%
%
%

\documentclass{svproc}
%
%


%

\usepackage{amssymb}
\usepackage{amsmath}
\usepackage{mathtools}
\usepackage{amsfonts}
\usepackage{amsxtra}
\usepackage[all]{xy}
\usepackage{verbatim}
\usepackage{color}
\usepackage[normalem]{ulem}
\usepackage[usenames,dvipsnames]{xcolor}
\usepackage{graphicx}
\usepackage{multicol}
\usepackage{footmisc}
\usepackage{comment}
\usepackage{etoolbox}
\usepackage{tikz-cd}
\usepackage{scalerel}
\usetikzlibrary{matrix,arrows}
\usepackage{extarrows}
\usepackage{url}

\usepackage{bbm}
\usepackage[mathscr]{euscript}
\usepackage{makeidx}
\makeindex

\usepackage[pagebackref, colorlinks, citecolor=DarkOrchid, linkcolor=NavyBlue, urlcolor=NavyBlue]{hyperref}
\hypersetup{linktoc=all,} 
\usepackage{bbm}
\usepackage{bbold}

\usepackage[nopostdot,  sort=use, toc]{glossaries}
\usepackage{glossary-mcols}
\setglossarystyle{mcolindex}
 \makeglossaries
 \glsdisablehyper 

\usepackage[textwidth=25mm, textsize=tiny]{todonotes}
\usepackage[final]{microtype}

\usepackage{enumerate}

\usepackage{appendix}

\numberwithin{equation}{section} 
\numberwithin{figure}{section}
\usepackage[nameinlink,capitalise]{cleveref}
\crefname{section}{Sect.}{Sects.}

\usepackage{stmaryrd}


\newcommand\restr[2]{{
  \left.\kern-\nulldelimiterspace 
  #1 
  \vphantom{\big|} 
  \right|_{#2} 
  }}

  \numberwithin{equation}{subsection}

\newcommand{\C}{\mathscr{C}}

\newcommand{\V}{\mathscr{V}}
\newcommand{\cM}{\mathscr{M}}

\newcommand{\I}{\mathbb I}
\newcommand{\Set}{\mathrm{Set}}

\newcommand{\N}{\mathbb N}
\newcommand{\eN}{{\mathbb N_\infty}}

\newcommand{\alg}[2]{\mathrm{Alg}_{#2}{#1}}
\newcommand{\id}{\mathrm{id}}

\newcommand{\inthomp}[3]{\underline{\hom}_{#1}(#2,#3)}

\newcommand{\term}{1}

\newcommand{\Vect}{\mathrm{Vect}}
\newcommand{\Alg}{\mathrm{Alg}}
\newcommand{\coAlg}{\mathrm{coAlg}}
\newcommand{\CoAlg}{\mathrm{coAlg}}
\newcommand{\coCAlg}{\mathrm{coCAlg}}
\newcommand{\op}{\mathrm{op}}
\newcommand{\inthom}{\underline{\hom}}
\newcommand{\tv}{T^\vee}

\newcommand{\Limi}{C_\infty}

\newcommand{\ev}{\mathrm{ev}}

\newcommand{\uni}[2]{\underline{\Alg}_k(#1, #2)}

\newcommand{\pull}{\arrow[dr, phantom, "\lrcorner", very near start]}

\newcommand{\dom}{\textsc{dom}}

\newcommand{\n}{\mathbb{n}}
\renewcommand{\k}{\mathbb{k}}

\newcommand{\inv}{^{-1}}
\newcommand{\im}{\operatorname{im}}

\newcommand{\ind}[1]{\llbracket{#1}\rrbracket}
\newcommand{\zip}{\mathtt{zip}}

\newcommand{\head}{\mathtt{head}}
\newcommand{\take}{\mathtt{take}}

\newcommand{\filter}{\mathtt{filter}}
\newcommand{\map}{\mathtt{map}}
\newcommand{\fold}{\mathtt{fold}}

\newcommand{\depth}{\mathtt{depth}}
\newcommand{\shape}{\mathtt{shape}}
\newcommand{\argmax}{\mathtt{argmax}}

\newcommand{\xto}[1]{\xrightarrow{#1}}
\renewcommand{\phi}{\varphi}
\renewcommand{\epsilon}{\varepsilon}
\newcommand{\const}{\operatorname{const}}
\newcommand{\pr}{\operatorname{pr}}
\newcommand{\inr}{\operatorname{inr}}

\newcommand{\len}{\mathtt{len}}
\newcommand{\Fr}{\operatorname{Fr}}
\newcommand{\Cof}{\operatorname{Cof}}
\newcommand{\Sh}{\operatorname{Sh}}
\newcommand{\CoSh}{\operatorname{CoSh}}
\newcommand{\m}{\mu}

\DeclareMathSymbol{\mathinvertedexclamationmark}{\mathord}{operators}{'074}
\DeclareMathSymbol{\mathexclamationmark}{\mathord}{operators}{'041}
\makeatletter
\newcommand{\raisedmathinvertedexclamationmark}{%
  \mathord{\mathpalette\raised@mathinvertedexclamationmark\relax}%
}
\newcommand{\raised@mathinvertedexclamationmark}[2]{%
  \raisebox{\depth}{$\m@th#1\mathinvertedexclamationmark$}%
}
\makeatother

\newcommand{\fromI}[1]{\raisedmathinvertedexclamationmark_{#1}}
\newcommand{\toT}[1]{\mathexclamationmark_{#1}}

\renewcommand{\partial}{\textsf{\reflectbox{\textup 6}}}

\newcommand{\cofree}{\mathrm{Cof}}
\newcommand{\free}{\mathrm{Fr}}
\newcommand{\tone}{\mathtt{t}}
\newcommand{\bcoalg}{{\mathrm{coAlg}}}
\newcommand{\ubcoalg}{\underline{\mathrm{coAlg}}}
\newcommand{\balg}{{\mathrm{Alg}}} 
\newcommand{\ubalg}{{\underline{\mathrm{Alg}}}}

\newcommand{\standalg}[1]{{\mathbb {#1}}}
\newcommand{\standcoalg}[1]{{\mathbb {#1}^\circ}}
\newcommand{\coaN}{\mathbb N^c}


\setglossarypreamble{} 


\loadglsentries{glossary}

\begin{document}

\mainmatter              
\title{Measuring data types}
\titlerunning{Measuring data types}  
%
\author{Lukas Mulder\inst{1} \and Paige Randall North\inst{2}
\and Maximilien P\'eroux\inst{3}}
\authorrunning{Mulder, North, P\'eroux} 
%
\tocauthor{Lukas Mulder, Paige North, Maximilien P\'eroux}
\institute{
Institute for Computing and Information Science, Radboud University\\
Toernooiveld 212, Nijmegen, 6525 EC, Netherlands\\
\email{lukas.mulder@ru.nl}\\
\and
Department of Mathematics and Department of Information and Computing Sciences, Utrecht University\\
Budapestlaan 6, Utrecht, 3584 CD, Netherlands\\
\email{p.r.north@uu.nl}\\
\and
Department of Mathematics, Michigan State University\\
619 Red Cedar Road, East Lansing, MI 48824, USA\\
\email{peroux@msu.edu}\\
}

\maketitle              

\begin{abstract}
In this article, we combine Sweedler's classic theory of measuring coalgebras -- by which $k$-algebras are enriched in $k$-coalgebras for $k$ a field -- with the theory of W-types -- by which the categorical semantics of inductive data types in functional programming languages are understood.
In our main theorem, we find that under some hypotheses, algebras of an endofunctor are enriched in coalgebras of the same endofunctor, and we find polynomial endofunctors provide many interesting examples of this phenomenon.
We then generalize the notion of initial algebra of an endofunctor using this enrichment, thus generalizing the notion of W-type.
This article is an extended version of \cite{north2023coinductive}: it adds expository introductions to the original theories of measuring coalgebras and W-types along with some improvements to the main theory and many explicitly worked examples.
\keywords{Inductive types, enriched category theory, W-types, measurings, partial homomorphisms, partial induction}
\end{abstract}

\section{Introduction}

Many features of functional programming languages like Haskell and Agda are justified by their categorical semantics.
For instance, perhaps the central feature -- inductive data types -- is justified by the categorical theory of categorical W-types -- initial algebras of polynomial endofunctors \cite{dybjer,mp}.
When programming in a functional programming language, most (data) types -- such as the type of natural numbers, of booleans, of lists, of strings, of trees, etc. -- are defined as inductive types.
Passing to the categorical semantics of a functional programming language, this way of defining types corresponds exactly to specifying an object of a category by asking that it is the initial algebra of a specified endofunctor.
Depending on the category, not all endofunctors have initial algebras of course.
Thus of central importance are \emph{polynomial} endofunctors, which have initial algebras (called categorical W-types) in elementary toposes with natural numbers objects, in particular the category of sets.
These correspond to the inductive types that are guaranteed in languages such as Coq and Agda, and though in languages such as Haskell more inductive types are guaranteed, categorical W-types correspond to the inductive types that are usually used.

Such categorical semantics are important in general because they make programming languages susceptible to mathematical study.
And more specifically, the categorical semantics of inductive data types is important because it connects types -- e.g., the type of natural numbers -- living in an abstract programming language with `real' mathematical objects -- e.g., the `real' natural numbers studied by mathematicians and living in the category of sets, thus justifying (or giving meaning to, hence the word \emph{semantics}) programs written in such a language.

We seek in this line of work to further elucidate the mathematical structure in which categorical W-types are embedded -- driven not only by motivations from pure mathematics, but also the fact that categorical phenomena that are at first adjacent to the semantics of programming languages often find their way into the programming languages themselves.

The mathematical structure surrounding categorical W-types that we study here is an analogue for our setting of Sweedler's classic theory of measuring coalgebras \cite{Sweedler}. In that theory, $k$-algebras are found to be enriched in $k$-coalgebras, where $k$ is a field in the usual sense. This enrichment adds extra structure to the category of $k$-algebras. One can see this extra structure as giving a notion of \emph{partial algebra homomorphism}. That is, while the hom-sets in the category of $k$-algebras consist of algebra homomorphisms, the hom-coalgebras in the \emph{enriched} category of $k$-algebras consist not only of algebra homomorphisms but also include other morphisms which are `almost' algebra homomorphisms. The hom-coalgebras also include information about `how close' a morphism is to being a homomorphism, hence the word \emph{measuring}. That is, if one thinks of the hom-objects of an (enriched) category as being the tools with which one studies the objects, this classic Sweedler theory gives us higher-precision tools with which to study the interactions between algebras.

Inspired by this, we give an analogue in our setting.

\begin{theorem}[{\cref{thm:enriched}}]
Let $\C$ be a locally presentable, closed symmetric monoidal category.
Let $F\colon \C\rightarrow \C$ be an accessible, lax symmetric monoidal functor. 
Then the category of $F$-algebras in $\C$ is enriched, tensored and cotensored over the closed symmetric monoidal category of $F$-coalgebras in $\C$.
\end{theorem}
We use this to give a notion of partial algebra homomorphism. 
The theorem says that, given $F$-algebras $A$ and $B$, in lieu of a set $\Alg(A,B)$ of algebra homomorphism $A\rightarrow B$, there is a coalgebra $\ubalg(A,B)$ of partial algebra homomorphisms. 
Inspired by the generalization of (co)limits in ordinary categories to weighted (co)limits in enriched categories, we generalize the notion of initial algebra to $C$-initial algebra for any coalgebra $C$.
Thus, by considering polynomial endofunctors, we generalize the notion of categorical W-types (i.e., initial algebra of a polynomial endofunctor), and we give many such examples.
We emphasize that not all polynomial endofunctors can be considered, as the endofunctor must be lax monoidal. For instance, this prevents the theory to be applied to general list type, but only lists over a monoid, see \cref{subsec:list} below. Moreover, the requirement of ``symmetry" for $\C$ and $F$ above is not strictly necessary, and is only there for convenience.

We also make sense of partial homomorphisms up to index $n$ and construct a tower of coalgebras
 \[
\begin{tikzcd}[column sep=small]
    \ubalg_\infty(A,B)\ar{r} & \cdots \ar{r} &
    \ubalg_2(A,B) \ar{r}&
    \ubalg_1(A,B) \ar{r}&
    \ubalg_0(A,B).
\end{tikzcd}    
\]
Its limit is composed of the $\infty$-partial homomorphisms which are morphisms $A\rightarrow B$, built layer by layer in the towers, that approximates inductively a total algebra homomorphism.

This is not an isolated phenomenon: several other analogues in other settings have appeared since Sweedler's version: \cite{anel2013sweedler,hylandetal,vasila,Per22,MRU22}.
One of our motivations from pure mathematics is purely to understand this phenomenon better.
Indeed, our setting is the `simplest' in the sense that our notion of (co)algebra does not require any coherence: that is, an algebra of an endofunctor is purely an object $X$ with a morphism $FX \to X$; there are no units or multiplication to keep track of.
Thus, here, Sweedler's phenomenon is visible in its purest form.

In future work, we hope to push both the categorical, pure mathematical story further as well as develop applications in programming languages.
In this article, we fix a category $\C$ and an endofunctor $F$ to obtain an enriched category $\Alg_F (\C)$.
In future work, we hope to show that this construction is categorical, i.e., functorial in $\C$ and $F$.
On the programming side, we hope to see how such W-types could be usefully implemented in a programming language like Haskell.

\subsection{Outline and Contents}

In \cref{sec: sweedler coalgebras}, we review Sweedler's classic story of $k$-algebras enriched in $k$-coalgebras, for $k$ a field.
In \cref{sec: data types}, we review the basic theory of W-types, that is, of data types defined as initial algebras for an endofunctor.
In these first two sections, we aim to make this paper accessible to master's students or advanced undergraduates with knowledge of category theory; in addition, we hope to bridge the gap between algebraists in the tradition of Sweedler and (functional) programming language theorists. Thus, while a familiarity with abstract algebra is useful for \cref{sec: sweedler coalgebras} and a familiarity with functional programming is useful for \cref{sec: data types}, we hope that the basic theory is explained here clearly enough to get the ideas across to members of either camp.
Thus, in particular, \cref{sec: sweedler coalgebras} and \cref{sec: data types} are expository.

In \cref{sec: measuring}, we put the content of Sections \ref{sec: sweedler coalgebras} and \ref{sec: data types} to find that an analogue of Sweedler's story holds for endofunctors: under some hypotheses, algebras for endofunctors are enriched in coalgebras for the same endofunctor (\cref{thm:enriched}).
\Cref{sec: measuring} is an extended version of \cite{north2023coinductive}. There are several differences between that section and \cite{north2023coinductive}. First, all proofs have been included (these were already available in the preprint \cite{arxiv}). Second, while \cite{north2023coinductive} presented an extended example before the categorical theory, here we have first given the categorical theory first with that example given as a running example. Third, we have included some small improvements. In particular, \cref{thm: most general} now generalizes \cite[Thm.~33]{north2023coinductive}.
We also clarify and generalize an inductive construction of partial homomorphisms in \cref{ex: partial induction}, \cref{ex: partial induction part deux} and \cref{remark: generalized tower} that was originally found in \cite[Construction 1]{north2023coinductive}.

In \cref{sec: examples}, we give many worked-out examples of the theory presented in \cref{sec: measuring}. Because we want this paper to be accessible to a wide range of readers, we provide many details for the calculations in these examples.

\section{Sweedler Theory}
\label{sec: sweedler coalgebras}

In this section, we present the classical theory of algebras and coalgebras found in \cite{Sweedler}, with a modern perspective that will allow us to extend the arguments in the next sections. Sweedler arguments was formalized in a more general context in \cite{Fox}, and his approach was extended further in \cite{hylandetal}.
 
\subsection{Algebras}

Let \gls{field} be a field.
Let $\gls{Vectk}$ be the category of $k$-vector spaces with $k$-linear transformations.
Recall that a \emph{$k$-algebra}\index{algebra!over a field} $A$ is a vector space $A$ together with a multiplication
\begin{align*}
    A\times A & \longrightarrow A\\
    (x, y) & \longmapsto xy
\end{align*}
that is $k$-bilinear, meaning that it defines a $k$-linear homomorphism $\mu\colon A\otimes_k A\rightarrow A$, associative, meaning that for all $x,y,z\in A$ we have
\[
(xy)z=x(yz),
\]
and unital, meaning there exists $1_A\in A$ such that
\[
x1_A=x=1_Ax
\]
for all $x\in A$. The existence of such element $1_A\in A$ is recorded by a $k$-linear homomorphism $\eta\colon k\rightarrow A$. Associativity and unitality can now be rephrased to say that the following diagrams commute in $k$-vector spaces
\[
\begin{tikzcd}
    A\otimes_k A \otimes_k A \ar{r}{1\otimes \mu}\ar{d}[swap]{\mu\otimes 1} & A\otimes_k A \ar{d}{\mu} & A\otimes_k k\ar[phantom, "\cong"]{r} \ar{d}[swap]{1\otimes \eta} & [-2em] A \ar[equals,shorten <=1.5ex, shorten >=1.5ex, shift right=.5]{drr} \ar[phantom, "\cong"]{r}& [-2em] k\otimes_k A \ar{r}{\eta\otimes 1} & A\otimes_k A \ar{d}{\mu}\\
    A\otimes_k A \ar{r}{\mu} & A & A\otimes_k A \ar{rrr}[swap]{\mu} & & &  A.
\end{tikzcd}
\]
Given two $k$-algebras $(A, \mu_A, \eta_A)$ and $(B, \mu_B, \eta_B)$, an\emph{ algebra homomorphism} $f\colon A\rightarrow B$ is a $k$-linear homomorphism $f \colon A\rightarrow B$ such that $f(xy)=f(x)f(y)$ and $f(1_A)=1_B$: i.e., the following diagrams commute in $k$-vector spaces
\[
\begin{tikzcd}
    A\otimes_k A \ar{r}{f\otimes f} \ar{d}[swap]{\mu_A} & B\otimes_k B \ar{d}{\mu_B} & k \ar{d}[swap]{\eta_A}\ar{dr}{\eta_B}\\
    A\ar{r}{f} & B & A\ar{r}[swap]{f} & B.
\end{tikzcd}
\]
Denote by $\Alg_k$ the resulting category of $k$-algebras. We denote by $\gls{AlgHomk}$ the set of algebra homomorphisms $A\to B$.
The initial $k$-algebra is $k$ by unitality.

The \emph{free $k$-algebra}\index{free algebra!over a field} on a vector space $V$ is given by the \emph{tensor algebra}
\[
\gls{freeAlg}=\bigoplus_{n\geq 0} V^{\otimes n}
\]
with multiplication given by concatenation: the identifications $V^{\otimes i}\otimes V^{\otimes j}\stackrel{\cong}\rightarrow V^{\otimes i+j}$ assemble to give the multiplication $TV \otimes TV \rightarrow TV $.
The free algebra is the left adjoint with respect to the forgetful functor $U\colon \Alg_k\rightarrow \Vect_k$: i.e., we obtain the following adjunction
\[
\begin{tikzcd}[column sep=large]
    \Vect_k \ar[bend left]{r}{T} \ar[phantom, "\perp" description, xshift=-0.5ex]{r} & \Alg_k. \ar[bend left]{l}{U}
\end{tikzcd}
\]
In particular, for a vector space $V$ and a $k$-algebra $A$, we obtain the following natural bijection
\[
\Alg_k(TV, A) \cong \Vect_k(V, UA).
\]

Here are some non-examples of algebra homomorphisms that we should keep in mind. We will see that Sweedler's theory \cite{Sweedler} allows to consider these as examples of \emph{partial} algebra homomorphisms.

\begin{example}
Consider the polynomial ring $\mathbb{R}[x]$ which is an $\mathbb{R}$-algebra with the usual multiplication of polynomials.
 Consider the derivation operator 
 \begin{align*}
     D\colon \mathbb{R}[x] & \longrightarrow \mathbb{R}[x]\\
     f(x) & \longmapsto f'(x).
 \end{align*}
 This is a $\mathbb{R}$-linear homomorphism but not an $\mathbb{R}$-algebra homomorphism as $D(fg)\neq D(f)D(g)$ and $D(1)\neq 1$.
\end{example}

\begin{example}
   Consider the multiplication by 2 on the rationals: $m_2\colon\mathbb{Q}\rightarrow\mathbb{Q}$ sending $a \mapsto 2a$ for all $a\in \mathbb{Q}$. It is a $\mathbb{Q}$-linear homomorphism but not an algebra homomorphism when $\mathbb{Q}$ is viewed as a $\mathbb{Q}$-algebra with its usual multiplication, as $m_2(ab)=2ab \neq 4ab = m_2(a)m_2(b)$.
\end{example}

\subsection{Coalgebras}

We can dualize the notion of $k$-algebras as follows. A \emph{$k$-coalgebra}\index{coalgebra!over a field} $C$ is a vector space $C$ together with $k$-linear homomorphisms $\Delta\colon C\rightarrow C\otimes_k C$ (called \emph{comultiplication}) and $\varepsilon\colon C\rightarrow k$ (called the \emph{co-unit}) that is co-associative and co-unital in the sense that the following diagrams commute
\[
\begin{tikzcd}
    C\otimes_k C \otimes_k C \ar[leftarrow]{r}{1\otimes \Delta}\ar[leftarrow]{d}[swap]{\Delta\otimes 1} & C\otimes_k C \ar[leftarrow]{d}{\Delta} & C\otimes_k k\ar[phantom, "\cong"]{r} \ar[leftarrow]{d}[swap]{1\otimes \varepsilon} & [-2em] C \ar[equals]{drr} \ar[phantom, "\cong"]{r}& [-2em] k\otimes_k C \ar[leftarrow]{r}{\varepsilon\otimes 1} & C\otimes_k C \ar[leftarrow]{d}{\Delta}\\
    C\otimes_k C \ar[leftarrow]{r}{\Delta} & C & C\otimes_k C \ar[leftarrow]{rrr}[swap]{\Delta} & & &  C.
\end{tikzcd}
\]
Compare the definition of $k$-coalgebra with that of $k$-algebra, and notice that we have simply reversed the direction of the arrows. We can express the commutativity of these diagrams nicely using \emph{Sweedler notation} from \cite[\S 1.2]{Sweedler}.
For $c\in C$, we abbreviate the sum notation in the comultiplication as follows 
\[
\Delta(c)=\sum_i {c_{(1)}}_i \otimes {c_{(2)}}_i \eqqcolon\gls{SweedlerNot}.
\]
The co-associativity then says that
\[
\sum_{(c)} \left(\sum_{(c_{(1)})}{(c_{(1)})}_{(1)}\otimes {(c_{(1)})}_{(2)}\right)\otimes {c_{(2)}}= \sum_{(c)} c_{(1)}\otimes \left( \sum_{(c_{(2)})} {(c_{(2)})}_{(1)}\otimes {(c_{(2)})}_{(2)}\right),
\]
and that element is often denoted $\sum_{(c)} c_{(1)}\otimes c_{(2)}\otimes c_{(3)}$.
Co-unitality says that
\[
\sum_{(c)} \varepsilon(c_{(1)})c_{(2)}=c=\sum_{(c)}\varepsilon(c_{(2)})c_{(1)}.
\]
Therefore, it is often useful to think of the comultiplication on $C$ to be a (co-associative) decomposition of each element $c\in C$ into parts ${c_{(1)}}_i$ and ${c_{(2)}}_i$ and to think of the co-unit as a projection such that when applied to one `side' of the result of decomposition, returns $c$.

\begin{example}\label{ex:binomial}
The \emph{binomial} $k$-coalgebra structure on $k[x]$ is given by the comultiplication
\begin{align*}
   \Delta\colon  k[x] & \longrightarrow k[x]\otimes_k k[x]\\
   x^n & \longmapsto \sum_i \binom{n}{i} x^{n-i}\otimes x^{i} 
\end{align*}
and co-unit
\begin{align*}
    \varepsilon\colon k[x] & \longrightarrow k\\
    x^n & \longmapsto 0.
\end{align*}
\end{example}

Given two $k$-coalgebras $(C, \Delta_C, \varepsilon_C)$ and $(D, \Delta_D, \varepsilon_D)$, a \emph{coalgebra homomorphism} $f\colon C\rightarrow D$ is a $k$-linear homomorphism $f\colon C\rightarrow D$ such that \[\sum_{(c)}f(c_{(1)})\otimes f(c_{(2)})= \sum_{(f(c))} f(c)_{(1)}\otimes f(c)_{(2)}\]
and
\[
\varepsilon_D(f(c))=\varepsilon_C(c)
\]
for all $c\in C$.
In other words, the following diagrams commute:
\[
\begin{tikzcd}
    C\otimes_k C \ar{r}{f\otimes f} \ar[leftarrow]{d}[swap]{\Delta_C} & D\otimes_k D \ar[leftarrow]{d}{\Delta_D} & k \ar[leftarrow]{d}[swap]{\varepsilon_C}\ar[leftarrow]{dr}{\varepsilon_D}\\
    C\ar{r}{f} & D & C\ar{r}[swap]{f} & D.
\end{tikzcd}
\]
Denote by $\coAlg_k$ the resulting category of $k$-coalgebras. Denote the set of coalgebra homomorphisms $C\to D$ by $\gls{coAlgHomk}$.
The terminal $k$-coalgebra is $k$ by co-unitality.

Although more subtle than its algebra analogue, there is a notion of \emph{cofree coalgebra} on a vector space $V$.
A candidate for this cofree coalgebra could be the direct product  $\prod_{n\geq 0} V^{\otimes n}$ coming from ``de-concatenation" $V^{\otimes (i + j)}\stackrel{\cong}\rightarrow V^{\otimes i}\otimes_k V^{\otimes j}$.
However, because direct products do not commute with tensor products, the de-concatenation only leads to an injective $k$-linear homomorphism
\[
\Delta\colon \prod_{n\geq 0}V^{\otimes n} \hookrightarrow \prod_{i,j\geq  0} V^{\otimes i}\otimes V^{\otimes j}.
\]
The cofree $k$-coalgebra\index{cofree coalgebra!over a field} \gls{cofreecoAlg} is defined to be the largest vector subspace of $\prod_{n\geq 0}V^{\otimes n}$ such that \[\Delta(\tv(V))\subseteq \tv(V)\otimes_k \tv(V).\]
In fact, by \cite{blockleroux,hazewinkel}, this can be defined as the following pullback (i.e.~intersection) in vector spaces
\[
\begin{tikzcd}
    \tv(V) \pull \ar[hook]{d} \ar[hook]{r} & \left( \prod_{i \geq  0} V^{\otimes i}\right)\otimes_k \left(\prod_{j\geq 0}  V^{\otimes j}\right) \ar[hook]{d}\\
    \prod_{n\geq 0}V^{\otimes n} \ar[hook]{r}{\Delta} & \prod_{i,j\geq 0} V^{\otimes i}\otimes_k V^{\otimes j}.
\end{tikzcd}
\]
The cofree coalgebra is right adjoint to the forgetful functor $U\colon \coAlg_k\rightarrow \Vect_k$: i.e.~we obtain an adjunction
\[
\begin{tikzcd}[column sep=large]
    \coAlg_k \ar[bend left]{r}{U} \ar[phantom, "\perp" description, xshift=-0.5ex]{r} & \Vect_k. \ar[bend left]{l}{\tv}
\end{tikzcd}
\]
In particular, for a vector space  $V$ and a $k$-coalgebra $C$, we obtain a natural bijection
\begin{equation}\label{eq: forgetful-cofree}
    \coAlg_k(C, \tv(V))\cong \Vect_k(U(C), V).
\end{equation}

\subsection{The Convolution Algebra}
Denote by $[V,W]=\hom_k(V,W)$ the $k$-vector space of linear homomorphisms $V\rightarrow W$.
Suppose that $(C,\Delta, \varepsilon)$ is a $k$-coalgebra and that $(B, \mu, \eta)$ is a $k$-algebra. 
Then \gls{ConvAlg} is also a $k$-algebra with the following convolution product 
\begin{align*}
[C,B]\otimes_k [C,B] & \longrightarrow  [C, B]\\
f \otimes g & \longmapsto \gls{ConvProd}.
\end{align*}
Here, $f\ast g\colon C\rightarrow B$ is the composite
\[
\begin{tikzcd}
    C \ar{r}{\Delta} & C\otimes_k C \ar{r}{f\otimes g} & B\otimes_k B \ar{r}{\mu} & B.
\end{tikzcd}
\]
In Sweedler notation:
\[(f\ast g) (c)=\sum_{(c)} f(c_{(1)})g(c_{(2)}).\]
The unit $k\rightarrow [C,B]$ corresponds to $1_{[C,B]}\colon C\rightarrow B$ defined as the composite
\[
\begin{tikzcd}
    C \ar{r}{\varepsilon} & k \ar{r}{\eta} & B,
\end{tikzcd}
\]
i.e.~$1_{[C,B]}(c)=\varepsilon(c)1_B$.

\begin{definition}[convolution algebra]
Given a $k$-coalgebra $C$ and a $k$-algebra $B$, we call $([C, B], \ast, 1_{[C,B]})$ the \emph{convolution algebra}\index{convolution algebra!over a field} of $C$ with $B$.
The construction is natural in $C$ and $B$ and defines a functor
\[
[-,-]\colon \coAlg_k^\op\times \Alg_k \longrightarrow \Alg_k.
\]
\end{definition}

\begin{example}
If we let $C=k$, the terminal $k$-coalgebra,  then the convolution algebra $[k, B]$ is isomorphic to $B$ as a $k$-algebra.
\end{example}

\subsection{Partial Algebra Homomorphism}
Recall that given $k$-vector spaces $V,W, Z$, we have the $k$-linear exponential correspondence
\[
\Vect_k\left( V\otimes_k W, Z\right) \cong \Vect_k(V, [W, Z])\cong \Vect_k(W, [V,Z])
\]
for which a map $f\colon V\otimes_k W \rightarrow Z$ 
corresponds to $\widetilde{f}\colon V\rightarrow [W,Z]$ (where $\widetilde{f}(v)(w)=f(v\otimes w)$) which corresponds to $\widehat{f}\colon W \rightarrow [V,Z]$ (where $\widehat{f}(w)(v)=f(v\otimes w)$).
It is customary to denote $\widetilde{f}(v)\colon W\rightarrow Z$ as $f_v$.

Let us now consider $k$-algebras $(A, \mu_A, 1_A)$ and $(B, \mu_B, 1_B)$ and a $k$-coalgebra $(C, \Delta, \varepsilon)$. Then by our discussion above, we know that a $k$-linear homomorphism $f\colon C\otimes_k A\rightarrow B$ corresponds to $k$-linear homomorphisms $\widetilde{f}\colon C\rightarrow [A,B]$ and $\widehat{f}\colon A \rightarrow [C,B]$.
Since $[C,B]$ can be given the structure of a $k$-algebra via the convolution product, then we can consider maps $\widehat{f}\colon A\rightarrow [C,B]$ that are algebra homomorphisms, i.e., for all $x,y\in A$:
\[
\widehat{f}(xy)=\widehat{f}(x)\ast \widehat{f}(y), \quad\quad\quad  \widehat{f}(1_A)=1_{[C,B]}.
\]

\begin{definition}[partial homomorphism]
    \label{def: partial homomorphism}
Given $k$-algebras $A$ and $B$ and a $k$-coalgebra $C$, a $k$-linear homomorphism $\widetilde{f}\colon C\rightarrow [A,B]$ is called a \emph{$C$-indexed partial homomorphism from $A$ to $B$}\index{partial homomorphism indexed over a coalgebra!over a field} if its adjunct $\widehat{f}\colon A\rightarrow [C, B]$ is an algebra homomorphism. 
Denoting $\gls{partialhm}\coloneqq \widetilde{f}(c)$ 
for all $c\in C$, this means that for all $x,y\in A$ we have
\[
f_c(xy)=\sum_{(c)}f_{c_{(1)}}(x) f_{c_{(2)}}(y) \quad \quad \quad  f_c(1_A)=\varepsilon(c)1_B.
\]
In other words, we obtain the following commutative diagrams in vector spaces
\[
\begin{tikzcd}[row sep= large]
    C\ar{r}{\Delta} \ar{d}[swap]{\widetilde{f}}& C\otimes_k C \ar{r}{\widetilde{f}\otimes \widetilde{f}} & {[A,B]\otimes_k [A,B]} \ar{r}{\nabla} & {[A\otimes_k A, B\otimes_k B]} \ar{d}{{\mu_B}_*}\\
    {[A,B]} \ar{rrr}{\mu_A^*} &&& {[A\otimes_k A, B]}
\end{tikzcd}
\]
\[
\begin{tikzcd}
    C \ar{rr}{\widetilde{f}} \ar{d}[swap]{\varepsilon} & [-2em] & {[A,B]}\ar{d}{{\eta_B}_*}\\
    k \ar[phantom, "\cong"]{r}&  {[k,k]} \ar{r}{\eta_A^*} & {[A,k]}.
\end{tikzcd}
\]
Here, $\nabla$ is the transformation
\begin{align*}
\nabla\colon [V_1, V_2]\otimes_k [V_3, V_4]& \longrightarrow [V_1 \otimes_k V_3, V_2 \otimes_k V_4]\\
f \otimes g & \longmapsto f\otimes g
\end{align*}
natural in vector spaces $V_1$, $V_2$, $V_3$ and $V_4$.
\end{definition}

\begin{remark}
A $C$-indexed partial homomorphism is \emph{not} a partial function. It is a function that should be regarded as only partially multiplicative. Indeed, a homomorphism is multiplicative function, therefore, the term partial homomorphism should not be confused with the notion of partial function.
\end{remark}

\begin{example}
Choose $C=\mathbb{R}[x]$ and $A,B=\mathbb{R}[x]$ as in \cref{ex:binomial}.    We can view the derivation $D\colon \mathbb{R}[x]\rightarrow \mathbb{R}[x]$ as an $\mathbb{R}[x]$-indexed partial homomorphism
\begin{align*}
    \mathbb{R}[x] & \longrightarrow [\mathbb{R}[x], \mathbb{R}[x]]\\
    x^n & \longmapsto D^n.
\end{align*}
We have $\Delta(x)=x\otimes 1 + 1\otimes x$, and so we recover the formula 
\[
D(fg)=D(f)g+fD(g),
\]
and more generally
\[
D^n(fg)=\sum_{i=0}^n \binom{n}{i} D^{n-i}(f)D^{i}(g).
\]
\end{example}

\begin{example}
 We can view $\mathbb{Q}$ as a coalgebra over itself via $\mathbb{Q}\stackrel{\cong}\rightarrow \mathbb{Q}\otimes_\mathbb{Q}\mathbb{Q}$  defined by $q\mapsto q\otimes 1$.  
 We have a $\mathbb{Q}$-indexed partial homomorphism 
 \begin{align*}
     \mathbb{Q} &\longrightarrow [\mathbb{Q}, \mathbb{Q}]\\
     q & \longmapsto m_q
 \end{align*}
 where $m_q(a)=qa$. 
 The partial homomorphism says that \[m_q(ab)=q(ab)= (qa)b=m_q(a)m_1(b).\]
\end{example}

\begin{example}
Building on the previous example, given any $k$-algebra homomorphism $f\colon A\rightarrow B$, choose the trivial $k$-coalgebra structure on $k$ via $\Delta\colon k\stackrel{\cong}\rightarrow k\otimes_k k$, and define a $k$-partial homomorphism
\begin{align*}
    k& \longrightarrow [A,B]\\
    q  & \longmapsto f_q
\end{align*}
where  $f_q(x)=qf(x)$ for all $x\in A$  and $q\in k$.
As $\Delta(1)=1\otimes 1$, we get:
\[
f(xy)=f_1(xy)=f_1(x)f_1(y)=f(x)f(y),
\]
recovering the fact that $f$ is an algebra homomorphism. 
We obtain in fact a correspondence between algebra homomorphisms $A\rightarrow B$ and $k$-partial homomorphisms $k\rightarrow [A,B]$.
\end{example}

\subsection{Measuring}
By the adjunction $\Vect_k(C\otimes_k A, B)\cong \Vect_k(C, [A,B])$, we can rephrase the notion of partial homomorphisms as follows.

\begin{definition}
    [measuring]
    \label{def: measurings}
Given $k$-algebras $A$ and $B$ and a $k$-coalgebra $C$, a $k$-linear homomorphism $f\colon C\otimes_k A\rightarrow B$ is called
\emph{a $C$-measuring of $A$ and $B$}\index{measuring of algebras!over a field} if its adjunct $\widetilde{f}\colon C\rightarrow [A,B]$ is a $C$-indexed partial homomorphism, or equivalently, the other adjoint $\widehat{f}\colon A\rightarrow [C, B]$ is an algebra homomorphism.
This means that for all $c\in C$ and $x,y\in A$, we have
\[
f(c\otimes xy)=\sum_{(c)} f(c_{(1)}\otimes x) f(c_{(2)}\otimes y), \quad  \quad f(c\otimes 1_A)=\varepsilon(c)1_B.
\]
In other words, we have the following commutative diagrams in vector spaces:
\[
\begin{tikzcd}[row sep=large]
    C\otimes_k A^{\otimes 2} \ar{d}[swap]{1\otimes \mu_A}\ar{r}{\Delta \otimes 1} & C^{\otimes 2}\otimes_k A^{\otimes 2} \ar[phantom, "\cong"]{r}& [-2em] (C\otimes_k A)^{\otimes 2} \ar{r}{f\otimes f} & B\otimes_k B \ar{d}{\mu_B}\\
    C\otimes_k A \ar{rrr}{f} & & & B
\end{tikzcd}
\]
\[
\begin{tikzcd}
    C\otimes_k k \ar{d}[swap]{1\otimes \eta_A} \ar[phantom, "\cong"]{r}& [-2em] C \ar{r}{\varepsilon} & k \ar{d}{\eta_B}\\
    C\otimes_k A \ar{rr}{f} & & B.
\end{tikzcd}
\]
\end{definition}

We can summarize the equivalent definitions as in the table below.
\vspace{1em}
\begin{center}
    \begin{tabular}{|c|c|c|c|c|}
    \hline  $C\otimes_k A\rightarrow B$ &  $C\rightarrow [A,B]$ & $A\rightarrow [C,B]$  \\
      \hline  $C$-\vspace{-0.08em} & $C$-indexed partial  \vspace{-0.08em}& algebra\vspace{-0.08em} \vspace{-0.08em} \\ 
       measuring & homomorphism & homomorphism  \\\hline 
    \end{tabular}
    \end{center}
\vspace{1em}
These notions can be defined in any closed symmetric monoidal category. 
If not closed, we cannot make sense of internal hom, and thus the notion of  $C$-indexed partial homomorphism $C\rightarrow [A,B]$ is not well  defined, nor a convolution algebra $[C,B]$. However, we can still make sense of $C$-measurings $C\otimes_k A\rightarrow B$ in the non-closed case, which may explain why this term is more present in the literature.

\begin{definition}[functoriality of measurings]
    \label{def: measuring functor to set}
Let $A$, $B$ be $k$-algebras and $C$ a $k$-coalgebra.
Let \gls{MeasuringSet} be the set of $C$-measurings $C\otimes_k A\rightarrow B$ of $A$ to $B$, or equivalently, the set of $C$-indexed partial homomorphisms $C\rightarrow [A, B]$. 
Given a measuring $f\colon C\otimes_k A\rightarrow B$, a coalgebra homomorphism $c\colon C'\rightarrow C$, and algebra homomorphisms $a\colon A'\rightarrow A$ and $b\colon B\rightarrow B'$, we obtain a new $C$-measuring $f'\colon C'\otimes_k A'\rightarrow B'$ as the composite
\[
\begin{tikzcd}
C'\otimes_k A' \ar{r}{c\otimes a} & C\otimes_k A \ar{r}{f} & B \ar{r}{b} & B'. 
\end{tikzcd}
\]
By doing this for any such $f$, we obtain a function $\mu_C(A,B)\rightarrow \mu_{C'}(A',B')$.
This makes $\mu$ into a functor
\begin{align*}
    \mu\colon \coAlg_k^\op\times \Alg_k^\op \times \Alg & \longrightarrow \Set\\
    (C, A, B) & \longmapsto \mu_C(A,B).
\end{align*}
\end{definition}

As a $C$-measuring $f\colon C\otimes_k A\rightarrow B$ (equivalently, $C$-indexed partial homomorphism $C\rightarrow [A,B]$) is equivalent to an algebra homomorphism $\widehat{f}\colon A\rightarrow [C,B]$, we get an identification 
\[
\mu_C(A,B)\cong \Alg_k(A, [C, B]),
\]
showing that $\mu_C(-, B)\colon \Alg_k^\op\rightarrow \Set$ is represented by the convolution algebra $[C,B]$. 
We will show how $\mu\colon \coAlg_k^\op\times \Alg_k^\op \times \Alg \longrightarrow \Set$ is represented in its other variables.

\subsection{Composing Partial Homomorphisms}
Given $k$-algebras $A_1$, $A_2$ and $A_3$, we can compose algebra homomorphisms $f\colon A_1\rightarrow A_2$ and $g\colon A_2\rightarrow A_3$ into an algebra homomorphism $g\circ f\colon A_1\rightarrow A_3$. We wish to do the same for partial homomorphisms.

Let $(C, \Delta_C, \varepsilon_C)$ and $(D, \Delta_D, \varepsilon_D)$ be $k$-coalgebras.
Given partial homomorphisms $f\colon C\rightarrow [A_1, A_2]$ and $g\colon D\rightarrow [A_2, A_3]$, we can define a new partial homomorphism $g\circ f\colon D\otimes_k C\rightarrow [A_1, A_3]$ by
\[
(g\circ f)_{d\otimes c}\coloneqq g_d\circ f_c.
\]
First, notice that $D\otimes_k C$ remains a coalgebra with comultiplication given by
\begin{align*}
    D\otimes_k C &\stackrel{\Delta_D\otimes \Delta_C}\longrightarrow (D\otimes_k D)\otimes_k (C\otimes_k C)\cong (D\otimes_k C)\otimes_k (D\otimes_k C)\\
    d\otimes c & \longmapsto  \sum_{(d)}\sum_{(c)} (d_{(1)}\otimes c_{(1)})\otimes (d_{(2)}\otimes c_{(2)})
\end{align*}
and counit given by
\[
D\otimes_k C \stackrel{\varepsilon_D\otimes \varepsilon_C}\longrightarrow k.
\]
We can check that $g\circ f$ remains a $(D\otimes_k C)$-partial homomorphisms. Indeed for all $x, y\in A_1$, $c\in C$ and $d\in D$, we obtain
\begin{align*}
    (g\circ f)_{d\otimes c}(xy) & = g_d(f_c(xy)) \\
    & = g_d \left( \sum_{(c)}f_{c_{(1)}}(x) f_{c_{(2)}}(y)\right)\\
    &= \sum_{(c)} g_d\left(f_{c_{(1)}}(x) f_{c_{(2)}}(y)\right)\\
    & =\sum_{(c)}\sum_{(d)}g_{d_{(1)}}(f_{c_{(1)}}(x)) g_{d_{(2)}}(f_{c_{(2)}}(y))\\
    & = \sum_{(d)}\sum_{(c)} (g\circ f)_{d_{(1)}\otimes c_{(1)}}(x) (g\circ f)_{d_{(2)}\otimes c_{(2)}}(y).
\end{align*}
Similarly, we have
\begin{align*}
    (g\circ f)_{d\otimes c}(1_{A_1}) & = g_d(f_c(1_{A_1}))\\
    & = g_d(\varepsilon_C(c)1_{A_2})\\
    & = \varepsilon_D(d)\varepsilon_C(c)1_{A_3}.
\end{align*}
Therefore we have defined a composition of partial homomorphisms
\begin{align}\label{eq: composition of partial}
\begin{split}
    \mu_D(A_2, A_3)\times \mu_C(A_1, A_2) & \longrightarrow \mu_{D\otimes_k C}(A_1, A_3)\\
    (g, f) & \longmapsto g\circ f.
    \end{split}
\end{align}
The usual identity homomorphism $\id_A\colon A\rightarrow A$ defines a $k$-indexed partial homomorphism $\id_A\colon k\rightarrow [A,A]$, i.e.~an element of $\mu_k(A,A)$. Given any partial homomorphism $f\colon C\rightarrow [A,B]$, we can check that $f\circ \id_A=f=\id_B\circ f$.

We are getting closer to being able to construct a new category of $k$-algebras, where instead of algebra homomorphisms we have partial homomorphisms.
 An issue that arises is that there is no control on the coalgebra being indexed on. That is, for what coalgebra $C$ should we set $\hom(A , B)$ to $\mu_C(A,B)$?

 \subsection{The Measuring Coalgebra}
 Fix $k$-algebras $A$ and $B$. 
 Define $\cM(A,B)$ 
 to be the category of measurings from $A$ and to $B$. Each object consists of a pair $(C,\widetilde{f})$ where $C$ is a $k$-coalgebra and $\widetilde{f}\colon C\rightarrow [A,B]$ is a $C$-indexed partial homomorphism (equivalently a measuring $f\colon C\otimes_kA\rightarrow B$). 
 A morphism $(C, \widetilde{f})\rightarrow (D, \widetilde{g})$ consists of a coalgebra homomorphism $\gamma\colon C\rightarrow D$ such that the following commutes
 \[
 \begin{tikzcd}
C \ar{rr}{\gamma}\ar{dr}[swap]{\widetilde{f}} & & D \ar{dl}{\widetilde{g}}\\
& {[A,B].}
 \end{tikzcd}
 \]
Composition and identity on $\cM(A,B)$ are induced by $\coAlg_k$.

\begin{definition}[universal measuring]\label{def: universal measuring}
Given $k$-algebras $A$ and $B$, denote by $(\gls{UnivMeasuring}, \widetilde{\ev})$ the terminal object (if it exists) of the category $\cM(A,B)$ of measurings from $A$ to $B$. The coalgebra $\uni{A}{B}$ is called the \emph{universal measuring coalgebra}\index{universal measuring coalgebra!over a field}
from $A$ to $B$ and its measuring $\ev\colon\uni{A}{B}\otimes_k  A\rightarrow B$ is called the \emph{evaluation}. 
This means given an $C$-indexed partial homomorphisms $\widetilde{f}\colon C\rightarrow [A, B]$,  there exists a unique coalgebra homomorphism $u_f\colon C\rightarrow \uni{A}{B}$ such that the following diagram commutes:
\[
\begin{tikzcd}
    C \ar{r}{\widetilde{f}} \ar[dashed]{d}[swap]{\exists! u_f}& {[A, B]}\\
\uni{A}{B}. \ar[bend right]{ur}[swap]{\widetilde{\ev}}
\end{tikzcd}
\]
The construction is natural in $A$ and $B$ and defines a functor
\[
\uni{-}{-}\colon \Alg_k^\op\times \Alg_k \longrightarrow \coAlg_k.
\]
\end{definition}

It is useful to think of $\uni{A}{B}$ as the coalgebra containing all possible partial homomorphisms from  $A$ to $B$. 
In order to construct $\uni{A}{B}$, we rephrase the data of a $C$-indexed partial homomorphism $\widetilde{f}\colon C\rightarrow [A,B]$ entirely coalgebraically. 
By the forgetful-cofree adjunction (\cref{eq: forgetful-cofree}), a $k$-linear homomorphism $\widetilde{f}\colon C\rightarrow [A,B]$ corresponds to a coalgebra homomorphism $\overline{f}\colon C\rightarrow \tv([A, B])$.
Following the commutative diagrams of \cref{def: partial homomorphism}, the map $\widetilde{f}$ is a partial homomorphism if and only if the map  $\overline{f}$ fits into the following equalizing diagram in $\coAlg_k$
\[
 \begin{tikzcd}
    C \ar{r}{\overline{f}} & \tv ([A, B]) \ar[shift left  =1]{r}{\Phi}\ar[shift right =1]{r}[swap]{\Psi} &  \tv\left([A^{\otimes 2}, B]\right) \oplus \tv([A, k]).
 \end{tikzcd}
 \]
 The morphism $\Phi$ is the cofree map induced by summing the $k$-linear homomorphisms $\mu_A^*\colon [A,B]\rightarrow [A^{\otimes 2}, B]$ and ${\eta_B}_*\colon [A,B]\rightarrow [A, k]$.
 The morphism $\Psi$ is induced from using the forgetful-cofree adjunction on a $k$-linear homomorphism $\tv([A,B])\rightarrow [A^{\otimes 2}, B]\oplus [A, k]$ that we now define.
 On the first summand, it is defined as the following composite
 \[
 \begin{tikzcd}[column sep=scriptsize]
    \tv([A, B])  \ar{r}{\Delta} & \tv([A, B])^{\otimes 2} \ar{r} & {[A, B]}^{\otimes 2}\ar{r}{\nabla} & [-1em]{[A^{\otimes 2}, B^{\otimes 2}]}\ar{r}{{\mu_B}_*}& {[A^{\otimes 2}, B]}
 \end{tikzcd}
 \]
 where the unlabeled arrow is induced by $\tv([A,B])\rightarrow [A, B]$, the counit of the forgetful-cofree adjunction (we have omitted $U$ for clarity).
 On the second summand, it is defined as the composite
 \[
 \begin{tikzcd}
     \tv([A,B])\ar{r} & k\cong {[k,k]} \ar{r}{\eta_A^*} & {[A, k]},
 \end{tikzcd}
 \]
 where $\tv([A,B])\rightarrow k$ is the counit of $\tv([A,B])$.

 With this coalgebraic perspective, the category $\cM(A,B)$ can now be reformulated as the category of cones over the following parallel morphisms in $\coAlg_k$ that we have just described
 \[
 \begin{tikzcd}
   \tv ([A, B]) \ar[shift left  =1]{r}{\Phi}\ar[shift right =1]{r}[swap]{\Psi} &  \tv\left([A^{\otimes 2}, B]\right) \oplus \tv([A, k]).
 \end{tikzcd}
 \]
 Therefore, the measuring coalgebra is the terminal object of the category of cones, i.e.~the limit. We have just proved the  following theorem.

\begin{theorem}[{\cite[7.0.4]{Sweedler}}]
 Given $k$-algebras $A$ and $B$, the measuring coalgebra $(\uni{A}{B}, \widetilde{\ev})$ exists and is determined by the following equalizer in $\coAlg_k$
 \[
 \begin{tikzcd}
     \uni{A}{B} \ar[dashed]{r}{\overline{\ev}} & \tv ([A, B]) \ar[shift left  =1]{r}{\Phi}\ar[shift right =1]{r}[swap]{\Psi} &  \tv\left([A^{\otimes 2}, B]\right)\oplus \tv([A, k]).
 \end{tikzcd}
 \]
\end{theorem}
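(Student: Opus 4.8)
The plan is to reduce the existence of the terminal measuring to the existence of an equalizer in $\coAlg_k$, exactly mirroring the coalgebraic reformulation developed just above the statement. The key device is the forgetful--cofree adjunction of \cref{eq: forgetful-cofree}. I would first use it to replace each object of $\cM(A,B)$ by a purely coalgebraic datum: given a coalgebra $C$, the bijection $\coAlg_k(C, \tv([A,B])) \cong \Vect_k(U(C), [A,B])$ identifies an arbitrary $k$-linear map $\widetilde f \colon C \to [A,B]$ with a coalgebra homomorphism $\overline f \colon C \to \tv([A,B])$. Thus on underlying data the objects of $\cM(A,B)$ are already in bijection with coalgebra maps into $\tv([A,B])$; it remains only to isolate which such maps arise from partial homomorphisms.

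The main step is to show that $\widetilde f$ is a $C$-indexed partial homomorphism in the sense of \cref{def: partial homomorphism} if and only if $\overline f$ equalizes the parallel pair
\[
\tv([A,B]) \rightrightarrows \tv\left([A^{\otimes 2}, B]\right) \oplus \tv([A,k])
\]
constructed above. Here I would transpose the two defining diagrams of a partial homomorphism across the cofree adjunction: the multiplicativity square corresponds to the equality of $\overline f$ postcomposed with the first parallel map (the cofree lift of $\mu_A^*$ and ${\eta_B}_*$) and with the second (built from the comultiplication, $\nabla$, and the relevant (co)units). Since $U$ is faithful and the counit $\tv([A,B]) \to [A,B]$ recovers $\widetilde f$ from $\overline f$ via the triangle identity, it suffices to check that the two composites agree after applying $U$ and postcomposing with that counit, which is precisely the content of the two squares of \cref{def: partial homomorphism}; the unitality condition matches the $\tv([A,k])$ summand in the same fashion. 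This verification --- carefully tracking adjuncts, the counit, and the summand-wise definition of the second parallel map --- is where essentially all the work sits, and is the step I expect to be the main obstacle.

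Having established this correspondence on objects, I would check that it is full and faithful on morphisms: a morphism $(C,\widetilde f) \to (D, \widetilde g)$ in $\cM(A,B)$ is a coalgebra map $\gamma \colon C \to D$ with $\widetilde g \circ \gamma = \widetilde f$, and transposing across the adjunction this says exactly $\overline g \circ \gamma = \overline f$. Hence $\cM(A,B)$ is isomorphic to the category whose objects are the coalgebra maps $\overline f$ equalizing the pair and whose morphisms are the evident commuting triangles --- that is, the category of cones over the parallel pair in $\coAlg_k$.

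Finally, a terminal cone over a parallel pair is by definition its equalizer, so the terminal object of $\cM(A,B)$ is exactly the equalizer $\uni{A}{B} \to \tv([A,B])$ displayed in the statement, with $\overline{\ev}$ the universal equalizing map. For existence it then remains only to know that this equalizer exists in $\coAlg_k$; since $\coAlg_k$ is cocomplete with colimits created by $U$ and is locally presentable, it is in particular complete, so the equalizer is guaranteed. Unwinding the cofree transpose of $\overline{\ev}$ recovers the evaluation measuring $\ev\colon \uni{A}{B} \otimes_k A \to B$ of \cref{def: universal measuring}, completing the argument.
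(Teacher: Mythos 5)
Your proposal is correct and follows essentially the same route as the paper: transposing across the forgetful--cofree adjunction to identify $C$-indexed partial homomorphisms with coalgebra maps $\overline f\colon C\to \tv([A,B])$ equalizing the parallel pair, recognizing $\cM(A,B)$ as the category of cones over that pair, and concluding that its terminal object is the equalizer. Your explicit appeal to completeness of $\coAlg_k$ (via local presentability) to guarantee that the equalizer exists is a point the paper leaves implicit, but it is a minor supplement rather than a different argument.
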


The existence of the universal measuring means that we obtain the natural bijection for all coalgebras $C$ and algebras $A$ and $B$
\[
\mu_C(A,B)\cong \coAlg_k(C, \uni{A}{B})
\]
showing that $\mu_{-}(A, B)\colon \coAlg_k^\op\rightarrow \Set$ is represented by the universal measuring $\uni{A}{B}$.

Moreover, we can now rephrase the composition of partial homomorphisms from \cref{eq: composition of partial}.
Given $k$-algebras $A_1$, $A_2$, $A_3$ and $k$-coalgebras $C$ and $D$, our previously defined composition of partial homomorphisms produces a function
\begin{align*}
\coAlg_k(D, \uni{A_2}{A_3})& \times \coAlg_k(C, \uni{A_1}{A_2})\\
&\rightarrow \coAlg_k(D\otimes_k C, \uni{A_1}{A_3}).
\end{align*}
Choose $C =\uni{A_1}{A_2}$ and $D=\uni{A_2}{A_3}$, and then plug $\id_{\uni{A_2}{A_3}}$ and $\id_{\uni{A_1}{A_2}}$ into this function to obtain a morphism of coalgebras
\[
\circ \colon \uni{A_2}{A_3}\otimes_k \uni{A_1}{A_2} \longrightarrow \uni{A_1}{A_3}.
\]
It is universal in the sense that given any partial homomorphisms $f\colon C\rightarrow [A_1, A_2]$ and $g\colon D\rightarrow [A_2, A_3]$, the universal map $u_{g\circ f}$ (as in \cref{def: universal measuring}) associated to the composition $g\circ f$ fits into the commutative diagram
\[
\begin{tikzcd}
    D\otimes_k C \ar[bend left]{dr}{u_{g\circ f}}\ar{d}[swap]{u_g\otimes u_f}\\
    \uni{A_2}{A_3}\otimes_k \uni{A_1}{A_2} \ar{r}{\circ} & \uni{A_1}{A_3}.
\end{tikzcd}
\]
Moreover, the identity $\id_A$ on a algebra $A$ is an element of \[\Alg_k(A,A)\cong\mu_k(A,A)\cong \coAlg_k(k, \uni{A}{A}),\]
and thus can be viewed as a coalgebra homomorphism $\id_A\colon k\rightarrow \uni{A}{A}$.
The universal composition and identities defined above assemble into a compatible enriched structure on $\Alg_k$ (see \cref{thm: alg enriched in coalg}). Instead of a set of algebra homomorphisms, we obtain a coalgebra of all possible partial algebra homomorphisms.

\begin{example}
 Given a $k$-coalgebra $(C, \Delta, \varepsilon)$, a grouplike element in  $C$ is an element $g\in C$ such that $\Delta(g)=g\otimes g$ and $\varepsilon(g)=1$.  
 The set of grouplike elements in $C$ is in bijective correspondence with coalgebra homomorphisms $k\rightarrow C$.
 Applying this to a universal measuring $C=\uni{A}{B}$ we obtain
 \[
 \coAlg_k(k, \uni{A}{B})\cong \Alg_k(A, [k, B])\cong \Alg_k(A, B).
 \]
 Therefore, the set of grouplike elements of $\uni{A}{B}$ is precisely the hom-set of  algebra homomorphisms $A\rightarrow B$.
\end{example}

\begin{example}
Given a $k$-coalgebra $(C, \Delta, \varepsilon)$ and a grouplike element $g\in C$, a \emph{primitive element over $g$} of $C$ is an element $d\in C$ such that $\Delta(d)=d\otimes g+ g\otimes d$.
Given a $C$-partial homomorphism $\widetilde{f}\colon C\rightarrow [A,B]$, we obtain for all $x,y\in A$
\[
f_d(xy)=f_d(x)f_g(y)+f_g(x)f_d(y).
\]
Here, $f_g\colon A\to B$ is an algebra  homomorphism by the previous example, and thus $f_d\colon A\rightarrow B$ is a derivation from $A$ to $B$.  Therefore, the set of primitive elements of $\uni{A}{B}$ is the set of derivations $A\rightarrow B$.
\end{example}

\begin{example}
Given a vector space $V$ and an algebra $B$, we obtain an isomorphism of coalgebras:
\[
\uni{T(V)}{B} \cong \tv([V ,B]).
\]
This is due to the fact that for any coalgebra $C$ we have the series of invertible linear maps
\begin{align*}
  \coAlg_k(C,  \uni{T(V)}{B}) & \cong \Alg_k(T(V), [C, B])\\
  & \cong \Vect_k(V, [C, B])\\
  & \cong \Vect_k(C, [V, B])\\
  & \cong \coAlg_k(C, \tv([V, B])).
\end{align*}
\end{example}

\begin{example}
Given a $k$-coalgebra $C$ and a $k$-algebra $A$, we can view $k$ as both a $k$-algebra and $k$-coalgebra, and we obtain a bijection
\[
\coAlg_k(C, \uni{A}{k})\cong \Alg_k(A, [C, k]).
\]
The $k$-algebra $C^*\coloneqq [C, k]$ is called the \emph{dual algebra of $C$}, and the $k$-coalgebra $A^\circ\coloneqq \uni{A}{k}$ is called the \emph{finite dual} or \emph{Sweedler dual of $A$}.
We can check that in vector spaces, we have $A^\circ\cong A^*$ if and only if $A$ is finite dimensional.
We obtain natural identifications for all $C\in \coAlg_k$ and $A\in \Alg_k$:
    \[
    \balg_k(A, C^*)\cong \bcoalg_k(C, A^\circ).
    \]
\end{example}

\subsection{The Measuring Tensor}
While building the measuring coalgebra $\uni{A}{B}$, we re-framed the definition of a $C$-indexed partial homomorphism $\widetilde{f}\colon C\rightarrow [A, B]$ as a certain coalgebra homomorphism $\overline{f}\colon C\rightarrow \tv([A,B])$ using the cofree coalgebra. 
We can provide a dual framework using the free algebra.
We first make the following consideration and provide a dual formulation of the category $\cM(A,B)$ of measurings.

\begin{definition}
 Given a $k$-coalgebra $C$ and $k$-algebra $A$, define $\cM_C(A)$ to be the category whose objects consist of pairs $(B, f)$ where $B$ is a $k$-algebra and $f\colon C\otimes_k A\rightarrow B$ is a $C$-measuring.
 A morphism $(B_1, f_1)\rightarrow (B_2, f_2)$ in $\cM_C(A)$ consists of an algebra homomorphism $\beta\colon B_1\rightarrow B_2$ such that the following diagram commutes.
 \[
 \begin{tikzcd}
  &  C\otimes_k A\ar{dr}{f_2} \ar{dl}[swap]{f_1} &\\
  B_1 \ar{rr}{\beta} & & B_2
 \end{tikzcd}
 \]
 Composition and identity on $\cM_C(A)$ are induced from $\Alg_k$.
\end{definition}

\begin{definition}[measuring tensor]
Given a $k$-coalgebra $C$ and $k$-algebra $A$, denote by $(C\triangleright A, u)$ the initial object of the category $\cM_C(A)$.
The $k$-algebra $\gls{MeasTens}$ is called \emph{the measuring tensor}\index{measuring tensor!over a field} of $C$ with $A$, and its measuring $u\colon C\otimes_k A\rightarrow C\triangleright A$ is called the universal measuring of $C$ with $A$.
This means given any $C$-measuring $f\colon C\otimes_k A\rightarrow B$, there exists a unique algebra homomorphism $C\triangleright A\rightarrow B$ such that:
\[
\begin{tikzcd}
    C\otimes_k A \ar{r}{f} \ar{d}[swap]{u} &  B.\\
    C\triangleright A \ar[dashed]{ur}[swap]{\exists !}
\end{tikzcd}
\]  
\end{definition}

We can build the measuring tensor $C\triangleright A$ in a dual way as the measuring coalgebra $\uni{A}{B}$.
By the free-forgetful adjunction, a $k$-linear homomorphism $f\colon C\otimes_k A\rightarrow B$  corresponds to an algebra homomorphism ${f'}\colon T(C\otimes A)\rightarrow B$.
Following the commutative diagrams of \cref{def: measurings}, the map $f$ is a measuring if and only if its corresponding map ${f'}$ fits into the following coequalizing diagram in $\Alg_k$
\[
\begin{tikzcd}
 {T(C\otimes_k A^{\otimes 2})} \oplus T(C\otimes_k k) \ar[shift left  =1]{r}{\Phi}\ar[shift right =1]{r}[swap]{\Psi} &  T(C\otimes_k A) \ar{r}{{f'}} & B.   
\end{tikzcd}
\]
The morphism $\Phi$ is the free map induced by summing the $k$-linear homomorphism $1\otimes \mu_A\colon C\otimes_k A^{\otimes 2}\rightarrow C\otimes_k A$ and $1\otimes \eta_A\colon C\otimes_k k\rightarrow C\otimes_k A$.
The morphism $\Psi$ is induced from using the forgetful-free adjunction on a $k$-linear homomorphism $(C\otimes_k A^{\otimes 2})\oplus (C\otimes_k k)\rightarrow T(C\otimes_k A)$ that we now define.
On the first summand, it is defined as the composite
\[
\begin{tikzcd}[column sep=scriptsize]
 C\otimes_k A^{\otimes 2} \ar{r}{\Delta\otimes 1} & C^{\otimes 2}\otimes_k A^{\otimes 2}\cong (C\otimes_k A)^{\otimes 2} \ar{r} & T(C\otimes A)^{\otimes 2} \ar{r}{\mu} & [-1em]T(C\otimes A)    
\end{tikzcd}
\]
where the unlabeled arrow is induced by $C\otimes_k A\rightarrow T(C\otimes_k A)$, the unit of the free-forgetful adjunction (we have omitted to write $U$ for clarity).
On the second summand, it is defined as the composite
\[
\begin{tikzcd}
 C\otimes_k k\cong C\ar{r}{\varepsilon} & k \ar{r} & T(C\otimes_k A)
\end{tikzcd}
\]
where $k\rightarrow T(C\otimes_k A)$ is the unit of $T(C\otimes_k A)$.

With this algebraic perspective, a measuring $C\otimes_k A\rightarrow B$ can now be reformulated as a choice of a cocone over the parallel morphisms in $\Alg_k$ that we just described
\[
\begin{tikzcd}
 {T(C\otimes_k A^{\otimes 2})} \oplus T(C\otimes_k k) \ar[shift left  =1]{r}{\Phi}\ar[shift right =1]{r}[swap]{\Psi} &  T(C\otimes_k A).   
\end{tikzcd}
\]
As coequalizers in $\Alg_k$ exist, we find the following.

\begin{theorem}
    \label{thm: measuring tensor as coequalizer}
 Given a $k$-algebra $B$ and a $k$-coalgebra $C$, the measuring tensor $(C\triangleright A, u)$ exists and is the following coequalizer in $\Alg_k$
 \[
\begin{tikzcd}
{T(C\otimes_k A^{\otimes 2})} \oplus T(C\otimes_k k) \ar[shift left  =1]{r}{\Phi}\ar[shift right =1]{r}[swap]{\Psi} &  T(C\otimes_k A)  \ar[dashed]{r}{u'}  & C\triangleright A.
\end{tikzcd}
\]
\end{theorem}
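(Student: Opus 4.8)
The plan is to run the construction exactly dually to the measuring coalgebra, replacing the cofree coalgebra and its limit presentation with the free algebra $T(-)$ and a colimit presentation. The setup immediately preceding the statement has already reframed the data of a $C$-measuring $f\colon C\otimes_k A\to B$ as an algebra homomorphism out of a free algebra, so the proof reduces to making that reframing into a precise identification of categories and then invoking cocompleteness of $\Alg_k$. Concretely, I would establish three things: that measurings correspond to certain algebra homomorphisms out of $T(C\otimes_k A)$, that the measuring condition is equivalent to coequalizing the displayed parallel pair, and that the initial such object is the coequalizer.

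First I would use the free--forgetful adjunction $\Vect_k(C\otimes_k A, UB)\cong \Alg_k\bigl(T(C\otimes_k A), B\bigr)$ to pass from a $k$-linear map $f$ to its adjunct algebra homomorphism $f'\colon T(C\otimes_k A)\to B$; this bijection is natural in $B$. The key step is then to verify that $f$ is a $C$-measuring precisely when $f'$ coequalizes the two parallel morphisms out of $T(C\otimes_k A^{\otimes 2})\oplus T(C\otimes_k k)$ described above the statement. An algebra homomorphism out of a free algebra is determined by its restriction to the generators $C\otimes_k A$, and the two parallel maps are engineered so that their coincidence after composition with $f'$ is, on the first summand, the multiplicativity condition $f(c\otimes xy)=\sum_{(c)} f(c_{(1)}\otimes x)f(c_{(2)}\otimes y)$ and, on the second summand, the unitality condition $f(c\otimes 1_A)=\varepsilon(c)1_B$ of \cref{def: measurings}. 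Chasing the adjunction through each summand and comparing with the two commuting diagrams defining a measuring gives this equivalence; the care required is bookkeeping of the natural isomorphisms $C^{\otimes 2}\otimes_k A^{\otimes 2}\cong (C\otimes_k A)^{\otimes 2}$ and $C\otimes_k k\cong C$ and of the unit of the free--forgetful adjunction. This identification then upgrades to an isomorphism between $\cM_C(A)$ and the category of cocones over the parallel pair, since a morphism of measurings is exactly an algebra homomorphism $\beta\colon B_1\to B_2$ commuting with the structure maps $f_1,f_2$, which under the adjunction is precisely a morphism of cocones.

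Finally, having identified $\cM_C(A)$ with the cocone category of the displayed parallel pair, its initial object is by definition the colimit of that pair, namely the coequalizer. Since $\Alg_k$ is a locally presentable category (it is the category of models of an algebraic theory, equivalently monadic over $\Vect_k$ via $T\dashv U$), it is cocomplete, so this coequalizer exists; its universal property is exactly the universal property defining the measuring tensor, namely that every $C$-measuring $C\otimes_k A\to B$ factors uniquely through $u\colon C\otimes_k A\to C\triangleright A$. The hard part will be the explicit verification in the key step that the two prescribed parallel maps carve out exactly the measuring relations and nothing more, so that coequalizing them is neither weaker nor stronger than being a measuring; everything else is formal and dual to the argument already given for $\uni{A}{B}$.
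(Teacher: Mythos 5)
Your proposal is correct and follows essentially the same route as the paper: pass through the free--forgetful adjunction to identify measurings $C\otimes_k A\to B$ with algebra homomorphisms $T(C\otimes_k A)\to B$ coequalizing the displayed parallel pair, identify $\cM_C(A)$ with the category of cocones over that pair, and conclude by existence of coequalizers in $\Alg_k$ that the initial cocone is the coequalizer, which therefore has the universal property of $C\triangleright A$. The paper presents this same argument as the discussion preceding the theorem statement, dualizing its equalizer construction of $\uni{A}{B}$.
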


\begin{remark}
   The existence of the measuring tensor was already known in \cite{Fox}.
    The notation $C\triangleright A$ first appeared in \cite{anel2013sweedler}. 
\end{remark}

Given any algebra $B$, a measuring $C\otimes_k A\rightarrow B$ corresponds to an algebra homomorphism $C\triangleright A\rightarrow B$, and thus we obtain a natural bijection
\[\mu_C(A,B)\cong \Alg_k(C\triangleright A, B)\]
showing that $\mu_C(A, -)\colon \Alg_k\rightarrow \Set$ is represented by the measuring tensor.

\begin{remark}
The measuring tensor induces a functor
\[
-\triangleright -\colon \coAlg_k \times \Alg_k \longrightarrow \Alg_k.
\]
It is useful to describe $C\triangleright A$ as follows. 
Given $c\in C$, and $x\in A$, and the universal measuring $u\colon C\otimes_k A\rightarrow C\triangleright A$,
put $c\triangleright x\coloneqq u(c\otimes x)$.
Then $C\triangleright A$ is the algebra generated by the symbols $c\triangleright x$ under the relations
\[
c\triangleright (xy) = \sum_{(c)}(c_{(1)}\triangleright x)(c_{(2)}\triangleright y), \quad \quad c\triangleright 1_A=\varepsilon(c)
\]
for all $c\in C$ and $x,y \in A$.
\end{remark}

We have now shown that the measuring functor  $\mu\colon \coAlg_k^\op\times \Alg_k^\op \times \Alg \longrightarrow \Set$ of \cref{def: measuring functor to set} is represented in each of its variables:
\[
\mu_C(A,B)\cong \Alg_k(A, [C,B]) \cong \coAlg_k(C, \uni{A}{B}) \cong \Alg_k(C\triangleright A, B).
\]
We can strengthen the identification in the following section.

\subsection{Enrichment in Coalgebras}\label{subsection: enrichment in vector space case}

Sometimes the hom-sets of a category carry extra structure. For instance, the hom-sets of the category $\Vect_k$ can be seen to have the structure themselves of a vector space. This paper is about the hom-sets of categories of algebras actually having the structure of a coalgebra. 

\begin{definition}
    [enriched category]
    \label{def: enriched category}
    Consider a monoidal category $(\V, \otimes, \I)$, the \emph{enriching} category \index{category enriched over a monoidal category}, and a category $\C$, the \emph{underlying} category.
We say that $\C$ has an \emph{enrichment} in $\V$ if there is a functor
    \[ \inthom_\C(-,-) \colon \C^\op \times \C \to \V \]
    together with the following.
    \begin{itemize}
        \item For each $A\in \C$, a morphism called the \emph{identity} on $A$:
        \[ \id_A\colon \I\rightarrow \inthom_\C(A,A).\] 
        \item For each $A, B, C\in \C$, a morphism in $\V$ called \emph{composition}
        \[
        \circ\colon \inthom_\C(B, C)\otimes \inthom_\C(A, B) \longrightarrow \inthom_\C(A, C).
        \]
        \item For each $A, B \in \C$, a bijection:
        \[ \hom_\C (A, B) \cong \hom_\C (\I, \inthomp{\C}{A}{B}). \]
    \end{itemize}
    such that the standard associativity and unitality diagrams commute, all the above morphisms are (extra)natural in $A,B,C$, and such the identity and composition $\V$-morphisms are compatible with the identity and composition morphisms of $\C$.
We also call this structure a \emph{$\V$-enriched category}, and when we are considering $\C$ with this extra structure, we will sometimes write $\underline{\C}$. The notion was introduced in \cite{benabou1965categories}, see \cite{Kelly} for a standard reference. 
\end{definition}

\begin{remark}
    It is more standard to define a $\V$-enriched category $\underline{\C}$ using only the first two bullet points, without introducing a category $\C$, and deducing an underlying category $\C$ using third bullet point. See more details in \cite{Kelly}.
\end{remark}

\begin{example}
    If $\V$ is a closed symmetric monoidal category, its internal hom, often denoted $\inthom_\V(-,-)$ determines $\V$ as enriched over itself.
\end{example}

The category $\coAlg_k$ is symmetric monoidal closed, with monoidal product given by the tensor product $\otimes_k$ in $\Vect_k$. Indeed, given $k$-coalgebras $(C, \Delta_C, \varepsilon_C)$ and $(D, \Delta_D, \varepsilon_D)$, then $C\otimes_k D$ is a $k$-coalgebra with comultiplication
\[
\begin{tikzcd}
    C\otimes_k D \ar{r}{\Delta_C\otimes \Delta_D} &[0.5 em] (C\otimes_k C)\otimes_k (D\otimes_k D)\cong (C\otimes_k D)\otimes_k (C\otimes_k D)
\end{tikzcd}
\]
and counit
\[
\begin{tikzcd}
    C\otimes_k D \ar{r}{\varepsilon_C\otimes \varepsilon_D} & k\otimes_k k \cong k.
\end{tikzcd}
\]
The monoidal structure on $\coAlg_k$ remains closed as $C\otimes_k-\colon \coAlg_k\rightarrow \coAlg_k$ preserves all colimits as they are computed in $\Vect_k$, which is closed.
Its internal hom $\gls{IntcoAlgHomk}$ is computed as the following equalizer in $\coAlg_k$
\[
\begin{tikzcd}
\underline{\coAlg}_k(C, D) \ar[dashed]{r} & \tv ([C, D])\ar[shift left  =1]{r}{\Phi}\ar[shift right =1]{r}[swap]{\Psi} &  \tv([C, D^{\otimes 2}]).
\end{tikzcd}
\]
The morphism $\Phi$ is induced by the comultiplication $D\rightarrow  D\otimes_k D$. The morphism $\Psi$ is induced the forgetful-cofree adjunction on a morphism $\tv([C,D])\rightarrow [C, D^{\otimes 2}]$ defined as the composite
\[
\begin{tikzcd}[column sep = scriptsize]
    \tv([C,D])\ar{r}{\Delta} & \tv([C,D])^{\otimes 2} \ar{r} & {[C,D]}^{\otimes 2} \ar{r}{\nabla} & [-0.5em]{[C^{\otimes 2}, D^{\otimes 2}]} \ar{r}{\Delta_C^*} & {[C, D^{\otimes 2}]}
\end{tikzcd}
\]
where the unlabeled map is induced by the counit of the forgetful-cofree adjunction.
In fact, if we restrict to the full subcategory $\coCAlg_k$ spanned by \emph{cocommutative} $k$-coalgebras, then the monoidal structure becomes cartesian closed, and thus can be viewed as an extension on the monoidal structure on $\Set$.

If $\V$ is a closed symmetric monoidal category, we say a $\V$-enriched category \emph{$\C$ is tensored over $\V$} \index{category tensored over a monoidal category} if there exists a functor $-\odot-\colon \V\times \C\rightarrow \C$ together with a natural isomorphism in $\V$
\[
\inthom_\C(V\odot A, B) \cong \inthom_\V(V, \inthom_\C(A,B))
\]
for all $V\in \V$, and $A,B\in \C$.
We say $\C$ is \emph{cotensored} (or \emph{powered}) \emph{over $\V$}\index{category cotensored over a monoidal category} if there exists a functor $-\pitchfork-\colon \V^\op\times \C\rightarrow \C$ together with a natural isomorphism in $\V$
\[
\inthom_\C(A, V\pitchfork B)\cong \inthom_\V(V, \inthom_\C(A,B))
\]
for all $V\in \V$, and $A,B\in \C$. 

We are now ready to state the main theorem in this section. 
The measuring coalgebra $\uni{A}{B}$ will provide the hom-objects of the enrichment $\inthom_{\Alg_k}(A,B)$ of algebras in coalgebras, with the universal composition that we described earlier:
\[
\uni{A_2}{A_3}\otimes_k \uni{A_1}{A_2}\rightarrow \uni{A_1}{A_3}. 
\]

\begin{theorem}\label{thm: alg enriched in coalg}
The category $\Alg_k$ of $k$-algebras is enriched in the closed symmetric monoidal category $(\coAlg_k, \otimes_k, k, \ubcoalg_k(-,-))$ of $k$-coalgebras, for which the hom-coalgebra between two algebras $A$ and $B$ is given by the universal measuring $\uni{A}{B}$. 
It is also tensored over $\coAlg_k$ via the measuring tensor and cotensored via the convolution algebra. In other words, we obtain the following natural isomorphisms in $\coAlg_k$:
\[
\underline{\coAlg}_k(C, \uni{A}{B})\cong \uni{C\triangleright A}{B} \cong \uni{A}{[C,B]}.
\]
\end{theorem}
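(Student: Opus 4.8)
The plan is to assemble the enrichment entirely from the representability results already proved, namely the natural bijections
\[
\mu_C(A,B)\cong \Alg_k(A,[C,B])\cong \coAlg_k(C,\uni{A}{B})\cong \Alg_k(C\triangleright A,B),
\]
together with the universal composition $\circ\colon \uni{A_2}{A_3}\otimes_k\uni{A_1}{A_2}\to\uni{A_1}{A_3}$ and the identity $\id_A\colon k\to\uni{A}{A}$ constructed above. First I would record that $\uni{-}{-}$ is a functor $\Alg_k^\op\times\Alg_k\to\coAlg_k$ (\cref{def: universal measuring}), and that the bijection between underlying hom-sets required by \cref{def: enriched category}, namely $\Alg_k(A,B)\cong\coAlg_k(k,\uni{A}{B})$, is the case $C=k$ of the representability of $\mu$ combined with the earlier identification $[k,B]\cong B$ of algebras, which gives $\mu_k(A,B)\cong\Alg_k(A,B)$.

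Next I would verify the enriched-category axioms. Because the composition morphism is determined by the universal property of the measuring coalgebra, associativity and unitality may be checked after applying $\coAlg_k(E,-)$ for an arbitrary coalgebra $E$ and translating to the corresponding partial homomorphisms via $\coAlg_k(E,\uni{A_1}{A_3})\cong\mu_E(A_1,A_3)$. Under this translation the enriched composite corresponds to the pointwise composite $(g\circ f)_{d\otimes c}=g_d\circ f_c$ of linear maps; both bracketings of a triple composite then reduce to $h_e\circ g_d\circ f_c$ and the unit laws to $g_d\circ\id=g_d=\id\circ g_d$, so the axioms follow from associativity and unitality of composition in $\Vect_k$. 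Naturality of composition and identity in the algebra variables is checked the same way, by Yoneda on $\coAlg_k$.

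For the tensoring and cotensoring, note that with the identifications $C\odot A:=C\triangleright A$ and $C\pitchfork B:=[C,B]$ the conditions in the definitions of tensored and cotensored categories are \emph{exactly} the two natural isomorphisms in the statement, so it suffices to establish those. I would prove each by the Yoneda lemma in the closed symmetric monoidal category $\coAlg_k$. For the cotensor, for every coalgebra $D$ I compute
\begin{align*}
\coAlg_k(D,\uni{A}{[C,B]}) &\cong\mu_D(A,[C,B])\cong\Alg_k(A,[D,[C,B]])\\
&\cong\Alg_k(A,[D\otimes_k C,B])\cong \mu_{D\otimes_k C}(A,B)\\
&\cong\coAlg_k(D\otimes_k C,\uni{A}{B})\cong\coAlg_k(D,\underline{\coAlg}_k(C,\uni{A}{B})),
\end{align*}
where the middle step uses a convolution-algebra exponential law $[D\otimes_k C,B]\cong[D,[C,B]]$ and the last step uses that $\coAlg_k$ is closed. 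For the tensor, the analogous chain runs through $\mu_D(C\triangleright A,B)\cong\Alg_k(D\triangleright(C\triangleright A),B)$ and uses an associativity isomorphism $(D\otimes_k C)\triangleright A\cong D\triangleright(C\triangleright A)$ to reach $\coAlg_k(D\otimes_k C,\uni{A}{B})$ again. In both cases Yoneda then yields the desired isomorphism of coalgebras.

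The main obstacle is establishing the two structural lemmas underlying these chains. The exponential law $[D\otimes_k C,B]\cong[D,[C,B]]$ is the usual tensor--hom adjunction on underlying vector spaces, but I must check that it is an isomorphism of \emph{convolution} algebras; this is a Sweedler-notation computation comparing the convolution induced by the comultiplication of $D\otimes_k C$ with the iterated convolution on $[D,[C,B]]$. The associativity law for the measuring tensor I would deduce formally rather than by manipulating generators and relations: the representability $\Alg_k(C\triangleright A,B)\cong\Alg_k(A,[C,B])$ exhibits $C\triangleright-$ as left adjoint to $[C,-]$, so $(D\otimes_k C)\triangleright-$ and $D\triangleright(C\triangleright-)$ are both left adjoints whose right adjoints agree, up to the exponential law and the symmetry $D\otimes_k C\cong C\otimes_k D$; uniqueness of adjoints then gives the isomorphism, naturally. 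Throughout, care is needed to confirm that each isomorphism in the Yoneda chains is natural in $D$, so that the Yoneda lemma applies and the resulting coalgebra isomorphisms are natural in $A$, $B$ and $C$.
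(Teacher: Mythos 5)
Your proposal is correct, but it takes a genuinely different route from the paper's proof. The paper reduces everything to general machinery: the exponential laws $[C\otimes_k D,A]\cong[C,[D,A]]$ and $[k,A]\cong A$ exhibit $\Alg_k^\op$ as a left module over the monoidal category $\coAlg_k$, the representability $\coAlg_k(C,\uni{A}{B})\cong\Alg_k(A,[C,B])$ says each action functor $[-,B]^\op$ has a right adjoint, and then \cite[3.1]{hylandetal} is invoked to produce the enrichment, tensoring and cotensoring in one stroke, with all coherence conditions absorbed by that citation. You instead assemble the structure by hand: composition from the universal measuring, the enriched-category axioms by reduction to associativity and unitality in $\Vect_k$, and the two displayed isomorphisms by Yoneda chains through the measuring functor $\mu$, with associativity of $\triangleright$ obtained from uniqueness of adjoints. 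Note that your two structural lemmas are precisely the paper's two inputs — the convolution exponential law is the module axiom, and your Yoneda chains re-derive what the citation abstracts — so your argument is essentially an unpacking of \cite[3.1]{hylandetal}: more self-contained and explicit about what the enriched composition is, at the cost of the coherence and (extra)naturality verifications you defer. One point to tighten: a coalgebra map $E\to\uni{A_2}{A_3}\otimes_k\uni{A_1}{A_2}$ is not literally a pair of partial homomorphisms, so the associativity check is best run through terminality of $\uni{A_1}{A_4}$ in $\cM(A_1,A_4)$: two coalgebra maps into it agree as soon as their composites with $\widetilde{\ev}$ agree, and those composites are computed on elementary tensors using the defining property of the universal composition, which is exactly where your reduction to composition of linear maps in $\Vect_k$ takes place.
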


\begin{proof}
 The convolution algebra $[-,-]\colon \coAlg_k^\op\times \Alg_k \rightarrow \Alg_k$ determines $\Alg_k$ as a right module over $\coAlg_k^\op$, in the sense that we have the following natural isomorphisms in $\Alg_k$ for all coalgebras $C,D$ and algebra $A$
 \[
 [C\otimes_k D, A]\cong [C, [D,A]] \quad \quad [k, A]\cong A.
 \]
 Thus $\Alg_k^\op$ is a left module over $\coAlg_k$ via $[-,-]^\op\colon \coAlg_k\times \Alg_k^\op\rightarrow \Alg_k^\op$.
 Moreover, for any algebra $B$, the induced functor $[-,B]^\op\colon \coAlg_k\rightarrow \Alg_k^\op$ has a right adjoint $\uni{-}{B}\colon \Alg_k^\op\rightarrow \coAlg_k$ as we have the natural isomorphism
 \[
 \coAlg_k(C, \uni{A}{B})\cong \Alg_k(A, [C, B]). 
 \]
 By \cite[3.1]{hylandetal}, we obtain that $\Alg_k^\op$ is enriched, tensored and cotensored over $\coAlg_k$.
 Thus $\Alg_k$ is enriched, tensored and cotensored over $\coAlg_k$.
\end{proof}

The above result first appeared in \cite{Sweedler} and was stated later in similar terms as us in \cite{Fox}.
The result has been extended in many different contexts, see \cite{anel2013sweedler,hylandetal,vasila,grignou,Per22,MRU22}.

\begin{remark}\label{remark: coalgebras measurings and universal internal hom}
While $\Alg_k$ is enriched in $\coAlg_k$, we do not have that $\coAlg_k$ is enriched in $\Alg_k$. However, because $\coAlg_k$ is closed symmetric monoidal, it is enriched in itself. 
This actually also defines a notion of measurings and partial homomorphism for coalgebras as we can now define. 

Let $(C, \Delta, \varepsilon)$, $(C_1, \Delta_1, \varepsilon_1)$ and $(C_2, \Delta_2, \varepsilon_2)$ be $k$-coalgebras.
A \emph{$C$-measuring} from $C_1$ to $C_2$ is a homomorphism of coalgebras $f\colon C\otimes_k C_1\rightarrow C_2$. 
We say a map $\widetilde{f}\colon C\rightarrow [C_1, C_2]$ is a \emph{$C$-indexed partial homomorphism from $C_1$ to $C_2$} if its adjunct $C\otimes_k C_1\rightarrow C_2$ is a $C$-measuring. In other words, if we denote $f_c\coloneqq \widetilde{f}(c)$ for all $c\in C$, this means for all $x\in C_1$:
\[
\sum_{(f_c(x))} {f_c(x)}_{(1)}\otimes {f_c(x)}_{(2)}=\sum_{(x)}\sum_{(c)} f_{c_{(1)}}(x_{(1)})\otimes f_{c_{(2)}}(x_{(2)})
\]
and
\[
\varepsilon_2(f_c(x))=\varepsilon(c)\varepsilon_1(x).
\]
We see that $f_c\colon C_1\rightarrow C_2$ is a homomorphism of coalgebras if $c$ is a grouplike element.

With these notions, we interpret the internal hom $\ubcoalg_k(C_1, C_2)$ as the universal measuring coalgebra from $C_1$ to  $C_2$, in the sense that, given any $C$-partial homomorphism $f\colon C\rightarrow [C_1, C_2]$,  there exists a unique homomorphism of coalgebras $u_f\colon C\rightarrow \ubcoalg_k(C_1, C_2)$ such that the following diagram commutes:
\[
\begin{tikzcd}
    C \ar{r}{f} \ar[dashed]{d}[swap]{\exists !\, u_f}& {[C_1, C_2]}\\
    \ubcoalg_k(C_1, C_2).\ar{ur}
\end{tikzcd}
\]
This is making precise the natural identification
\[
\ubcoalg_k(C\otimes_k C_1, C_2) \cong \ubcoalg_k(C, \ubcoalg_k(C_1, C_2))\cong \ubcoalg_k(C_1, \ubcoalg_k(C, C_2)).
\]
This is the coalgebraic analogue of \cref{thm: alg enriched in coalg}.
\end{remark}

\section{Data Types as (Co)algebras}
\label{sec: data types}

In this section, we explain how most (data) types in functional programming languages are obtained as initial algebras for endofunctors.

Here, we study the \emph{categorical semantics} \cite{scott1970outline} of functional programming languages. We will not make anything very precise on the programming side but will rather use that as inspiration. Precision will happen on the category theory side.

To simplify things, a functional programming language consists of (data) types, terms, and functions. Often encountered types are, for instance, the type of natural numbers, the type of booleans, and the type of strings; types are roughly analogous to sets. Like sets, types have constituents, their terms. For instance, $0$ is a term of the type of natural numbers, and \texttt{True} is a term of the type of booleans. In the programming language Haskell, this is written as $0 :: \mathtt{N}$ and $\mathtt{True} :: \mathtt{Bool}$. Just as between sets, we have functions between types. For instance, there is a function $\mathtt{Bool} \to \mathtt{N}$ that sends $\mathtt{True} \mapsto \mathtt{0}$ and $\mathtt{False} \mapsto \mathtt{1}$. In such a language, there is also a unit type $\texttt{unit}$ that has exactly one term so that functions $\mathtt{unit} \to T$ correspond to terms of $T$, and we thus think of functions as generalized terms.

In the \emph{categorical semantics} of such a programming language, we choose a category $\C$, and we `interpret' types of the programming language as objects of $\C$ and functions as morphisms of $\C$. Here, we will not make `interpretation' precise, but instead will gloss over that and make definitions directly in category theory. As mentioned, types usually bear much similarity to sets, so one often chooses $\C$ to be $\Set$, and this article is no exception.

\subsection{W-Types}\label{subsec: W-types}

In functional programming languages, it is customary to define types via their constructors. 
For instance, in Haskell, one defines the type of natural numbers by the following:
\begin{align*}
    \mathtt{{\color{Green} data \ } {\color{BrickRed} Nat \ } {\color{VioletRed} = \ } {\color{BrickRed} Zero\ } | {\ \color{BrickRed} Succ \ Nat}}.
\end{align*}
The natural numbers are usually defined in this way in functional programming these days, but the basic idea of this definition goes all the way back to Peano \cite{peano}.
Lawvere \cite{lawvere2004functorial} gave it its categorical instantiation that we use here.
We read the above definition as saying, first of all, that the type \texttt{Nat} has a term \texttt{Zero} :: \texttt{Nat} and a function $\mathtt{Succ} :: \mathtt{Nat} \to \mathtt{Nat}$ (i.e.~terms $\mathtt{Succ} n :: \mathtt{Nat}$ for every $n :: \mathtt{Nat}$).
Second of all, inherent in the \texttt{data} keyword, \texttt{Nat} is defined to be \emph{freely generated} by these two \emph{constructors}, \texttt{Zero} and \texttt{Succ}. A type being freely generated is about how the programming language allows one to define functions out of a type. For \texttt{Nat}, it suffices to define such a function on the constructors \texttt{Zero} and \texttt{Succ n}.

When taking the categorical semantics of such a programming language, the body of such a definition is interpreted as specifying an endofunctor; the interpretation of \texttt {Nat} is then taken to be the initial algebra of this endofunctor.

That is, for \texttt{Nat}, the specification above gives two constructors $\mathtt{Zero} :: \mathtt{Nat}$ (equivalently a function $\mathtt{Zero} \colon 1 \to \mathtt{Nat}$ where $1$ denotes the unit type) and $\mathtt{Succ} \colon \mathtt{Nat} \to \mathtt{Nat}$ (where $1$ is a terminal object). Categorically, the existence of such morphisms is equivalent to the existence of one morphism $\mathtt{Zero + Succ} \colon 1 + \mathtt{Nat} \to \mathtt{Nat}$. Thus, an object $X$ of a category $\C$ has the ``structure of \texttt {Nat}'' if it has a morphism $1 + X \to X$ (assuming that $\C$ has a terminal object $1$ and coproduct $1 + X$). We are led then to consider the endofunctor $1 + \id \colon \C \to \C$ that sends $X \mapsto 1 + X$, and algebras $(X, f \colon 1 + X \to X)$ of $1 + \id$.

\begin{definition}[algebra]
    Given an endofunctor $F$ on a category $\C$, an \emph{algebra} for $F$, or simply \emph{an $F$-algebra}\index{algebra!over an endofunctor}, is a pair $(A , f)$ where $A$ is an object of $\C$ together with a morphism $f\colon F A \to A$ in  $\C$, with no further assumptions. 
   An \emph{algebra homomorphism} $\eta \colon (A , f) \to (B, g)$ consists of a morphism $\eta\colon  A \to B$ such that the following diagram commutes in $\C$
    \[
         \begin{tikzcd}
             FA \ar[r,"F \eta"] \ar[d,"f "'] & FB \ar[d,"g"]
             \\ 
             A \ar[r,"\eta "] & B.
         \end{tikzcd}
    \]
    Let $\alg{\C}{F}$, or simply $\Alg$ if $\C$ and $F$ are understood,  denote the category of algebras of $F$. We denote by $\gls{AlgHom}$ the set of algebra homomorphisms $A\to B$.
\end{definition}

However, there are many sets with the structure of the natural numbers, but we want the specification above to characterize the natural numbers uniquely (up to isomorphism). We want to capture the idea that \texttt{Nat} is \emph{freely generated} by the constructors \texttt{Zero} and \texttt{Succ} -- that is, by the existence of an algebra structure. Categorically, this corresponds to taking the initial object of $\alg{\C}{F}$, the \emph{initial algebra} (an idea which goes back to \cite{Goguen-Thatcher-Wagner78}).

For instance, if we take the category $\Set$ of sets and the endofunctor $1 + \id$, then the initial object of $\alg{\Set}{1 + \id}$ is the usual natural numbers $\N$. Indeed, the natural numbers has an algebra structure: we take the function $1 \to \N$ to be the one corresponding to $0 \in \N$, and we take the function $\N \to \N$ to be the one taking $n \mapsto n + 1$ (where here, $n + 1$ refers to the usual addition within $\N$, not a coproduct with the terminal object). It is initial because given any other algebra $(A, f)$, we can construct a function $i\colon \N \to A$ by sending $0 \mapsto f(*)$ and inductively sending $n + 1 \mapsto f i (n)$. Indeed, the algebra structure on $A$ is \emph{exactly} the information needed to inductively construct a function $\N \to A$.

In programming languages such as Haskell, the main way of defining types is via their constructors. In this way, one can define any finite type (i.e., type corresponding to a finite set), lists with values of any other type, various varieties of trees, etc.

One widely considered class of such types are called \emph{W-types}, originally introduced with Martin--Löf type theory \cite{ml-wtypes}. Categorically, W-types correspond to initial algebras for polynomial endofunctors \cite{dybjer,mp}.

\begin{definition}
    [polynomial endofunctor, W-type]
    \label{def: poly endofunctor and w-type}
    Consider a category $\C$, objects $A, B$ of $\C$, and a morphism $f \colon A \to B$. The endofunctor $\gls{PolyEndo} \colon \C \to \C$ is the composition displayed below (if it exists):
    \[
         \begin{tikzcd}
          \C\ar{r}{-\times A}  &    \C / A \ar[r,"\prod_f"]  &  \C / B \ar[r,"\dom"] &  \C.
         \end{tikzcd}
    \]
    Here, $- \times A$ denotes the functor that takes an object $X \in \C$ to the object $\pi_A \colon X \times A \to A$ in the slice category $\C / A$. The functor $\prod_f \colon \C / A \to \C / B$ denotes the right adjoint to pullback along $f$, and $\dom$ is the functor that sends an object $X \to B$ of $\C / B$ to $X$.
 By \emph{polynomial endofunctor on $\C$} we mean any endofunctor on $\C$ obtained in this way.
Given a  $f \colon A \to B$, we denote by $W(f)$ the initial algebra of the endofunctor $\mathcal P_f$. A \emph{(categorical) W-type} \index{W-type} is an algebra of the form $W(f)$.
\end{definition}

\begin{example}
Consider polynomial endofunctors on $\Set$. Consider two sets $A$ and $B$ and a function $f \colon A \to B$. Note that for an object $X \in \C$, we have
\begin{align*}
   \mathcal P_f X &\coloneqq \dom \ \prod_f (X \times A) \\
    &\cong \sum_{b \in B} {\Set / B}(b , \prod_f (X \times A))\\ 
    &\cong \sum_{b \in B} {\Set / A}(f^{-1} b , X \times A) \\
    &\cong \sum_{b \in B} {\Set} (f^{-1} b , X).
\end{align*}
Letting $Fb \coloneqq f^{-1} b$, we find that $\mathcal P_f X \cong \sum_{b \in B} X^{Fb}$ (where $X^{Fb}$ denotes exponentiation, i.e., the internal hom, in $\Set$).  Now, we find that $\mathcal P_f$ does indeed look like a polynomial.
\end{example}

\begin{example}
    Consider the endofunctor $X \mapsto 1 + X$ on $\Set$. We claim that we can write this as a polynomial endofunctor. That is, we want to find an $f\colon A \to B$ such that $1 + X \cong \sum_{b \in B} X^{Fb}$. Since there are two summands in $1 + X$, we take $B$ to be the set $\{\mathtt{Zero},\mathtt{Succ}\}$. We want $X^{F(\mathtt{Zero})} \cong 1$, so we take $F(\mathtt{Zero}) \cong \emptyset$. We want $X^{F(\mathtt{Succ})} \cong X$, so we take $F(\mathtt{Succ}) \cong 1$. Thus, we take $f$ to be the inclusion $\{\mathtt{Succ}\} \to \{\mathtt{Zero},\mathtt{Succ}\}$.
\end{example}

\begin{example}
    \label{ex:lists-1}
    Given a type $\mathtt{a}$, we define the type of lists in $\mathtt{a}$ via the following specification in Haskell:
    \begin{align*}
        \mathtt{{\color{Green} data \ } {\color{BrickRed} List \ a \ } {\color{VioletRed} = \ } {\color{BrickRed} Nil\ } | {\ \color{BrickRed} Cons \ a \ (List \ a)}}.
    \end{align*}
    This has two constructors: $\mathtt{Nil} \colon 1 \to \mathtt{List \ a}$ and $\mathtt{Cons} \colon \mathtt{a} \times \mathtt{List \ a} \to \mathtt{List \ a}$, in categorical terms. This is equivalently a map $1 + \mathtt{a} \times \mathtt{List \ a} \to \mathtt{List \ a}$. That is, we want $\mathtt{List \ a}$ to be an algebra of the endofunctor $1 + \mathtt{a} \times - $.
    We consider this endofunctor on $\Set$. We have that:
    \begin{align*}
        1 + \mathtt{a} \times X &\cong X^0 + \mathtt{a} \times X^1 \\
        &\cong X^0 + \sum_{t \in \mathtt{a}} X^1 \\
        &\cong \sum_{t \in 1 + \mathtt{a}} X^{Ft},
    \end{align*}
    where $F \colon 1 + \mathtt{a} \to \Set$ sends the element of $1$ to $0$, the empty set, and any $t \in \mathtt{a}$ to $1$. Thus, this is the polynomial endofunctor associated to the inclusion $i: \mathtt{a} \hookrightarrow 1 + \mathtt{a}$; that is $\mathcal P _i \cong 1 + \mathtt{a} \times - $.
    That is, where one could say that our Haskell specification has two constructors, \texttt{Nil} and \texttt{Cons}, the W-type specification has $(1 + \mathtt{a})$-many constructors.

    We now claim that the initial algebra of this endofunctor is the set of lists with entries from $\mathtt{a}$.
    To be a bit more precise, it is $\sum_{n \in \N} \mathtt{a}^n$. This has an algebra structure: we take \texttt{Nil} to correspond to the single element of $\mathtt{a}^0$ in the disjoint union, and we take $\texttt{Cons} \colon\mathtt{a} \times \mathtt{List \ a} \to \mathtt{List \ a}$ to send $(a \in \mathtt{a}, \ell \in \mathtt a^n)$ to the element $a.\ell \in \mathtt a^{n+1}$ where $a.\ell (0) \coloneqq a$ and $a.\ell(m+1) \coloneqq \ell(m)$ for any $m \in \N$. It is initial because we can use any  structure $n + c \colon 1 + \mathtt{a} \times L \to L$ of any other algebra to ``inductively'' build a unique algebra homomorphism $u: \mathtt{List \ a} \to L$. 
    Indeed, we send the single element of the summand $a^0$ in $\mathtt{List \ a}$ to the element corresponding to $n : 1 \to L$,
    and then we inductively send an element $\texttt{Cons}(a,\ell) \in a^{n+1}$ to $c(a,u\ell)$. One can check that this is an algebra homomorphism (by construction) and is the unique such. 
\end{example}

\begin{theorem}[\cite{dybjer,mp}]
    Given any function $f\colon A \to B$ in $\Set$, the categorical W-type $W(f)$ exists \cite[Thm.~1]{dybjer}.
    In fact, all categorical W-types exist in any topos \cite[Prop.~3.6]{mp} with a natural numbers object.
\end{theorem}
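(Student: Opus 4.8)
The plan is to treat the two assertions in turn, since they call for genuinely different techniques: the $\Set$ statement admits an elementary, hands-on construction, whereas the general topos statement must be argued internally.

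For $\Set$ I would build $W(f)$ explicitly as a set of \emph{well-founded trees}, matching the computation $\mathcal P_f X \cong \sum_{b\in B} X^{f^{-1}(b)}$ established above. An element of $W(f)$ is a tree in which each node carries a label $b\in B$ and possesses exactly one child for every element of the fiber $f^{-1}(b)$, subject to the condition that every branch is finite. Concretely, one can encode such a tree as a subset $D$ of finite sequences over $A$ together with a labeling $t\colon D\to B$, where $D$ is prefix-closed, contains the empty sequence, and a sequence $s\cdot a$ lies in $D$ exactly when $s\in D$ and $f(a)=t(s)$; well-foundedness is the requirement that $D$ contains no infinite branch. The structure map $\mathcal P_f W(f)\to W(f)$ sends $(b,(t_a)_{a\in f^{-1}(b)})$ to the tree whose root is labeled $b$ and whose subtree under the child $a$ is $t_a$.

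The crux is initiality. Given any algebra $(X,\alpha)$, I would define the comparison map $u\colon W(f)\to X$ by well-founded recursion, sending a tree with root label $b$ and immediate subtrees $(t_a)$ to $\alpha(b,(u(t_a))_{a})$; finiteness of branches is exactly what makes this recursion total and well-defined, and a parallel well-founded induction shows $u$ is the unique $\mathcal P_f$-algebra homomorphism. This is the same pattern already used for $\mathtt{List \ a}$ in \cref{ex:lists-1}, except that the single successor is replaced by branching indexed by the fibers of $f$. (Alternatively, since $\Set$ is locally presentable and $\mathcal P_f$ is accessible, one could obtain $W(f)$ as the stabilizing value of the transfinite initial-algebra chain; I prefer the tree description, as it is more transparent and harmonizes with the expository tone of this section.)

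For a general topos the main obstacle is that neither shortcut survives: the fibers $f^{-1}(b)$ may be infinite, so $\mathcal P_f$ need not preserve $\omega$-colimits and the na\"ive chain $0\to \mathcal P_f 0\to \mathcal P_f^2 0\to\cdots$ need not converge; moreover an elementary topos need not be locally presentable, so the accessibility argument is unavailable, and one cannot reason pointwise about ``trees'' and ``branches.'' The resolution, which I would cite rather than reprove, is that of Moerdijk--Palmgren: one isolates $W(f)$ as the subobject of hereditarily well-founded trees inside an ambient object of all trees, expressing well-foundedness and performing the recursion in the internal logic of the topos. Verifying that this internally defined subobject genuinely supports both existence and uniqueness of the comparison map is the substantive work, and it is precisely the content of \cite[Prop.~3.6]{mp}.
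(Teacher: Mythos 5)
The paper gives no proof of this theorem at all: it is stated as a pure citation (Dybjer for the $\Set$ case, Moerdijk--Palmgren for toposes), with Ad\'amek's theorem (\cref{thm:adamek}) offered afterwards only as intuition for the finite-fiber examples. So there is no in-paper argument for your proposal to diverge from, and it must be judged on its own merits; on those, it is correct. Your $\Set$ construction --- prefix-closed $B$-labelled sets of finite sequences in $A$, with branching at a node labelled $b$ indexed by $f^{-1}(b)$, no infinite branches, and with existence and uniqueness of the comparison map obtained by recursion and induction on the immediate-subtree relation --- is the standard argument behind the cited result, and the encoding has the pleasant side effect of making it evident that $W(f)$ is a set. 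The one step worth flagging is the passage from ``every branch is finite'' to the legitimacy of the recursion: that a relation with no infinite descending chains is well-founded requires dependent choice, which is harmless here since the paper reasons classically (it invokes excluded middle elsewhere), but it is where the real work hides. Deferring the topos case to \cite[Prop.~3.6]{mp} matches the paper's own treatment, and your diagnosis of why the $\Set$-style arguments break down there (infinite fibers defeat the $\omega$-chain, elementary toposes need not be locally presentable, no pointwise reasoning about trees) is accurate. One caveat, which applies to the paper's statement as much as to your proposal: Moerdijk--Palmgren's proposition assumes a natural numbers object, and some such hypothesis is genuinely necessary --- in the topos of finite sets the polynomial endofunctor $1 + -$ has no initial algebra, since by \cref{prop:lambek} its structure map would give an isomorphism $1 + W \cong W$, impossible for a finite set.
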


There is an important construction of initial algebras that we do not explicitly use in this article, but that provides a lot of intuition.

\begin{theorem}[Adámek's fixed point theorem, \cite{adamek74}]
    \label{thm:adamek}
    Consider a category $\C$ with an initial object $0$ and colimits of sequences $\omega \to \C$, where $\omega$ denotes the countably infinite ordinal. Consider an endofunctor $F$ that preserves these colimits. Then the colimit of the following sequence is an initial algebra of $F$:
    \[ 0 \xrightarrow{!} F0 \xrightarrow{F!} F^2 0 \to \cdots. \]
\end{theorem}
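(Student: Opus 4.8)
The plan is to first manufacture the algebra structure on the colimit and then verify initiality by hand, driving everything off the colimit cocone. Write $L = \operatorname{colim}_n F^n 0$ for the colimit of the chain $0 \xrightarrow{!} F0 \xrightarrow{F!} F^2 0 \to \cdots$, with colimit cocone $\iota_n \colon F^n 0 \to L$ (the connecting map $F^n 0 \to F^{n+1}0$ being $F^n(!)$). The first step is to produce a map $\varphi\colon FL \to L$. Since $F$ preserves this colimit, $FL$ is the colimit of the shifted chain $F0 \to F^2 0 \to \cdots$ with cocone $F\iota_n \colon F^{n+1}0 \to FL$. But dropping the initial term of an $\omega$-chain does not change its colimit, so this same shifted chain also has colimit $L$, with cocone $\iota_{n+1}$. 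Comparing these two presentations of one colimit, the universal property of $FL$ yields a unique (in fact invertible) map $\varphi\colon FL \to L$ characterized by $\varphi \circ F\iota_n = \iota_{n+1}$ for all $n$. This exhibits $(L,\varphi)$ as an $F$-algebra.

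Next I would prove initiality. Given any $F$-algebra $(X, a\colon FX \to X)$, define maps $a_n \colon F^n 0 \to X$ recursively by taking $a_0 \colon 0 \to X$ to be the unique map out of the initial object and setting $a_{n+1} = a \circ F a_n$. A short induction, using initiality of $0$ for the base case and functoriality for the step $a_{n+2}\circ F^{n+1}(!) = a \circ F(a_{n+1}\circ F^n(!)) = a \circ F a_n = a_{n+1}$, shows that $(a_n)_n$ is a cocone under the chain. It therefore induces a unique $u\colon L \to X$ with $u \circ \iota_n = a_n$. To see $u$ is an algebra homomorphism, i.e.\ $u\circ \varphi = a \circ Fu$, I precompose both sides with each leg $F\iota_n$ of the colimit cocone on $FL$: the left side gives $u\circ\varphi\circ F\iota_n = u\circ\iota_{n+1} = a_{n+1}$, while the right gives $a\circ Fu\circ F\iota_n = a\circ F(u\circ\iota_n) = a\circ F a_n = a_{n+1}$. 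Since a map out of $FL$ is determined by its composites with the $F\iota_n$, the square commutes.

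Finally I would establish uniqueness. If $v\colon L \to X$ is any algebra homomorphism, so $v\circ\varphi = a\circ Fv$, then I claim $v\circ\iota_n = a_n$ for all $n$. This follows by induction: the base case is forced by initiality of $0$, and for the step I use $\iota_{n+1} = \varphi\circ F\iota_n$ to compute $v\circ\iota_{n+1} = v\circ\varphi\circ F\iota_n = a\circ Fv\circ F\iota_n = a\circ F(v\circ\iota_n) = a\circ F a_n = a_{n+1}$. Hence $v\circ\iota_n = a_n = u\circ\iota_n$ for every $n$, so $v = u$ by the universal property of $L$. The conceptual crux, and the only genuinely nonroutine step, is the first one: recognizing that applying $F$ to the colimit and deleting the initial term of the chain give two descriptions of the \emph{same} colimit, which is exactly what conjures the structure map $\varphi$ and the key relation $\varphi \circ F\iota_n = \iota_{n+1}$. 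Once that relation is in hand, both the homomorphism property and uniqueness are formal inductions.
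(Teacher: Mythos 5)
Your proof is correct and is the classical argument for Adámek's theorem; the paper itself states this result only with a citation to \cite{adamek74} and offers no proof, and your argument is exactly the standard one. All three steps are carried out correctly: the structure map $\varphi$ obtained by recognizing that $F$ applied to the chain and the de-shifted chain present the same colimit (with the key relation $\varphi \circ F\iota_n = \iota_{n+1}$), the inductive construction of the cocone $(a_n)$ inducing $u$, and the verification of the homomorphism property and uniqueness by precomposing with the jointly epimorphic colimit legs.
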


One can check that polynomial endofunctors $\mathcal P_f$ preserve such colimits when all fibers of $f$ are finite, as in all of our examples. For instance, for the endofunctor $1 + \id$ on $\Set$, we can compute the initial algebra by taking the colimit of the above sequence, which becomes the following.
\[ 0 \hookrightarrow 1 \hookrightarrow 2 \hookrightarrow \cdots \]
Now, we can easily see that it (or at least its underlying set) is $\mathbb N$.

\subsection{M-Types}

In this previous section, we investigated the categorical semantics of types as initial algebras. This begs the question: is there a dual story, and what does it capture?

Indeed, there is a dual story about \emph{co}algebras, \emph{terminal} coalgebras, and M-types (the letter M being an upside-down W). In this story, we take the specifications of the types to be the same as in the preceding story. That is, we take the same endofunctors -- in particular, polynomial endofunctors -- and we consider terminal coalgebras of them.

Note that in some categories -- for instance the category of DCPOs, used to model languages like Haskell -- initial algebras and terminal coalgebras coincide \cite{sp}. When this happens, they are often called \emph{canonical fixed points}. In others -- for instance $\Set$, used to model languages like Coq and Agda -- they do not.

\begin{definition}[coalgebra]
    Consider an endofunctor $F$ on a category $\C$. A \emph{coalgebra} of $F$,\index{coalgebra!over an endofunctor} or simply an \emph{$F$-coalgebra}, is a pair $(C, f)$ where $C$ is an object of $\C$ and $f\colon C \to F C$ is a morphism in $\C$. A \emph{coalgebra homomorphism} $h\colon (C, f) \to (D, g)$ consists of a morphism $h\colon C \to D$ in $\C$ such that the following diagram commutes:
    \[
         \begin{tikzcd}
             C \ar[r,"h"] \ar[d,"f "'] & D \ar[d,"g"]
             \\ 
             FC \ar[r,"F h"] & FD.
         \end{tikzcd}
    \]
    The category of coalgebras, denoted $\coAlg_{\C}{F}$ or simply $\coAlg$ if $\C$ and $F$ are understood, has coalgebras of $F$ as objects and coalgebra homomorphisms as morphisms. 
    We denote by $\gls{coAlgHom}$ the set of $F$-coalgebra homomorphisms.
\end{definition}

\begin{definition}[M-type]
    Given a morphism $f\colon A \to B$ in a category $\C$, we denote by $M(f)$ the terminal coalgebra of $\mathcal P_f$. By \emph{(categorical) M-type} we mean any coalgebra of the form $M(f)$. \index{M-type}
\end{definition}

There is an important relationship between initial algebras and terminal coalgebras. First, the algebra structure morphism for an initial algebra is always an isomorphism (by the next proposition). 
This is why initial algebras and terminal coalgebras are often called (least or greatest) \emph{fixed points} for the endofunctor.
This isomorphism makes an initial algebra into a coalgebra, and there is then an induced morphism from the initial algebra to the terminal coalgebra. The dual story holds for terminal coalgebras, but the resulting morphism from the initial algebra to the terminal coalgebra is the same.

\begin{proposition}[\cite{lambek}]
    \label{prop:lambek}
    Consider an endofunctor on a category with an initial algebra $(A, \alpha)$ and a terminal coalgebra $(C,\chi)$. Then we find the following.
    \begin{enumerate}
        \item The morphisms $\alpha$ and $\chi$ are isomorphisms.
        \item The pair $(A, \alpha^{-1})$ is a coalgebra. Dually, $(C,\chi^{-1})$ is an algebra.
        \item There is a morphism $A \to C$.
    \end{enumerate}
\end{proposition}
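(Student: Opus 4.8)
The plan is to prove the three parts in order, with part~(1) being the classical argument of Lambek and parts~(2)--(3) following almost immediately from it.

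For part~(1), I would first show that $\alpha$ is an isomorphism by exhibiting an explicit inverse. The key observation is that applying $F$ to the initial algebra produces a new algebra: the pair $(FA, F\alpha)$ is an $F$-algebra, since $F\alpha\colon F(FA)\to FA$. By initiality of $(A,\alpha)$ there is a unique algebra homomorphism $h\colon (A,\alpha)\to (FA, F\alpha)$, which by definition satisfies $h\circ\alpha = F\alpha\circ Fh$. Next I would note that $\alpha$ itself is an algebra homomorphism $(FA, F\alpha)\to (A,\alpha)$, since the relevant square commutes with both paths equal to $\alpha\circ F\alpha$. Composing, $\alpha\circ h\colon (A,\alpha)\to (A,\alpha)$ is an algebra endomorphism of the initial algebra, so by the uniqueness clause of initiality it must equal $\id_A$. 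For the other composite I would combine the homomorphism square for $h$ with functoriality: $h\circ\alpha = F\alpha\circ Fh = F(\alpha\circ h) = F(\id_A) = \id_{FA}$. Hence $\alpha$ is invertible with $\alpha^{-1}=h$. The statement for $\chi$ is exactly dual: form the coalgebra $(FC, F\chi)$, obtain a unique coalgebra homomorphism $k\colon (FC, F\chi)\to (C,\chi)$ by terminality, observe that $\chi$ is a coalgebra homomorphism, and conclude that $\chi\circ k$ and $k\circ\chi$ are both identities.

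Part~(2) is then immediate: since $\alpha$ is an isomorphism, $\alpha^{-1}\colon A\to FA$ is a genuine morphism of $\C$, so $(A,\alpha^{-1})$ is an $F$-coalgebra; dually $\chi^{-1}\colon FC\to C$ equips $C$ with an $F$-algebra structure.

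For part~(3), I would simply invoke one of the two universal properties now available. Using part~(2), $(C,\chi^{-1})$ is an $F$-algebra, so initiality of $(A,\alpha)$ supplies a unique algebra homomorphism $A\to C$; alternatively, $(A,\alpha^{-1})$ is an $F$-coalgebra, so terminality of $(C,\chi)$ supplies a unique coalgebra homomorphism $A\to C$. Either construction yields the desired morphism. I expect no serious obstacle here; the only points requiring care are the bookkeeping of the directions of the homomorphism squares in part~(1), and---if one wants the sharper statement hinted at in the surrounding text---the verification that the two maps of part~(3) actually coincide, which follows by checking that the initial-algebra map $A\to C$ is simultaneously a coalgebra homomorphism $(A,\alpha^{-1})\to (C,\chi)$ and then appealing to uniqueness in the terminal coalgebra.
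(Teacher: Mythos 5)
Your proof is correct: it is the classical Lambek argument, and the paper itself gives no proof of this proposition, simply citing Lambek and sketching in the surrounding prose exactly the route you take (structure map is an isomorphism, hence the initial algebra becomes a coalgebra, hence terminality yields the map $A \to C$). Your closing remark that the two candidate maps in part~(3) coincide is also right and matches the paper's claim that "the resulting morphism from the initial algebra to the terminal coalgebra is the same"; the verification is the one you indicate, since $f \circ \alpha = \chi^{-1} \circ Ff$ rearranges to $\chi \circ f = Ff \circ \alpha^{-1}$.
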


Moreover, M-types are often an infinitary `completion' of a W-type, in the sense suggested by the following examples. Indeed, W-types can be thought of (and are often called) ``well-founded trees'' while M-types are ``non-well-founded trees''. For precise statements, see the discussion of this relationship within homotopy type theory in \cite{rech2017strictly}.

\begin{example}\label{example: terminal coalgebra of Nat}
    \label{ex: terminal coalgebra for 1 + -}
    Consider the endofunctor $1 + \id \colon\Set \to \Set$. We saw above that its initial algebra is $\N$.
We claim that its terminal coalgebra is $\gls{TermiCoalgNat}$. The underlying set of $\eN$ is $\N + \{\infty\}$. Its coalgebra structure $\eN \to 1 + \eN$ is given by sending $0$ to the unique element of the summand $1$ and any other $n$ to $n-1$, where $\infty - 1 \coloneqq \infty$.

    Given any other coalgebra $\chi\colon C \to 1 + C$, we let $\ind{c}$ denote the least $n$ such that $\chi^n (c)$ is the unique element of the summand $1$. One can check that this defines the only coalgebra homomorphism $\gls{TermiCoalgNatMap} \colon C \to \eN$.
The induced map $\N \to \eN$ from \cref{prop:lambek} is the inclusion.
\end{example}

\begin{example}
    Consider the endofunctor $1 + \mathtt{a} \times - \colon \Set \to \Set$. We saw in \cref{ex:lists-1} that its initial algebra is the set of (finite) lists with elements from $\mathtt{a}$.
We claim that the underlying set of its terminal coalgebra is the set of potentially infinite lists, often called \emph{streams}, with elements from $\mathtt{a}$, that is $\sum_{n \in \eN} \mathtt{a}^n$.
    The coalgebra structure $\sum_{n \in \eN} \mathtt{a}^n \to 1 + \mathtt{a} \times \sum_{n \in \eN} \mathtt{a}^n$ is given by sending the element of the summand $\mathtt{a}^0$ to the element of the summand $1$. We send an $\ell \in \mathtt{a}^\infty$ to the pair $(\ell(0),\ell)$. For other $n$, we send an element $\ell \in \mathtt{a}^n$ to the pair $(\ell(0), \ell' )$ where $\ell'(m) \coloneqq \ell(m-1)$.

    There is a unique coalgebra homomorphism from any other coalgebra $C$ to this one of streams. Intuitively, one sees the map $\chi\colon C \to 1 + \mathtt{a} \times C$ as a partially defined function $\chi\colon C \to \mathtt{a} \times C$. For any $c \in C$, $\chi(c)$, if defined, is a pair of a `head' in $\mathtt{a}$ and `tail' in $C$. By repeatedly applying $\chi$ to any $c$ until it is not defined anymore, one can construct a stream from $c$. One can check that this is the unique coalgebra homomorphism.
Just as in the above example, the induced map from the initial algebra to the terminal coalgebra is the natural inclusion of lists into streams.
\end{example}

We also have the dual to Adámek's fixed point theorem for initial algebras.

\begin{theorem}[Adámek's fixed point theorem, \cite{adamek74}]
    Consider a category $\C$ with a terminal object $1$ and limits of sequences $\omega \to \C$, where $\omega$ denotes the countably infinite ordinal. Consider an endofunctor $F$ that preserves these limits. Then the limit of the following sequence is a terminal coalgebra of $F$:
    \[ \cdots \to F^2 1 \xrightarrow{F !} F 1 \xrightarrow{!} 1. \] 
\end{theorem}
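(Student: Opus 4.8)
The plan is to dualize, step for step, the proof of the initial-algebra version (\cref{thm:adamek}); since everything below is the formal dual of that argument, one could in principle simply invoke duality, but I will indicate the direct construction. Write the given tower as a diagram $D\colon \omega^{\op}\to\C$ with $D(n)=F^n 1$ and connecting maps $d_n = F^n(!)\colon F^{n+1}1\to F^n 1$, where $!$ denotes the unique map to the terminal object $1$. Let $(L,(\pi_n\colon L\to F^n 1)_n)$ be the limiting cone, which exists by hypothesis.

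First I would produce the coalgebra structure on $L$. Applying $F$ and using that $F$ preserves this limit, the cone $(F\pi_n\colon FL\to F^{n+1}1)_n$ exhibits $FL$ as the limit of the shifted tower $n\mapsto F^{n+1}1$. But the original cone with its first leg discarded, namely $(\pi_{n+1}\colon L\to F^{n+1}1)_n$, exhibits $L$ as the limit of that same shifted tower — this is the dual of the cofinality observation that the inclusion of the tail $\omega^{\op}_{\geq 1}\hookrightarrow\omega^{\op}$ does not change the limit. Comparing the two limiting cones yields a canonical isomorphism $\chi\colon L\to FL$ characterized by $F\pi_n\circ\chi=\pi_{n+1}$ for all $n$. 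This makes $(L,\chi)$ an $F$-coalgebra, and pleasingly $\chi$ is an isomorphism, in agreement with \cref{prop:lambek}.

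Next I would prove terminality. Given any coalgebra $(X,x\colon X\to FX)$, I define a family of maps $\xi_n\colon X\to F^n 1$ inductively by $\xi_0={}!_X$ and $\xi_{n+1}=F(\xi_n)\circ x$. A short induction shows $d_n\circ\xi_{n+1}=\xi_n$ — the base case being the uniqueness of maps into $1$ — so this is a cone over $D$ and induces a unique $h\colon X\to L$ with $\pi_n\circ h=\xi_n$. To see that $h$ is a coalgebra homomorphism, i.e. that $\chi\circ h=Fh\circ x$, it suffices to postcompose with each $F\pi_n$, since these jointly detect equality of maps into the limit $FL$: the left side gives $\pi_{n+1}\circ h=\xi_{n+1}$, while the right side gives $F(\pi_n\circ h)\circ x=F(\xi_n)\circ x=\xi_{n+1}$. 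For uniqueness, if $g\colon X\to L$ is any coalgebra homomorphism, then the identity $\pi_{n+1}\circ g=F\pi_n\circ\chi\circ g=F(\pi_n\circ g)\circ x$ lets one prove $\pi_n\circ g=\xi_n$ for all $n$ by induction, whence $g=h$ by the universal property of $L$.

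The argument is entirely formal, so there is no deep obstacle. The one point that will require care is the first step, where one must correctly identify $FL$ as the limit of the shifted tower and verify that the comparison isomorphism satisfies the compatibility $F\pi_n\circ\chi=\pi_{n+1}$, since this single identity is exactly what powers both the homomorphism check and the uniqueness induction. The remaining verifications — the cone condition for $(\xi_n)_n$ and the naturality bookkeeping — are routine diagram chases dual to those in the initial-algebra case.
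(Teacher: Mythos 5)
Your proof is correct and is exactly the standard argument for this result: the paper itself gives no proof (it defers to the cited reference \cite{adamek74}), and your construction --- obtaining the comparison isomorphism $\chi\colon L \to FL$ from limit preservation plus the cofinality of the shifted tower, then using the inductively defined cone $(\xi_n)_n$ to establish existence and uniqueness of the mediating coalgebra homomorphism --- is precisely the dualization of Ad\'amek's construction that the citation points to. All steps check out, including the key identity $F\pi_n \circ \chi = \pi_{n+1}$, the two inductions, and the consistency with \cref{prop:lambek}.
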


One can check that polynomial endofunctors $\mathcal P_f$ preserve connected limits. So for instance, for the endofunctor $1 + \id$ on $\Set$, we can compute the terminal coalgebra by taking the limit of the above sequence, which becomes
\[ \cdots \to 3 \to 2 \to 1 , \]
where each morphism identifies two elements of the domain.
Now, we can see that it (or at least its underlying set) is $\eN$.

\section{Measuring of Inductive Data Types}
\label{sec: measuring}

In this section, we now show that there is a version of Sweedler's theory of measuring coalgebras (\cref{sec: sweedler coalgebras}) for (co)algebras of endofunctors (\cref{sec: data types}).
So fix a symmetric monoidal closed category $(\C, \otimes, \gls{MonUnit}, [-,-])$
and a lax symmetric monoidal endofunctor $(F, \nabla, \eta)$ (defined below in \cref{definition: lax symmetric monoidal}) on $\C$.

\subsection{Lax Symmetric Monoidal Endofunctors}

In order to define the general notion of \emph{measuring} for algebras over an endofunctor $F$, we first need to require that $F$ is lax symmetric monoidal.

\begin{example}
    Our running example throughout this section will be $\Set$ with its cartesian monoidal structure, and the endofunctor $X \mapsto X + 1$.
\end{example}

\begin{definition}[lax symmetric monoidal endofunctor]
    \label{definition: lax symmetric monoidal}
 That $(F, \nabla, \eta)$ is a \emph{lax symmetric monoidal endofunctor}\index{lax symmetric monoidal endofunctor} means that $F$ is an endofunctor on $\C$ with
 \begin{description}
     \item[{(L1)}:] a natural transformation $\nabla_{X,Y}\colon F X \otimes FY\longrightarrow F(X\otimes Y)$, for all $X,Y\in \C$; and
     \item[{(L2)}:] a morphism $\eta\colon \I\rightarrow F\I $ in $\C$;
 \end{description}
 such that $(F, \nabla, \eta)$ is associative, unital, and symmetric, in the sense that the following diagrams commute.
 \begin{description}
    \item[(Associativity):] For all objects $X,Y, Z\in \C$, the following diagram commutes:
\[
\begin{tikzcd}[row sep= large]
 FX \otimes FY \otimes FZ \ar{r}{\nabla_{X,Y}\otimes \id}\ar{d}[swap]{\id\otimes \nabla_{Y,Z}} & [2.5em] F(X\otimes Y) \otimes FZ \ar{d}{\nabla_{X\otimes Y, Z}}\\
 FX\otimes F(Y\otimes Z) \ar{r}{\nabla_{X, Y\otimes Z}} & F(X\otimes Y \otimes Z).
   \end{tikzcd}
   \]
   
   \item[(Unitality):] For all objects $X\in \C$, the following diagrams commute:
\[
\begin{tikzcd}[column sep=scriptsize]
FX\ar[equals]{ddrr}\ar{r}{\cong} & [-1em] \I\otimes FX  \ar{r}{\eta\otimes \id} & F\I\otimes FX\ar{d}{\nabla_{\I, X}} & FX \ar{r}{\cong} \ar[equals]{ddrr} & [-1em]FX\otimes \I \ar{r}{\id\otimes \eta} & FX\otimes F\I \ar{d}{\nabla_{X, \I}}\\
 & & F(\I\otimes X) \ar{d}{\cong} & & & F(X\otimes \I)\ar{d}{\cong}\\
 [-1em]  & & FX & & & FX.
\end{tikzcd}
\]
\item[(Symmetry):] For all objects $X,Y\in \C$, the following diagram commutes
\[
\begin{tikzcd}
   FX \otimes FY \ar{r}{\nabla_{X,Y}} \ar{d}{\cong}[swap]{\tau_{FX,FY}}& F(X\otimes Y)\ar{d}{F(\tau_{X,Y})} [swap]{\cong}\\
   FY \otimes FX \ar{r}{\nabla_{Y,X}} & F(Y\otimes X),
\end{tikzcd}
\]
where $\tau_{X,Y}:X\otimes Y\rightarrow Y\otimes X$ is the symmetry natural isomorphism in $\C$.
\end{description}
\end{definition}

\begin{remark}
    One may also say \emph{symmetric lax monoidal} instead of lax  symmetric monoidal. 
    Notice the symmetry is not truly needed in our work but makes the statement of enrichment in \cref{thm:enriched} easier.
\end{remark}

\begin{example}
    For the endofunctor $\id + 1$, we define
    \begin{align*}
        \nabla_{X,Y}\colon (X+1)\times (Y+1) & \longrightarrow  (X\times Y)+1\\
        (x,y) & \longmapsto  \begin{cases}
            (x, y) & \text{if  }x\neq \tone,   y \neq \tone\\
            \tone & \text{otherwise.}
        \end{cases}
    \end{align*}
    for $x \in X, y \in Y, \tone \in 1$. 
    We define $\eta \colon 1 \to 1 + 1$ to be the inclusion into the first summand.
\end{example}

Note that since $F$ is lax monoidal, it is also \emph{lax closed}: that is, there is a map
\[
\widetilde{\nabla}_{X,Y}\colon F\left( [X,Y]\right) \stackrel{}\longrightarrow [FX, FY]
\]
natural in  $X,Y \in \C$.
As $-\otimes X\colon \C\rightarrow \C$ has the right adjoint $[X, -]\colon \C\rightarrow \C$, the above map is the adjunct  of the composition
\[
\begin{tikzcd}
F([X,Y])\otimes F(X) \ar{r}{\nabla_{[X,Y], X}}  & [2em] F([X,Y] \otimes X) \ar{r}{F(\ev_{X})} &[1em] F(Y),
\end{tikzcd}
\]
in which $\ev_{X}$ is the counit of the adjunction $- \otimes X \dashv [X, -]$.

\subsection{The Convolution Algebra}
Let $F\colon \C\rightarrow \C$ be any lax symmetric monoidal endofunctor.
Given $(C, \chi)$ an $F$-coalgebra and $(B, \beta)$ an $F$-algebra, then the object \gls{ConvAlg} in $\C$ is also an $F$-algebra via the \emph{convolution product} 
\[
\begin{tikzcd}
F([C, B]) \ar{r}{\widetilde{\nabla}_{C,B}} & {[FC, FB]} \ar{r}{\beta_*} & {[FC, B]} \ar{r}{\chi^*} & {[C,B]}.
\end{tikzcd}
\]
We call this $F$-algebra  $[C,B]$ the \emph{convolution algebra} of $C$ with  $B$.\index{convolution algebra!over an endofunctor}
The construction is natural in $C$ and $B$ and defines a functor
\[\coAlg^\op\times \Alg  \longrightarrow \Alg.\]

\begin{example}
The lax monoidal structure on $F$ determines an $F$-coalgebra structure on $\eta\colon \I\rightarrow  F(\I)$. 
One can check that the resulted convolution algebra $[\I, B]$ is isomorphic as an $F$-algebra to $B$.
\end{example}

\subsection{Measurings and Partial Algebra Homomorphisms}
\label{sec: Measurings and partial algebra homomorphisms}
Recall that as $\C$ is closed, we obtain natural bijections
\[
\C(C\otimes A, B) \cong \C(C, [A,B]) \cong \C(A, [C,B]),
\]
for any object $A,B, C$ in $\C$. 
Given a lax symmetric monoidal endofunctor $F\colon \C\rightarrow \C$, if we require $A$ and $B$ to be $F$-algebras, and  $C$ an $F$-coalgebra, then as $[C,B]$ is an $F$-algebra via the convolution algebra, we may consider the maps $A\rightarrow [C,B]$ in $\C$ to be $F$-algebra homomorphisms instead. By the identification above, they correspond to certain maps $C\otimes A\rightarrow B$ and $C\rightarrow [A,B]$ in $\C$ with additional conditions that we now describe.

\begin{definition}[measuring]\label{definition: measuring coalgebra}
Consider $F$-algebras $(A,\alpha)$ and $(B, \beta)$, and a $F$-coalgebra $(C, \chi)$. 
We call a map $\phi\colon C\otimes A\rightarrow B$ a \emph{measuring from $A$ to $B$}\index{measuring of algebras!over an endofunctor} if it makes the following diagram commute:
    \[
     \begin{tikzcd}
       & F(C)\otimes F(A) \ar{r}{\nabla_{C,A}} & F(C\otimes A) \ar{r}{F(\phi)} & F(B) \ar{dd}{\beta}\\
      C\otimes F(A) \ar{ur}{\chi\otimes \id} \ar{dr}[swap]{\id\otimes \alpha} \\
      & C\otimes A \ar{rr}{\phi} & & B.
     \end{tikzcd}
    \]
    We denote by \gls{MeasuringSet} the set of all measurings $C\otimes A\rightarrow B$.
\end{definition}

If $\phi\colon C\otimes A \rightarrow B$ is a measuring, $a\colon (A', \alpha') \rightarrow (A, \alpha)$ and $b\colon (B, \beta) \rightarrow (B', \beta')$ are algebra homomorphisms, and $c\colon (C', \chi')\rightarrow (C, \chi)$ is a coalgebra homomorphism, then one can check that the composite
\[
\begin{tikzcd}
   C'\otimes A' \ar{r}{c\otimes a} & C\otimes A \ar{r}{\phi} & B\ar{r}{b} & B'
\end{tikzcd}
\]
is a measuring. Therefore, the assignment $(C,A,B) \mapsto \mu_C(A,B)$ underlies a functor
\[
\mu \colon \bcoalg^\op \times \balg^\op \times \balg \longrightarrow \Set. 
\]
We shall see that this functor is representable in each of its variables under reasonable hypotheses. The convolution algebra provides a first representing object for $\mu_C(-,B)\colon \balg^\op \to \Set$. Indeed, we have the following bijection natural in $C,A,B$:
\[
\mu_C(A,B)\cong \balg(A, [C, B]).
\]
In other words, a measuring $\phi\colon C\otimes A\rightarrow B$ corresponds to an algebra homomorphism $\widehat{\phi}\colon A\rightarrow [C,B]$ under the bijection $\C(C\otimes A, B)\cong \C(A, \inthomp{\C}{C}{B})$. Indeed, notice that $\widehat{\phi}$ is a homomorphism if and only if the following diagram, the adjunct of the one appearing in \cref{definition: measuring coalgebra}, commutes:
 \[
     \begin{tikzcd}
       & F([C,B]) \ar{r}{\widetilde{\nabla}_{C,B}} &[2em] {[F(C), F(B)]} \ar{d}{\beta_*}\\
      F(A) \ar{ur}{F(\widehat{\phi})} \ar{dr}[swap]{\alpha} & & {[FC, B]}\ar{d}{{\chi}^*} \\
      & A \ar{r}{\widehat{\phi}} &  {[C,B].}
     \end{tikzcd}
    \]

\begin{example}
    The monoidal unit $\I$ of $\C$ is a coalgebra via the lax symmetric monoidal structure $\eta\colon\I\rightarrow F(\I)$. Thus morphisms $A \to B$ in $\C$ are in bijection with morphisms $\I \otimes A \to B$ in $\C$, and one can check that a morphism $A \to B$ in $\C$ is an algebra homomorphism if and only if $\I \otimes A \to B$ is a measuring. Thus, $\mu_\I( A, B)\cong\balg(A,B)$.
\end{example}

\begin{example}
    \label{ex: measuring for - + 1}
    Consider algebras $A$ and $B$ and a coalgebra $C$ for the endofunctor $X \mapsto 1 + X$.
    A measuring from $A$ to $B$ by $C$ is a function $f  \colon C \times A \rightarrow B$ such that the following three properties hold:
    \begin{description}
        \item[{(M1)}:] $f_c (0_A)=0_B$ for all $c \in C$;
        \item[{(M2)}:] $f_c(a+1)=0_B$ for all $\llbracket c \rrbracket=0$ and for all $a\in A$;
        \item[{(M3)}:] $f_c(a+1)=f_{c-1}(a)+1$ for $\llbracket c \rrbracket \geq 1$ and for all $a\in A$. 
    \end{description}
    Here, we write $f_c (a)$ for $f(c,a)$. Since $A$ has an algebra structure $A + 1 \to A$, it has a distinguished element, that we write above as $0_A$, and an endofunction, that we write above as $a + 1$; similarly for $B$. Since $C$ has a coalgebra structure, there is a unique map $\ind{-} : C \to \eN$ (recalling that $\eN$ is the terminal coalgebra from \cref{ex: terminal coalgebra for 1 + -}). The coalgebra structure map is a partial endofunction that we denote as $c - 1$ above.
\end{example}

Another formulation of measurings is possible. An algebra homomorphism $A\rightarrow [C, B]$ corresponds also to a certain map $C\rightarrow [A,B]$ in $\C$ that we now describe. 

\begin{definition}[partial homomorphism]
    \label{definition: abstract partial homomorphism}
    Given algebras $(A, \alpha)$ and $(B,\beta)$ and a coalgebra $(C,\chi)$, a \emph{$C$-indexed partial homomorphism} from $A$ to $B$\index{partial homomorphism indexed over a coalgebra!over an endofunctor} is a map $\widetilde{\varphi}\colon C\rightarrow [A,B]$ in $\C$ such that its adjunct $\widehat{\varphi}\colon A\rightarrow [C, B]$ is an algebra homomorphism i.e., 
    fits in the following commutative diagram
    \[
        \begin{tikzcd}
          & F(C) \ar{r}{F(\widetilde{\phi})} &[1em] F([A,B]) \ar{r}{\widetilde{\nabla}_{A,B}} & [1em] {[FA, FB]} \ar{dd}{\beta_*}\\
         C \ar{ur}{\chi} \ar{dr}[swap]{\widetilde{\phi}} \\
         & {[A,B]} \ar{rr}{\alpha^*} & & {[FA,B]}
        \end{tikzcd}
       \]
    where $\alpha^*$ denotes precomposition by $\alpha$ and $\beta_*$ denotes postcomposition by $\beta$. 
\end{definition}

By construction, a $C$-indexed partial homomorphism $\widetilde{\varphi}\colon C\rightarrow [A,B]$ corresponds to a $C$-measuring $\varphi\colon C\otimes A\rightarrow B$.

\begin{definition}[universal measuring coalgebra]\label{definition: universal measuring}
Let $A$ and $B$ be $F$-algebras. We define the category of measurings $\cM(A,B)$ from $A$ to $B$ to be the category whose objects are pairs $(C;\widetilde{\varphi})$ of an $F$-coalgebra $C$ and a $C$-indexed partial homomorphism $\widetilde{\varphi}\colon C\rightarrow [A, B]$ (or equivalently a measuring $\varphi\colon C \otimes A \to B$), and whose morphisms $(C;\widetilde{\varphi}) \to (D;\widetilde{\psi})$ are coalgebra homomorphisms $\gamma\colon C \to D$ such that the following diagram commutes
\[
 \begin{tikzcd}
C \ar{rr}{\gamma}\ar{dr}[swap]{\widetilde{\varphi}} & & D \ar{dl}{\widetilde{\psi}}\\
& {[A,B].}
 \end{tikzcd}
 \]
The \emph{universal measuring coalgebra from $A$ to $B$}\index{universal measuring coalgebra!over an endofunctor}, denoted $(\gls{UnivFMeasuring}, \widetilde{\ev})$, is the terminal object (if it exists) in $\cM(A,B)$. 
This means given an $C$-indexed partial homomorphisms $\widetilde{\varphi}\colon C\rightarrow [A, B]$,  there exists a unique coalgebra homomorphism $u_\varphi\colon C\rightarrow \ubalg({A},{B})$ such that the following diagram commutes
\[
\begin{tikzcd}
    C \ar{r}{\widetilde{\varphi}} \ar[dashed]{d}[swap]{\exists! u_\varphi}& {[A, B]}\\
\ubalg(A,B). \ar[bend right]{ur}[swap]{\widetilde{\ev}}
\end{tikzcd}
\]
The construction is natural in $A$ and $B$ and defines a functor:
\[
\ubalg(-,-)\colon \Alg^\op\times \Alg \longrightarrow \coAlg.
\]
\end{definition}

If a universal measuring $(\ubalg(A,B), \ev)$ exists, then we obtain a representation $\ubalg(A,B)$ for the functor $\mu_-(A,B)$. That is, we have the following identification, natural in $C,A,B$:
\[
\Alg(A, [C, B])\cong \mu_C(A,B)\cong \bcoalg(C, \ubalg(A,B)).
\]
In the following section, we show that if $\C$ is closed and locally presentable and $F$ is accessible, then the universal measuring coalgebra always exists.

\subsection{Local Presentability and the Existence Theorem}

We will now usually require that $\C$ be \emph{locally presentable} and $F$ be \emph{accessible} \cite[Def.~1.17~\&~2.17]{presentable}. Then $\balg$ and $\bcoalg$ are also locally presentable, the forgetful functor $\balg \to \C$ has a left adjoint \gls{freeFAlg} \index{free algebra!over an endofunctor}, and the forgetful functor $\bcoalg \to \C$ has a right adjoint \gls{cofreeFcoAlg} \cite[Cor.~2.75~\&~Ex.~2.j]{presentable}\index{cofree coalgebra!over an endofunctor}. We will also use that these categories, as locally presentable categories, are complete and cocomplete.

\begin{example}
    $\Set$ is locally presentable and $\id + 1$ is accessible, as it preserves all filtered colimits (in fact all connected colimits).
\end{example}

\begin{example}
    The following endofunctors on a locally presentable, symmetric monoidal category $\C$ are always accessible and lax symmetric monoidal. 
    \begin{description}
           \item[$(\id)$:] The identity endofunctor $\id_\C$ has a trivial lax symmetric monoidal structure and preserves filtered colimits.
           \item[$(A)$:] Given a commutative monoid $(A, \eta, \nabla) \in \C$, the constant endofunctor that sends each object to $A$ has a lax symmetric monoidal structure. Indeed, $\eta$ and $\nabla$ are exactly the structure required for this functor to be lax symmetric monoidal. This preserves filtered colimits.
           \item[$(GF)$:] Suppose that $(F, \eta_F, \nabla_F)$ and $(G, \eta_G, \nabla_G)$ are lax symmetric monoidal endofunctors on $\C$. Then so is the composition $GF$. The map $\eta$ is the composition $\I \xrightarrow{\eta_G} G (\I) \xrightarrow{G \eta_F} GF(\I)$. The map $\nabla$ is the composition $GF(X) \otimes GF(Y) \xrightarrow{\nabla_G} G(F(X) \otimes F(Y)) \xrightarrow{G\nabla_F} GF(X \otimes Y)$. If $F$ and $G$ both preserved filtered colimits, then so does $GF$.
           \item[$(F \otimes G)$:] Suppose that $\C$ is closed and that $(F, \eta_F, \nabla_F)$ and $(G, \eta_G, \nabla_G)$ are lax symmetric monoidal endofunctors on $\C$. Then so is $F \otimes G$, by which we mean the functor that takes $X \mapsto FX \otimes GX$. The map $\eta$ is the composition $\I \to \I \otimes \I \xrightarrow{\eta_F \otimes \eta_G} (F \otimes G)(\I)$. The map $\nabla$ is the composition $(F \otimes G)(X) \otimes (F \otimes G)(Y) \cong (FX \otimes FY) \otimes (GX \otimes GY) \xrightarrow{\nabla_F \otimes \nabla_G} (F \otimes G)  (X \otimes Y)$. If $F$ and $G$ are accessible, then so is $F \otimes G$ (because $\otimes$ preserves colimits in each variable, as $\C$ is closed). 
           \item[$(F + G)$:] Suppose $\C$ is closed and that $(F, \eta_F, \nabla_F)$ is a lax symmetric monoidal endofunctor, and consider a pair $(G, \nabla_G)$ of an endofunctor $G\colon \C \to \C$ and transformation $GX \otimes GY \to G(X \otimes Y)$ which is associative and symmetric (i.e.~a lax symmetric monoidal endofunctor but without $\eta$), and suppose that $G$ is a \emph{module} over $F$, meaning that there is a natural transformation $GX \otimes FY \to G(X \otimes Y)$ satisfying the diagrams of \cite[Def.~39]{yetter}, where the concept of module over a monoidal functor is defined. Then $F + G$ is a lax symmetric monoidal functor, where $F+ G$ takes $X$ to $FX + GX$. We set $\eta$ to be the composition $\I \xrightarrow{\eta_F} F (\I) \rightarrow F(\I) + G(\I) \cong (F + G)(\I)$. The map $\nabla$ is given by the following composition
           \begin{align*}
           \hspace{-1em}
               & (F + G)(X) \otimes (F + G)(Y)  \\
               \cong \ &(FX \otimes FY) + (FX \otimes GY) + (GX \otimes FY) + (GX \otimes GY) \\
                \xrightarrow{\langle \nabla_F , \lambda, \rho, \nabla_G \rangle} \ &F(X \otimes Y) + G(X \otimes Y) 
                \\ \cong \ &(F+G)(X \otimes Y).
           \end{align*} 
           If $F$ and $G$ preserve filtered colimits, then so does $F + G$.
       \item[$(\id^A)$:] Suppose that $\C$ is cartesian (so we denote it $(\C,1 ,\times)$). Consider an exponentiable object $A$ of $\C$, meaning that the functor $\id \times A$ has a right adjoint, denoted $\id^A$. This has a lax symmetric monoidal structure. We let $1 \to 1^A$ and $X^A \times Y^A \to (X \times Y)^A$ be the isomorphisms we get from the fact that $-^A$, as a right adjoint, preserves limits. It is accessible as a right adjoint.
       \item[($W$-types):] Consider the cartesian monoidal structure on $\Set$. Consider a commutative monoid $C \in \Set$, which can be considered as a commutative monoidal discrete category, and a function $f\colon A \to C$ such that the preimage functor $f^{-1}\colon C \to \Set$ has an oplax symmetric monoidal structure. Then the polynomial endofunctor (\cref{def: poly endofunctor and w-type}) $\mathcal P_f \colon \Set \to \Set$ has a lax symmetric monoidal structure. The elements of $\mathcal P_f X$ are pairs $(c \in C, g \colon f^{-1} c \to X)$.  The natural transformation $\nabla_{X,Y} \colon \mathcal P_f X \times \mathcal P_f Y \to \mathcal P_f(X \times Y)$ takes a pair $((c, g \colon f^{-1} c \to X), (d, h \colon f^{-1} d \to X))$ to $(cd, (g \times h) \circ \Lambda_{c,d}\colon f^{-1} (cd) \to X \times Y)$ where $\Lambda_{c,d}\colon f^{-1} (cd) \to f^{-1} c \times f^{-1}d$ is part of the oplax monoidal structure of $f^{-1}$. The natural transformation $\epsilon\colon 1 \to \mathcal P_f 1 \cong C$ is the unit of the monoidal structure on $C$. This polynomial endofunctor is accessible since it is the composition of left and right adjoints.
       \item[(Discrete equational systems):] Suppose that the monoidal structure on $\C$ is cocartesian, so we denote it by $(\C,0, +)$, and suppose also that $\C$ also has binary products that preserve filtered colimits in each variable. Consider a \emph{discrete equational system} \cite{leinster} on $\C$, that is, a pair $(A,M)$ where $A$ is a set and $M$ is an object of $\C^{A \times A}$. The functor category $\C^A$ inherits a cocartesian monoidal structure $(\C^A ,0, +)$ pointwise. Define an endofunctor $M \otimes - \colon \C^A \to \C^A$ by setting
       \[ (M \otimes X)_a \coloneqq\sum_{b \in A} M_{b,a} \times X_b. \]
       This is lax symmetric monoidal. The morphisms $\eta\colon 0 \to M \otimes 0$ and $\nabla\colon M \otimes (X + Y) \to M \otimes X + M \otimes Y$ are induced by the universal properties of $0$ and $+$. It is also accessible since the coproducts and binary products preserve filtered colimits.
   \end{description}
   \end{example}

If $\C$ is locally presentable and $F$ is accessible, then given a coalgebra $(C,\chi)$ and algebras $(A,\alpha)$ and $(B, \beta)$, a map $\widetilde{\phi}\colon C\rightarrow [C,A]$ in $\C$ uniquely determines a coalgebra homomorphism $\overline{\varphi}\colon C \rightarrow \cofree([A,B])$. A map $\widetilde{\phi}\colon C\rightarrow [A,B]$ is a $C$-indexed partial homomorphism if and only if both composites from $(C, \chi_C)$ to $\cofree\big( \inthomp{\C}{FA}{B}\big)$ in the following diagram coincide:
\[
\begin{tikzcd}
    (C, \chi_C) \ar{r}{\overline{\varphi}} & \cofree\big( [A,B]\big) \ar[shift left]{r}{\Phi} \ar[shift right]{r}[swap]{\Psi} & \cofree\big( [FA, B]\big).
\end{tikzcd}
\]
The morphism $\Phi$ is the cofree coalgebra homomorphism induced by the map $\alpha^*\colon[A,B]\rightarrow [FA, B]$ in $\C$.
The morphism $\Psi$ is the adjunct under the cofree-forgetful adjunction of the following composite:
\[
\begin{tikzpicture}[baseline= (a).base]
\node[scale=0.99] (a) at (1,1){
\begin{tikzcd}
    [column sep=scriptsize]
   \cofree([A,B]) \ar{r}{\chi_\cofree} & [-0.5em] F \big(  \cofree([A,B]) \big) \ar{r}{F(\varepsilon)} & [-0.5em] F \big( [A,B]\big) \ar{r}{\widetilde{\nabla}_{A,B}} & {[FA, FB]} \ar{r}{\beta_*} & [-1em] {[FA, B]}.
\end{tikzcd}
};  
\end{tikzpicture}
\]
Here $\chi_\cofree$ is the coalgebraic structure on the cofree coalgebra, and $\varepsilon$ is the counit of the cofree-forgetful adjunction.

Now we can use this to guarantee the existence of a universal measuring.

\begin{theorem}
    \label{theorem: equalizer formula for universal measuring}
Suppose that $\C$ is a symmetric monoidal closed category that is locally presentable, and that $F\colon \C\rightarrow \C$ is a symmetric lax monoidal, accessible endofunctor.
Given $F$-algebras $A$ and $B$, then the universal measuring coalgebra $(\ubalg(A,B), \widetilde{\ev})$ exists and is determined as the following equalizer diagram in $\bcoalg$:
\[
\begin{tikzcd}
   \ubalg(A,B) \ar[dashed]{r}{\overline{\ev}} & \cofree\big( [A,B]\big) \ar[shift left]{r}{\Phi} \ar[shift right]{r}[swap]{\Psi} & \cofree\big( [FA, B]\big).
\end{tikzcd}
\]
\end{theorem}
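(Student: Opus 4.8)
The plan is to reproduce, in the endofunctor setting, the argument that established the classical Sweedler measuring coalgebra as an equalizer. The text preceding the statement has already identified the two parallel morphisms $\cofree([A,B]) \rightrightarrows \cofree([FA,B])$, so the task reduces to two points: (i) verifying that a map $\widetilde{\varphi}\colon C \to [A,B]$ in $\C$ is a $C$-indexed partial homomorphism exactly when the induced coalgebra homomorphism $\overline{\varphi}$ equalizes this pair, and (ii) deducing that the equalizer, which exists because $\bcoalg$ is complete, carries the universal measuring of \cref{definition: universal measuring}.

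First I would set up the dictionary supplied by the cofree–forgetful adjunction. Since $\C$ is locally presentable and $F$ accessible, $\bcoalg$ is locally presentable, hence complete, and the forgetful functor $U\colon \bcoalg \to \C$ has a right adjoint $\cofree$. Thus an $F$-coalgebra $(C,\chi)$ equipped with an underlying map $\widetilde{\varphi}\colon C \to [A,B]$ corresponds bijectively and naturally to a coalgebra homomorphism $\overline{\varphi}\colon C \to \cofree([A,B])$, subject to the counit identity $\widetilde{\varphi} = \varepsilon \circ U\overline{\varphi}$. Under the same adjunction, maps $C \to [FA,B]$ in $\C$ match coalgebra homomorphisms $C \to \cofree([FA,B])$, so it suffices to compare the two composites $C \to \cofree([FA,B])$ after precomposing each parallel morphism with $\overline{\varphi}$.

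The crux is step (i): matching the equalizer condition with the commuting diagram of \cref{definition: abstract partial homomorphism}. Precomposing the first parallel morphism $\cofree(\alpha^*)$ with $\overline{\varphi}$ and taking the adjunct gives, by naturality of the counit, the map $\alpha^* \circ \widetilde{\varphi}$, which is the lower path of that diagram. For the second parallel morphism, which is the adjunct of $\beta_* \circ \widetilde{\nabla}_{A,B} \circ F(\varepsilon) \circ \chi_\cofree$, precomposing with $\overline{\varphi}$ and taking adjuncts yields $\beta_* \circ \widetilde{\nabla}_{A,B} \circ F(\varepsilon) \circ \chi_\cofree \circ U\overline{\varphi}$. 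Here I would invoke that $\overline{\varphi}$ is a coalgebra homomorphism, so $\chi_\cofree \circ U\overline{\varphi} = F(U\overline{\varphi}) \circ \chi$, together with the counit identity $\varepsilon \circ U\overline{\varphi} = \widetilde{\varphi}$, to rewrite the composite as $\beta_* \circ \widetilde{\nabla}_{A,B} \circ F(\widetilde{\varphi}) \circ \chi$, which is precisely the upper path. I expect this naturality-and-adjunction bookkeeping to be the main obstacle, since one must track the counit and the cofree coalgebra structure carefully; but once done it shows the two composites agree if and only if the two paths agree, i.e.\ if and only if $\widetilde{\varphi}$ is a $C$-indexed partial homomorphism.

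Finally I would assemble the conclusion. Steps (i) and (ii) together exhibit an isomorphism between the category $\cM(A,B)$ and the category of cones over the parallel pair $\cofree([A,B]) \rightrightarrows \cofree([FA,B])$ in $\bcoalg$: an object $(C;\widetilde{\varphi})$ corresponds to the equalizing coalgebra homomorphism $\overline{\varphi}$, and morphisms correspond on both sides to coalgebra homomorphisms commuting with the respective data. The terminal object of the cone category is by definition the equalizer, which exists by completeness of $\bcoalg$. Transporting terminality across this isomorphism, the equalizer $\ubalg(A,B)$ together with the measuring $\widetilde{\ev}$ obtained from $\overline{\ev}$ is the terminal object of $\cM(A,B)$, i.e.\ the universal measuring coalgebra, establishing both its existence and the claimed equalizer formula.
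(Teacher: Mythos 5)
Your proposal is correct and follows essentially the same route as the paper: translate partial homomorphisms into coalgebra maps into $\cofree([A,B])$ via the cofree--forgetful adjunction, characterize the partial-homomorphism condition as equalizing the given parallel pair, identify $\cM(A,B)$ with the category of cones over that pair, and transport terminal objects, using completeness of $\bcoalg$ (from local presentability) for existence of the equalizer. The only difference is one of detail: your step (i) explicitly carries out the counit/naturality bookkeeping (using $\chi_\cofree \circ U\overline{\varphi} = F(U\overline{\varphi})\circ\chi$ and $\varepsilon \circ U\overline{\varphi} = \widetilde{\varphi}$) that the paper asserts without proof in the paragraph preceding the theorem statement.
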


\begin{proof}
Since every $C$-indexed partial homomorphism $\widetilde{\varphi}\colon C\rightarrow [A, B]$ is equivalent to a coalgebra homomorphism $C\rightarrow \cofree([A,B])$ that coequalizes the parallel morphisms described above, we see that the category $\cM(A,B)$ of measurings from $A$ to $B$ is equivalent to the category of cones over the aforementioned parallel morphisms.
Thus they have equivalent terminal objects $\ubalg(A,B)$.
    \end{proof}

\begin{definition}
    [dual (co)algebra]
    \label{ex: dual}
    Suppose that $\C$ is a symmetric monoidal closed category that is locally presentable, and that $F\colon \C\rightarrow \C$ is a symmetric lax monoidal, accessible endofunctor.
    The category $\balg$ has an initial object which we denote by $N$.
    
    Let $(-)^*\colon\bcoalg^\op\rightarrow \balg$ denote the functor $[-,N]$, and call $C^*$ the \emph{dual algebra of $C$} for any coalgebra $C$.

    Let $(-)^\circ\colon\balg^\op\rightarrow \bcoalg$ denote the functor $\ubalg(-,N)$, and call $A^\circ$ the \emph{dual coalgebra of $A$} for any algebra $A$.
    
    These functors form a dual adjunction in the sense that 
we obtain natural identifications for all $C\in \coAlg$ and $A\in \Alg$:
    \[
    \balg(A, C^*)\cong \bcoalg(C, A^\circ).
    \]
\end{definition}

\subsection{The Measuring Tensor}
We now provide the last representation of the measuring functor. 

\begin{definition}[measuring tensor]
Let $C$ be an $F$-coalgebra and $A$ an $F$-algebra. 
We define $\cM_C(A)$ to be the category whose objects consist of pairs $(B, \varphi)$ where $B$ is an $F$-algebra, and $\varphi\colon C\otimes A\rightarrow B$ is a $C$-measuring from $A$ to $B$.
A morphism $(B_1, \varphi)\rightarrow (B_2, \psi)$ in $\cM_C(A)$ consists of an algebra homomorphism $B_1\rightarrow B_2$ fitting in the commutative diagram in $\C$:
\[
\begin{tikzcd}
    & C\otimes A \ar{dl}[swap]{\varphi} \ar{dr}{\psi}\\
    B_1 \ar{rr} & & B_2.
\end{tikzcd}
\]
If it exists, we denote by $(C\triangleright A, u)$ the initial object of $\cM_C(A)$. The $F$-algebra $\gls{MeasTens}$ is called the \emph{measuring tensor of $C$ with $A$},\index{measuring tensor!over an endofunctor} and its measuring $u\colon C\otimes A\rightarrow C\triangleright A$ is the universal $C$-measuring from $A$.
In other words, given any $C$-measuring $\varphi\colon C\otimes A\rightarrow B$, there exists a unique algebra homomorphism $!\colon C\triangleright A\rightarrow B$ such that the following diagram commutes in $\C$:
\[
\begin{tikzcd}
    C\otimes A \ar{r}{\varphi} \ar{d}[swap]{u} & B\\
    C\triangleright A. \ar[dashed]{ur}[swap]{!}
\end{tikzcd}
\]
The construction is natural in $C$ and $A$ and thus determines a functor
\[
-\triangleright -\colon \coAlg\times  \Alg \longrightarrow \Alg.
\]
\end{definition}

The existence of the measuring tensor determines a natural identification
\[
\mu_C(A,B)\cong \balg(C\triangleright A, B).
\]
That is, the functor $\mu_C(A,-)\colon \balg \to \Set$ is represented by $C \triangleright A$.

We can prove the existence of the measuring tensor $C\triangleright A$ using a dual argument to the one used to prove the existence of the universal measuring coalgebra $\ubalg(A,B)$.
If $\C$ is locally presentable and $F$ is accessible, then for a coalgebra $(C,\chi)$, and algebras $(A,\alpha)$ and $(B,\beta)$, a map $\phi\colon C\otimes A\rightarrow B$ uniquely determines an algebra homomorphism $\phi'\colon\free(C\otimes A)\rightarrow (B,\beta)$.
Notice then that a map $\phi\colon C\otimes A\rightarrow B$ is a measuring if and only if both composites from $\free(C\otimes FA)$ to $(B, \beta)$ coincide in the following diagram: 
\[
\begin{tikzcd}
   \free(C\otimes FA) \ar[shift left]{r}{\Phi} \ar[shift right]{r}[swap]{\Psi} & \free(C\otimes A) \ar{r}{\phi'} & (B, \beta).
\end{tikzcd}
\]
The morphism $\Phi$ is the free map induced by $\id\otimes \alpha\colon C\otimes FA\rightarrow C\otimes A$.
The morphism $\Psi$ is the adjunct under the free-forgetful adjunction of the composition
\[
\begin{tikzcd}[column sep=scriptsize]
   C\otimes FA \ar{r}{\chi\otimes \id}& FC\otimes FA \ar{r}{\nabla_{C,A}} & F(C\otimes A) \ar{r}{F(i)} & F(\free(C\otimes A)) \ar{r}{\alpha_\free} & \free(C\otimes A),
\end{tikzcd}
\]
in which $i$ is the unit of the free-forgetful adjunction and $\alpha_\free$ is the algebra structure on the free algebra $\free(C\otimes A)$. 
Then we see that the category $\cM_C(A)$ is equivalent to the category of cocones over the parallel morphisms described above. 
We have now shown the following.

\begin{theorem}
    \label{thm: copower}
Suppose that $\C$ symmetric monoidal closed category that is locally presentable, and that $F\colon \C\rightarrow \C$ is an accessible endofunctor.
Consider an $F$-coalgebra $C$ and an $F$-algebra $A$.
The measuring tensor $(C\triangleright A, u)$ exists and is determined as the following coequalizer in $\balg$:
\[
\begin{tikzcd}
   \free(C\otimes FA)\ar[shift left]{r}{\Phi} \ar[shift right]{r}[swap]{\Psi} & \free(C\otimes A) \ar[dashed]{r}{u'} & C\triangleright A.
\end{tikzcd}
\]
\end{theorem}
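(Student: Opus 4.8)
The plan is to dualize the argument for \cref{theorem: equalizer formula for universal measuring}: I would reduce the universal property defining the measuring tensor $C\triangleright A$ to the universal property of a coequalizer in $\balg$ by exhibiting an equivalence between the category $\cM_C(A)$ and the category of cocones over the displayed parallel pair $\free(C\otimes FA)\rightrightarrows\free(C\otimes A)$. The existence of the coequalizer itself then comes for free: since $\C$ is locally presentable and $F$ is accessible, $\balg$ is locally presentable, hence cocomplete \cite[Cor.~2.75]{presentable}, so the coequalizer exists and is by construction the initial such cocone. Under the claimed equivalence, this initial cocone is exactly the initial object $(C\triangleright A, u)$ of $\cM_C(A)$, with $u$ recovered as the adjunct of the coequalizing map precomposed with the unit of the free--forgetful adjunction.

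First I would fix the adjunction bijection $\balg(\free(C\otimes A), B)\cong\C(C\otimes A, B)$ supplied by $\free\dashv U$, so that every map $\phi\colon C\otimes A\to B$ corresponds to a unique algebra homomorphism $\phi'\colon\free(C\otimes A)\to (B,\beta)$ with $\phi=\phi'\circ i$, where $i$ is the unit. Objects of $\cM_C(A)$ over a fixed target $(B,\beta)$ are precisely the measurings $\phi$, i.e. those making the pentagon of \cref{definition: measuring coalgebra} commute; the content of the theorem is that under the bijection these are exactly the $\phi'$ for which the two composites $\free(C\otimes FA)\rightrightarrows\free(C\otimes A)\xrightarrow{\phi'}(B,\beta)$ agree. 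Verifying this equivalence of conditions, naturally in the target algebra, is the technical heart of the proof.

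The main obstacle is therefore checking that post-composition by $\phi'$ sends the two parallel arrows to the two legs of the measuring pentagon. For the first parallel arrow $\free(\id\otimes\alpha)$ this is immediate: its composite with $\phi'$ has adjunct $\phi\circ(\id\otimes\alpha)$, the lower path of the pentagon. For the second parallel arrow $s$, defined as the adjunct of $\alpha_\free\circ F(i)\circ\nabla_{C,A}\circ(\chi\otimes\id)$, I would compute the adjunct of $\phi'\circ s$ and use that $\phi'$ is an algebra homomorphism, so that $\phi'\circ\alpha_\free=\beta\circ F(\phi')$, together with the triangle identity $\phi'\circ i=\phi$; these collapse the composite $\phi'\circ\alpha_\free\circ F(i)\circ\nabla_{C,A}\circ(\chi\otimes\id)$ to $\beta\circ F(\phi)\circ\nabla_{C,A}\circ(\chi\otimes\id)$, which is exactly the upper path. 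Hence $\phi$ is a measuring if and only if $\phi'$ coequalizes the pair. Because this correspondence is natural in $(B,\beta)$ and sends morphisms of $\cM_C(A)$ to morphisms of cocones, it upgrades to an equivalence of categories, and matching initial objects identifies $C\triangleright A$ with the coequalizer as claimed.
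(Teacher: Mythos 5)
Your proposal is correct and follows essentially the same route as the paper: translating maps $C\otimes A\to B$ into algebra homomorphisms out of $\free(C\otimes A)$ via the free--forgetful adjunction, showing the measuring condition is equivalent to coequalizing the displayed parallel pair, and identifying $\cM_C(A)$ with the category of cocones so that initial objects (which exist since $\balg$ is locally presentable, hence cocomplete) correspond. The only difference is that you spell out the adjunct computation using $\phi'\circ\alpha_\free=\beta\circ F(\phi')$ explicitly, which the paper leaves as a "notice that."
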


\begin{remark}\label{remark: universal measuring as an adjoint}
The convolution algebra also provides an alternative characterization of the algebra $C\triangleright A$ and coalgebra $\ubalg(A,B)$.
As limits in $\balg$ and colimits in $\bcoalg$ are determined in $\C$ \cite{varieties} and the internal hom $[-,-]\colon\C^\op\times \C\rightarrow \C$ preserves limits, the functor $[-,-]\colon\bcoalg^\op\times \balg\rightarrow \balg$ also preserves limits. 
Moreover, fixing a coalgebra $C$, the induced functor $[C, -]\colon\balg\rightarrow \balg$ is accessible since filtered colimits in $\balg$ are computed in $\C$ (see \cite[5.6]{varieties}). 
Therefore, by the adjoint functor theorem \cite[1.66]{presentable}, the functor $[C,-]$ is a right adjoint. Its left adjoint is precisely $C\triangleright -\colon\balg\rightarrow \balg$. Indeed, for any algebras $A$ and $B$, we obtain the following bijection, natural in $C,A,B$.
\[
\balg \big( C\triangleright A, B\big) \cong \balg \big(A, [C,B]\big).
\]
Notice we can also determine the universal measuring coalgebra by using the adjoint functors.
Fixing now an algebra $B$, the opposite functor $[-,B]^\op\colon\bcoalg\rightarrow \balg^\op$ preserves colimits, where the domain is locally presentable and the codomain is essentially locally small. By the adjoint functor theorem \cite[1.66]{presentable}, this functor is a left adjoint. 
Its right adjoint is precisely the functor $\ubalg(-, B)\colon\balg^\op\rightarrow \bcoalg$. Indeed, for any algebra $A$ and $B$ and any coalgebra $C$, we have the following bijection, natural in $C,A,B$:
\begin{align*}
\bcoalg\big( C, \ubalg(A,B) \big) \cong \balg \big( A, [C,B]\big).
\end{align*}
\end{remark}

Combining the identifications, we see that the measuring functor is representable in each factor:
\begin{align}\label{eq: universal prop of measuring}
    \mu_C(A,B) \cong \bcoalg\big( C, \ubalg(A,B) \big)\cong \balg \big( A, [C,B]\big) \cong \balg \big( C\triangleright A, B\big).
\end{align}
In other words, for any algebra $A$ and $B$ and any coalgebra $C$, the following data are equivalent.
\vspace{0.2em}
\begin{center}
    \begin{tabular}{|c|c|c|c|c|}
    \hline  $C\otimes A\rightarrow B$ &  $C\rightarrow [A,B]$ & $C\rightarrow \ubalg(A,B)$ & $A\rightarrow [C,B]$ &  $C\triangleright A\rightarrow B$ \\
      measuring \vspace{-0.08em} & partial \vspace{-0.08em}& coalgebra  \vspace{-0.08em}& algebra\vspace{-0.08em} & algebra \vspace{-0.08em} \\ 
        &homomorphism  & homomorphism & homomorphism & homomorphism \\\hline 
    \end{tabular}
\end{center}
\vspace{0.1em}

\subsection{Interlude: Calculations Involving Preinitial Algebras and Subterminal Coalgebras}

In this section, we take a brief interlude to develop some theory and examples around preinitial algebras and subterminal coalgebras. These provide good examples with which to work out the theory: calculations involving them are both feasible and illuminating.

In some sense, the initial algebra is the `nicest' algebra, and the terminal coalgebra is the `nicest' coalgebra. But these are a bit too nice and so do not reveal much about the theory. Here, we take one step further into the world of (co)algebras by considering \emph{quotients} of the initial algebra, and \emph{subobjects} of the terminal coalgebra.

The reader who wants to read about the general theory before embarking on understanding these examples can skip to the next section, \cref{sec: Measuring as an enrichment}.

\begin{definition}
    [preinitial algebra, subterminal coalgebra]
    Say that an algebra $A$ is \emph{preinitial} \index{preinitial algebra} if the universal map $I \to A$ from the initial algebra $I$ is an epimorphism (in $\balg$). Dually, a coalgebra $C$ is \emph{subterminal} \index{subterminal coalgebra} if the universal map $C \to T$ to the terminal coalgebra $C$ is a monomorphism (in $\bcoalg$).
\end{definition}

Now we establish recognition theorems for preinitial algebras and subterminal coalgebras.

    \begin{proposition}\label{lem: quotient algebras}
        Consider a category $\C$, an endofunctor $F\colon \C \to \C$, and an algebra $(A,\alpha)$ of $F$.
        \begin{enumerate}
            \item Given a quotient object $Q$ of $A$ for which $\alpha$ restricts to a map $\alpha_Q \colon FQ \to Q$, then the pair $(Q, \alpha_Q)$ is a quotient object of $(A, \alpha)$.
            \item If the forgetful functor $\Alg \to \C$ preserves epimorphisms, then for every quotient object $(Q, \alpha_Q)$ of $(A, \alpha)$, $Q$ is a quotient object of $A$.
        \end{enumerate}
        Thus, if the forgetful functor preserves epimorphisms, it induces an isomorphism between the poset of quotient algebras of $(A,\alpha_A)$ and the poset of quotient objects $Q$ of $A$ for which $\alpha$ restricts to a morphism $\alpha_Q \colon FQ \to Q$.
    \end{proposition}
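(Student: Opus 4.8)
The plan is to read everything through the forgetful functor $U\colon \Alg \to \C$ and to exploit two formal properties of it: it is faithful, and (under the extra hypothesis) it preserves epimorphisms. Recall that a \emph{quotient object} of an object $X$ is an isomorphism class of epimorphisms out of $X$, ordered by factorisation, and that any faithful functor \emph{reflects} epimorphisms: if $Uf$ is epic then $f$ is epic, since $g\circ f = h\circ f$ forces $Ug\circ Uf = Uh\circ Uf$, hence $Ug = Uh$, hence $g=h$. The observation that ties the two sides together is that ``$\alpha$ restricts along $q\colon A\to Q$ to $\alpha_Q\colon FQ\to Q$'' means precisely $\alpha_Q\circ Fq = q\circ\alpha$, i.e.\ exactly that $q\colon(A,\alpha)\to(Q,\alpha_Q)$ is an $F$-algebra homomorphism; and since an $F$-algebra carries no further axioms, $(Q,\alpha_Q)$ is automatically an object of $\Alg$.

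For (1), given a quotient $q\colon A\to Q$ in $\C$ admitting a restriction $\alpha_Q$, the map $q$ is an algebra homomorphism by the observation above, and $Uq=q$ is epic in $\C$ because $q$ represents a quotient object of $A$; since $U$ is faithful it reflects epimorphisms, so $q$ is epic in $\Alg$ and $(Q,\alpha_Q)$ is a quotient object of $(A,\alpha)$. For (2), a quotient object of $(A,\alpha)$ is represented by an epimorphism $q\colon(A,\alpha)\to(Q,\alpha_Q)$ in $\Alg$; the hypothesis that $U$ preserves epimorphisms gives that $Uq=q$ is epic in $\C$, so $Q$ is a quotient object of $A$, and the homomorphism square for $q$ exhibits $\alpha_Q$ as a restriction of $\alpha$. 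Thus $Q$ lies in the indicated sub-poset.

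For the final clause, the constructions of (1) and (2) are mutually inverse on representatives: forgetting the structure map of a quotient algebra and re-reading its homomorphism square as a restriction returns the same $\alpha_Q$, while forming $(Q,\alpha_Q)$ from a restriction and then forgetting returns the same quotient of $A$. It remains to see that these assignments descend to the posets and are order-preserving; that is, whenever two such quotient objects of $A$ are comparable in $\C$ via a factorising map $h\colon Q_2\to Q_1$ with $h q_2 = q_1$ (and likewise for the isomorphisms identifying representatives), the map $h$ is automatically a homomorphism $(Q_2,\alpha_{Q_2})\to(Q_1,\alpha_{Q_1})$. The reverse monotonicity (from $\Alg$ to $\C$) is immediate, since $U$ carries factorisations to factorisations.

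The main obstacle is exactly this last verification, and it is where the behaviour of $F$ on epimorphisms enters. Precomposing the two candidate maps $\alpha_{Q_1}\circ Fh$ and $h\circ\alpha_{Q_2}$ with $Fq_2$ and using the restriction equations for $q_1,q_2$ together with $h q_2 = q_1$ shows that both composites equal $q_1\circ\alpha$; one may then cancel $Fq_2$ provided it is epic, obtaining $\alpha_{Q_1}\circ Fh = h\circ\alpha_{Q_2}$. The same cancellation shows that the restriction $\alpha_Q$ is \emph{uniquely} determined by $q$. Hence everything goes through as soon as $F$ sends the relevant quotient epimorphisms to epimorphisms; this is automatic in our running example, and more generally in $\Set$, where every epimorphism splits and is therefore preserved by every functor. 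Absent such a hypothesis one should read the sub-poset as carrying $\alpha_Q$ as data, in which case well-definedness and both monotonicities hold with no cancellation. Either way the forgetful functor underlies the claimed isomorphism of posets.
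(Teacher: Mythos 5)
Your proofs of parts (1) and (2) are correct and essentially identical to the paper's: the paper likewise observes that a restriction $\alpha_Q$ is exactly the statement that $q\colon (A,\alpha)\to(Q,\alpha_Q)$ is an algebra homomorphism, and proves $q$ epic in $\Alg$ by the same faithfulness argument you spell out (two homomorphisms agreeing after $q$ have equal underlying maps, hence are equal); part (2) is, as you say, immediate from the hypothesis.

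Where you genuinely diverge is the final poset-isomorphism clause, and there your extra care is warranted: the paper's own proof never addresses it (after part (1) it ends with a stray remark that a pushout-preserving forgetful functor preserves epimorphisms). The obstruction you isolate is real. Uniqueness of the restriction, injectivity of $(Q,\alpha_Q)\mapsto Q$, and order-reflection all require cancelling $Fq$, and the stated hypothesis that the forgetful functor preserves epimorphisms does not supply this. Concretely, take $\C=\mathrm{Ab}$, $F=\Hom(\mathbb{Z}/2,-)$, the algebra $(\mathbb{Z},\alpha)$ with $\alpha$ the unique map $F\mathbb{Z}=0\to\mathbb{Z}$, and the quotient $q\colon \mathbb{Z}\to\mathbb{Z}/4$: then $Fq$ is the map $0\to\mathbb{Z}/2$, so both the zero map and the inclusion $\mathbb{Z}/2\hookrightarrow\mathbb{Z}/4$ are restrictions of $\alpha$ along $q$, and they yield distinct quotient algebras of $(\mathbb{Z},\alpha)$ with the same underlying quotient object. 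So some hypothesis of the kind you introduce --- that $F$ sends the relevant quotient maps to epimorphisms --- is needed; it holds automatically over $\Set$ (every surjection splits), hence in all of the paper's applications, and indeed the paper's own use of this proposition in \cref{ex: quotient algebras for id + 1} verifies precisely that the endofunctor $\id+1$ preserves surjections, not anything about the forgetful functor. One caveat on your fallback reading: if the sub-poset carries $\alpha_Q$ as data but is ordered by factorizations in $\C$, antisymmetry fails (in the example above each of the two structures factors through the other via the identity), so that reading only works if the order also requires the factorizing maps to be homomorphisms --- at which point the object in question is the poset of quotient algebras by definition. In short, your proposal is correct where the paper's proof is complete, and it identifies and repairs a genuine gap where the paper's proof is silent.
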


    \begin{proof}
        For (1), asking that $\alpha$ restricts to a map $\alpha_Q$ is exactly asking that $(Q, \alpha_Q)$ is an algebra of $F$ and that the quotient map constitutes a total algebra homomorphism $q\colon (A, \alpha) \to (Q, \alpha_Q)$. 
        Since the forgetful functor $\Alg \to \C$ is faithful, it reflects epimorphisms. The second statement (2) is immediate.
    \end{proof}

    \begin{example}\label{ex: quotient algebras for id + 1}
        Consider the endofunctor $\id + 1$ on $\Set$. Since the forgetful functor $\Alg \to \C$ preserves epimorphisms, there is an isomorphism between the poset of quotient algebras of an algebra $(A,\alpha)$ and the poset of quotient sets $Q$ of $A$ for which $\alpha$ restricts to a morphism $\alpha_Q \colon FQ \to Q$.
    Thus, nontrivial preinitial algebras include those of the form
$\standalg{n} \coloneqq (\{0,1,...,n\}, \alpha_\standalg{n})$ for any $n \in \N$ where $\alpha_\standalg{n}$ is the algebra structure that $\{0,1,...,n\}$ inherits as the quotient of $\N$ in $\Set$ that identifies all $m \geq n$.
    \end{example}

    \begin{definition}
        [index]
        Consider a category $\C$ with a terminal object $\term$, an endofunctor $F\colon \C \to \C$, and suppose that the category of coalgebras of $F$ has a terminal coalgebra $T$.
        \begin{itemize}
            \item  Call a point $\term \to T$ an \emph{index}. For any index $n$, let $\bcoalg^n$ denote the category whose objects are pairs $((X, \chi_X), x)$ where $(X, \chi_X)$ is a coalgebra and $x\colon \term \to X$ such that $\ind{x} = n$, using the notation of \cref{example: terminal coalgebra of Nat}.
            \item Say that a coalgebra is \emph{freely generated by a point of index $n$} if it is an initial object in $\bcoalg^n$. Say that a coalgebra is \emph{generated by a point of index $n$} if it is the vertex of a cone on the identity endofunctor on $\bcoalg^n$.
        \end{itemize}   
    \end{definition}

    \begin{definition}[well-equipped]
        Say that $F \colon\C \to \C $ is \emph{well-equipped} \index{well-equipped endofunctor} if $\C$ is well-pointed and for every index $n$, there is a coalgebra generated by a point of index $n$.
    \end{definition}

    \begin{proposition}\label{lem:char-sub}
        Consider a category $\C$, an endofunctor $F$, and a coalgebra $(C, \chi)$.
        \begin{enumerate}
            \item Given a subobject $S$ of $C$ for which $\chi$ restricts to a morphism $\chi_S\colon S \to FS$, the pair $(S, \chi_S)$ is a subobject of $(C, \chi)$.
            \item If $F$ is well-equipped, then for every subcoalgebra $(S, \chi_S)$ of $(C, \chi)$, we have that $S$ is a subobject of $C$.
        \end{enumerate}
        Thus, if $F$ is well-equipped, there is an isomorphism between the poset of subobjects of $(C, \chi_C)$ and the poset of subobjects of $C$ for which $\chi_C$ restricts to a morphism $\chi_S\colon FS \to S$.
    \end{proposition}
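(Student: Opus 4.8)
The plan is to follow the pattern of the dual result, \cref{lem: quotient algebras}: part (1) will be a purely formal consequence of the forgetful functor being faithful, while part (2) is the substantial claim requiring the \emph{well-equipped} hypothesis. Throughout, write $U\colon \bcoalg \to \C$ for the forgetful functor and, for a coalgebra $X$, write $!_X\colon X \to T$ for the unique coalgebra homomorphism to the terminal coalgebra $T$, so that the index of a point $p\colon \term \to X$ is $\ind{p} = {!_X}\circ p$.

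For part (1), suppose $i\colon S \to C$ is a monomorphism in $\C$ along which $\chi$ restricts to a map $\chi_S\colon S \to FS$, meaning exactly that $Fi \circ \chi_S = \chi \circ i$; this is precisely the statement that $i$ is a coalgebra homomorphism $(S,\chi_S) \to (C,\chi)$. To see that $i$ is monic in $\bcoalg$, I would take coalgebra homomorphisms $f,g\colon (X,\chi_X) \to (S,\chi_S)$ with $if = ig$ and note that, since $U$ is faithful and $i$ is monic in $\C$, the underlying equality $Uf = Ug$ follows, hence $f = g$. This step is routine.

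The main obstacle is part (2): showing that a monomorphism $m\colon (S,\chi_S) \to (C,\chi)$ in $\bcoalg$ has underlying monomorphism $Um$ in $\C$. Since $\C$ is well-pointed, it suffices to prove that $Um$ is \emph{injective on points}, i.e. that $m s_1 = m s_2$ for points $s_1, s_2\colon \term \to S$ forces $s_1 = s_2$. Set $t \coloneqq m s_1 = m s_2$ and $n \coloneqq \ind{t}$. Because $m$ is a coalgebra homomorphism, ${!_S} = {!_C}\circ m$, so $\ind{s_i} = \ind{m s_i} = \ind{t} = n$; thus $(S,s_1)$, $(S,s_2)$ and $(C,t)$ all lie in $\bcoalg^n$, and $m$ defines morphisms $(S,s_i) \to (C,t)$ there. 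Now I would invoke well-equippedness to choose a coalgebra $G_n$ generated by a point of index $n$: being an object of $\bcoalg^n$ it carries a distinguished point $g\colon \term \to G_n$ of index $n$, and being a cone vertex on the identity functor of $\bcoalg^n$ it comes with a natural family of $\bcoalg^n$-morphisms $\lambda_Y\colon G_n \to Y$. The two properties I need are that each leg preserves distinguished points, so $\lambda_{(S,s_i)}\circ g = s_i$, and that naturality along $m\colon (S,s_i) \to (C,t)$ gives $m \circ \lambda_{(S,s_i)} = \lambda_{(C,t)}$.

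Combining these, $m \circ \lambda_{(S,s_1)} = \lambda_{(C,t)} = m \circ \lambda_{(S,s_2)}$ as coalgebra homomorphisms $G_n \to C$. Since $m$ is monic in $\bcoalg$, I conclude $\lambda_{(S,s_1)} = \lambda_{(S,s_2)}$, and precomposing with $g$ yields $s_1 = \lambda_{(S,s_1)} g = \lambda_{(S,s_2)} g = s_2$, as desired; hence $Um$ is injective on points, and well-pointedness upgrades this to $Um$ being monic in $\C$. Finally, for the poset isomorphism, part (1) sends a subobject $S \hookrightarrow C$ admitting a restriction $\chi_S\colon S \to FS$ to the subcoalgebra $(S,\chi_S) \hookrightarrow (C,\chi)$, while part (2), together with the fact that the underlying inclusion is a coalgebra homomorphism determining $\chi_S$ by restriction, shows every subcoalgebra arises this way; these assignments are order-preserving and mutually inverse, giving the claimed isomorphism of posets.
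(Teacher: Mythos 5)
Your proposal is correct and follows essentially the same route as the paper's proof: part (1) via faithfulness of the forgetful functor, and part (2) by matching indices of the two points, invoking a coalgebra generated by a point of that index, using the cone's naturality along the subcoalgebra inclusion together with its monicity in $\bcoalg$ to identify the cone legs, and finally appealing to well-pointedness. The only differences are notational (your $\lambda_{(S,s_i)}$ versus the paper's $\overline{x}, \overline{y}$), plus your slightly more explicit treatment of the closing poset-isomorphism claim, which the paper leaves implicit.
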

    \begin{proof}
    For (1), consider $\iota \colon S \rightarrowtail C$. Asking that $\chi$ restricts to a morphism $\chi_S$ means exactly that $\iota$ constitutes a total algebra homomorphism $\iota \colon (S, \chi_C) \to (C, \chi)$. This is a monomorphism since the forgetful functor $U\colon \Alg \to \C$ is faithful and thus reflects monomorphisms.

    For (2), consider a monomorphism $\iota\colon(S, \chi_S) \rightarrowtail (C, \chi)$. In order to show that $\iota \colon S \to C$ is also a monomorphism, consider two points $x, y\colon\term \to S$ in $\C$ such that $\iota x = \iota y$. Since $\C$ is assumed to be well-pointed, it suffices to show that $x = y$. Note that $\ind{x} = \ind{\iota x} = \ind{\iota y} =  \ind{y}$, and let $((I, \chi_I), \top_n)$ denote a coalgebra generated by a point of this index. Then we get two maps $\overline x, \overline y\colon(I, \chi_I) \to (S, \chi_S)$ and a map $\overline{\iota x} = \overline{\iota y} \colon (I, \chi_I) \to (C, \chi)$ such that $\iota \overline x = \overline{\iota x} = \overline{\iota y} = \iota \overline y$. Since $\iota$ is a monomorphism, $\overline x = \overline y$, but then $x = \overline x \top_n = \overline y \top_n = y\colon * \to \C$.
    \end{proof}

    \begin{example}\label{ex: subcoalgebras for id + 1}
        Consider the endofunctor $\id + 1$ on $\Set$. The nontrivial subsets of $\eN$ that are closed under the coalgebra morphism are $\{0,1,...,n\}$, $\{\infty\}$, $\N$ (and unions of these); let $\gls{subterm1}, \I, \gls{subterm2}$ denote the corresponding subterminal coalgebras. By the preceding proposition, these are the generating sub-coalgebras of $\eN$.
        The indices are the elements of $\eN$. Each coalgebra $\standcoalg{n}$ is freely generated by a point of index $n$ for finite $n$, and $\I$ is freely generated by a point of index $\infty$.
    \end{example}

Now, we can use preinitial algebras and terminal coalgebras of the endofunctor $1 + \id$ on $\Set$ to calculate some measurings.

\begin{example}
    \label{ex: partial induction}
    Consider algebras $A$ and $B$ of the endofunctor $1 + \id$ on $\Set$.
    Suppose that $A$ is preinitial, so that in particular every element of $A$ is of the form $0_A$ or $a + 1$.
    If we want to construct an algebra homomorphism $f\colon A \to B$, we can proceed by induction. First, we send $0_A \mapsto 0_B$. Next, we can try to send $1_A \mapsto 1_B$ (where $-_A$ denotes the algebra morphism $\N \to A$). Note that this only works if $1_A$ is different from $0_A$ or if $0_B = 1_B$. If this works, we can try to continue inductively, sending $n_A \mapsto n_B$. If this procedure succeeds, then we will have constructed an algebra homomorphism $f\colon A \to B$ by induction.
    
    We can turn this construction into a sequence of functions that we define in the following way.
    \begin{description}
        \item[{Initial step (P1)}:] Define $f_0\colon A\rightarrow B$ by $f_0(a)\coloneqq 0_B$ for all $a\in A$. 
        \item[{Inductive step}:] Define $f_{c+1} \colon A\rightarrow B$ by:
        \begin{description}
            \item[{(P2)}:] $f_{c+1}(0_A)\coloneqq 0_B$;
            \item[{(P3)}:] $f_{c+1}(a+1)\coloneqq f_{c}(a)+1$.
        \end{description}
    \end{description}
    If we have defined $f_c$ for all $c \in \N$, then we will say that we have defined an \emph{$\infty$-partial homomorphism}. Note that in this case, one can then form a total algebra homomorphism which we will denote $f_\infty$.
Otherwise, if we have only defined $f_c$ for all $c \in \{0, ..., n\}$, we will say that we have defined an \emph{$n$-partial homomorphism}.
    Now compare this induction with the definition of measuring (\cref{ex: measuring for - + 1}).
    There is a measuring from $A$ to $B$ by $\standcoalg{n}$ if the induction above creates an $n$-partial homomorphism, and in this case the functions of the form $f_c$ constructed are the same as those specified in \cref{ex: measuring for - + 1}.

    If we denote $\Alg_n(A,B)\coloneqq \mu_{\n^\circ}(A,B)\cong \coAlg(\n^\circ, \ubalg(A,B))$ the set of all $\n^\circ$-measurings from $A$ to $B$, the natural inclusions $\n^\circ\hookrightarrow (\n+1)^\circ$ are homomorphisms of coalgebras, so after applying the functor $\coAlg(-, \ubalg(A,B))\colon \coAlg^\op\rightarrow \Set$, we obtain a tower
     \[
\begin{tikzcd}[column sep=small]
    \Alg_\infty(A,B)\ar{r} & \cdots \ar{r} &
    \Alg_2(A,B) \ar{r}&
    \Alg_1(A,B) \ar{r}&
    \Alg_0(A,B),
\end{tikzcd}    
\] 
whose limit is $\Alg_\infty(A,B)\coloneqq \coAlg(\coaN, \Alg(A,B))$ as the right adjoint functor $\coAlg(-, \ubalg(A,B))$ sends colimit in $\coAlg$ to a limit in $\Set$,  and the colimit the coalgebra homomorphisms $\n^{\circ}\hookrightarrow (\n+1)^\circ$ is precisely the coalgebra $\coaN$.
Our construction above of $f_\infty$ precisely builds, layer by layer in the tower, an element of $\Alg_\infty(A,B)$.
    
    There is a measuring by $\coaN$ if the induction never fails, and again the functions $f_c$ from the induction above and \cref{ex: measuring for - + 1} coincide. Now, note that exhibiting a measuring by $\eN$ amounts to exhibiting a measuring by $\coaN$ together with a total algebra homomorphism $f_\infty$. For such an $A$, then, exhibiting a measuring by $\coaN$ is (logically) equivalent to exhibiting one by $\eN$.
\end{example}

Now, we show that the universal measuring coalgebra from $A$ to $B$ is subterminal and easy to calculate if $A$ is preinitial.

\begin{proposition}\label{lem:subobject}
    Consider two algebras $A,B$ such that $A$ is preinitial. Then $\ubalg(A,B)$ is a subterminal coalgebra, and it is the maximum subterminal coalgebra $C$ such that $\balg(A, [C,B])$ is nonempty.
\end{proposition}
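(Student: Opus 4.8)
The plan is to read off both claims from the representability of the measuring functor, namely the natural isomorphism
\[
\bcoalg(D, \ubalg(A,B)) \cong \mu_D(A,B) \cong \balg(A, [D,B])
\]
from \cref{eq: universal prop of measuring}, combined with one elementary consequence of preinitiality. So first I would record that consequence: since $A$ is preinitial, the canonical map $\iota\colon I \to A$ from the initial algebra is an epimorphism, so precomposition $\iota^*\colon \balg(A, X) \to \balg(I, X)$ is injective for every algebra $X$. As $I$ is initial, $\balg(I, X)$ is a singleton, and therefore $\balg(A, X)$ has \emph{at most one} element for every algebra $X$. This cardinality bound is the crux from which everything follows.

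Next I would prove that $\ubalg(A,B)$ is subterminal. A coalgebra $C$ is subterminal precisely when the map $C \to T$ to the terminal coalgebra is a monomorphism, and since $T$ is terminal this is equivalent to $\bcoalg(D, C)$ having at most one element for every coalgebra $D$. Taking $X = [D,B]$ in the isomorphism above gives $\bcoalg(D, \ubalg(A,B)) \cong \balg(A, [D,B])$, whose right-hand side has at most one element by the previous paragraph. Hence $\bcoalg(D, \ubalg(A,B))$ is a subsingleton for all $D$, so $\ubalg(A,B)$ is subterminal.

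Finally I would verify maximality in the poset of subobjects of $T$. That $\ubalg(A,B)$ lies in the relevant family is immediate: the identity on $\ubalg(A,B)$ corresponds under the universal property to the adjunct of the evaluation $\widetilde{\ev}$, an algebra homomorphism $A \to [\ubalg(A,B),B]$, so $\balg(A,[\ubalg(A,B),B])$ is nonempty. For the upper-bound property, suppose $C$ is any subterminal coalgebra with $\balg(A,[C,B]) \neq \emptyset$. Then $\bcoalg(C, \ubalg(A,B)) \cong \balg(A,[C,B])$ is nonempty, yielding a coalgebra homomorphism $u\colon C \to \ubalg(A,B)$; composing with $\ubalg(A,B) \to T$ produces the unique map $C \to T$ by terminality, so $u$ is a morphism over $T$ and witnesses $C \leq \ubalg(A,B)$. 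Thus $\ubalg(A,B)$ is the maximum.

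The proof is largely formal once the preinitiality bound is in hand; the one point requiring care is the subobject bookkeeping in the maximality step. There one must observe that the comparison $u$ is automatically monic, since its composite with the monomorphism $\ubalg(A,B) \to T$ equals the monomorphism $C \to T$, so that the comparison genuinely lives inside the poset of subterminal coalgebras rather than being merely an abstract morphism. I expect this to be the only place where sloppiness could creep in.
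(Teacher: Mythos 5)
Your proof is correct and follows essentially the same route as the paper: preinitiality of $A$ forces $\balg(A,X)$ to be a subsingleton for every algebra $X$ (injectivity of precomposition along the epimorphism from the initial algebra), and the representability $\bcoalg(D,\ubalg(A,B)) \cong \balg(A,[D,B])$ then makes every hom-set of coalgebra maps into $\ubalg(A,B)$ a subsingleton, which is exactly monomorphy of the canonical map to the terminal coalgebra. You in fact go slightly further than the paper's own proof, which stops after establishing subterminality: your explicit verification that $\ubalg(A,B)$ belongs to the family (via the adjunct of $\widetilde{\ev}$) and of the upper-bound property in the subobject poset is the part of the statement the paper leaves implicit.
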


\begin{proof}
    We first claim that $\balg(A, [C,B])$ is empty or a singleton for any coalgebra $C$.
    Since $!_A\colon \N \to A$ is an epimorphism, the function
    \[\balg(A, [C,B]) \xrightarrow{!^*_A}  \balg(\N, [C,B]) \]
    is an injection, and since $\N$ is initial, $\balg(\N, [C,B]) \cong *$.
    Thus, $\balg(A, [C,B])$ is a singleton, in the case where the only map $\N \to [C,B]$ factors through $A$, and otherwise is empty.

    We now claim that the canonical map $\ind{-}\colon\ubalg(A,B) \to \eN$ is a monomorphism. Suppose there are maps $a,b\colon C \to \ubalg(A,B)$ from a coalgebra $C$ such that $\ind{a} = \ind{b}$. Recall that the coalgebra $\ubalg(A,B)$ is defined by the following universal property for any coalgebra $C$
    \[ \bcoalg(C,\ubalg(A,B)) \cong \balg(A, [C,B]).\]
    Thus, $\bcoalg(C,\ubalg(A,B))$ is a singleton, so $a = b$, and $\ind{-}$ is monic.
\end{proof}

\begin{example}\label{ex:universal measuring for preinitial}
    Again, consider algebras $A$ and $B$ of the endofunctor $1 + \id$ on $\Set$ where $A$ is preinitial.
    In this case, the universal measuring is a subterminal coalgebra by the above. If the induction of \cref{ex: partial induction} succeeds in constructing $f_i$ for $i \leq n$ but fails to construct $f_{n+1}$, then the maximum subterminal coalgebra that measures $A \to B$ is $\standcoalg{n}$, so this is the universal measuring. And if the induction of \cref{ex: partial induction} creates an $\infty$-partial homomorphism, i.e., a total homomorphism, then the maximum subterminal coalgebra that measures $A$ to $B$ is $\eN$ itself, so this is the universal measuring.
    We will also show this fact more directly (i.e., without reference to \cref{lem:subobject}) below.
\end{example}

\begin{example}
    \label{ex: compute partial alg hom}
We compute $\ubalg(\standalg{n},B)$ for any algebra $B$ of $1 + \id$ using \cref{eq: universal prop of measuring}.
We first observe the following for any algebra $Z$:
\[ \balg(\standalg{n} , Z) \cong 
\begin{cases} 
    * & \text{if } n_{Z} = (n+1)_{Z} \\ \emptyset & \text{otherwise}.
\end{cases}
\] 
Since we are considering $Z\coloneqq [C,B]$, we need to understand when $n_{[C,B]} = (n+1)_{[C,B]}$. By definition, $0_{[C,A]}$ is the constant function at $0_A$. Then $1_{[C,A]}$ is the function that takes every $c \in C$ of index $0$ to $0_B$, and every other $c \in C$ to $1_B$. Inductively, we can show that $n_{[C,B]}(c) = \min(\llbracket c \rrbracket , n)_{B}$.
Thus, $n_{[C,B]} = (n+1)_{[C,B]}$ means that $\min(\llbracket c \rrbracket , n)_{B} = \min(\llbracket c \rrbracket , n + 1)_{B}$ for all $c \in C$, and this holds if and only if $\llbracket c \rrbracket \leq n$ for all $c \in C$ or $n_B = (n+1)_B$. Now we have the following:
\begin{align}\label{eq: conv alg calc}
    \bcoalg(C ,  \ubalg(\standalg{n},B)) \cong \balg(\standalg{n} , [C,B]) \cong
\begin{cases} 
    * & \text{if } \llbracket c \rrbracket \leq n \text{ for all } c \in C \\ 
    * & \text{if } n_B = (n+1)_B \\
    \emptyset & \text{otherwise.} 
\end{cases}
\end{align}
In the case that $n_B = (n+1)_B$, we find that $\ubalg(\standalg{n},B)$ has the universal property of the terminal object, $\eN$.
Now suppose that $n_B \neq (n+1)_B$. Since $\bcoalg(C, \standcoalg{n}) = *$ if and only if $\llbracket c \rrbracket \leq n$ for all $c \in C$, $\ubalg(\standalg{n},B)$ has the universal property of $\standcoalg{n}$. 

Now we have calculated the following 
\[ \ubalg(\standalg{n},B) = \begin{cases}
    \eN \text{ if } n_B = (n+1)_B \\
    \standcoalg{n} \text{ otherwise.}
\end{cases} \]
This aligns with our expectations, since there is a total homomorphism $\standalg{n} \to B$ if $n_B = (n+1)_B$ but there is only an $n$-partial homomorphism $\standalg{n} \to B$ otherwise.
Finally, note that taking $B \coloneqq\N$, we have calculated $\ubalg(\standalg{n},\N)$, the \emph{dual} (\cref{ex: dual}) of $\standalg{n}$, to be $\standcoalg{n}$.
\end{example}

\subsection{Measuring as an Enrichment}
\label{sec: Measuring as an enrichment}

We now come to the main punchline of the general theory presented in this paper: that $\ubalg(-,-)$ gives the category of algebras an enrichment in coalgebras. First, we describe how to compose measurings.

\begin{proposition}\label{prop:sym-mon}
    Let $\C$ be a (symmetric) monoidal category.
    Let $F\colon \C\rightarrow \C$ be a lax (symmetric) monoidal functor.
    Then the category $\coAlg$ has a (symmetric) monoidal structure for which the forgetful functor $\coAlg\rightarrow \C$ is strong (symmetric) monoidal.
    Moreover, if $\C$ is locally presentable and closed, and $F$ is accessible, then $\coAlg$ is also closed, with internal hom denoted $\ubcoalg(-,-)$.
\end{proposition}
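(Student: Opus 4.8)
The plan is to transport the monoidal structure of $\C$ to $\coAlg$ along the forgetful functor $U\colon \coAlg \to \C$, using the lax structure maps of $F$ to equip tensor products of coalgebras with coalgebra structures. Given $F$-coalgebras $(C,\chi_C)$ and $(D,\chi_D)$, I would define their tensor to be $C\otimes D$ equipped with the composite
\[
C\otimes D \xrightarrow{\chi_C\otimes \chi_D} FC\otimes FD \xrightarrow{\nabla_{C,D}} F(C\otimes D),
\]
and I would take the monoidal unit to be $(\I,\eta)$, the $F$-coalgebra determined by the lax unit map $\eta\colon \I\to F\I$. Since an $F$-coalgebra carries no coherence conditions, every such pair is automatically an $F$-coalgebra, so nothing must be checked for the objects themselves. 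Functoriality of $\otimes$ on $\coAlg$ — that $f\otimes g$ is a coalgebra homomorphism whenever $f$ and $g$ are — follows at once from the naturality of $\nabla$. By construction $U$ is strict on underlying objects and units, so it will be strong (indeed strict) monoidal once the structure isomorphisms are shown to lift to $\coAlg$.

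Next I would verify that the associator, the left and right unitors, and the symmetry of $\C$ underlie coalgebra homomorphisms. Each of these verifications is exactly one of the coherence axioms imposed on $(F,\nabla,\eta)$ in \cref{definition: lax symmetric monoidal}: the associativity axiom for $\nabla$ makes the associator a coalgebra homomorphism, the two unitality axioms handle the unitors, and the commutativity axiom handles the symmetry $\tau_{X,Y}$. Once these isomorphisms are known to live in $\coAlg$, the pentagon, triangle, and hexagon identities hold in $\coAlg$ automatically: the forgetful functor $U$ is faithful and sends them to the corresponding coherence diagrams in $\C$, which already commute. This yields the (symmetric) monoidal structure together with the claim that $U$ is strong (symmetric) monoidal; the parenthetical ``symmetric'' is optional throughout, the non-symmetric statement using only associativity and unitality.

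For the closed structure, I would fix a coalgebra $C$ and show that $C\otimes -\colon \coAlg\to \coAlg$ is a left adjoint via the adjoint functor theorem. The key input is that $U\colon \coAlg\to \C$ creates all small colimits: given a diagram of coalgebras, the colimit of the underlying objects in $\C$ inherits a canonical coalgebra structure from the universal property, and this is the colimit in $\coAlg$; this requires no hypothesis on $F$. Since $\C$ is closed, $-\otimes-$ preserves colimits in each variable in $\C$, and because $U$ creates colimits while $U(C\otimes -)=UC\otimes U(-)$, it follows that $C\otimes -$ preserves all small colimits in $\coAlg$. As $\C$ is locally presentable and $F$ is accessible, $\coAlg$ is locally presentable by \cite[Cor.~2.75]{presentable}. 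The adjoint functor theorem for locally presentable categories \cite[1.66]{presentable} then produces a right adjoint, which I denote $\ubcoalg(C,-)$; assembling these over $C$ by the theory of parametrized adjoints yields the internal hom $\ubcoalg(-,-)\colon \coAlg^\op\times \coAlg\to \coAlg$ exhibiting $\coAlg$ as closed.

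The verifications in the first two paragraphs are routine once one recognizes that the lax symmetric monoidal axioms of $F$ are precisely calibrated to turn the structure isomorphisms of $\C$ into coalgebra homomorphisms; the only genuine content there is this recognition, together with the faithfulness argument that lets coherence descend for free. The main technical step is therefore the closed part, where the crux is establishing that colimits in $\coAlg$ are computed in $\C$ so that $C\otimes -$ is cocontinuous, after which the adjoint functor theorem does the remaining work.
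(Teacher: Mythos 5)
Your proposal is correct and follows essentially the same route as the paper's proof: the same tensor product of coalgebras via $\nabla\circ(\chi_C\otimes\chi_D)$ with unit $(\I,\eta)$, the same lifting of the structure isomorphisms to $\coAlg$ using the lax (symmetric) monoidal axioms of $F$ with coherence descending from $\C$, and the same argument for closedness, namely that $\coAlg$ is locally presentable with colimits computed in $\C$, so $C\otimes-$ is cocontinuous and the adjoint functor theorem supplies $\ubcoalg(C,-)$. Your additional remarks (faithfulness of the forgetful functor for coherence, creation of colimits requiring no hypothesis on $F$) only make explicit steps the paper leaves implicit.
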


\begin{proof}
    Suppose $(C, \chi_C)$ and $(D, \chi_D)$ are coalgebras. Then $C\otimes D$ has the following coalgebra structure:
    \[
\begin{tikzcd}
{C \otimes D} \ar{r}{\chi_C \otimes \chi_D} & [1em] {F(C) \otimes F(D)} \ar{r}{\nabla_{C,D}} & {F(C\otimes D).}
\end{tikzcd}
\]
The morphism $\eta\colon \I\rightarrow F(\I)$ provides the coalgebraic structure on $\I$.

To see that $(\bcoalg, \otimes, (\I, \eta))$ is a symmetric monoidal category, we first need to check that $(C, \chi_C)\otimes (\I, \eta)\cong (C, \chi_C)$. On objects, we have $C\otimes \I\cong C$ as $\I$ is a monoidal unit. The coalgebraic structures correspond as we have the following commutative diagram by unitality of $(F, \nabla, \eta)$:
\[
\begin{tikzcd}
   C \ar{r}{\cong}\ar{d}[swap]{\chi} & C\otimes \I \ar{dr}{\chi\otimes \eta} \\
   F(C) \ar[bend right=2em, equals]{rrrr} \ar{r}{\cong} & F(C)\otimes \I \ar{r}[swap]{\id\otimes \eta}& F(C)\otimes F(\I) \ar{r}{\nabla} & F(C\otimes \I)\ar{r}{\cong}  & F(C).
\end{tikzcd}
\]
Similarly, we have a natural isomorphism $ (\I, \eta)\otimes (C, \chi_C)\cong (C, \chi_C)$ from the identification $\I\otimes C\cong C$. 
Moreover, we also have the following natural isomorphism
\[
((C, \chi)\otimes (C', \chi')) \otimes (C'', \chi'') \cong (C, \chi)\otimes ((C', \chi') \otimes (C'', \chi'')),
\]
lifted from $(C\otimes C')\otimes C''\cong C\otimes (C'\otimes C'')$.
Lastly, the symmetry $C\otimes C'\cong C'\otimes C$ lifts to $\bcoalg$. Therefore as $\C$ is symmetric monoidal, the above natural identifications assemble to a symmetric monoidal structure on $\bcoalg$ as the pentagon, triangle, hexagon and symmetric identities remain valid in $\bcoalg$.

Suppose now that $\C$ is locally presentable and closed, and $F$ is accessible. Then for all objects $X\in \C$, we have that $X\otimes -\colon \C\rightarrow \C$ is a left adjoint functor,  with right adjoint given by its the internal hom $[X, -]\colon \C\rightarrow \C$. 
Since $\coAlg$ is also locally presentable and colimits are computed in $\C$, we get that for all $C\in \coAlg$, the induced functor $C\otimes -\colon \coAlg\rightarrow \coAlg$ also preserves all colimits and thus must be a left adjoint as $\coAlg$ is locally presentable. 
We denote by $\ubcoalg(C,-)\colon \coAlg\rightarrow \coAlg$ its right adjoint. The construction is natural in $C$ and thus defines the desired closed structure on $\coAlg$.
\end{proof}

We can build explicitly the internal hom $\ubcoalg(-,-)$ of $\coAlg$ as follows.
Given coalgebras $(C, \chi_C)$ and $(D,\chi_D)$, the coalgebra $\gls{CoAlgHom}$ is the following equalizer in $\coAlg$:
\[
\begin{tikzcd}
    \ubcoalg(C, D) \ar[dashed]{r} & \cofree([C, D]) \ar[shift left]{r}{\Phi} \ar[shift right]{r}[swap]{\Psi} & \cofree([C, FD]).
\end{tikzcd}    
\]
The morphism $\Phi$ is induced by the map $(\chi_D)_*\colon [C, D]\to [C, FD]$ in $\C$.
The morphism $\Psi$ is induced by the forgetful-cofree adjunction on a morphism the following morphism:
\[
\begin{tikzcd}
    [column sep=scriptsize]
 \cofree([C,D]) \ar{r}{\chi_\cofree} & F\cofree([C, D]) \ar{r}{F\varepsilon} & F{[C, D]}  \ar{r}{\widetilde{\nabla}_{C, D}} & {[FC, FD]} \ar{r}{\chi_C^*} & {[C ,FD]}. 
\end{tikzcd}    
\]

Now we can state and prove our main theorem.

\begin{theorem}
    \label{thm:enriched}
Suppose that $\C$ is a locally presentable, symmetric monoidal closed category. Let $F\colon \C \to \C $ be an accessible lax symmetric monoidal endofunctor.
Then the category $\balg$ is enriched, tensored, and cotensored over the closed symmetric monoidal category $\bcoalg$ respectively via
\[
\balg^\op \times \balg\xrightarrow{\ubalg(-,-)} \bcoalg, \quad \bcoalg\times \balg \xrightarrow{-\triangleright-} \balg, \quad \bcoalg^\op\times \balg\xrightarrow{[-,-]} \balg.
\]
In other words, for any coalgebra $C$, and algebras $A$ and $B$, we obtain the natural identifications in $\coAlg$:
\[
  \ubcoalg(C, \ubalg(A,B))\cong \ubalg(C \triangleright A, B) \cong \ubalg(A, [C, B]).  
\]
\end{theorem}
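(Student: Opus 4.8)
The plan is to follow the strategy of the proof of \cref{thm: alg enriched in coalg} in the Sweedler setting, replacing the hands-on arguments there by the structural results already assembled. The engine is the abstract enrichment theorem \cite[3.1]{hylandetal}: if $\V$ is a monoidal closed category and a category $\mathcal{A}$ carries an action of $\V$ (a $\V$-module) for which, for each object, the acting functor admits a right adjoint, then $\mathcal{A}$ is enriched, tensored and cotensored over $\V$. I will take $\V = \bcoalg$ and $\mathcal{A} = \balg^\op$, so that dualizing at the end yields the claim for $\balg$.

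First I would record that $\bcoalg$ is a closed symmetric monoidal category, which is exactly \cref{prop:sym-mon}; its unit is $(\I, \eta)$, its tensor is computed in $\C$, and its internal hom is $\ubcoalg(-,-)$. Next I would promote the convolution functor $[-,-]\colon \bcoalg^\op \times \balg \to \balg$ to an action of $\bcoalg$ on $\balg^\op$. Concretely, I would exhibit natural isomorphisms of $F$-algebras
\[
[C \otimes D, A] \cong [C, [D, A]], \qquad [\I, A] \cong A,
\]
the second being the example showing $[\I, B] \cong B$. Passing to opposite categories, $[-,-]^\op \colon \bcoalg \times \balg^\op \to \balg^\op$ becomes a covariant action making $\balg^\op$ a left $\bcoalg$-module. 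Finally, for each fixed algebra $B$, the acting functor $[-,B]^\op \colon \bcoalg \to \balg^\op$ admits a right adjoint, namely $\ubalg(-,B)\colon \balg^\op \to \bcoalg$: this is precisely the natural bijection $\bcoalg(C, \ubalg(A,B)) \cong \balg(A, [C,B])$ from \cref{eq: universal prop of measuring}, read as an adjunction $\balg^\op([C,B]^\op, A^\op) \cong \bcoalg(C, \ubalg(A,B))$. With these three inputs, \cite[3.1]{hylandetal} gives that $\balg^\op$ is enriched, tensored and cotensored over $\bcoalg$, hence so is $\balg$.

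It then remains to identify the resulting tensor and cotensor functors with $-\triangleright-$ and $[-,-]$, and to read off the three displayed isomorphisms. The tensor $\odot$ is characterized by $\balg(C\odot A, B) \cong \bcoalg(C, \ubalg(A,B))$; comparing with the chain $\bcoalg(C, \ubalg(A,B)) \cong \mu_C(A,B) \cong \balg(C\triangleright A, B)$ of \cref{eq: universal prop of measuring} and invoking Yoneda forces $C \odot A \cong C \triangleright A$. Dually, the cotensor $\pitchfork$ satisfies $\balg(A, C\pitchfork B) \cong \bcoalg(C, \ubalg(A,B)) \cong \balg(A, [C,B])$, so $C \pitchfork B \cong [C,B]$. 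The two defining compatibilities of an enriched-tensored-cotensored category, namely $\ubalg(C\triangleright A, B) \cong \ubcoalg(C, \ubalg(A,B))$ and $\ubalg(A, [C,B]) \cong \ubcoalg(C, \ubalg(A,B))$, then assemble into the asserted chain.

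\textbf{Main obstacle.} I expect the one genuinely technical step to be verifying that the associativity isomorphism $[C \otimes D, A] \cong [C,[D,A]]$ underlying the $\C$-level closed structure is an isomorphism of convolution $F$-algebras, i.e. that it intertwines the two convolution products. This is where the coherences of the lax symmetric monoidal structure on $F$ enter: the convolution on $[C\otimes D, A]$ uses the coalgebra structure on $C \otimes D$ built from $\nabla_{C,D}$ in \cref{prop:sym-mon} together with $\widetilde{\nabla}$, whereas the convolution on $[C,[D,A]]$ is the iterated convolution using $C$ and the convolution algebra $[D,A]$; their agreement reduces, after adjoint transposition, to the associativity diagram for $\nabla$ and the coassociativity of $C$ and $D$. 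The unit isomorphism $[\I, A]\cong A$ is comparatively routine, relying on the unitality axiom for $(F,\nabla,\eta)$.
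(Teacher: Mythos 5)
Your proposal is correct and follows essentially the same route as the paper's proof: present $\balg^\op$ as a left $\bcoalg$-module via the convolution functor $[-,-]^\op$, observe that each $[-,B]^\op\colon\bcoalg\to\balg^\op$ has right adjoint $\ubalg(-,B)$ (the paper cites \cref{remark: universal measuring as an adjoint} for this, which is the same bijection you invoke), and conclude by \cite[3.1]{hylandetal}. The extra steps you spell out --- the Yoneda identification of the abstract tensor and cotensor with $\triangleright$ and $[-,-]$, and the verification that $[C\otimes D,A]\cong[C,[D,A]]$ is an isomorphism of convolution algebras --- are precisely the details the paper leaves implicit in the phrase ``presents $\balg$ as a right module over $\bcoalg^\op$.''
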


\begin{proof}
    The convolution algebra functor $[-,-]\colon\bcoalg^\op\times \balg\rightarrow \balg$ presents $\balg$ as a right module over $\bcoalg^\op$. 
    Thus $\balg^\op$ is a left module over $\bcoalg$ via \[[-,-]^\op\colon \bcoalg\times \balg^\op \rightarrow \balg^\op.\] 
    Moreover, for any $A\in \balg$, the functor $[-,B]^\op\colon\bcoalg\rightarrow \balg^\op$ 
    has a right adjoint $\ubalg(-, B)\colon\balg^\op\rightarrow \bcoalg$ as shown in \cref{remark: universal measuring as an adjoint}.
    Thus, by \cite[3.1]{hylandetal}, the category $\balg^\op$ is enriched over $\bcoalg$ with tensor $[-,-]^\op$. 
    Therefore $\balg$ is enriched over $\bcoalg$ with power $[-,-]$. 
\end{proof}

\begin{remark}
    While $\Alg$ is enriched in $\coAlg$, we cannot dualize our arguments to show that $\coAlg$ is enriched in $\Alg$.
    This is because if $\C$ is locally presentable, then $\C^\op$ is generally not locally presentable \cite[1.64]{presentable}. 
    Instead, just as in \cref{remark: coalgebras measurings and universal internal hom}, the internal hom in $\coAlg$ provides that  $\coAlg$ is enriched, tensored, and cotensored over itself. Given $F$-coalgebras $(C, \chi)$, $(C_1, \chi_1)$ and $(C_2, \chi_2)$, we say a morphism $\widetilde{\varphi}\colon C\rightarrow [C_1, C_2]$ is a\emph{ $C$-indexed partial homomorphism} if its adjunct $\varphi\colon C\otimes C_1\rightarrow C_2$ is a homomorphism of coalgebras (which can be renamed as a \emph{$C$-measuring from $C_1$ to $C_2$}).
    In other words, this means we obtain the commutative diagram in $\C$
    \[
        \begin{tikzcd}
          & F(C) \ar{r}{F(\widetilde{\phi})} &[1em] F([C_1,C_2]) \ar{r}{\widetilde{\nabla}_{C_1,C_2}} & [1em] {[FC_1, FC_2]} \ar{dd}{\chi_1^*}\\
         C \ar{ur}{\chi} \ar{dr}[swap]{\widetilde{\phi}} \\
         & {[C_1,C_2]} \ar{rr}{(\chi_2)_*} & & {[C_1,FC_2]}.
        \end{tikzcd}
       \]
    Then the internal hom $\ubcoalg(C_1, C_2)$ can be viewed as the universal measuring coalgebra in the sense that given any $C$-indexed partial homomorphism $\widetilde{\varphi}\colon C\rightarrow [C_1, C_2]$ (or equivalently a measuring $\varphi\colon C\otimes C_1\rightarrow C_2$), there exists a unique coalgebra homomorphism $u_\varphi\colon C\rightarrow \ubcoalg(C_1, C_2)$ such that the diagram commutes:
    \[
    \begin{tikzcd}
        C \ar{r}{\widetilde{\varphi}} \ar[dashed]{d}[swap]{\exists !\, u_\varphi} & {[C_1, C_2]}\\
        \ubcoalg(C_1, C_2). \ar{ur}[swap]{\widetilde{\ev}}
    \end{tikzcd}
    \]
    This is making precise the coalgebraic variant of \cref{thm:enriched}: there is a natural identification in $\coAlg$
    \[
    \ubcoalg(C, \ubcoalg(C_1, C_2)) \cong \ubcoalg(C\otimes C_1, C_2)\cong \ubcoalg(C_1, \ubcoalg(C, C_2)).
    \]
    The last identification just signifies there is a symmetry in the coalgebraic case: a $C$-measuring $C\otimes C_1\rightarrow C_2$ can be viewed as a $C_1$-measuring $C_1\otimes C\rightarrow C_2$.
\end{remark}

\begin{example}\label{ex: partial induction part deux}
    We can strengthen \cref{ex: partial induction} as follows. 
     Given algebras $A$ and $B$ over endofunctor $-+1$, denote by $\ubalg_n(A,B)$ the coalgebra $\ubcoalg(\n^\circ, \ubalg(A,B))$.
     The inclusions $\n^\circ \hookrightarrow (\n+1)^\circ$ define a tower in $\coAlg$:
      \[
\begin{tikzcd}[column sep=small]
    \ubalg_\infty(A,B)\ar{r} & \cdots \ar{r} &
    \ubalg_2(A,B) \ar{r}&
    \ubalg_1(A,B) \ar{r}&
    \ubalg_0(A,B).
\end{tikzcd}    
\]
Just as in \cref{ex: partial induction}, we compute that its limit is given by $\ubalg_\infty(A,B)\coloneqq\ubcoalg(\coaN, \ubalg(A,B))$.
The coalgebras $\ubalg_n(A,B)$ should be considered as the universal measuring of $n$-partial homomorphisms, while $\ubalg_\infty(A,B)$ is \emph{the universal measuring of $\infty$-partial homomorphisms}: these are functions $f_\infty\colon A\rightarrow B$ for which $f_\infty((n+1)_A)=f_\infty(n_A)+1$, and thus are equivalent to total algebra homomorphisms if $A$ is preinitial.
\end{example}

\begin{remark}\label{remark: generalized tower}
 The previous example suggests the following generalization.
 Denote by $1$ the terminal object of a locally presentable, symmetric monoidal closed category $\C$.
Let $F$ be an accessible lax symmetric monoidal functor.
The unique map $!\colon F1\rightarrow 1$ in $\C$ determines an algebra structure on $1$.
In fact, $1$ is also the terminal object in $\Alg$. 
The induced map $F!\colon F^21\rightarrow F1$ defines also an algebra structure on $F1$ such that the map $!\colon F1\rightarrow 1$ is homomorphism of $F$-algebras.
More generally, we obtain algebra homomorphisms $F^{n}!\colon F^{n+1}1\rightarrow F^{n}1$ for all $n\geq 0$. 
We have successfully built a tower in $\Alg$:
\[
\begin{tikzcd}
\cdots \ar{r} & F^31\ar{r}{F^2!} \ar{r} & F^2 1 \ar{r}{F!} & F1 \ar{r}{!} & 1,
\end{tikzcd}    
\]
the same one that appeared in Ad\'amek's fixed point theorem (\cref{thm:adamek}), but considered in the category $\Alg$ instead of $\C$.
We now apply the dual coalgebra functor $(-)^\circ\colon\Alg^\op\rightarrow \coAlg$ from \cref{ex: dual}.  
We obtain a filtered diagram in $\coAlg$:
\begin{equation}\label{eq: dual of cofree Adamek}
    \begin{tikzcd}
1^\circ \ar{r}{!^\circ} & (F1)^\circ \ar{r}{F!^\circ} & (F^21)^\circ \ar{r}{F^2!^\circ}& (F^3 1)^\circ \ar{r}  & \cdots.  
    \end{tikzcd}
\end{equation}
Denote by $\Limi$ the colimit of the above diagram in $\coAlg$ (which is always computed in $\C$).
For all $n\geq 0$, given $F$-algebras $A$ and $B$, we define the \emph{$n$-partial measuring coalgebra from $A$ to $B$} to be the coalgebra determined as:
\[
\ubalg_n(A, B)\coloneqq \ubcoalg((F^n 1)^\circ, \ubalg(A, B)).    
\]   
The \emph{$\infty$-partial measuring coalgebra from $A$ to $B$} is the coalgebra defined as
\[
    \ubalg_\infty(A, B)\coloneqq \ubcoalg(\Limi, \ubalg(A, B)).   
\]
If we apply the right adjoint functor $\ubcoalg(-, \ubalg(A, B))\colon \coAlg^\op\to \coAlg$ to the filtered diagram of \cref{eq: dual of cofree Adamek}, we obtain the tower in $\coAlg$:
 \[
\begin{tikzcd}[column sep=small]
    \ubalg_\infty(A,B)\ar{r} & \cdots \ar{r} &
    \ubalg_2(A,B) \ar{r}&
    \ubalg_1(A,B) \ar{r}&
    \ubalg_0(A,B).
\end{tikzcd}    
\]
As right adjoints preserve limits, we indeed obtain that $\ubalg_\infty(A,B)$ is the limit of the tower:
\begin{align*}
    \ubalg_\infty(A,B) & := \ubcoalg(\Limi, \ubalg(A, B))\\
    & = \ubcoalg(\mathrm{colim}_n\, (F^n1)^\circ, \ubalg(A, B)) \\
    & \cong \lim_{n}\ubcoalg((F^n1)^\circ, \ubalg(A,B))\\
    & =\lim_n \ubalg_n(A,B).    
\end{align*}
\end{remark}

On some occasions, the category of coalgebras of $F$ can be interesting while its category of algebras is less so. 
For instance, given an alphabet $\Sigma$, coalgebras over the endofunctor $F(X)=2\times X^\Sigma$ in $\Set$ are automata but the initial algebra remains $\emptyset$. To remedy this, we can extend our main result into the following theorem. 

\begin{theorem}
    \label{thm: most general}
    Let $\C$ be a locally presentably and closed symmetric monoidal category.
    Let $F,G\colon \C\to \C$ be accessible endofunctors.
    Assume that $F$ is lax symmetric monoidal, and that $G$ is an $F$-module, in the sense that there is a transformation
    \[ FX \otimes GY \to G( X \otimes Y) \]
    natural in $X,Y$ and satisfying the diagrams of \cite[Def.~39]{yetter}.
Then the category $\balg_{G}$ is enriched, tensored and cotensored over $\bcoalg_F$.
\end{theorem}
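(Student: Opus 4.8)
The plan is to follow the proof of \cref{thm:enriched} at the structural level, replacing the self-convolution $[C,B]$ of an $F$-algebra $B$ by a convolution \emph{$G$-algebra} built from the $F$-module structure on $G$. Concretely, I would first upgrade the convolution algebra to a functor $[-,-]\colon \bcoalg_F^\op \times \balg_G \to \balg_G$, then show it exhibits $\balg_G$ as a right module over the monoidal category $\bcoalg_F^\op$ (which is symmetric monoidal closed by \cref{prop:sym-mon}, since $F$ is lax symmetric monoidal and accessible). Passing to opposites makes $\balg_G^\op$ a left $\bcoalg_F$-module via $[-,-]^\op$, and after producing a right adjoint $\ubalg(-,B)$ to each action functor $[-,B]^\op\colon \bcoalg_F \to \balg_G^\op$, the enrichment, tensoring and cotensoring follow from \cite[3.1]{hylandetal} exactly as in \cref{thm:enriched}.

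For the convolution construction, the point is that being an $F$-module makes $G$ \emph{lax closed relative to $F$}: precisely as $F$ being lax monoidal yields $\widetilde{\nabla}_{X,Y}\colon F([X,Y]) \to [FX, FY]$, the module transformation $\lambda_{X,Y}\colon FX \otimes GY \to G(X\otimes Y)$ yields a natural map $\widetilde{\lambda}_{X,Y}\colon G([X,Y]) \to [FX, GY]$, namely the adjunct of
\[
G([X,Y]) \otimes FX \xrightarrow{\tau} FX \otimes G([X,Y]) \xrightarrow{\lambda_{X,[X,Y]}} G(X \otimes [X,Y]) \xrightarrow{G(\tau)} G([X,Y]\otimes X) \xrightarrow{G(\ev_X)} GY.
\]
Then, given an $F$-coalgebra $(C,\chi)$ and a $G$-algebra $(B,\beta)$, I equip $[C,B]$ with the $G$-algebra structure
\[
G([C,B]) \xrightarrow{\widetilde{\lambda}_{C,B}} [FC, GB] \xrightarrow{\beta_*} [FC, B] \xrightarrow{\chi^*} [C,B],
\]
and check functoriality in $C$ and $B$. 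This reduces to the convolution of \cref{thm:enriched} when $G = F$ and $\lambda = \nabla$.

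Next I would establish the module isomorphisms, i.e. that the $\C$-level isomorphisms $[C\otimes D, A] \cong [C, [D,A]]$ and $[\I, A] \cong A$ lift to isomorphisms of $G$-algebras, where $C,D$ are $F$-coalgebras ($C\otimes D$ carrying the coalgebra structure of \cref{prop:sym-mon}) and $A$ is a $G$-algebra. This is the main obstacle: the two $G$-algebra structures on $[C\otimes D,A]$ and $[C,[D,A]]$ are assembled from $\lambda$, $\nabla$, $\chi_C$, $\chi_D$ and $\beta$ in different orders, and their agreement is exactly the associativity and unit coherence of the module action against the lax monoidal structure of $F$ — which is what the diagrams of \cite[Def.~39]{yetter} guarantee. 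I expect this to be a moderately involved but purely diagrammatic verification, obtained by transporting the module coherence through the closed-monoidal adjunctions of $\C$.

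Finally, with $\balg_G^\op$ exhibited as a left $\bcoalg_F$-module, it remains to produce the adjoints. For each $G$-algebra $B$, the functor $[-,B]^\op\colon \bcoalg_F \to \balg_G^\op$ preserves colimits: colimits in $\bcoalg_F$ are created in $\C$, limits in $\balg_G$ are created in $\C$, and $[-,B]\colon \C^\op \to \C$ sends colimits to limits. Since $\bcoalg_F$ is locally presentable (as $\C$ is locally presentable and $F$ accessible) and $\balg_G^\op$ is essentially locally small, the adjoint functor theorem \cite[1.66]{presentable} provides the right adjoint $\ubalg(-,B)\colon \balg_G^\op \to \bcoalg_F$, exactly as in \cref{remark: universal measuring as an adjoint}. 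Invoking \cite[3.1]{hylandetal} then shows $\balg_G^\op$ is enriched, tensored and cotensored over $\bcoalg_F$, and hence so is $\balg_G$, with hom-coalgebra $\ubalg(-,-)$, tensor $-\triangleright-$ and cotensor $[-,-]$.
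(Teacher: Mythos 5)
Your proposal is correct and follows essentially the same route as the paper's own proof: the same convolution $G$-algebra structure on $[C,B]$ built from the $F$-module transformation (your $\widetilde{\lambda}$ is exactly the adjunct the paper leaves unmarked), the same appeal to the diagrams of \cite[Def.~39]{yetter} to get a left $\bcoalg_F$-action on $\balg_G^\op$, the same adjoint functor theorem argument for the right adjoint $\ubalg(-,B)$, and the same invocation of \cite[Prop.~3.1]{hylandetal}. No gaps to report.
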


\begin{proof}
    We obtain a convolution algebra $\bcoalg_F^\op \times \balg_{G} \to \balg_{G}$ by taking a coalgebra $(C,\chi)$ and an algebra $(A, \alpha)$ to the following composite
\[
\begin{tikzcd}
    G[C,A] \ar{r} & {[FC, GA]} \ar{r}{\chi^*} \ar{r} & {[C, FA]} \ar{r}{\alpha_*} & {[C,A]},
\end{tikzcd}    
\]
    where
    the unmarked arrow is the adjunct of the following composite
    \[
    \begin{tikzcd} 
    FC\otimes G([C,A]) \ar{r} & G(C\otimes [C,A]) \ar{r}{G(\ev)} & GA
\end{tikzcd}  
    \]
    where $\ev$ denotes evaluation. The action on morphisms is given by the naturality of constituent arrows of this constructed algebra structure. The functoriality is inherited from the functoriality of $[-, -]$. 

    The diagrams of \cite[Def.~39]{yetter} exactly correspond to the opposite 
    \[[-,-]^\op \colon \bcoalg_F \times \balg_{G}^\op \to \balg_{G}^\op,\]
     this functor being a left action in the sense of \cite{hylandetal}.
    Since $[-,A]^\op$ preserves colimits, so does $[-,A]^\op \colon \bcoalg_F \to \balg_{G}^\op$. Since $\bcoalg_F$ is locally presentable, and $\Alg_G^\op$ is essentially small, the functor has a right adjoint.
    Thus, by \cite[Prop.~3.1]{hylandetal}, we see that $\balg_{G}^\op$ is enriched, tensored and cotensored in $\bcoalg_F$. Thus, $\balg_{G}$ is enriched, tensored and cotensored in $\bcoalg_F$.
\end{proof}

\begin{corollary}\label{theorem: mixed enriched GF over F}
    Suppose that $\C$ is a locally presentable, closed symmetric monoidal category. Suppose $F$ is an accessible lax symmetric monoidal functor.
Let $G\colon\C\rightarrow \C$ be a $\C$-enriched functor. 
Then $\balg_{GF}$ is enriched and cotensored over $\bcoalg_F$.
\end{corollary}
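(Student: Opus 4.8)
The plan is to exhibit the composite $GF$ as an $F$-module in the sense required by \cref{thm: most general}, and then to re-run that theorem's proof while tracking precisely where accessibility of the module was used. Since $\C$ is closed symmetric monoidal, the hypothesis that $G$ is $\C$-enriched is equivalent to $G$ carrying a (tensorial) strength, i.e.\ a natural transformation $\sigma_{X,Y}\colon X\otimes GY\to G(X\otimes Y)$ satisfying the usual associativity and unit coherences. I would take this translation as the starting point.

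First I would build the module structure. The natural candidate for the $F$-action on $GF$ is the composite
\[
\rho_{X,Y}\colon FX\otimes GFY \xrightarrow{\sigma_{FX,FY}} G(FX\otimes FY)\xrightarrow{G(\nabla_{X,Y})} GF(X\otimes Y),
\]
combining the strength of $G$ with the lax structure $\nabla$ of $F$. The substantive step --- and the one I expect to be the main obstacle --- is to verify that $\rho$ satisfies the module diagrams of \cite[Def.~39]{yetter}. This is a diagram chase interleaving two packages of coherence: the associativity and unit axioms of the strength $\sigma$, and the associativity and unit axioms of the lax symmetric monoidal structure $(\nabla,\eta)$ on $F$ (\cref{definition: lax symmetric monoidal}). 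Concretely, the associativity square for $\rho$ should be obtained by pushing the two occurrences of $\nabla$ past a strength using naturality of $\sigma$, then applying strength associativity, and finally $\nabla$-associativity of $F$; the unit axiom uses the unit law of $\sigma$ together with $\eta$. I expect no conceptual difficulty here, only bookkeeping.

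With $GF$ an $F$-module, I would follow the proof of \cref{thm: most general} essentially verbatim, with $GF$ in the role of the module. The convolution-algebra construction there equips $[C,B]$ with a $GF$-algebra structure for every $F$-coalgebra $C$ and $GF$-algebra $B$, using $\rho$ exactly where the proof invoked the module map; this presents $\balg_{GF}^\op$ as a left $\bcoalg_F$-actegory via $[-,-]^\op$, and already yields the cotensoring of $\balg_{GF}$ by the convolution algebra. For the enrichment I would produce the universal measuring coalgebra as the right adjoint to $[-,B]^\op\colon \bcoalg_F\to \balg_{GF}^\op$, exactly as in \cref{remark: universal measuring as an adjoint}: this functor preserves colimits (the internal hom sends colimits in the coalgebra variable to limits in $\balg_{GF}$, computed in $\C$), and since $\bcoalg_F$ is locally presentable (\cref{prop:sym-mon}, using $F$ accessible) while $\balg_{GF}^\op$ is merely locally small, the adjoint functor theorem \cite[1.66]{presentable}, \cite[5.5.2.10]{HTT} furnishes the adjoint. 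The key point is that this step needs only local smallness of $\balg_{GF}^\op$, so it never invokes accessibility of $GF$.

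Finally I would invoke \cite[Prop.~3.1]{hylandetal} to conclude that $\balg_{GF}^\op$ is enriched and tensored over $\bcoalg_F$ with tensor the action $[-,-]^\op$, whence $\balg_{GF}$ is enriched and cotensored over $\bcoalg_F$, as claimed. I would then remark on why the tensoring of $\balg_{GF}$ (the measuring tensor) cannot be recovered: it would arise as the left adjoint of $[C,-]\colon\balg_{GF}\to\balg_{GF}$, and the adjoint functor theorem for that adjoint requires $\balg_{GF}$ itself to be locally presentable, i.e.\ $GF$ to be accessible --- a hypothesis we have deliberately dropped, since a $\C$-enriched $G$ need not be accessible. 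This is exactly the point at which \cref{thm: most general} used accessibility of the module, and its omission is what weakens the conclusion from ``enriched, tensored, and cotensored'' to ``enriched and cotensored''.
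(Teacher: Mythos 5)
Your proposal is correct and follows essentially the same route as the paper: the module structure $\rho_{X,Y}$ you define (the strength of $G$ applied at $FX, FY$ followed by $G\nabla_{X,Y}$) is exactly the composite the paper constructs, after which the paper likewise concludes by invoking \cref{thm: most general}. Your extra bookkeeping about where accessibility of the module functor enters --- namely that only the tensoring requires $\balg_{GF}$ to be locally presentable, which is why the conclusion drops ``tensored'' --- is left implicit in the paper's proof, and it is the correct justification for running the argument of \cref{thm: most general} even though $GF$ need not satisfy that theorem's accessibility hypothesis on the module.
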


\begin{proof}
    To apply previous result, we only need to show there exists an $F$-module structure on $GF$, that is, we need a certain natural map 
    \[ FX \otimes GFY \to GF( X \otimes Y). \]
    The $\C$-enrichment of $G$ produces a map $X \otimes GY \rightarrow G(X\otimes Y)$ for any objects $X$ and $Y$ in $\C$. Indeed, this is the adjunct of
    \[
    \begin{tikzcd}
        X \ar{r} & {[Y, X\otimes Y]} \ar{r} & {[GY, G(X\otimes Y)]},
    \end{tikzcd}    
    \]
    where the first map is the coevaluation of the internal hom, and the second map is induced by the $\C$-enrichment. We apply this map to $FX$ and $FY$ to get the following composition
    \[ FX \otimes GFY \rightarrow G(FX\otimes FY) \xrightarrow{G \nabla} GF (X \otimes Y).\]
    One can check that this makes the diagrams of \cite[Def.~39]{yetter} commute.
\end{proof}

\begin{example}
If $F(X)=2\times X^\Sigma$, we could consider $G=\id+1$, and thus $\balg_{GF}$ has a nonempty initial object and is enriched in automata.
\end{example}

The enrichment of algebras in coalgebras specifies a pairing of coalgebras 
\[
 \ubalg(A_2,A_3)\otimes \ubalg(A_1,A_2) \longrightarrow \ubalg(A_1,A_3),
\]
regarded as an enriched composition, for any algebras $A_1$, $A_2$ and $A_3$.
In more details, the above coalgebra homomorphism is induced by the measuring of
\[
\begin{tikzcd}
  \big(  \ubalg(A_2,A_3)\otimes \ubalg(A_1, A_2)\big) \otimes A_1 \ar{r}{\id\otimes \ev_{A_1,A_2}} & [2em]\ubalg(A_2,A_3)\otimes A_2 \ar{r}{\ev_{A_2,A_3}} & A_3.
\end{tikzcd}
\]
In other words, the enrichment is recording precisely that we can compose a measuring $C\otimes A_1\rightarrow A_2$ with $D\otimes A_2\rightarrow A_3$ to obtain a measuring $(D\otimes C)\otimes A_1\rightarrow A_3$. 

\begin{remark}
\Cref{thm:enriched} and \cref{theorem: mixed enriched GF over F} appeared in \cite{north2023coinductive}. This current paper contextualizes and provides more details to the original work. 
\Cref{thm: most general} is new and is an extension of the previous results, and moreover, the tower appearing in \cref{remark: generalized tower} provides a new formalization of partial induction that did not appear in \cite{north2023coinductive}. 
\Cref{subsect: general theory of c-initial} below provides also new results that did not appear in \cite{north2023coinductive}. 
\end{remark}

\begin{remark}
    Recall that if $F$ is an endofunctor in $\mathcal{C}$, we can define the free monad $\widetilde{F}$ in $\C$ on $F$, and  the (Eilenberg--Moore) category of $\widetilde{F}$-algebras in $\C$ is equivalent to the category of $F$-algebras $\Alg_F$ in $\C$. A similar dual statement exists for coalgebras over endofunctors.
    Algebras over monads are enriched in coalgebras over comonads given some compatibility \cite[Thm.~6]{grignou}, and our results fit into this framework.
    Given a field $k$, then $k$-algebras are algebras over the free $k$-algebra monad and $k$-coalgebras are coalgebras over the cofree $k$-coalgebra comonad, and thus the results of \cref{sec: sweedler coalgebras} also fit in this framework.
\end{remark}

\subsection{General \texorpdfstring{$C$}{C}-Initial Objects}\label{subsect: general theory of c-initial}

Within the theory of categorical W-types, the category of algebras is mostly used to find the initial algebra, the W-type. Here, we explore how the extra structure that we have found in categories of algebras can be used to generalize the notion of initial algebras, inspired by the generalization of (co)limits in categories by the notion of weighted (co)limits in enriched categories.

We can use the extra structure in the enriched category of algebras to specify more algebras than we could in the unenriched category of algebras.

\begin{definition}
    [$C$-initial algebra]
    \label{def: second initial algebra}
    Suppose that $\C$ is locally presentable and closed and that $F$ is accessible.
Given a coalgebra $C$,
    we say an algebra $A$ is a \emph{$C$-initial algebra} \index{$C$-initial algebra} if there exists a unique map $C\rightarrow \ubalg(A,X)$, for all algebras $X$. That is, $\mu_C (A,X) \cong *$.
The \emph{terminal} $C$-initial algebra \index{terminal $C$-initial algebra} is the terminal object, if it exists, in the subcategory of $\balg$ spanned by the $C$-initial algebras.
\end{definition}

\begin{remark}
    One may wonder what would happen if for a \emph{set} $S$, we defined an $S$-initial algebra to be an algebra $A$ such that there is a unique \emph{function} $S \to \balg(A,X)$ for all $X \in \balg$. But every algebra is an $\emptyset$-initial algebra, and an $S$-initial algebra is an initial algebra for any $S \neq \emptyset$ (because functions $S \to T$ are unique only when $S = \emptyset$ or $T \cong 1$). Thus, we need to consider $\ubalg(A,X)$ and not just $\balg(A,X)$ to obtain interesting $C$-initial algebras.
\end{remark}

\begin{example}
    We have shown in \cref{ex: compute partial alg hom} that $\standalg{n}$ is an $\standcoalg{n}$-initial algebra.
 Since $\balg(A,X) \cong \bcoalg(\I, \ubalg(A,X))$, the initial algebra $\N$ is the $\I$-initial algebra. In fact, since $\ubalg(\N,X) \cong \eN$ for all $X$ by \cref{lem:subobject} or by a similar computation to \cref{ex: compute partial alg hom}, we find that $\N$ is a $C$-initial algebra for any subterminal coalgebra (i.e., $\emptyset, \standcoalg{n}, \coaN, \eN$).
\end{example}

\begin{example}
    Since the only $\I$-initial algebra is $\N$, it is also the terminal $\I$-initial algebra.
\end{example}

The following result helps us calculate $C$-initial algebras in examples.

\begin{proposition}
    \label{prop:map from initial alg to dual}
    Suppose that $\C$ is locally presentable and closed and that $F$ is accessible.
    There is a unique map from any $C$-initial algebra to $C^*$.
\end{proposition}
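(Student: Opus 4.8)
The plan is to unwind the two definitions involved---$C$-initial algebra and dual algebra---and then read off the conclusion from the representability of the measuring functor established in \eqref{eq: universal prop of measuring}. Recall from \cref{ex: dual} that the dual algebra is $C^* = [C, N]$, where $N$ is the initial object of $\balg$; this initial object exists because $\balg$ is locally presentable, hence cocomplete, under our standing hypotheses. So a map of algebras $A \to C^*$ is precisely an element of the set $\balg(A, [C, N])$, and proving the proposition amounts to showing that this set is a singleton.

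First I would invoke the chain of natural bijections in \eqref{eq: universal prop of measuring}, specialized to $B := N$:
\[
\balg(A, C^*) = \balg\big(A, [C, N]\big) \cong \mu_C(A, N).
\]
Then I would apply the definition of $C$-initial algebra (\cref{def: second initial algebra}) at the algebra $X := N$: by hypothesis $\mu_C(A, X) \cong \term$ for \emph{every} algebra $X$, and in particular $\mu_C(A, N) \cong \term$. Combining these two facts gives $\balg(A, C^*) \cong \term$, which is exactly the assertion that there exists a unique algebra homomorphism $A \to C^*$.

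There is essentially no hard step here: the proposition is a direct application of the enriched/representable structure already in hand, and the only thing to verify is that the bijection $\balg(A,[C,N]) \cong \mu_C(A,N)$ is genuinely the one supplied by \eqref{eq: universal prop of measuring} (the convolution-algebra representation of $\mu_C(-,N)$), rather than an incidental set-level coincidence. The one conceptual point worth flagging explicitly---and the closest thing to an obstacle---is the identification $C^* = [C,N]$: the content of the proof is recognizing that the dual algebra of \cref{ex: dual} is exactly the convolution algebra into the initial algebra, so that the universal property of measurings converts the defining condition $\mu_C(A,N)\cong\term$ of a $C$-initial algebra into the desired uniqueness statement about maps to $C^*$.
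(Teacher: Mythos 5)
Your proof is correct, and it is in fact more direct than the paper's. The computational core is the same in both arguments --- evaluating at the initial algebra $N$ and using the representability $\mu_C(A,N)\cong\balg(A,[C,N])=\balg(A,C^*)$ from \eqref{eq: universal prop of measuring} --- but the paper reaches it by a longer, more conceptual route: it introduces the $C$-weighted limit $\lim^C\id_{\ubalg}$ of the identity functor, characterized by $\balg(A,\lim^C\id_{\ubalg})\cong\lim_{X\in\balg}\bcoalg(C,\ubalg(A,X))$, and shows that $C^*$ has this universal property, using that $\bcoalg(C,\ubalg(A,-))$ preserves limits and that the limit of the identity functor on $\balg$ is the initial algebra; for a $C$-initial $A$ the weight diagram is then a diagram of singletons, whose limit is a singleton. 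You instead specialize the defining condition of a $C$-initial algebra (\cref{def: second initial algebra}) to the single algebra $X:=N$, which suffices. Your route uses fewer ingredients (no limit-preservation argument, no identification $\lim_{X\in\balg}X\cong N$) and proves exactly the stated result; the paper's detour buys a stronger intermediate fact, namely that $C^*$ is the $C$-weighted limit of the identity functor on the enriched category of algebras, generalizing the classical fact that an initial object is the limit of the identity functor. Finally, the caveat you flag at the end is not a genuine issue: $C^*=[C,N]$ holds \emph{by definition} in \cref{ex: dual}, and the bijection $\mu_C(A,B)\cong\balg(A,[C,B])$ is the natural one coming from the convolution algebra, so no further compatibility check is needed.
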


\begin{proof}
    We consider an object $\lim^C\id_\ubalg$ with the following universal property:
\[ \balg(A,{\lim}^C\id_\ubalg) \cong \lim_{X \in \balg} \ \bcoalg (C, \ubalg(A,X)).  \]
(This is the limit of $\id_\ubalg$ weighted by the constant functor at $C$.)
Since the hom $\bcoalg (C, \ubalg(A,-))$ preserves limits and $\lim_{X \in \balg} X \cong \N$, we have
\begin{align*}
    \lim_{X \in \balg} \ \bcoalg (C, \ubalg(A,X)) &\cong \bcoalg (C, \ubalg(A,\lim_{X \in \balg} X)) \\
    &\cong \bcoalg (C, \ubalg(A,\N)) \\
    &\cong \balg(A, [C,\N]) \\
    &= \balg(A, C^*).
\end{align*}
Thus, $C^*$ has the universal property of $\lim^C\id_\ubalg$.

Note that for any $C$-initial algebra $A$, we have $\lim_{X \in \balg} \ \bcoalg (C, \ubalg(A,X))$ is the terminal object. Thus, by this universal property of $C^*$, there is a unique map $A \to C^*$.
\end{proof}

In the following two examples, we calculate that $\standalg{n}$ is the terminal $\standcoalg{n}$-initial algebra.

\begin{example}
    Let $C:= \standcoalg{n}$. Then elements of $(\standcoalg{n})^*$ are sequences of $n+1$ natural numbers.
    The successor of a sequence $(a_i)_{i=0}^n$ is $(b_i)_{i=0}^n$ where $b_0 = 0$ and $b_{i+1} = a_i +1$. Notice that the successor of $(b_i)_{i=0}^n$ is $(c_i)_{i=0}^n$ where $c_0 = 0$, $c_1 = 1$ and otherwise $c_{i+2} = a_i +2$. Thus, we can inductively show that the $(n+1)$-st successor of any element of $[\standcoalg{n}, \N]$ is the sequence $(i)_{i=0}^n$, and the successor of this sequence is itself. 

    We claim that the unique morphism $!_{[\standcoalg{n}, \N]} : \N \to [\standcoalg{n}, \N]$ factors through $\standalg{n}$. We have $m_{[\standcoalg{n}, \N]} = (\min( i,m))_{i=0}^n$. Thus, the restriction of the map $!_{[\standcoalg{n}, \N]}$ to $\{0,...,n\} \subset \N$ is injective, and $n_{[\standcoalg{n}, \N]} = m_{[\standcoalg{n}, \N]}$ for all $m \geq n$.
\end{example}

\begin{example}\label{example: terminal C-initial algebra}
    Now we can show that $\standalg{n}$ is the terminal $\standcoalg{n}$-initial algebra. In this calculation, we use the law of excluded middle for the only time in this paper.
    Consider an $\standcoalg{n}$-initial algebra $A$.

    First, we show that every $a \in A$ is either the basepoint or a successor. So suppose that there is an element $a \in A$ that is not a basepoint or successor, and consider an algebra $B$ with more than one element. Then for any $b \in B$ and any measure $f: \standcoalg{n} \to A \to B$, we can form a measure $\widetilde f: \standcoalg{n} \to A \to B$ such that $\widetilde f_n(a) = b$ and $\widetilde f$ agrees with $f$ everywhere else, since \cref{def: partial homomorphism} imposes no requirements on $\widetilde f_n(a)$. Thus, there are multiple measures $\standcoalg{n} \to A \to B$, equivalently total algebra homomorphisms $\standcoalg{n} \to \ubalg(A, B)$, so we find a contradiction.

    Now, we consider the unique map $A \to [\standcoalg{n},\N]$ and claim that this factors through the injection $\standalg{n} \to [\standcoalg{n},\N]$, so that there is a unique $A \to \standalg{n}$. Since every element of $A$ is either a basepoint or a successor, every element of $A$ is either of the form $n_A$ or has infinitely many predecessors. The elements of the form $n_A$ are mapped those to of the form $n_{[\standcoalg{n},\N]}$, and the elements who have infinitely many predecessors can only be mapped to the `top element' $n_{[\standcoalg{n},\N]} = (i)_{i=0}^n$, since this is the only element which has an $m$-th predecessor for any $m \in \N$. Thus, the unique $A \to [\standcoalg{n},\N]$ indeed factors through $\standalg{n}$.
\end{example}

\section{Examples}
\label{sec: examples}

In the previous section we have built a general theory showing that the category of algebras of an endofunctor $F\colon \C \to \C$ is enriched in the category of coalgebras of that same endofunctor.
We also saw several objects representing the measuring functor $\m\colon \CoAlg^\op \times \Alg^\op \times \Alg \to \Set$ in each of its arguments.
Lastly, we assigned significance to $C$-initial algebras, which are algebras that have the same desirable properties as initial algebras, but are more abundant.

In this section, we would like to bring the theory to life by studying some examples.
Fixing an endofunctor $F$ in each example, we will consider its initial algebra and terminal coalgebra, the representing objects of $\m$ and $C$-initial algebras.
Starting with some straightforward examples to get a feel, we will work towards those which have 
those that serve as semantics for something in computer science.
For most of the examples we will only give the results and the necessary ingredients to obtain them.
However, there are two selected examples where we will go into more detail, showing intermediary steps and confirming the results.
The first of the two selected examples is the monoid type, which is the first example where we see the monoidal structure in action.
The second is the list type, which is the first example which has semantics commonly used in computer science.

In all our examples we consider the monoidal category $(\Set, \times, 1)$ which is locally presentable and closed.
The endofunctors $F\colon \Set \to \Set$ in the examples will all be accessible, and we will provide the monoidal structure of the endofunctor explicitly in each example.
Once an endofunctor is fixed we will denote the category of $F$-algebras by $\Alg$ and the category of $F$-coalgebras by $\CoAlg$.
Recall that due to $\Set$ being locally presentable and $F$ being accessible, the left adjoint of the forgetful functor $U\colon \Alg \to \Set$ exists.
For the same reasons the right adjoint to the forgetful functor $U\colon \CoAlg \to \Set$.
The free and cofree functors will be denoted by $\Fr$ and $\Cof$ respectively, and will be used throughout this section.
Finally, we will usually denote $F$-algebras by $(A,\alpha), (B,\beta)\in \Alg$ and $F$-coalgebras by $(C,\chi), (D,\delta) \in \CoAlg$.

Throughout, given a set map $\varphi\colon C\times A\rightarrow  B$, we may write $\varphi_c(a)$ for $\varphi(c,a)$ for $c\in C$ and $a\in A$. Given sets $S$ and $T$, we write $[S, T]$ for the set of all functions $S\rightarrow T$. We shall only focus on describing the measurings $\varphi\colon C\times A\to B$, and the associated $C$-indexed partial homomorphism $\widehat{\varphi}\colon C\to [A,B]$ is defined as $\widehat{\varphi}(c)=\varphi_c$, and each $\varphi_c\colon A\to B$ is regarded as a function $A\to B$ that only partially respect the algebra structures on $A$ and $B$, for all $c\in C$.

\subsection{The Unit Type}\label{subsec:unit}
First off, we wish to start with a simple example.
We will see that this endofunctor has the singleton $1$ as its initial algebra.
Hence, this example corresponds to the unit type in functional programming, the type with only one value.
Let $F \colon \Set \to \Set$ be the functor given by
\begin{align*}
    F \colon \Set &\longrightarrow \Set \\
    X&\longmapsto 1.
\end{align*}
It has a trivial lax monoidal structure.

The category of $F$-algebras, denoted $\Alg$, has elements
$
    \alpha\colon 1 \to A,
    * \mapsto a_0
$
denoted $(A,a_0)$ and morphisms
$
    f \colon (A,a_0) \to (B,b_0)
$
satisfying $f(a_0) = b_0$.
It can be regarded as the category of pointed sets, or as the coslice category $1/\Set$.
If it is clear we are considering a pointed set, we will sometimes simply write $A \in \Alg$.
The category of $F$-coalgebras, denoted $\CoAlg$, has elements
$
    \chi\colon C \to 1
$
and morphisms
$
    f \colon C \to D,
$
and so is isomorphic to the category of sets.

\subsubsection{Initial and Terminal Objects}
The initial algebra is given by $\id \colon (1,*) \to (1,*)$, since for every algebra $(A,a_0)$ there exists exactly one pointed morphism out of $\id \colon 1 \to 1$.
For any preinitial algebra $(I,i_0)$ the unique morphism $(1,*) \to (I,i_0)$ must be epic.
This can only be the case whenever the underlying function $1 \to I$ is surjective.
The only set which satisfies this condition is $1$, so we conclude there are no preinitial algebras besides the initial algebra.

The terminal coalgebra is given by 1 since the $\CoAlg$ is isomorphic to $\Set$. 
A subterminal coalgebra is any set $X$ such that $X \rightarrowtail 1$ is injective.
The only sets satisfying this are $\emptyset$ and $1$, so we conclude these underlie the only subterminal coalgebras.

\subsubsection{Measurings}

Let $(A,a_0), (B,b_0) \in \Alg$ and $(C, \chi) \in \CoAlg$.
A {measuring from $A$ to $B$ by $C$} is a function $\phi\colon C \times A \to B$, such that \[\phi_c(a_0) = b_0\]for all $c \in C$.
We find that in this case, a measuring $\phi\colon C \times A \to B$ is nothing but a family of algebra morphisms $A \to B$ indexed by $C$.

\subsubsection{Free and Cofree Functors}
In order to compute the representing objects of $\mu$, we wish to make use of the constructions provided in \cref{sec: measuring}.
These constructions make use of the free and cofree functor.

The free functor is given by
\begin{align*}
    \Fr\colon \Set &\longrightarrow \Alg \\
    X &\longmapsto (X + 1, \operatorname{inr})\\
    f &\longmapsto f + \id_1
\end{align*}
where $\operatorname{inr}$ is the inclusion $1 \to X + 1$.
The unit and counit are given by
$
    \eta^{\Fr}_X \colon X \to X + 1, x \mapsto x
$
for all set $X$,
and
$
    \epsilon^{\Fr}_A \colon A + 1 \to A ,
    * \mapsto a_0,
    a \mapsto a
$, for all pointed set $(A, a_0)$.
One can easily check these satisfy the triangle identities and hence form an adjunction.

The forgetful functor $U\colon \CoAlg \to \Set$ is an isomorphism.
Hence the cofree functor is the identity as well.

\subsubsection{Representing Objects}
Now we have everything in place to compute the representing objects of $\m$.
The first representing object is the universal measuring coalgebra $\underline{\Alg}(A,B)$.
Its underlying set is given by $[A\setminus\{a_0\}, B]$ with the measuring structure
\begin{align*}
    \ev \colon [A\setminus\{a_0\}, B] \times A &\to B \\
    (f,a) &\mapsto 
    \begin{cases}
        f(a) &\text{ if } a \neq a_0\\
        b_0 &\text{ if } a = a_0.
    \end{cases}
\end{align*}
Indeed, given a measuring $\phi\colon C \times A \to B$, there is exactly one coalgebra morphism $f\colon C \to \underline{\Alg}(A,B)$ which results in a morphism of measurings, namely
\begin{align*}
    f \colon C &\to \underline{\Alg}(A,B) \\
    c &\mapsto {\phi_c}_{|_{A\setminus\{a_0\}}}.
\end{align*}
Conversely, given a coalgebra morphism $f\colon C \to \underline{\Alg}(A,B)$, we get a measuring $\ev \circ (f \times \id) \colon C\times A \to B$.

The second representing object is the measuring tensor $C \triangleright A$, and has as underlying set
$$
C \times A / (c,a_0) \sim (c', a_0),
$$
for all $c,c' \in C$, with the equivalence class of $(c,a_0)$ giving the algebra structure. 
A measuring $\phi$ always uniquely factorizes since $\phi(c,a_0) = \phi(c',a_0) = b_0$ for all $c,c' \in C$.
We can now state that $\m_C(A,-)$ is represented by $C \triangleright A$.

Lastly, given $C \in \CoAlg, B \in \Alg$, we can turn $[C,B]$ into an algebra by taking the constant function $\const_{b_0} \colon C \to B, c \mapsto b_0$
to be the preferred point in $[C,B]$. 
This is the {convolution algebra}.

\subsubsection{$C$-Initial Algebras}
If $C = \emptyset$, every algebra is $C$-initial, since $\emptyset$ is the initial object in $\Set$.
For $C \neq \emptyset$, being $C$-initial implies the underlying set of $\underline{\Alg}(A,X)$ given by $[A\setminus\{a_0\}, X]$ contains only a single element.
This is because for any other $1 \neq D \in \CoAlg$, there exists more than one function $C \to D$. 
For $[A\setminus\{a_0\}, X]$ to contain only one function for all $X$, it must be that $A\setminus\{a_0\} = \emptyset$, hence $A \cong 1$.
The only algebra satisfying this is the initial algebra $\id \colon 1 \to 1$.
We conclude for $C \neq \emptyset$, the only $C$-initial algebra is $\id \colon 1 \to 1$, i.e., the terminal object with its unique point as basepoint.

\subsection{The Empty Type}\label{subsec:empty}
Perhaps an even more fundamental example is that of the empty type.
The empty type is the type with no values, which means we wish to consider a functor which has the empty set as initial algebra.
Let $F \colon \Set \to \Set$ be the functor given by
\begin{align*}
    F \colon \Set &\longrightarrow \Set \\
    X &\longmapsto X.
\end{align*}
Its lax monoidal structure is given given by identities.

The category of $F$-algebras, denoted $\Alg$ has elements
$
    \alpha \colon A \to A
$
denoted $(A,\alpha)$. 
Morphisms $f\colon(A,\alpha) \to (B,\beta)$ are given by functions $ f \colon A \to B $
which make the following diagram commute:
\[\begin{tikzcd}
A & B \\
A & B.
            \arrow["f", from=1-1, to=1-2] 
\arrow["\alpha"',from=1-1, to=2-1] \arrow["\beta",from=1-2, to=2-2]
            \arrow["f", from=2-1, to=2-2]
\end{tikzcd}\]

The category of $F$-coalgebras, denoted $\CoAlg$ has elements
$
    \chi \colon C \to C
$
denoted $(C,\chi)$.
Morphisms $f\colon(C,\chi) \to (D,\delta)$ are given by functions $ f : C \to D $
which make the following diagram commute:
\[\begin{tikzcd}
C & D \\
C & D.
            \arrow["f", from=1-1, to=1-2] 
\arrow["\chi",from=2-1, to=1-1] \arrow["\delta"',from=2-2, to=1-2]
            \arrow["f", from=2-1, to=2-2]
\end{tikzcd}\]
We observe the category $\CoAlg$ is isomorphic to the category of $\Alg$ by definition. 
Nevertheless, we will distinguish between these categories to see which role they play in the upcoming constructions.

\subsubsection{Initial and Terminal Objects}
The initial algebra is given by $\id\colon \emptyset \to \emptyset$, since for every algebra $(A,\alpha)$ there exists exactly morphism out of $\id \colon \emptyset \to \emptyset$.
The only preinitial algebra is given by the initial algebra itself, since there exist no epimorphisms out of the empty set other than the identity.

The terminal coalgebra is given by $\id \colon 1 \to 1$ since there exists exactly one morphism into $\id \colon 1 \to 1$ for every coalgebra $(C,\chi)$.
A subterminal coalgebra is then any coalgebra $(C,\chi)$ such that $(C,\chi) \rightarrowtail (1,\id)$ is injective.
The only coalgebras satisfying this are $(\emptyset,\id)$ and $(1,\id)$ and we conclude $\emptyset$ is the only subterminal coalgebra besides the terminal algebra.

\subsubsection{Measurings}
Let $(A,\alpha), (B,\beta) \in \Alg$ and $(C,\chi) \in \CoAlg$.
A {measuring from $A$ to $B$ by $C$} is a function $\phi\colon C \times A \to B$, such that \[\phi_c(\alpha(a)) = \beta(\phi_{\chi(c)}(a))\] for all $(c,a) \in C \times A$.
Another way to state the condition on measurings is to say the diagram
\[\begin{tikzcd}
A & [2em]B \\
A & B
            \arrow["\phi_{\chi(c)}", from=1-1, to=1-2] 
\arrow["\alpha"',from=1-1, to=2-1] \arrow["\beta",from=1-2, to=2-2]
            \arrow["\phi_c"', from=2-1, to=2-2]
\end{tikzcd}\]
must commute for all $c \in C$. 

\begin{example}
    Consider the algebras $(\N, (\cdot 4)), (\N, (\cdot 8))$ and the coalgebra $(\N, (\cdot 2))$.
    A measuring from $(\N, (\cdot 4))$ to $(\N, (\cdot 8))$ by $(\N, (\cdot 2))$ is given by
    \begin{align*}
        \phi \colon \N \times \N &\longrightarrow \N \\
        (i,j) &\longmapsto i\cdot j^2.
    \end{align*}
    To verify this is a measuring, we must check
    $
    \phi(i,4j) = 8 \cdot \phi(2i, j),
    $
    which is the case since
    $$
    \phi(i,4j) = i \cdot (4j)^2 = 16ij^2 = 8 \cdot 2i \cdot j^2  = 8 \cdot \phi(2 \cdot i, j).
    $$
\end{example}


\subsubsection{Free and Cofree Functors}
In order to compute the representing objects in the upcoming section, we need the left and right adjoint of the forgetful functors.
They are the free and cofree functor and are respectively given by
\begin{align*}
    \Fr \colon \Set &\longrightarrow \Alg \\
    X &\longmapsto
    \begin{pmatrix}
        \coprod_{i \in \N} X_i &\xrightarrow{\Sh} \coprod_{i \in \N} X_i\\
        X_i \ni x &\mapsto x \in X_{i+1}
    \end{pmatrix},
\end{align*}
where $X_i = X$,
and
\begin{align*}
    \Cof \colon \Set &\longrightarrow \CoAlg \\
    X &\longmapsto
    \begin{pmatrix}
        \prod_{i \in \N} X_i &\xrightarrow{\CoSh} \prod_{i \in \N} X_i\\
        (x_0, x_1, x_2, \dots ) &\mapsto (x_1, x_2, \dots )
    \end{pmatrix},
\end{align*}
where $X_i = X$.
Their behavior on functions is in both cases given by applying the function component-wise.

To verify these are adjoints, one must verify
$
[X,A] \cong \Alg(\Fr(X),A)
$
and
$
[C,X] \cong \Alg(C,\Cof(X))
$.
This can be done by constructing explicit natural bijections.
The first bijection is given by
$
    [X,A] \to \Alg(\Fr(X),A), f \mapsto \coprod_{i \in \N} \alpha^i \circ f
$
with inverse
$
    \Alg(\Fr(X),A) \to [X,A] ,  \coprod_{i \in \N} f_i \mapsto f_0.
$
The second bijection is given by
$
    [C,X] \to \CoAlg(C,\Cof(X)) , f \mapsto \langle f \circ \chi^i \rangle_{i \geq 0}
$
with inverse
$
    \CoAlg(C,\Cof(X)) \to [C,X]  , \langle f_i \rangle_{i \geq 0} \mapsto f_0.
$
One can check those bijections are indeed natural.

\subsubsection{Representing Objects}
The universal measuring coalgebra $\underline{\Alg}(A,B)$ has as underlying set
$$
\left.\left\lbrace (f_i)_{i\geq 0}=(f_0, f_1, \dots) \in \prod_{i \in \N} [A,B] \, \right| \, \beta \circ f_{i+1} = f_i \circ \alpha\right\rbrace.
$$
Its coalgebra structure is given by shifting the entries to the left, or more precisely by the restriction of $\CoSh$ to $\underline{\Alg}(A,B)$. 
The terminal object in the category of measurings from $A$ to $B$ is then given by
\begin{align*}
    \ev \colon \underline{\Alg}(A,B) \times A &\longrightarrow B \\
    ( (f_i)_{i \geq 0},a) &\longmapsto f_0(a).
\end{align*}
Indeed, given a measuring $\phi \colon C \times A \to B$, there is exactly one coalgebra morphism $f\colon C \to \underline{\Alg}(A,B)$ which results in a morphism of measurings, namely
$
    f \colon C \to \underline{\Alg}(A,B), c \mapsto (\phi_{\chi^i(c)})_{i\geq 0}.
$
Conversely, given a morphism $f\colon C \to \underline{\Alg}(A,B)$, composing with $\ev$ yields a measuring $\ev \circ (f \times \id) \colon C\times A \to B$.
This yields a bijective correspondence between morphisms $f\colon C \to \underline{\Alg}(A,B)$ and measurings $\phi \colon C\times A \to B$.

The measuring tensor $C \triangleright A$ has as underlying set
$$
\coprod_{i\in \N} (C \times A)_i
\big/
\sim,
$$
where the equivalence relation $\sim$ is given by
$
(\chi(c),a)_{i+1}\sim (c,\alpha(a))_{i}.
$
Its algebra structure is given by shifting to the right, namely by the function $\Sh$. 
A measuring $\phi$ corresponds to the algebra morphism
$
    \widetilde{\phi} \colon C \triangleright A \to B ,
    (c,a)_i \mapsto \phi_{\chi^i(c)}(a) 
$
and an algebra morphism $\widetilde{\phi} \colon C \triangleright A \to B$ corresponds to a measuring $\phi = \widetilde{\phi}|_{(C\times A)_0}$.
We can then state $\m_C(A,-)$ is represented by $C \triangleright A$.

Finally, given $C \in \CoAlg, B \in \Alg$, we define the convolution algebra $[C,B]$ to be the algebra structure given by
\begin{align*}
    \beta_*\chi^* \colon [C,B] &\to [C,B]\\
    f &\mapsto \beta \circ f \circ \chi.
\end{align*}

\subsubsection{$C$-Initial Algebras}
If $C = \emptyset$ every algebra is $C$-initial, with $(1,\id)$ being the terminal $\emptyset$-initial algebra.
For $C \neq \emptyset$ the only $C$-initial algebra is $(\emptyset, \id)$, since every other algebra would give too much freedom in constructing maps from $C$ to $\Alg(A,X)$ for there to be a unique morphism.
This means $(\emptyset, \id)$ is also the terminal $C$-initial algebra for all $C \in \CoAlg$.

\subsection{The Monoid Type}\label{subsec:monoid}
In this example, we will begin to see the flexibility measurings provide.
This is also the motivation for discussing more details regarding the representing objects than previously done.
We will be considering the type which takes values in a fixed monoid $(M, \bullet, e)$.
This type is not as frequently used in computer science, but it does give us insight into the general theory.
To study this type, we must find a functor which has it as an initial algebra.

Let $F \colon \Set \to \Set$ be the functor given by
\begin{align*}
    F \colon \Set &\longrightarrow \Set \\
    X &\longmapsto M\\
    f &\longmapsto \id_M
\end{align*}
where $(M, \bullet, e)$ is some fixed monoid in $\Set$.
The lax monoidal structure is given by monoid structure on $M$, namely
$\nabla_{A,B} \colon M \times M \to M$
    where $\nabla_{A,B}(m,m')= m \bullet m',
$
and
$\eta \colon 1 \to M$, where $\eta(*)=e$.
Since we will need it later, we also remark the lax closed structure is given by
$
    \widetilde{\nabla}_{A,B} \colon M \to [M,M]$ where $\widetilde{\nabla}_{A,B}(m)=r_m$,    
where $r_m \colon M \to M$ is defined as $r_m(m')=m'\bullet m$, i.e., it is the function which multiplies an element of $M$ by $m \in M$ from the right.

The category $\Alg$ has elements $\alpha \colon M \to A$, with morphisms $f\colon(A,\alpha) \to (B,\beta)$ given by commuting squares (or triangles if you prefer)
\[\begin{tikzcd}
	M & M \\
	A & B.
	\arrow["\alpha"', from=1-1, to=2-1]
	\arrow["f", from=2-1, to=2-2]
	\arrow["\beta", from=1-2, to=2-2]
	\arrow["{\id_M}", from=1-1, to=1-2]
\end{tikzcd}\]
The category $\CoAlg$ has elements $\chi \colon C \to M$, with morphisms $f\colon(C,\chi) \to (D,\delta)$ given by commuting squares
\[\begin{tikzcd}
	M & M \\
	C & D.
	\arrow["f", from=2-1, to=2-2]
	\arrow["{\id_M}", from=1-1, to=1-2]
	\arrow["\chi", from=2-1, to=1-1]
	\arrow["\delta"', from=2-2, to=1-2]
\end{tikzcd}\]
Notice how these categories correspond to under category $M / \Set$ and the over category $\Set / M$ respectively.

\subsubsection{Initial and Terminal Objects}
The initial algebra and the terminal coalgebra are both given by
$$
\id_M \colon M \to M.
$$
Any algebra for which the function $\alpha \colon M \to A$ is surjective gives a preinitial algebra.
Similarly, any coalgebra for which the function $\chi \colon C \to M$ is injective will give a subterminal coalgebra.

\subsubsection{Measuring}
Let $(A,\alpha), (B,\beta)\in \Alg$ and $(C,\chi) \in \CoAlg$.
    A {measuring from $A$ to $B$ by $C$} is a function $\phi\colon C \times A \to B$, such that \[\phi_c(\alpha(m)) = \beta(\chi(c) \bullet m)\] for all $c \in C$ and $m \in M$.
Comparing this to the condition on a regular algebra morphism $f\colon A \to B$, given by
$
f(\alpha(m)) = \beta(m),
$
we see a coalgebra $C$ introduces some sort of twisting.
The above condition on $\phi$ is equivalent to asking the following diagram commutes
\[\begin{tikzcd}
	& {M \times M} & M \\
	{C\times M} \\
	& {C\times A} & B.
	\arrow["{\chi \times \id_M}", from=2-1, to=1-2]
	\arrow["{\id_C\times \alpha}", from=2-1, to=3-2]
	\arrow["\phi", from=3-2, to=3-3]
	\arrow["\beta", from=1-3, to=3-3]
	\arrow["\bullet", from=1-2, to=1-3]
\end{tikzcd}\]

\begin{example}
    Consider the algebra $(M, \id_M)$ and the coalgebra $(M,\id_M)$.
    A trivial example of a measuring is the monoid multiplication $\bullet \colon M \times M \to M$.
\end{example}

\begin{example}
    Let $A \in \Set$ be arbitrary and consider the algebra structure on $[A,M]$ given by
    \begin{align*}
        \alpha \colon M &\to [A,M] \\
        m &\mapsto \const_m
    \end{align*}
    and the coalgebra $(M,\id_M)$.
    A measuring from $([A,M], \alpha)$ to itself by $(M,\id_M)$ is given by
    \begin{align*}
        \phi \colon M \times [A,M] &\longrightarrow [A,M]\\
        (m, f) &\longmapsto (a \mapsto f(a) \bullet m).
    \end{align*}
    Indeed, we can verify
    $
    \phi(m', \const_m) = \const_{m' \bullet m} = \alpha(m' \bullet m).
    $
\end{example}

\begin{example}
    Let $\gls{List}$ denote the set of finite lists with entries in the monoid $M$.
    We shall be slightly informal in this example, but the 
 the reader can see more precise statements in \cref{subsec:list}.
    We can give it an algebra structure
    $
        \alpha \colon M \to M^*
    $
    by mapping $m$ to the list $[m]$ that contains the single element $m$.
    We can also give it a coalgebra structure
    $
        \chi \colon M^* \to M
    $ 
    by sending a list to the product of all entries in the list.
    If acquainted with functional programming, one might recognize the definition of $\fold$ in the coalgebra structure.

    Let $\gls{EmptList}$ denote the empty list and $\gls{ConcList}$ denote the concatenation of $m \in M$ and $\ell \in M^*$. A measuring from $(M^*,\alpha)$ to itself by $(M^*,\chi)$ is given by
    \begin{align*}
        \phi \colon M^* \times M^* &\longrightarrow M^*\\
        (\ell', [\:]) &\longmapsto [\:]\\
        (\ell', m:\ell) &\longmapsto (\chi(\ell') \bullet m):\ell,
    \end{align*}
    collapsing the entire list $\ell'$ onto the first element of the list $m:\ell$ using the monoidal structure on $M$.
    To verify this is a measuring we check
    $
    \phi_{\ell'}(\alpha(m)) = \phi_{\ell'}([m]) = [\chi(\ell')\bullet m] = \alpha(\chi(\ell')\bullet m).
    $

    Note this is not the only measuring we could have defined.
    Another measuring is given by
    \begin{align*}
        \phi' \colon M^* \times M^* &\longrightarrow M^*\\
        \ell', [\:] &\longmapsto [\:]\\
        \ell', m:\ell &\longmapsto [\chi(\ell') \bullet m],
    \end{align*}
    simply ignoring the last part of the list $m:\ell$.
    Again we can verify this is a measuring, using the exact same equation as before to find
    $
    \phi_{\ell'}(\alpha(m)) = \phi_{\ell'}([m]) = [\chi(\ell')\bullet m] = \alpha(\chi(\ell')\bullet m).
    $
\end{example}

\subsubsection{Free and Cofree Functors}
The left adjoint free functor is given by
\begin{align*}
    \Fr \colon \Set &\longrightarrow \Alg \\
    X &\longmapsto (X + M, \operatorname{\alpha_{\Fr}})\\
    f &\longmapsto f + \id_M.
\end{align*}
where $\alpha_{\Fr} = \inr$, the inclusion of $M$ into $X+M$.
The unit and counit are given by
$
    \eta^{\Fr}_X \colon X \to X + M$ where $\eta^{\Fr}_X(x)=x$, for any set $X$, 
and
$
    \epsilon^{\Fr}_A \colon A + M \to A$
    is defined by $\epsilon^{\Fr}_A(m)=\alpha(m)$ and $\epsilon^{\Fr}_A(a)=a$, for all $m\in M$ and $a\in A$, where $(A, \alpha)$ is an algebra.
One can easily check these satisfy the triangle identities and hence form an adjunction.

The right adjoint cofree functor is given by
\begin{align*}
    \Cof\colon \Set &\longrightarrow \CoAlg \\
    X &\longmapsto (X \times M, \chi_{\Cof})\\
    f &\longmapsto (f\times \id_M).
\end{align*}
where $\chi_{\Cof} = \pr_M$.
The unit and counit are given by
$
    \eta^{\Cof}_C \colon C \to C \times M$ where $\eta^{\Cof}_C(c)= (c, \chi(c))
$ for all coalgebra $(C, \chi)$,
and
$ \epsilon^{\Cof}_X \colon X\times M \to X$ where $\epsilon^{\Cof}_X(x,m)=x$ for all set $X$.
Again, one can check the triangle identities to verify the adjunction.

\subsubsection{Representing Objects}
The universal measuring coalgebra $\underline{\Alg}(A,B)$ is given by the following equalizer
\[\begin{tikzcd}
	{\underline{\Alg}(A,B)} & {[A,B] \times M} & [2em]{[M,B]\times M}
	\arrow[dashed, "\operatorname{eq}", from=1-1, to=1-2]
	\arrow["{[\alpha, B] \times \id_M}", shift left, from=1-2, to=1-3]
	\arrow["\Phi"', shift right, from=1-2, to=1-3]
\end{tikzcd}\]
where $\Phi$ is the transpose of
$$
\widetilde{\Phi} \colon [A,B] \times M \xto{\chi_{\Cof}} M \xto{F(\epsilon^{\Cof})} M \xto{\widetilde{\nabla}_{A,B}} [M,M] \xto{\beta_*} [M,B].
$$
If we compute the composition we find
$
    \widetilde{\Phi} \colon [A,B] \times M  \to [M,B]$ is defined explicitly by $\widetilde{\Phi}(f,m)=r_m$,
where we recall $r_m \colon M \to M$ was multiplication by $m$ on the right. 
Taking its transpose $\Phi = \Cof(\widetilde{\Phi}) \circ \eta$ we find the explicit formula for $\Phi$ to be 
\begin{align*}
    \Phi \colon [A,B] \times M &\longrightarrow [M,B]\times M \\
    (f,m) &\longmapsto (m' \mapsto \beta(m' \bullet m), m).
\end{align*}
Now that we know explicitly which maps we want to equalize, we can compute
$$
\underline{\Alg}(A,B) = \{ (f,m) \in [A,B] \times M \mid (f \circ \alpha)(m') = \beta(m \bullet m')  \}.
$$
with coalgebra structure given by the projection
$
    \pr_M \colon \underline{\Alg}(A,B)\to M$ where $\pr_M(f,m)=m$.
We would like to verify $\underline{\Alg}(A,B)$ is indeed the universal measuring.
In other words, we would like to construct a natural bijection
$$
\Psi \colon \m_C(A,B) \xto{\cong} \CoAlg(C,\underline{\Alg}(A,B)).
$$
This bijection is given by sending a measuring $\phi \in \m_C(A,B)$ to
$$
\Phi(\phi)(c) = (a \mapsto \phi(c,a), \chi(c)).
$$
This is a well-defined coalgebra morphism since
$
\chi(c) = \pr_M(\Phi(\phi)(c)).
$
Its inverse $\Psi\inv$ sends a coalgebra morphism $f \colon C \to \underline{\Alg}(A,B)$ to the measuring
\begin{align*}
    \Psi\inv(f) \colon C\times A &\to B \\
    (c,a) &\mapsto (\pr_{[A,B]} \circ f)(c)(a).
\end{align*}
We must verify this is a measuring, which we can do using that $f$ is a coalgebra morphism.
Observe
$
(\pr_{[A,B]} \circ f)(c)(\alpha(m)) = \beta((\pr_M \circ f)(c) \bullet m) = \beta(\chi(c), m).
$
It is easy to verify that $\Psi$ and $\Psi\inv$ are inverses of each other and that the bijection is natural.

The measuring tensor $C \triangleright A$ is given by the coequalizer
\[\begin{tikzcd}
	{(C \times M) + M} & [3em] {(C \times A) + M} & {C \triangleright A},
	\arrow["\Psi"', shift right, from=1-1, to=1-2]
	\arrow["{\id_C \times \alpha + \id_M}", shift left, from=1-1, to=1-2]
	\arrow[dashed, "\operatorname{coeq}", from=1-2, to=1-3]
\end{tikzcd}\]
where $\Psi$ is the transpose of
$$
\widetilde{\Psi} \colon C \times M \xto{\chi \times \id_M} M \times M \xto{\nabla_{A,B}} M \xto{F(\eta^{\Fr})} M \xto{\alpha_{\Fr}} (C \times A) + M.
$$
If we compute the composition we find
$
    \widetilde{\Phi} \colon C \times M  \to (C \times A) + M,
    (c,m) \mapsto \chi(c) \bullet m.
$
Taking its transpose $\Psi = \epsilon \circ \Fr(\widetilde{\Phi})$ we find the explicit formula for $\Phi$ to be 
\begin{align*}
    \Psi \colon {(C \times M) + M} &\longrightarrow {(C \times A) + M} \\
    (c,m) &\longmapsto \chi(c) \bullet m\\
    m' &\longmapsto m'.
\end{align*}
Now that we know explicitly which maps we want to coequalize, we can compute
$$
C \triangleright A \cong
\big( (C \times A) + M \big) / (c,\alpha(m)) \sim (\chi(c) \bullet m).
$$
with algebra structure given by
$
    M \to C \triangleright A ,
    m \mapsto [m].
$
A slightly more intuitive may be to write $C \triangleright A$ as the pushout
\[\begin{tikzcd}
	{C \times M} & M \\
	{C \times A} & {C \triangleright A}
	\arrow["{\id_C \times \alpha}"', from=1-1, to=2-1]
	\arrow["{ f }", from=1-1, to=1-2]
	\arrow[from=2-1, to=2-2]
	\arrow[from=1-2, to=2-2]
	\arrow["\lrcorner"{anchor=center, pos=0.125, rotate=180}, draw=none, from=2-2, to=1-1]
\end{tikzcd}\]
where $f \colon C \times M \to M, (c,m) \mapsto \chi(c) \bullet m$.
To verify $C \triangleright A$ is measuring tensor, we construct a natural bijection
$$
\Xi \colon \m_C(A,B) \xto{\cong} \Alg(C \triangleright A,B).
$$
This bijection is given by sending a measuring $\phi \in \m_C(A,B)$ to
\begin{align*}
    \Xi(\phi) \colon C \triangleright A &\longrightarrow B \\
    [c,a] &\longmapsto \phi(c,a)\\
    [m] &\longmapsto \beta(m).
\end{align*}
We must check this is well-defined, which is the case since
$
\Xi(\phi)([c,\alpha(m)]) = \phi(c,\alpha(m)) = \beta(\chi(c) \bullet m) = \Xi(\phi)([\chi(c) \bullet m])
$
and the equivalence relation defining $C \triangleright A$ is generated by $(c,\alpha(m)) \sim \chi(c) \bullet m$.
It is a well-defined algebra morphism since $[m] = \beta(m)$ by definition.
Its inverse $\Xi\inv$ sends an algebra morphism $f \colon C \triangleright A \to B$ to the measuring
\begin{align*}
    \Xi\inv(f) \colon C\times A &\to B \\
    (c,a) &\mapsto f([c,a]).
\end{align*}
We must verify this is a measuring, which we can do using that $f$ is an algebra morphism.
Observe
$
f([c,\alpha(m)]) = f([\chi(c) \bullet m]) = \beta([\chi(c) \bullet m]),
$
which shows $\Xi\inv(f)$ is a measuring.
It is easy to verify the $\Xi$ and $\Xi\inv$ are inverses of each other and that the bijection is natural.

Lastly, the convolution algebra $[C,B]$ has algebra structure given by the composition
$$
M \xto{\widetilde{\nabla}_{C,B}} [M,M] \xto{\beta_*\circ \chi^*} [C,B].
$$
It sends $m\in M$ to $(c \mapsto \beta(\chi(c)\bullet m))$.

\subsubsection{$C$-Initial Algebras}
One of the characterizations of a $C$-initial algebra $A$ is that $C \triangleright A$ is isomorphic to the initial object.
In our case this would mean that
$$
M \cong \left.\Big( (C \times A) + M \Big) \right/ {}_{\displaystyle(c,\alpha(m)) \sim (\chi(c) \bullet m)},
$$
which is the case if and only if $\alpha \colon M \to A$ is surjective.
We conclude any preinitial algebra $\alpha \colon M \twoheadrightarrow A$ is also $C$-initial for any coalgebra $C$.
This also implies the terminal algebra $M \to 1$ is the terminal $C$-initial algebra for all coalgebras $C$.

\subsection{The Natural Numbers Type}\label{subsec:naturals}
In \cref{sec: measuring}, the main example provided was the example of the natural numbers type.
In this section we will repeat and elaborate on that example.
Finally, we will add a new constructive proof regarding $C$-initial algebras.
To study this type, we must find a functor which has it as an initial algebra.

Consider the functor
\begin{align*}
    F \colon \Set &\longrightarrow \Set \\
    X &\longmapsto 1+X\\
    f &\longmapsto \id_1 + f.
\end{align*}
The lax monoidal structure is given by
\begin{align*}
    \nabla_{X,Y} \colon 1 + X \times 1 + Y &\longrightarrow 1 + (X\times Y)\\
    (*,y) &\longmapsto *\\
    (x,*) &\longmapsto *\\
    (x,y) &\longmapsto (x,y)
\end{align*}
and
$
\inr \colon 1 \to 1 + 1.
$

The category of $F$-algebras has elements
$
    \alpha \colon 1+ A \to A
$
denoted $(A,\alpha)$.
Sometimes we will denote $\alpha(*)$ by $0_A$, and more generally $\alpha^n(0_A)$ will be denoted as $n_A$.
Morphisms $f\colon (A,\alpha) \to (B,\beta)$ are given by functions $ f \colon A \to B $
which make the following diagram commute:
\[\begin{tikzcd}
1+ A & 1+ B \\
A & B.
            \arrow["\id + f ", from=1-1, to=1-2] 
\arrow["\alpha",from=1-1, to=2-1] \arrow["\beta",from=1-2, to=2-2]
            \arrow["f", from=2-1, to=2-2]
\end{tikzcd}\]
The category of $F$-coalgebras has elements
$
    \chi \colon C \to 1+ C,
$
denoted $(C,\chi)$.
Morphisms $f\colon(C,\chi) \to (D,\delta)$ are given by functions $ f \colon C \to D $
which make the following diagram commute
\[\begin{tikzcd}
1+C & 1 +D \\
C & D.
            \arrow["\id_1 + f ", from=1-1, to=1-2] 
\arrow["\chi",from=2-1, to=1-1] \arrow["\delta",from=2-2, to=1-2]
            \arrow["f", from=2-1, to=2-2]
\end{tikzcd}\]

\subsubsection{Initial and Terminal Objects}
The initial algebra is the given by the set of natural numbers $\N$.
Its algebra structure is given by succession, as we recalled in \cref{subsec: W-types}.
To be more precise, it is given by
\begin{align*}
    1 + \N &\to \N \\
    * &\mapsto 0\\
    n &\mapsto n+1.
\end{align*}
We denote by $-_{A} \colon \N \to A$ the unique algebra morphism.
The sets of the form $\n \coloneqq \{0,1,\dots, n\}$ are preinitial algebras with algebra structure given by
\begin{align*}
    1 + \n &\to \n \\
    * &\mapsto 0\\
    i &\mapsto \min(i,n).
\end{align*} 
Recall from \cref{example: terminal coalgebra of Nat} that the terminal coalgebra is given by $\eN \coloneqq \N + \{\infty\}$.
Its coalgebra structure is given by
\begin{align*}
    \eN &\to 1+ \eN \\
    i &\mapsto
    \begin{cases}
        * &\text{ if }  i = 0\\
        i-1 &\text{ otherwise}
    \end{cases}\\
    \infty &\mapsto \infty.
\end{align*}
The unique coalgebra morphism $C \to \eN$ is denoted $\ind{-}\colon C \to \eN$.  
Subterminal coalgebras are given by subsets of $\eN$ which are closed under the coalgebra structure: $\emptyset$, $\mathbb{n}^\circ$, $\coaN$, $\eN$ as well as the union of $\{\infty\}$ with any of the these sets.
The coalgebra structure is simply a restriction of the coalgebra structure of $\eN$.
Finally, we would like to point out that $\n^\circ$ has a universal property.
Since $\n^\circ$ is subterminal, there exists at most one morphism $C \to \n^\circ$ into it for every $C \in \CoAlg$.
This morphism exists if and only if $\ind{c} \leq n$ for all $c \in C$, which is the universal property of $\n^\circ$.

\subsubsection{Measuring}
Let $(A,\alpha), (B,\beta) \in \Alg$ and $(C,\chi) \in \CoAlg$.
A {measuring from $A$ to $B$ by $C$} is a function $\phi \colon C \times A \to B$ satisfying
\begin{enumerate}
    \item $\phi_c(0_A) = 0_B$ for all $c \in C$
    \item $\phi_c(\alpha(a)) = 0_B$ if $\chi(c) = *$
    \item $\phi_c(\alpha(a)) = \beta(\phi_{c'}(a))$ if $\chi(c) = c'$.
\end{enumerate}
Intuitively, 
the coalgebra $C$ determines up to which point we are computing the algebra morphism.


\begin{example}
Notice that the algebra structures involved can be quite limiting.
We claim there does not exist a total algebra morphism $f\colon\n \to \N$.
Denoting the algebra structure of $\n$ by $\alpha_n \colon 1 + \n \to \n$, we can create a contradiction by noting that for $n \in \n$, we must have
$
f(n) = f(\alpha_n(n)) = 1 + f(n)
$
which cannot be the case.
However, we are able to define a measuring from $\n$ to $\N$.
We define the measuring
\begin{align*}
    \phi \colon {\n}^\circ \times \n &\to \N \\
    (i,j) &\mapsto \min(i,j).
\end{align*}
We note that checking this is a measuring can be done by comparing the definition of measuring and that of $\phi$.
Moreover, comparing with the definition also reveals this morphism is unique.
\end{example}

\begin{example}
    Say we would like to define a measuring $\phi\colon \n^\circ \times (\n-1) \to \N$.
    By definition $\phi$ must satisfy
    \begin{align*}
        \phi(0,j) &= 0 && \text{ for all } j \in \n-1\\
        \phi(i,0) &= 0 && \text{ for all } i \in \n^\circ\\
        \phi(i,\alpha_{n-1}(j)) &= 1 + \phi(i-1, j) && \text{ for all } i \neq 0, j \in \n-1. 
    \end{align*}
    This forces the definition of $\phi$ to be $\phi(i,j) = \min(i,j)$ for all $0 \leq i,j \leq n-1$.
    However, we run into contradictions when computing $\phi(n,n-1)$.
    Since $\alpha_{n-1}(n-2) = \alpha_{n-1}(n-1) = n-1$, we must have that
    $$
    \phi(n,n-1) = \phi(n,\alpha_{n-1}(n-2)) = 1 + \phi(n-1, n-2) = 1 + \min(n-1,n-2) = n-1,
    $$
    but also
    $$
    \phi(n,n-1) = \phi(n,\alpha_{n-1}(n-1)) = 1 + \phi(n-1, n-1) = 1 + \min(n-1,n-1) = n.
    $$
    We see there does not exist a measuring $\phi \colon \n^\circ \times (\n-1) \to \N$.
    This is due to the fact that we tried to take the inductive definition of $\phi$ one step too far.
\end{example}

\subsubsection{Free and Cofree Functors}
The free functor is defined by mapping $X \in \Set$ to the algebra
\begin{align*}
    \alpha\colon 1+\N \times (X + \{0_X\}) &\to \N \times (X + \{0_X\}) \\
    * &\mapsto (0, 0_X)\\
    (i,0_X) &\mapsto (i+1,0_X)\\
    (i,x) &\mapsto (i+1,x).
\end{align*}
The behavior of $\Fr$ on functions $f\colon X \to Y$ is given by
\begin{align*}
    \Fr(f) \colon \N \times (X + \{0_X\}) &\to \N \times (Y + \{0_Y\}) \\
    (i,x) &\mapsto (i,f(x))\\
    (i, 0_X) &\mapsto (i, 0_Y).
\end{align*}
Checking this is a left adjoint to the forgetful functor can be done by constructing the natural bijection
$$
[X,A] \cong \Alg(\Fr(X),A),
$$
for any set $X$ and algebra $(A, \alpha)$
The bijection is given by
$
    f \mapsto \widetilde f,
$
where $\widetilde{f}$ is defined inductively as
\begin{align*}
    \widetilde f \colon \N \times (X + \{0_X\}) &\to B \\
    (0, 0_X) &\mapsto 0_B\\
    (0, x) &\mapsto f(x)\\
    (i, 0_X) &\mapsto \beta(\widetilde f(i-1,0_X))\\
    (i, x) &\mapsto \beta(\widetilde f(i-1,x)).
\end{align*}
The inverse of the bijection $[X,A] \to \Alg(\Fr(X),A)$ is given by restriction to $X$.
One can check this bijection is natural.

The cofree functor is defined by mapping $X \in \Set$ to the following coalgebra:
\begin{align*}
    \delta \colon X \times X^{\leq \infty} &\to 1 + X \times X^{\leq \infty}\\
    (x_0, [\:]) &\mapsto *\\
    (x_0, x:\ell) &\mapsto (x,\ell)
\end{align*}
Here, $X^{\leq \infty}$ is the set of finite or infinite lists with entries in $X$ (see \cref{subsubsec:initial and terminal objects} for details).
The behavior of $\Cof$ on functions $f\colon X \to Y$ is forced and given by applying $f$ to all elements of $X$ in any element of $X \times X^{\leq \infty}$.
Checking this is a right adjoint to the forgetful functor can be done by constructing the natural bijection
$$
[C,X] \cong \CoAlg(C, \Cof(X)),
$$
The bijection is given by
$
    [C,X] \to \CoAlg(C,\Cof(X)),
    f \mapsto \widetilde f,
$
where
\begin{align*}
    \widetilde f = \langle \widetilde f_0,\widetilde f_1 \rangle \colon C &\longrightarrow X \times X^{\leq \infty} \\
    c &\longmapsto
    \begin{cases}
        (f(c), f(c'):\widetilde f_1(c')) \text{ if } \chi(c) = c'\\
        (f(c), [\:]) \text{ if } \chi(c) = *.
    \end{cases}
\end{align*}
Notice the function $\widetilde f_0 = f$, which also gives the inverse for the bijection.
Verifying $\widetilde f$ is a coalgebra morphism is left to the reader.

\subsubsection{Representing Objects}
For more details, we refer the reader to the upcoming section about lists, since it is a direct generalization of what we are considering here.

First we wish to construct the universal measuring coalgebra $\underline{\Alg}(A,B)$.
The underlying set of $\underline{\Alg}(A,B)$ is a subset of $[A,B] \times [A,B]^{\leq \infty}$, given by
\begin{align*}
  \{
(f_0, f_{i+1})_{i\in I}
\mid
f_{i}(0_A) = 0_B, f_i (\alpha(a))  &= \beta( f_{i+1}(a) ),\\
&f_i (\alpha(a)) = \beta( f_{i+1}(a) ) \text{ for all } a \in A
\},  
\end{align*}
where by $f_{\max}$ we mean the last element of the stream $(f_{i+1})_{i\in I}$, if it exists.
Unpacking the above, we find elements of $\underline{\Alg}(A,B)$ are non empty streams $(f_0, f_{i+1})_{i\in I}$ such that
$
f_i(\alpha(a)) = \beta(f_{i+1}(a)),
$
and the last element of the stream (if it exists) should be the function mapping all elements of the form $\alpha(a)$ to the element $0_B \in B$.
The coalgebra structure of $\underline{\Alg}(A,B)$ is given by shifting to the left, 
$
    (f_0, (f_{i+1})_i) \mapsto (f_1, (f_{i+2})_{i}),
    (f_0, [\:]) \mapsto *.
$
Lastly, we give the evaluation map
\begin{align*}
    \ev \colon \underline{\Alg}(A,B) \times A &\to B\\
    ((f_0, (f_{i+1})_{i\in I}) , a) &\mapsto f_0(a).
\end{align*}
The evaluation map is measuring from $A$ to $B$ by $\underline{\Alg}(A,B)$ by construction.
To verify $\m_C(A,B) \cong \CoAlg(C, \underline{\Alg}(A,B))$, we refer the reader to the next section.

Next we would like to determine the measuring tensor $C \triangleright A$. 
By \cref{thm: measuring tensor as coequalizer}, $\m_C(A,B) \cong \Alg(C\triangleright A, B)$, where $C \triangleright A$ has as underlying set
$$
\Fr(C\times A)/\sim  \:\:= \N \times ( (C \times A) + \{e_{C\times A}\} ) / \sim. 
$$
Here the equivalence relation $\sim$ is generated by
\begin{align*}
    (i, c, 0_A) &\sim (i, e_{C\times A}) \\
    (i, c, \alpha(a))  &\sim (i, e_{C\times A}) \text{ if } \chi(c) = *\\
    (i, c, \alpha(a)) &\sim (i + 1, c', a) \text{ if } \chi(c) = c'.
\end{align*}
The algebra structure is induced by the function
\begin{align*}
    1 + \Fr(C\times A)&\to \Fr(C\times A)\\
    * &\mapsto (0, e_{C\times A})\\
    (i,e_{C\times A}) &\mapsto (i+1 ,e_{C\times A})\\
    (i,c,a ) &\mapsto (i+1 ,c,a)
\end{align*}
which respects the quotient map $\Fr(C\times A) \to \Fr(C\times A) / \sim$.
To verify $\m_C(A,B) \cong \Alg(C\triangleright A, B)$ we again refer the reader to the upcoming section.

Finally, we define the convolution algebra $[C,B]$. Its algebra structure is given by
\begin{align*}
    1 + [C,B] &\to [C,B] \\
    * &\mapsto (c \mapsto 0_B)\\
    f &\mapsto 
    \left(
    c \mapsto 
    \begin{cases}
        \beta(f(c')) \text{ if } \chi(c) =c'\\
        0_A \text{ if } \chi(c) = *
    \end{cases}
    \right).
\end{align*}

We would like to calculate $\underline{\Alg}(\n, B)$ for arbitrary $B \in \Alg$.
Our aim is to leverage the isomorphism
$$
\CoAlg(C, \underline{\Alg}(\n, B)) \cong \m_C(\n, B) \cong \Alg(\n, [C,B]).
$$
To do so, we make an observation about $\Alg(\n, Z)$ for an arbitrary algebra $(Z,\zeta) \in \Alg$.
Since $\n$ is preinitial, any morphism out of $\n$ is unique.
So, $\Alg(\n, Z)$ has at most one element.
The question now becomes if there is a condition on $Z$ for $\Alg(\n, Z)$ to be inhabited.
We claim this condition is
\begin{equation}\label{eq:natcondition}
    \zeta(i_Z) = \zeta((\min(n-1, i))_{Z}) \text { for all } i \in \N.
\end{equation}
Now that we have an indication on whether $\Alg(\n, Z)$ is inhabited for arbitrary $Z \in \Alg$, we can focus on $Z = [C,B]$.
Observing the algebra structure on $[C,B]$, we see the morphism $-_{[C,B]}$ is given by
\begin{align*}
    -_{[C,B]} : \N &\to [C,B] \\
    0 &\mapsto (c \mapsto 0_B)\\
    i+1 &\mapsto 
    \left(
        c \mapsto
        \begin{cases}
            \beta(i_{[C,B]}(c')) \text{ if } \chi(c) = c' \\
            0_B \text{ if } \chi(c) = *
        \end{cases}
    \right).
\end{align*}
Denoting the algebra structure on $[C,B]$ by $\alpha$, we would like to know under which conditions on $C$ and $B$ $\alpha(i_{[C,B]}) = \alpha((\min(n-1)(i))_{[C,B]})$.
The above condition can be satisfied if $\ind{c} \leq n$ for all $c\in C$ or if condition \ref{eq:natcondition} holds for $B$.
So, we have the following:
$$
\Alg(\n, [C,B]) \cong 
\begin{cases}
    \{*\} \text{ if } \beta(i_B) = \beta((\min(n-1)(i))_B) \text{ for all } i \in \N\\
    \{*\} \text{ if } \ind{c} \leq n \text{ for all } c \in C\\
    \emptyset \text{ otherwise}.
\end{cases}
$$
Now we can make use of the isomorphism $\CoAlg(C, \underline{\Alg}(\n, B)) \cong \Alg(\n, [C,B])$
We observe that in the case of $\beta(i_B) = \beta((\min(n-1)(i))_B)$,  $\underline{\Alg}(\n, B)$ has the universal property of the terminal coalgebra $\eN$.
Whenever this is not the case, $\underline{\Alg}(\n, B)$ has the universal property of $\n^\circ \in \CoAlg$.
We conclude
$$
\underline{\Alg}(\n, B) = 
\begin{cases}
    \N_{\infty} \text{ if }  \beta(i_B) = \beta((\min(n-1)(i))_B) \text{ for all } i \in \N\\
    \n^\circ \text{ otherwise}.
\end{cases}
$$

\subsubsection{$C$-Initial Algebras}
In this section we would like to give a proof of the fact that $\n$ is the terminal $\n^\circ$-initial algebra.
The proof is different from the simpler proof of \cref{example: terminal C-initial algebra}, with the main advantage that it does not use the law of excluded middle.
Where in the previous examples we could make a complete classification of $C$-initial algebras, here we cannot.
The algebra structures involved are more complex, and hence we will restrict ourselves to the most useful cases.

We first show there exists a morphism $A \to \n$ for any $\n^\circ$-initial algebra $A$ using induction.
After that, we will show the morphism is unique.
We start with a lemma which allows us to use induction later on.
\begin{lemma}
    Let $A$ be a $\n^\circ$-initial algebra. Then $A$ is also $(\n-1)^\circ$ initial.
\end{lemma}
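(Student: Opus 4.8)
The plan is to prove the statement by verifying the defining condition of \cref{def: second initial algebra} directly: I would show that $\mu_{(\n-1)^\circ}(A,X)$ is a singleton for \emph{every} algebra $X$. Throughout I would use the explicit description of measurings for $-+1$: a measuring $\varphi\colon C\times A\to X$ (with structures $\chi,\alpha,\beta$) satisfies (1) $\varphi_c(0_A)=0_X$, (2) $\varphi_c(\alpha(a))=0_X$ when $\chi(c)=*$, and (3) $\varphi_c(\alpha(a))=\beta(\varphi_{c'}(a))$ when $\chi(c)=c'$. Writing $\varphi_i:=\varphi(i,-)$ for the components indexed by $\n^\circ=\{0,\dots,n\}$ or $(\n-1)^\circ=\{0,\dots,n-1\}$, conditions (1)--(3) become, level by level, $\varphi_0\equiv 0_X$ and $\varphi_i(\alpha(a))=\beta(\varphi_{i-1}(a))$ for $i\geq 1$.

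For \textbf{existence}, I would restrict along the inclusion. The map $\iota\colon(\n-1)^\circ\hookrightarrow \n^\circ$ is a coalgebra homomorphism, so by functoriality of $\mu$ in its coalgebra variable it induces $\iota^{*}\colon \mu_{\n^\circ}(A,X)\to \mu_{(\n-1)^\circ}(A,X)$, sending a measuring $\varphi$ to $\varphi\circ(\iota\times\id_A)$. Since $A$ is $\n^\circ$-initial, the source is a singleton, so its image is nonempty, giving at least one $(\n-1)^\circ$-measuring. The step I expect to be the \emph{main obstacle} is the structural fact that $\alpha$ is surjective, i.e.\ every element of $A$ is $0_A$ or of the form $\alpha(a)$. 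I would extract this from $\n^\circ$-initiality exactly as in \cref{example: terminal C-initial algebra}: if some $w\in A$ were neither $0_A$ nor a successor, then (1)--(3) place no constraint whatsoever on the \emph{top} component $\varphi_n(w)$ (there is no level above $n$ to propagate to, $w\neq 0_A$, and $w$ is not a successor so (3) never mentions $\varphi_n(w)$). Taking $X=\N$, one could then alter the value $\varphi_n(w)$ of a given measuring to produce a genuinely different one, contradicting uniqueness; the care needed is to check that this alteration leaves all of (1)--(3) intact, which is precisely why the top index $n$ is used.

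For \textbf{uniqueness}, I would argue by induction on the level once $\alpha$ is known to be surjective. Given two measurings $\varphi,\psi\colon (\n-1)^\circ\times A\to X$, condition (1)--(2) force $\varphi_0=\psi_0\equiv 0_X$; and if $\varphi_{i-1}=\psi_{i-1}$ then for any $z\in A$, writing $z=0_A$ or $z=\alpha(a)$ (possible since $\alpha$ is surjective), (1) and (3) give $\varphi_i(z)=\psi_i(z)$, so $\varphi_i=\psi_i$ for all $0\leq i\leq n-1$ and hence $\varphi=\psi$. Combining this with the existence above yields $\mu_{(\n-1)^\circ}(A,X)\cong *$ for every $X$, which is exactly the assertion that $A$ is $(\n-1)^\circ$-initial. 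Note that the existence and uniqueness halves decouple neatly: existence for $(\n-1)^\circ$ is inherited from $\n^\circ$ by restriction, while uniqueness rests only on surjectivity of $\alpha$, which both $\n^\circ$- and $(\n-1)^\circ$-initiality demand, so no new consistency conditions beyond those already guaranteed by $\n^\circ$-initiality are required.
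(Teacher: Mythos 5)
Your proposal is correct, but it takes a genuinely different route from the paper. The paper's proof stays entirely at the level of the measuring tensor: the inclusion $m\colon(\n-1)^\circ\to\n^\circ$ induces a map $m\triangleright A\colon(\n-1)^\circ\triangleright A\to\n^\circ\triangleright A$ which is monomorphic; $\n^\circ$-initiality of $A$ says precisely that $\n^\circ\triangleright A$ is an initial object, and a monomorphism into an initial object is automatically an isomorphism (writing $u$ for the unique map the other way, $m\circ u=\id$ by initiality, and $m\circ(u\circ m)=m$ forces $u\circ m=\id$ by cancelling the mono), so $(\n-1)^\circ\triangleright A$ is initial as well, i.e.\ $A$ is $(\n-1)^\circ$-initial. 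You instead verify the defining condition $\mu_{(\n-1)^\circ}(A,X)\cong *$ by hand: existence by restricting the unique $\n^\circ$-measuring along $\iota$ (the same inclusion the paper uses, but pulled back through $\mu$ rather than pushed through $-\triangleright A$), and uniqueness by first extracting surjectivity of $\alpha$ from $\n^\circ$-initiality and then inducting on the index. Both halves are sound; in particular your observation that only the top component $\varphi_n(w)$ escapes all of the constraints (1)--(3) is exactly right, and the induction goes through (your level-by-level summary forgets the clause $\varphi_i(0_A)=0_X$ for $i\geq 1$, but you invoke condition (1) correctly where it is needed). The trade-off is this: your argument is elementary and exposes the mechanism (no element of $A$ lies outside the image of $\alpha$, after which everything is forced inductively), but the surjectivity step is a proof by contradiction and hence uses the law of excluded middle --- exactly what the paper is trying to avoid here, since \cref{subsec:naturals} introduces this chain of lemmas as an alternative to \cref{example: terminal C-initial algebra} ``with as main advantage that it does not use the law of excluded middle.'' The paper's tensor argument remains constructive, and it transfers essentially verbatim to the unbounded-tree analogue in \cref{subsec:unbounded}, whereas a pointwise argument like yours would have to be redone from scratch for each functor.
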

\begin{proof}
    Consider the coalgebra morphism $m \colon (\n-1)^\circ \to \n^\circ, i \mapsto i$.
    This induces a morphism
    \begin{align*}
        m\triangleright A \colon (\n-1)^\circ \triangleright A &\to  \n^\circ \triangleright A \\
        [n,i,a] &\mapsto [n,i,a].
    \end{align*}
    This morphism is monomorphic by definition.
    Since $A$ is $\n^\circ$-initial, we know $\n \triangleright A$ is an initial object.
    This means $m\triangleright A $ is a monomorphism into the initial object, hence an isomorphism.
\end{proof}
As an immediate consequence we have the following corollary.
\begin{corollary}
    Let $A$ be a $\n^\circ$-initial algebra. Then $A$ is also $\k^\circ$ initial for all $k \leq n$.
\end{corollary}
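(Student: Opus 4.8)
The plan is to deduce the corollary from the preceding Lemma by a finite downward induction on $k$, running from $k = n$ down to the desired value. The Lemma supplies exactly the single descent step needed: it sends an $m^\circ$-initial algebra to an $(m-1)^\circ$-initial one for any index $m \geq 1$. Iterating this descent $n-k$ times carries the hypothesis that $A$ is $\n^\circ$-initial all the way down to the conclusion that $A$ is $\k^\circ$-initial.

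Concretely, I would argue as follows. The base case $k = n$ is precisely the hypothesis that $A$ is $\n^\circ$-initial. For the inductive step, suppose that for some $k$ with $k+1 \leq n$ we have already established that $A$ is $(\k+1)^\circ$-initial; applying the Lemma with the index $k+1$ in place of $n$ then yields immediately that $A$ is $\k^\circ$-initial. Since each index $k+1$ invoked in this way lies in $\{1, \dots, n\}$, the Lemma is legitimately applicable at every stage, and the induction terminates after finitely many steps, giving $\k^\circ$-initiality for all $k \leq n$.

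There is no genuine obstacle here: all the mathematical content is carried by the Lemma, and the corollary is a bookkeeping argument combining it with induction. The only point that warrants a moment's attention is to confirm that the Lemma is invoked at an admissible index at each step, which holds because we only ever apply it to indices between $1$ and $n$; this is why the statement is phrased as an \emph{immediate} consequence.
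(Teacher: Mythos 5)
Your proposal is correct and takes exactly the route the paper intends: the paper declares the corollary an ``immediate consequence'' of the preceding lemma, which is precisely the finite downward induction (iterating the one-step descent from $m^\circ$-initial to $(m-1)^\circ$-initial) that you spell out. Making the induction and the admissibility of each index explicit is a harmless elaboration, not a different argument.
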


The next lemma is a technical lemma which we will be able to leverage during the induction step.
\begin{lemma}
    Let $A$ be a $\n^\circ$-initial algebra and let $\phi \colon \n^\circ \times A \to \N$ be the unique measuring from $A$ to $\N$ by $\n^\circ$.
    Then for all $i \in \n^\circ$ and $0 \leq j \leq n-i$ we have
    $$
    \phi(i,a) = \phi(i, \phi(i+j, a)_A).
    $$
\end{lemma}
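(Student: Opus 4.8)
The plan is to reduce the claimed identity to a purely numerical statement about $\phi$ and then prove it by a short induction on $j$, whose only two ingredients are a \emph{boundedness} and a \emph{stability} property of $\phi$ in its first argument. Both of these I would establish constructively by a single device: writing down an explicit candidate function, checking by hand that it is a measuring, and then invoking uniqueness of measurings coming from $\k^\circ$-initiality (available through the Corollary above). This is what lets the argument avoid the law of excluded middle used in \cref{example: terminal C-initial algebra}.

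First I would record the auxiliary computation $\phi(i,\fromI{A}(m)) = \min(i,m)$ for $0\le i\le n$ and $m\in\N$. Because $\fromI{A}(m)=m_A$ is manifestly the basepoint $0_A$ when $m=0$ and the successor $\alpha((m-1)_A)$ when $m\ge 1$, this follows by a double induction on $i$ and $m$ straight from the three defining conditions of a measuring, with no case analysis on arbitrary elements of $A$. Applying it with $m=\phi(i+j,a)$ identifies the right-hand side of the Lemma with $\min(i,\phi(i+j,a))$, so the task becomes to show $\phi(i,a)=\min(i,\phi(i+j,a))$ for $0\le j\le n-i$.

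Next I would prove boundedness, $\phi(i,a)\le i$ for all $0\le i\le n$ and all $a\in A$. The function $\rho(k,a):=\min(k,\phi(k,a))$ is itself a $\n^\circ$-measuring from $A$ to $\N$: the first two measuring conditions hold because $\min(k,0)=0$ and $\min(0,-)=0$, and the third reduces to the identity $\min(k-1,x)+1=\min(k,x+1)$. Since $A$ is $\n^\circ$-initial there is a unique such measuring, so $\rho=\phi$, giving $\phi(i,a)=\min(i,\phi(i,a))$ and hence $\phi(i,a)\le i$. An entirely parallel argument proves stability: for $0\le k\le n-1$ the function $\sigma(l,a):=\min(l,\phi(l+1,a))$, defined for $0\le l\le k$, is a $\k^\circ$-measuring to $\N$; comparing it with the restriction of $\phi$ along the coalgebra inclusion $\k^\circ\hookrightarrow\n^\circ$ and using that $A$ is $\k^\circ$-initial (Corollary, as $k\le n$) yields $\phi(k,a)=\min(k,\phi(k+1,a))$. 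Finally, the base case $j=0$ of the induction is exactly boundedness, and the inductive step combines stability at slice $i+j$ with $\min(i,\min(i+j,x))=\min(i,x)$ (valid since $i\le i+j$) to pass from $\min(i,\phi(i+j,a))$ to $\min(i,\phi(i+j+1,a))$; the auxiliary computation then rewrites this back into the claimed form $\phi(i,\fromI{A}(\phi(i+j,a)))$.

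The hard part is controlling $\phi(i,a)$ for elements $a$ that are a priori neither the basepoint nor a successor, where the measuring conditions impose nothing directly; this is precisely the place where the earlier proof appealed to excluded middle. The idea that defeats this obstacle is that one never needs to decide the shape of $a$: the candidate functions $\rho$ and $\sigma$ are defined uniformly by a $\min$, their measuring conditions are verified by arithmetic insensitive to whether $a$ is a constructor term, and uniqueness from $\k^\circ$-initiality forces them to agree with $\phi$. I expect the only points demanding genuine care to be the verifications that $\rho$ and $\sigma$ really are measurings and that the restriction of $\phi$ along $\k^\circ\hookrightarrow\n^\circ$ is the relevant $\k^\circ$-measuring; everything else is routine induction and elementary $\min$-arithmetic.
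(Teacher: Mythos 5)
Your proof is correct, and it takes a genuinely different route from the paper's. The paper proves the identity in a single step for each $j$: it exhibits the right-hand side directly as a measuring, by composing $\fromI{A}\circ\phi$ (a measuring from $A$ to $A$) with $\phi$ to get a measuring indexed by the tensor coalgebra $\n^\circ\times\n^\circ$, precomposing along the coalgebra morphism $p_j\colon\k^\circ\to\n^\circ\times\n^\circ$, $i\mapsto(i,i+j)$, and then invoking $\k^\circ$-initiality to identify the resulting $\k^\circ$-measuring with the restriction of $\phi$ — no arithmetic in $\N$ ever appears. You instead reduce the identity to $\min$-arithmetic via $\phi(i,\fromI{A}(m))=\min(i,m)$, and obtain the two numerical inputs you need — boundedness $\phi(i,a)=\min(i,\phi(i,a))$ and stability $\phi(l,a)=\min(l,\phi(l+1,a))$ — by writing down explicit $\min$-formulas, checking the three measuring axioms by hand (the verifications indeed reduce to $\min(k,x+1)=\min(k-1,x)+1$), and appealing to the same uniqueness principle; an induction on $j$ then assembles the statement. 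Both arguments are constructive and both hinge on uniqueness of measurings out of a $\k^\circ$-initial algebra, so your proposal preserves the feature the paper advertises (no excluded middle). What your route buys: it avoids the composition-of-measurings machinery entirely (only precomposition along coalgebra maps and uniqueness are used), and it delivers the bound $\phi(i,a)\le i$ en route, which the paper can only extract afterwards as \cref{lem:bound}. What the paper's route buys: it never uses the order or arithmetic of $\N$, which is exactly why the same argument transfers essentially verbatim to the unbounded tree type in \cref{lem:adddepth}, where the analogue of your $\min$ would be a meet of tree shapes and the explicit-formula verification would be substantially heavier.
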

\begin{proof}
    Let $k\leq n$.
    Define the coalgebra morphism
    \begin{align*}
        p_j \colon \k^\circ &\to \n^\circ \times \n^\circ \\
        i &\mapsto (i,i+j)
    \end{align*}
    for all $0 \leq j \leq n-k$.
    Consider the composition of measurings $\phi \circ (\id \times -_{A} \circ \phi) \in \m_{\n^\circ \times \n^\circ}(A,\N)$.
    We can precompose this measuring with the coalgebra morphism $p_j$ to obtain a measuring
    $$
    \k^\circ \times A \xrightarrow{p_j \times \id_A} \n^\circ \times \n^\circ \times A \xrightarrow{\phi \circ (\id \times -_{A} \circ \phi)} \N.
    $$
    We also have the coalgebra morphism $f\colon \k^\circ  \to \n^\circ, i \mapsto i$ which we can precompose with $\phi$.
    This gives us a measuring
    $$
    \k^\circ \times A \xrightarrow{f \times \id_A} \n^\circ \times A \xrightarrow{\phi} \N.
    $$  
    Since $A$ is also $\k^\circ$-initial, we know these measurings must coincide.
    Hence we can state
    $
    \phi(i,a) = \phi(i,-_{A} \circ \phi(i+j,a))
    $
    for all $i \in \k^\circ$ and $0 \leq j \leq n-k$.
    The only restriction placed on $k$ was that $k \leq n$.
    Iterating over all $0 \leq k \leq n$, we arrive at the desired result
    $$
    \phi(i,a) = \phi(i,-_{A}\circ \phi(i+j,a))
    $$
    for all $0 \leq i \leq n$ and $0 \leq j \leq n-i$.
\end{proof}

\begin{corollary}\label{lem:bound}
    Let $A$ be $\n^\circ$-initial and let $\phi$ be the unique measuring to $\N$.
    For all $i \in \n^\circ$ and $a \in A$, $\phi(i,a) \leq i$.
\end{corollary}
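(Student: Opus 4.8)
The plan is to deduce the bound by specializing the preceding lemma to the case $j=0$ and combining it with an explicit computation of $\phi$ on the \emph{standard} elements $m_A \coloneqq \fromI{A}(m)$ of $A$. First I would record what it means for $\phi\colon \n^\circ\times A\to\N$ to be a measuring into the successor algebra $\N$: since the coalgebra structure on $\n^\circ$ sends $0\mapsto *$ and $i\mapsto i-1$ for $i\geq 1$, the three defining conditions of a measuring read $\phi(i,0_A)=0$ for all $i$, then $\phi(0,\alpha(a))=0$ for all $a$, and finally $\phi(i,\alpha(a))=\phi(i-1,a)+1$ for $i\geq 1$ and all $a$.

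The key intermediate claim is that $\phi(i,m_A)=\min(i,m)$ for all $i\in\n^\circ$ and $m\in\N$. I would prove this by induction on $m$, using that $0_A=\fromI{A}(0)$ and $m_A=\alpha\bigl((m-1)_A\bigr)$ for $m\geq 1$: the base case $m=0$ is the first measuring condition, the case $i=0$ with $m\geq 1$ is the second, and the case $i\geq 1$ with $m\geq 1$ follows from the third condition together with the inductive hypothesis, since $\phi(i,m_A)=\phi\bigl(i-1,(m-1)_A\bigr)+1=\min(i-1,m-1)+1=\min(i,m)$. This is the only computational step, and it is routine; there is no real obstacle once the measuring axioms are unwound on the standard elements.

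Finally, I would apply the preceding lemma with $j=0$, which gives $\phi(i,a)=\phi\bigl(i,\fromI{A}(\phi(i,a))\bigr)$ for every $i\in\n^\circ$ and $a\in A$. The point of routing through that lemma is exactly that an arbitrary $a\in A$ need not be standard, whereas its right-hand side replaces $a$ by the standard element $\fromI{A}(\phi(i,a))$, on which $\phi$ is already computed. Writing $m=\phi(i,a)$ and invoking the intermediate claim, the right-hand side equals $\phi(i,m_A)=\min(i,m)=\min\bigl(i,\phi(i,a)\bigr)$. Hence $\phi(i,a)=\min\bigl(i,\phi(i,a)\bigr)$, and since $\min(i,x)=x$ precisely when $x\leq i$, this is exactly the assertion $\phi(i,a)\leq i$. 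The argument uses $\n^\circ$-initiality only through the existence and uniqueness of $\phi$, so that the preceding lemma is available; no further appeal to initiality, and in particular no use of excluded middle, is needed.
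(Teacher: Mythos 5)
Your proof is correct, and it reaches the paper's key identity $\phi(i,a)=\min\bigl(i,\phi(i,a)\bigr)$ by a genuinely different route. Both arguments begin identically: specialize the preceding lemma to $j=0$ so that the arbitrary element $a$ is replaced by the standard element $\fromI{A}(\phi(i,a))$. From there, however, the paper moves into the measuring tensor $\n^\circ\triangleright A$: it rewrites the equation as an equality of classes $[0,i,a]=[0,i,\fromI{A}(\phi(i,a))]$, unfolds $[0,i,\fromI{A}(m)]=[\min(i,m),z]$ via the generating relations of $\sim$, and identifies $[0,i,a]$ with $[\phi(i,a),z]$ --- steps whose justification rests on the fact, left implicit there, that $\n^\circ\triangleright A$ is initial (hence isomorphic to $\N$ via $m\mapsto[m,z]$) precisely because $A$ is $\n^\circ$-initial. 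You never leave $\N$: your inductive computation $\phi\bigl(i,\fromI{A}(m)\bigr)=\min(i,m)$ is carried out directly from the three measuring axioms and is exactly the elementary counterpart of that tensor unfolding; it is also the adjoint form of the computation $n_{[C,B]}(c)=\min(\ind{c},n)_B$ appearing in \cref{ex: compute partial alg hom}, since $\widehat\phi\colon A\to[\n^\circ,\N]$ is an algebra homomorphism and so sends $m_A$ to $m_{[\n^\circ,\N]}$. What the paper's version buys is a working demonstration of the measuring-tensor presentation from \cref{thm: copower}; what yours buys is self-containedness --- it makes explicit the computation the paper compresses into ``by definition of the equivalence relation,'' avoids the unstated appeal to $\n^\circ\triangleright A\cong\N$, and, as you observe, uses $\n^\circ$-initiality only through the availability of the lemma itself.
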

\begin{proof}
    Since $\phi(i,a) = \phi(i, (\phi(i, a))_A)$ by the previous lemma, we know
    $
    [0,i,a] = [0, i, (\phi(i, a))_A]
    $
    in $\n^\circ \triangleright A$.
    We also have $[\phi(i,a),z] = [0,i,a]$, and by definition of the equivalence relation $\sim$, we have
    $
    [0, i, (\phi(i, a))_A] = [\min(i, \phi(i, a)), z].
    $
    From this we conclude
    $$
    \phi(i,a) = \min(i, \phi(i, a)),
    $$
    hence $\phi(i,a) \leq i$ for all $i \in \n^\circ$.
\end{proof}
Now we are ready to define the a family of functions which will culminate in an algebra morphism $A \to \n$ for any $\n^\circ$-initial algebra $A$.

\begin{proposition}\label{prop:Atonexists}
Let $A$ be an  $\n^\circ$-initial algebra.
Let $\varphi\colon \n^\circ\times A\rightarrow \N$ be the unique measuring. The functions $\phi_k \colon A \to \k$ 
are total algebra morphisms for all $0 \leq k \leq n$.
\end{proposition}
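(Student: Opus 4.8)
The plan is to check directly that each function $\varphi_k\coloneqq\varphi(k,-)$ satisfies the two defining conditions of an algebra homomorphism into $\k$, namely preservation of the basepoint and of the successor. The first preliminary point is that $\varphi_k$ really lands in $\k=\{0,\dots,k\}$ rather than in all of $\N$: this is immediate from \cref{lem:bound}, which gives $\varphi(k,a)\le k$ for every $a\in A$, so $\varphi_k\colon A\to\k$ is a well-defined function. Preservation of the basepoint, $\varphi_k(0_A)=0$, is then just the first measuring condition $\varphi_c(0_A)=0$.

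The substance is the successor condition, i.e.\ that $\varphi_k(\alpha(a))=\min(\varphi_k(a)+1,k)$ for all $a\in A$, where $j\mapsto\min(j+1,k)$ is the successor map of $\k$. The case $k=0$ follows at once from the second measuring condition, since $\chi(0)=*$ forces $\varphi_0(\alpha(a))=0=\min(\varphi_0(a)+1,0)$. For $k\ge 1$ the third measuring condition, together with $\chi(k)=k-1$ in $\n^\circ$, gives $\varphi_k(\alpha(a))=\varphi_{k-1}(a)+1$, so it remains to prove the cross-layer identity
\[
\varphi_{k-1}(a)=\min\big(\varphi_k(a),\,k-1\big),
\]
after which $\varphi_k(\alpha(a))=\min(\varphi_k(a),k-1)+1=\min(\varphi_k(a)+1,k)$ as required. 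To handle the right-hand side I would first record the auxiliary computation $\varphi_i(j_A)=\min(i,j)$ for all $i\in\n^\circ$ and $j\in\N$, where $j_A=\fromI{A}(j)=\alpha^j(0_A)$; this is a one-line induction on $j$ using the three measuring conditions and the relation $j_A=\alpha((j-1)_A)$.

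The key input, and the step I expect to be the main obstacle, is the technical lemma immediately preceding \cref{lem:bound}: this is where $\n^\circ$-initiality is genuinely used, the composition-of-measurings argument forcing the different layers of $\varphi$ to be consistent with one another. Applying it with $i=k-1$ and $j=1$ — permissible because $k\le n$ — yields $\varphi_{k-1}(a)=\varphi_{k-1}\big(\fromI{A}(\varphi_k(a))\big)=\varphi_{k-1}\big((\varphi_k(a))_A\big)$, and the auxiliary computation evaluates this to $\min(k-1,\varphi_k(a))$, which is exactly the cross-layer identity. I emphasize that this consistency between layers cannot be read off from the measuring axioms alone; it is precisely the content that the technical lemma extracts from $\n^\circ$-initiality, and once it is in hand the remaining verification is a routine unwinding of the measuring conditions.
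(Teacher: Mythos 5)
Your proof is correct, and it rests on exactly the two inputs that the paper's own proof uses --- the unlabeled technical lemma preceding \cref{lem:bound} (which, as you rightly emphasize, is the only place $\n^\circ$-initiality enters) and \cref{lem:bound} itself for well-definedness of $\phi_k\colon A\to\k$ --- but you assemble them along a genuinely different route. The paper argues by induction on $k$: assuming $\phi_{k-1}$ is already an algebra morphism, it splits into the cases $\phi_k(a)=0$ and $\phi_k(a)\geq 1$ and runs a chain of equalities invoking the technical lemma twice (at $j=0$ and at $j=1$) together with the inductive hypothesis. You avoid induction on $k$ altogether: the technical lemma at $(i,j)=(k-1,1)$ combined with your auxiliary evaluation $\varphi_i(j_A)=\min(i,j)$ (itself a short induction on $j$, quantified over all $i$) yields the explicit cross-layer identity
\[
\varphi_{k-1}(a)=\min\big(\varphi_k(a),\,k-1\big),
\]
after which the successor condition is pure arithmetic and each layer $k$ is handled independently of the others. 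What your version buys: a sharper intermediate statement that the paper never isolates (though your auxiliary formula echoes the computation $n_{[C,B]}(c)=\min(\ind{c},n)_B$ in \cref{ex: compute partial alg hom}), no case analysis, and an explicit basepoint check that the paper leaves implicit. What it costs: the argument leans on the arithmetic of $\N$ --- the closed-form $\min$ expressions --- whereas the paper's inductive scheme is the one that transfers essentially verbatim to the harder unbounded-tree analogue (\cref{prop:AtoS_nexists}), where no such closed-form evaluation of the measuring on standard elements is available.
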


\begin{proof}
    We proceed by induction over $k$.
    For $k = 0$, we have that $\k \cong 1$, the terminal object in the category of algebras.
    Hence $\phi_0 : A \to 1$ is an algebra morphism.
    For the inductive step, assume $\phi_{k-1} : A \to \k-1$ is an algebra morphism.
    We wish to show $\alpha_k(\phi_k(a)) = \phi_k(\alpha(a))$ for all $a \in A$.
    If $\phi_k(a) = 0$, we know
    \begin{align*}
         \phi(k,\alpha(a)) & = 1 + \phi(k-1,a) \\
         &= 1 + \phi(k-1, (\phi(k,a))_A) \\
         & = 1 + \phi(k-1, 0_A) \\
         &  = 1 = \alpha_k(0) \\
         &= \alpha_k(\phi(k,a)).
    \end{align*}
    If $\phi_k(a) \geq 0$ we can make the following deduction
    \begin{align*}
        \alpha_k(\phi_k(a)) 
        &= \alpha_k(\phi(k, a))\\
        &= 1 + \alpha_{k-1}(\phi(k,a) - 1)\\
        &= 1 + \alpha_{k-1}(\phi(k,(\phi(k,a)) - 1)_A)\\
        &= 1 + \alpha_{k-1}(\phi(k-1,(\phi(k,a) - 1)_A))\\
        &= 1 + \phi(k-1,  \phi(k,a)_A)\\
        &= 1 + \phi(k-1, a)\\
        &= \phi(k,\alpha(a)).
    \end{align*}
\end{proof}

In particular, this lemma shows $\phi_n \colon A \to \n$ is an algebra morphism.
It still remains to show this algebra morphism is unique.
\begin{lemma}\label{lem:Atonunique}
    For any $\n^\circ$-initial algebra $A$, there exist at most one algebra morphism $A \to \n$.
\end{lemma}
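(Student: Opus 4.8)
The plan is to show that any algebra morphism $g\colon A\to \n$ is completely determined by the \emph{unique} measuring $\phi\colon \n^\circ\times A\to \N$ which exists precisely because $A$ is $\n^\circ$-initial (so that $\m_{\n^\circ}(A,\N)\cong *$). The idea is that precomposing a fixed, explicitly known measuring out of $\n$ with $g$ produces a measuring out of $A$; by $\n^\circ$-initiality this composite has no choice but to equal $\phi$, and reading off a single value of $\phi$ then recovers $g$ with no reference to $g$ itself.

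Concretely, I would first recall the measuring $m\colon \n^\circ\times \n\to \N$, $(i,j)\mapsto \min(i,j)$, from the earlier computation of measurings out of $\n$. Given any algebra morphism $g\colon A\to \n$, the functoriality of measurings in the source-algebra variable (the precomposition rule recorded in \cref{sec: Measurings and partial algebra homomorphisms} just after \cref{definition: measuring coalgebra}) shows that the composite
\[
\n^\circ\times A \xrightarrow{\id\times g} \n^\circ\times \n \xrightarrow{m} \N
\]
is again a measuring from $A$ to $\N$ by $\n^\circ$. Since $A$ is $\n^\circ$-initial, the set $\m_{\n^\circ}(A,\N)$ is a singleton, so this composite must coincide with the unique measuring $\phi$. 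Hence $\phi(i,a)=\min\bigl(i,g(a)\bigr)$ for all $i\in \n^\circ$ and all $a\in A$.

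Finally I would evaluate at the top index $i=n$. Because $g(a)\in\n=\{0,1,\dots,n\}$ we have $g(a)\le n$, so $\min(n,g(a))=g(a)$, and therefore $g(a)=\phi(n,a)$ for every $a\in A$. As the right-hand side is independent of the chosen morphism, any two algebra morphisms $A\to\n$ must agree, which is exactly the assertion that there is at most one. (This also dovetails with \cref{prop:Atonexists}: the morphism constructed there is precisely $a\mapsto\phi(n,a)$, so existence and uniqueness together identify $\phi_n$ as \emph{the} morphism $A\to\n$.)

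The only genuine content lies in the middle paragraph: verifying that $m$ is truly a measuring and that precomposition by the algebra morphism $g$ preserves the measuring condition. Both are instances of facts already established—$m$ is the measuring exhibited in the worked example for the $\N$-type, and stability of measurings under precomposition by algebra homomorphisms is the functoriality of $\m$ in its source-algebra argument. I do not expect either to be a real obstacle; the remaining step is the elementary identity $\min(n,g(a))=g(a)$ forced by $g$ landing in $\n$.
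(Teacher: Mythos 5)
Your proof is correct, and it is essentially the adjoint transposition of the paper's argument rather than a verbatim copy of it. The paper works on the algebra-homomorphism side of the correspondence $\m_{\n^\circ}(A,\N)\cong\balg(A,[\n^\circ,\N])$: it invokes \cref{prop:map from initial alg to dual} to get a unique morphism $A\to[\n^\circ,\N]$, observes that the unique morphism $m\colon\n\to[\n^\circ,\N]$ (which is exactly the transpose of your min-measuring) is a monomorphism, and concludes $f=g$ from $m\circ f=m\circ g$. You instead stay on the measuring side: precompose the explicit measuring $(i,j)\mapsto\min(i,j)$ with $\id\times g$, identify the composite with the unique measuring $\phi\in\m_{\n^\circ}(A,\N)$ guaranteed by $\n^\circ$-initiality, and read off $g(a)=\phi(n,a)$ by evaluating at the top index. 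Your evaluation step $\min(n,g(a))=g(a)$ is precisely the concrete content behind the paper's unproved assertion that $m$ is a monomorphism (the paper only says ``upon closer inspection''), so your version is in that respect more self-contained and explicit — it even produces the formula $g=\phi(n,-)$, matching the morphism $\phi_n$ of \cref{prop:Atonexists}. What the paper's packaging buys in exchange is reusability: the same diagrammatic argument (unique map to the convolution algebra plus a mono out of the candidate terminal object) is repeated essentially unchanged for the unbounded tree type in \cref{lem:AtoS_nunique}, where an explicit closed formula analogous to $\min$ is more cumbersome to write down and evaluate.
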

\begin{proof}
    By \cref{prop:map from initial alg to dual}, there exists a unique morphism $A \to [\n^\circ, \N]$ for any $\n^\circ$-initial algebra $A$.
    Since $\n$ is $\n^\circ$-initial we know there exists a unique morphism $m \colon \n \rightarrowtail [\n^\circ, \N]$.
    Upon closer inspection $m$ turns out to be a monomorphism.
    Given any two morphisms $f,g \colon A \to \n$, we can draw the following diagram
    \[\begin{tikzcd}
        A & {[\n^\circ, \N]} \\
        & \n
        \arrow["{!}", from=1-1, to=1-2]
        \arrow["f"', shift right, from=1-1, to=2-2]
        \arrow["g", shift left, from=1-1, to=2-2]
        \arrow["m"', tail, from=2-2, to=1-2].
    \end{tikzcd}\]
    Since the morphism $A \to [\n^\circ, \N]$ is unique, we know the composites $m \circ f = m \circ g$, and by $m$ being mono we conclude $f = g$.
    Hence, there can be at most one algebra morphism from a $\n^\circ$-initial algebra $A$ to $\n$.
\end{proof}

Putting all the above together, we arrive at the following result.

\begin{theorem}
    The algebra $\n$ is the terminal $\n^\circ$-initial algebra.
\end{theorem}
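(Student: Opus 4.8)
The plan is to unwind the meaning of ``terminal $\n^\circ$-initial algebra'' from \cref{def: second initial algebra} and then assemble the ingredients already proved above. Being terminal in the full subcategory of $\balg$ spanned by the $\n^\circ$-initial algebras amounts to two assertions: that $\n$ itself lies in that subcategory, i.e. that $\n$ is $\n^\circ$-initial; and that for every $\n^\circ$-initial algebra $A$ there is a \emph{unique} algebra homomorphism $A \to \n$. So the proof splits cleanly into an object check and a morphism check.

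For the object check, I would verify that $\mu_{\n^\circ}(\n, X)$ is a singleton for every algebra $X$. This is immediate from \cref{ex: compute partial alg hom}: every $c \in \n^\circ$ satisfies $\ind{c} \le n$, so the first branch of \cref{eq: conv alg calc} applies and gives $\bcoalg(\n^\circ, \ubalg(\n, X)) \cong \mu_{\n^\circ}(\n, X) \cong *$, regardless of $X$. This is exactly the defining condition for $\n$ to be a $\n^\circ$-initial algebra, so $\n$ is an object of the relevant subcategory.

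For the morphism check, fix an arbitrary $\n^\circ$-initial algebra $A$. Existence of a map $A \to \n$ is supplied by \cref{prop:Atonexists}: taking $k = n$ there produces a total algebra homomorphism $\phi_n \colon A \to \n$, built by induction on $k$ from the unique measuring $\phi \colon \n^\circ \times A \to \N$. Uniqueness is then \cref{lem:Atonunique}, which factors any homomorphism $A \to \n$ through the monomorphism $\n \rightarrowtail [\n^\circ, \N]$ coming from \cref{prop:map from initial alg to dual}. Combining the two, $\balg(A, \n)$ is a singleton for every $\n^\circ$-initial $A$; together with the object check this exhibits $\n$ as the terminal object of the subcategory of $\n^\circ$-initial algebras, which is the claim. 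I would therefore present the proof as the conjunction of \cref{ex: compute partial alg hom}, \cref{prop:Atonexists}, and \cref{lem:Atonunique}.

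The genuinely substantial content, and hence the main obstacle, sits entirely inside the existence step (\cref{prop:Atonexists}) rather than in the bookkeeping above. The delicate issue is that an arbitrary measuring $\phi \colon \n^\circ \times A \to \N$ has no \emph{a priori} reason to descend to an honest algebra map into the truncation $\n$; what forces the inductive step $\alpha_k(\phi_k(a)) = \phi_k(\alpha(a))$ to close up is the bound $\phi(i,a) \le i$ of \cref{lem:bound} together with the self-consistency relation $\phi(i,a) = \phi(i, \fromI{A} \circ \phi(i+j,a))$, obtained by precomposing the composite measuring $\phi \circ (\id \times \fromI{A} \circ \phi)$ with the coalgebra maps $p_j \colon \k^\circ \to \n^\circ \times \n^\circ$. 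Both of these rely crucially on the fact that $\n^\circ$-initiality descends to $\k^\circ$-initiality for all $k \le n$. Since these lemmas are already in hand, the remaining work is only to invoke them in the correct order.
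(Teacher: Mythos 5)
Your proposal is correct and follows essentially the same route as the paper: existence of the map $A \to \n$ via \cref{prop:Atonexists} and uniqueness via \cref{lem:Atonunique}. The only difference is that you explicitly verify that $\n$ itself is $\n^\circ$-initial, a point the paper's proof of this theorem leaves implicit, having already established it via \cref{ex: compute partial alg hom}.
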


\begin{proof}
    Given an $\n^\circ$ initial algebra $A$, by Proposition \ref{prop:Atonexists} we obtain an algebra morphism $\phi_n \colon A \to \n$.
    By Lemma \ref{lem:Atonunique} it is unique.
    We conclude $\n$ is the terminal $\n^\circ$-initial algebra.
\end{proof}

\subsection{The List Type} \label{subsec:list}
In \cref{subsec:monoid} we have seen how introducing a monoidal structure allows measurings to introduce some ``twist''.
On the other hand in \cref{subsec:naturals} we have seen how measurings are able to control ``shape'' in a sense.
In this section we are combining the two, which comes together nicely in the list type.
Given a monoid $(M, \bullet, e)$, we can consider the type which contains all possible finite lists with elements in $M$.
Lists are used ubiquitously throughout computer science and exhibits the flexibility gained when considering not just algebra morphisms, but measurings as well.

To study the list type we must find a functor which has it as an initial algebra.
This functor exists and is defined as follows.
Let $(M, \bullet, e)$ be a commutative monoid in $\Set$ and consider the functor
\begin{align*}
    F \colon \Set &\longrightarrow \Set \\
    X &\longmapsto 1+X\times M\\
    f &\longmapsto \id_1 + f \times \id_M.
\end{align*}
The lax symmetric monoidal structure is given by
\begin{align*}
    \nabla_{X,Y} \colon (1 + M \times X) \times (1 + M \times Y) &\longrightarrow 1 + M \times X\times Y\\
    (*,(m,y)) &\longmapsto *\\
    ((m,x),*) &\longmapsto *\\
    ((m,x),(m',y)) &\longmapsto (m \bullet m' , x,y)
\end{align*}
and
$
    \eta \colon 1 \to 1 + M ,
    * \mapsto e.
$
Since we will use it explicitly later, we remark the lax closed structure is given by
\begin{align*}
    \widetilde{\nabla}_{X,Y}\colon 1+ M \times [X,Y] &\longrightarrow [1+ M \times X, 1+ M \times Y] \\
    * &\longmapsto \const_*\\
    (m, f) &\longmapsto \widetilde{\nabla}_{X,Y}(m,f),
\end{align*}
where
\begin{align*}
    \widetilde{\nabla}_{X,Y}(m,f) \colon 1 + M\times X &\longrightarrow 1 + M \times Y \\
    * &\longmapsto *\\
    (m',x) &\longmapsto m\bullet m', f(x).
\end{align*}

The category of $F$-algebras has elements
$
    \alpha \colon 1+M\times A \to A
$
denoted $(A,\alpha)$. We will write $\alpha(*) = e_A$, thinking of $\alpha(*)$ as an ``empty element''.
Sometimes when considering an algebra we will speak of a ``list-like algebra''.
Morphisms $f\colon(A,\alpha) \to (B,\beta)$ are given by functions $ f \colon A \to B $
which make the following diagram commute:
\[\begin{tikzcd}
1+M\times A & 1+M\times B \\
A & B.
            \arrow["F(f)", from=1-1, to=1-2] 
\arrow["\alpha"',from=1-1, to=2-1] \arrow["\beta",from=1-2, to=2-2]
            \arrow["f", from=2-1, to=2-2]
\end{tikzcd}\]
The category of $F$-coalgebras has elements
$
    \chi \colon C \to 1+M\times C,
$
denoted $(C,\chi)$.
Sometimes when considering a coalgebra we will speak of a ``stream-like coalgebra''.
Morphisms $f\colon (C,\chi) \to (D,\delta)$ are given by functions $ f \colon C \to D $
which make the following diagram commute:
\[\begin{tikzcd}
1+M\times C & 1+M\times D \\
C & D.
            \arrow["F(f)", from=1-1, to=1-2] 
\arrow["\chi",from=2-1, to=1-1] \arrow["\delta"',from=2-2, to=1-2]
            \arrow["f", from=2-1, to=2-2]
\end{tikzcd}\]

\subsubsection{Initial and Terminal Objects}
\label{subsubsec:initial and terminal objects}
We denote the set of lists of length $n$ with elements in $M$ by $M^{n}$.
The underlying set of the initial algebra is given by
$$
\gls{List} \coloneqq\coprod_{i\in \N} M^{i}.
$$
Again borrowing notation from functional programming, we will write $\gls{EmptList} \in M^0$ for the empty list and given a list $\ell \in M^*$ and an element $m \in M$, appending $m$ to the list $\ell$ is denoted by $\gls{ConcList}$.
Moreover, we will call the element which was appended to the list most recently the first element, and conversely the element which was appended to the list earliest the last element.
The algebra structure of $M^*$ is given by 
\begin{align*}
    1 + M \times M^* &\longrightarrow M^* \\
    * &\longmapsto [\:]\\
    (m,\ell) &\longmapsto m:\ell.
\end{align*}
This also explains the term ``list-like algebra'', since every algebra $\alpha \colon 1 + M \times A \to A$ has an empty element $e_A = \alpha(*)$ and a way of appending an element $m \in M$ to a ``list-like'' $a \in A$.
To verify $M^*$ is indeed the initial algebra, let $(A, \alpha)$ be an arbitrary algebra.
The unique algebra morphism $M^* \to A$, denoted $\fromI{A} \colon M^* \to A$ is forced by the commutative square
\[\begin{tikzcd}
1+M \times M^* & 1+M \times A \\
M^* & A
            \arrow["F(f)", from=1-1, to=1-2] 
\arrow[from=1-1, to=2-1] \arrow["\alpha",from=1-2, to=2-2]
            \arrow["\fromI{A}", from=2-1, to=2-2],
\end{tikzcd}\]
and defined as
\begin{align*}
    \fromI{A}\colon M^* &\to A \\
    [\:] &\mapsto e_A\\
    m:\ell &\mapsto \alpha(m,f(\ell)).
\end{align*}

Preinitial algebras correspond to equivalence relations $\sim$ on $M^*$ which satisfy
\[
\ell \sim \ell' \Rightarrow m:\ell \sim m:\ell' \text{ for all } m \in M,
\]
by \cref{lem: quotient algebras}.
An example is the equivalence relation which identifies all lists with length greater than $n$ with the list of their last $n$ elements.
This results in a preinitial algebra which we will denote by $\gls{Listn} \coloneqq \coprod_{0\leq i\leq n} M^{i}$.
Its algebra structure is given by
\begin{align*}
    1 + M \times M^{\leq n} &\to M^{\leq n} \\
    * &\mapsto [\:]\\
    (m,\ell) &\mapsto \take(n)(m:\ell),
\end{align*}
where $\take$ takes the first $n$ elements of a list.
Another example is the equivalence relation generated by
$$
m:\ell \sim \ell \text{ if } m \not\in M',
$$
where $M' \subseteq M$.
This results in the preinitial algebra $(M')^*$ for any $M' \subseteq M$.
The unique algebra morphism $M^* \to (M')^*$ corresponds to filtering the elements of a list.

One can check that in the above cases the unique morphisms $M^* \to M^{\leq n}$ and $M^* \to (M')^*$ are epic.
The above two can be combined to yield filtered lists of length at most $n$, leading to even more preinitial algebras.
However, keep in mind this list is not exhaustive, and that there are even more preinitial algebras.

The terminal coalgebra is given by lists in $M$ of finite and infinite length and is denoted $M^{\leq \infty}$.
Formally, it is given by $M^{\leq \infty} = M^* + (\prod_{n \in \N}M)$.
To verify this is indeed the terminal coalgebra, let $(C, \chi)$ be an arbitrary coalgebra.
The unique algebra morphism $C \to M^{\leq \infty}$, denoted $\toT{C}\colon C \to M^{\leq \infty}$, is forced by the commutative square
\[\begin{tikzcd}
1+M \times C & 1+M \times M^{\leq \infty} \\
C & M^{\leq \infty}
            \arrow["F(f)", from=1-1, to=1-2] 
\arrow["\chi", from=2-1, to=1-1] \arrow["",from=2-2, to=1-2]
            \arrow["\toT{C}", from=2-1, to=2-2],
\end{tikzcd}\]
and defined as
\begin{align*}
    \toT{C}\colon C &\longrightarrow M^{\leq \infty} \\
    c &\longmapsto 
    \begin{cases}
        [\:] \text{ if } \chi(c) = *\\
        m:f(c') \text{ if } \chi(c) = (m,c').\\
    \end{cases}
\end{align*}

Subterminal coalgebras correspond to subsets $X \subseteq M^{\leq \infty}$ satisfying
$
m:\ell \in X \Rightarrow \ell \in X
$
by \cref{lem:char-sub}.
As with with the case of the natural numbers, some of the subterminal coalgebras share the same underlying set as preinitial algebras.
We will use $(-)^\circ$ to denote the coalgebra whenever it might be ambiguous.
Some examples are
$\emptyset,
{(M^{\leq 0})}^\circ \cong 1,
{(M^{\leq n})}^\circ,
{M^*}^\circ,
\prod_{n \in \N} M$ and 
$(M')^{\leq \infty}, M' \subseteq M$,
with their coalgebra structure inherited from the coalgebra structure on $M^{\leq \infty}$.
One can verify the all maps from subterminal coalgebra into $M^{\leq \infty}$ are injective.

\subsubsection{Functions as Algebra Morphisms}\label{basiclistfunctions}
In this section we would like to demonstrate the power of the algebraic approach to types.
We wish to give the definitions of some basic functions frequently seen in computer science as algebra morphisms.

\begin{example}
The first function is the length function $\len$, which takes a list and returns its length.
We want to word it in the language of algebras, so we first define an algebra
\begin{align*}
    1+M \times \N &\to \N\\
    * &\mapsto 0\\
    (m, n) &\mapsto n+1.
\end{align*}
The function $\gls{len}$ on lists $\ell \in M^*$ is then given by the unique algebra morphism $M^* \to \N$.
\end{example}

\begin{example}
The second function considered takes two lists and concatenates them.
Classically, this would be denoted as a function $M^* \times M^* \to M^*$, but to frame it as an algebra morphism, we consider it as a function $M^* \to [M^*,M^*]$.
To this end, consider the the algebra
\begin{align*}
    \alpha \colon 1+M \times [M^*, M^*] &\to [M^*, M^*]\\
    * &\mapsto \id\\
    (m,f) &\mapsto (\ell \mapsto m:f(\ell)).
\end{align*}
The concatenation function is denoted by $(++)$ and is defined uniquely by the following diagram
\[\begin{tikzcd}
1+M\times M^* & [2em]1+M \times [M^*, M^*] \\
M^* & [M^*, M^*]
            \arrow["F(\:++\:)", from=1-1, to=1-2] 
\arrow["",from=1-1, to=2-1] \arrow["\alpha",from=1-2, to=2-2]
            \arrow["(++)", from=2-1, to=2-2] 
            [M^*, M^*]
\end{tikzcd}\]
where uniqueness follows from the fact $M^*$ is the initial algebra.
Given $\ell \in M^*$, the function $(++)(\ell) \in [M^*,M^*]$ is the function which takes a list $\ell'$ and concatenates it with $\ell$,
resulting in the list $(++)(\ell)(\ell') = \ell ++\: \ell'$.
\end{example}

\begin{example}
The next function is $\head$, which is a partially defined function which attempts to extract the first element of a list-like.
We define the algebra
\begin{align*}
    \alpha \colon 1 + M \times (M + \{\bot\})&\to M + \{\bot\}\\
    * &\mapsto \bot\\
    (m,\ell) &\mapsto m.
\end{align*}
The function $\head$ is then defined as the following algebra morphism
\begin{align*}
  \head \colon M^* & \to (M + \{\bot\}, \alpha) \\
    [\:] &\mapsto \bot \\
    m:\ell  & \mapsto m.  
\end{align*}
\end{example}

\begin{example}
The next function is $\take(n)$, which is a function that takes the first $n \in \eN$ elements of a list.
We have already defined this function ``by hand'' when defining the algebra structure on $M^{\leq n}$, but we would like view it from our algebraic perspective.
As maybe suspected, it is given by the unique function
\begin{align*}
    \take(n) \colon M^* &\to  M^{\leq n}\\
    \ell &\mapsto \take(n)(\ell).
\end{align*}
\end{example}

\begin{example}
The $\filter$ function takes a predicate $p \in P_M \coloneqq [M, \{\top, \bot\}]$ and a list $\ell \in M^*$ and returns only the elements in $\ell$ which satisfy $p$.
Classically, this would be written as $\filter\colon M^* \times P_M \to M^*$, but in our case we want to consider it as a function
$\filter\colon M^* \to [P_M, M^*]$.
First, we define an algebra structure on $ [P_M, M^*]$ by
\begin{align*}
    \beta \colon 1+M\times  [P_M, M^*] &\to  [P_M, M^*] \\
    * &\mapsto \const_{[\:]}\\
    (m,f) &\mapsto \left(
        p \mapsto
        \begin{cases}
            m:f(p) \text{ if } p(m)\\
            f(p) \text{ otherwise}
        \end{cases}
        \right).
\end{align*}
The function $\filter$ is given by the unique algebra morphism $M^* \to  [P_M, M^*]$.
Another way to construct the filter function using a predicate $p \in P_M$ is to compute the set $M_p = \{m \in M \mid p(m) = \top\}$.
Since $M_p \subset M$, $M_p^*$ has the algebra structure corresponding to filtering.
The function $\filter(p)$ is given by the unique function $\filter(p) \colon M^* \to M_p^*$.
\end{example}

Notice that all functions above can be described as folds.
Constructing the target algebra $(A,\alpha)$ as done above corresponds to finding the accumulator and the binary operator of the fold. 

\subsubsection{Measuring}
Let $(A,\alpha), (B,\beta) \in \Alg$ and $(C,\chi) \in \CoAlg$.
A {measuring from $A$ to $B$ by $C$} is a function $\phi\colon C \times A \to B$ satisfying
\begin{enumerate}
    \item $\phi_c(e_A) = e_B$ for all $c \in C$;
    \item $\phi_c(\alpha(m,a)) = e_B$ if $\chi(c) = *$;
    \item $\phi_c(\alpha(m,a)) = \beta(m' \bullet m, \phi_{c'}(a))$ if $\chi(c) = (m',c')$.
\end{enumerate}
Intuitively, a measuring is a function that takes a stream-like coalgebra and a list-like algebra and combines their elements using the monoidal structure on $M$, then wraps them up in a new list-like.
We see the coalgebra introduces some element wise ``twist'', as well as limits the ``shape'' of the list-like.

\begin{example}\label{ex:meauringtoX^*}
Give any commutative monoid $(M,e,\bullet)$, we remark there does not exist a total algebra morphism $f\colon M^{\leq n} \to M^*$, since for a list $\ell \in M^{\leq n}$ of length $n$, we must have that
$$
m : f(\take(n-1)(\ell)) = f(m:\take(n-1)(\ell)) = f(m:\ell) = m:f(\ell),
$$
which cannot be the case.
Note that in the above computation we have used $\take(n)(m:\ell) = m:\take(n-1)(\ell)$.
The problem here is that we are disregarding the last element of the list, where an algebra morphism into $M^*$ does need that information.

However, we are able to define a measuring from $M^{\leq n}$ to $M^*$.
We define the measuring
\begin{align*}
    \phi \colon ({M^{\leq n}})^\circ \times M^{\leq n} &\to M^* \\
    ([\:], \ell) &\mapsto [\:]\\
    (\ell', [\:]) &\mapsto [\:]\\
    (m':\ell',m:\ell) &\mapsto (m' \bullet m) : \phi(\ell', \ell).
\end{align*}
We note that checking this is a measuring can be done by comparing the definition of measuring and that of $\phi$.
This example is valid because the coalgebra ${(M^{\leq n})}^\circ$ limits up to which point we are considering elements of the algebra $M^{\leq n}$.
The problem we had earlier, caused by disregarding the last element of a list due to the maximum length of the list, is mitigated by the introduction of the coalgebra ${(M^{\leq n})}^\circ$.
Since the elements of ${(M^{\leq n})}^\circ$ all have length at most $n$, we can truly disregard the last element of a list whenever its length exceeds $n$.
\end{example}

\begin{example}
Next, we investigate another interesting preinitial algebra given by $(M')^*$ for some subset $M' \subseteq M$.
We would like to find a measuring $\phi\colon C \times (M')^* \to B$ for some coalgebra $C \in \CoAlg$ and some algebra $B \in \Alg$.
This measuring must satisfy
$$
\phi_c(m:\ell) = 
\begin{cases}
    \beta((m\bullet m'), \phi_{c'}(\ell)) \text{ if } \chi(c) = (m',c') \\
    e_B \text{ otherwise}.
\end{cases}
$$
By the coalgebra structure on $(M')^*$, we have that for any $m \not\in M'$, $m:\ell = \ell$.
In the case that $\chi(c) = (m',c')$ and $m \not\in M'$, the above condition becomes
$
\phi_c(\ell) = \phi_c(m:\ell) = \beta((m\bullet m'), \phi_{c'}(\ell)),
$
giving us a contradiction since we need $\phi_c(\ell) = \beta((m\bullet m'), \phi_{c'}(\ell))$ for all $m \not\in M'$.
The problem here is a loss of information. 
This loss stems from the fact that $m:m:\dots:m:\ell = \ell$ whenever $m \not\in M'$,
whereas the measuring does need to take into account the added elements $m$.
\end{example}

\begin{example}
    Using measurings, we are also able to define a more flexible version of the $\map$ function.
    Traditionally, $\map$ takes a function $f$ and a list and applies $f$ to all elements of the list element wise.
    Using measurings, we can actually specify a list of functions which gets applied to elements of a list in a pairwise fashion.
    For each $m \in M$, define the function $r_m \colon M \to M$ which multiplies an element of $M$ by $m$ from the right.
    We define the algebra
    \begin{align*}
        1+ M \times [M,M]^{\leq \infty} &\longrightarrow [M,M]^{\leq \infty}\\
        * &\longmapsto r_e\\
        (m,\ell) &\longmapsto r_m : \ell.
    \end{align*}
    We can now define the measuring
    \begin{align*}
        \phi \colon {M^*}^\circ \times  [M,M]^{\leq \infty} &\to M^* \\
        ([\:], \ell) &\mapsto [\:]\\
        (\ell, [\:]) &\mapsto [\:]\\
        (m:\ell, f:k) &\mapsto f(m):\phi(\ell, k).
    \end{align*}
    We verify this is a measuring by checking
    $
    \phi(m':\ell, r_{m}:k) = r_{m}(m'):\phi(\ell, k) = (m'\bullet m) :\phi(\ell,k).
    $
    Note that we really need to define an algebra structure on $[M,M]^{\leq \infty}$ instead of a coalgebra structure, since there is no obvious way to extract $m \in M$ from a function $f \colon M \to M$ in such a way that the conditions on a measuring are satisfied.
\end{example}

\subsubsection{Free and Cofree Functors}
The free functor is defined by mapping $X \in \Set$ to
\begin{align*}
    \alpha_{\Fr}\colon 1+M \times M^* \times (X + \{e_X\}) &\to M^* \times (X + \{e_X\}) \\
    * &\mapsto ([\:], e_X)\\
    (m,\ell,e_X) &\mapsto (m:\ell,e_X)\\
    (m,\ell,x) &\mapsto (m:\ell,x).
\end{align*}
The behavior of $\Fr$ on functions $f\colon X \to Y$ is forced and given by
\begin{align*}
    \Fr(f)\colon M^* \times (X + \{e_X\}) &\to M^* \times (Y + \{e_Y\}) \\
    (\ell,x) &\mapsto (\ell,f(x))\\
    (\ell, e_X) &\mapsto (\ell, e_Y).
\end{align*}
Checking this is a left adjoint to the forgetful functor can be done by constructing the natural bijection
$$
[X,A] \cong \Alg(\Fr(X),A).
$$
The bijection is given by
$
    [X,A] \to \Alg(\Fr(X),A),
    f \mapsto \widetilde f,
$
where
\begin{align*}
    \widetilde f\colon M^* \times (X + \{e_X\}) &\to Y \\
    ([\:], e_X) &\mapsto e_Y\\
    ([\:], x) &\mapsto f(x)\\
    (m:\ell, e_X) &\mapsto \beta(m, \widetilde f(\ell,e_X))\\
    (m:\ell, x) &\mapsto \beta(m, \widetilde f(\ell,x)).
\end{align*}
The inverse of the bijection $[X,A] \to \Alg(\Fr(X),A)$ is given by restriction to $X$.
Informally, $\widetilde{f}$ takes a list $\ell \in M^*$ and an element $x \in X$ and appends $\ell$ to $f(x)$ in $Y$.
A quick check will also show this bijection is natural.
The unit and counit are given by
$
    \eta^{\Fr}_X\colon X \to M^* \times (X + \{e_X\}) ,
    x \mapsto ([\:],x)
$
for any set $X$,
and
\begin{align*}
    \epsilon^{\Fr}_A\colon M^* \times (A + \{e_A\}) &\to A \\
    ([\:], e_A) &\mapsto \alpha(*)\\
    ([\:], a) &\mapsto a\\
    (m:\ell, a) &\mapsto \alpha(m, \epsilon^{\Fr}_A(\ell, a)),
\end{align*}
for an algebra $A$.

The cofree functor is defined by mapping $X \in \Set$ to
\begin{align*}
    \delta_{\Cof} \colon X \times (M \times X)^{\leq \infty} &\to 1 + M \times X \times (M \times X)^{\leq \infty}\\
    (x, [\:]) &\mapsto *\\
    (x, (m,y):\ell) &\mapsto (m,y,\ell).
\end{align*}
The behavior of $\Cof$ on functions $f\colon X \to Y$ is forced and given by applying $f$ to all elements of $X$ in any element of $X \times (M \times X)^{\leq \infty}$.
Checking this is a right adjoint to the forgetful functor can be done by constructing the natural bijection
$$
[C,Y] \cong \CoAlg(C, \Cof(Y)),
$$
The bijection is given by
$
    [C,Y] \to \CoAlg(C,\Cof(Y)),
    f \mapsto \widetilde f,
$
where
\begin{align*}
    \widetilde f = \langle \widetilde f_0,\widetilde f_1 \rangle \colon C &\to Y \times (M \times Y)^{\leq \infty} \\
    c &\mapsto
    \begin{cases}
        (f(c), (m,f(c')):\widetilde f_1(c')) \text{ if } \chi(c) = (m,c')\\
        (f(c), [\:]) \text{ if } \chi(c) = *.
    \end{cases}
\end{align*}
Notice the function $\widetilde f_0 = f$, and hence this also gives the inverse for the bijection.
Verifying $\widetilde f$ is an coalgebra morphism is left to the reader.
Informally, $\widetilde f$ takes a stream-like $c \in C$ and applies $f$ to all its elements.
The unit and counit are given by
$
    \eta^{\Cof}_C \colon C \to C \times (M \times C)^{\leq \infty} ,
    c \mapsto (c,[\:])
$ for any coalgebra $C$,
and
$
    \epsilon^{\Cof}_X \colon X \times (M \times X)^{\leq \infty} \to x ,
    (x, \ell) \mapsto x,
$ for any set $X$.

\subsubsection{Representing Objects}
Here, we will give full details when constructing the representing objects.
We will finally see which roles the free and cofree functor play in this construction.

First we wish to construct the universal measuring coalgebra $\underline{\Alg}(A,B)$.
The representing object $\underline{\Alg}(A,B)$ is given as the equalizer of the following parallel morphisms
\[\begin{tikzcd}
{[A,B] \times (M \times [A,B])^{\leq \infty}} & {[1+M \times A,B] \times (M \times [1+M \times A,B])^{\leq \infty}}
	\arrow["{\Cof(\alpha^*)}", shift left, from=1-1, to=1-2]
	\arrow["\Psi"', shift right, from=1-1, to=1-2]
\end{tikzcd}\]
where $\Psi$ is the transpose of
\begin{multline*}
    \widetilde{\Psi}\colon [A,B] \times (M \times [A,B])^{\leq \infty} \xto{\chi_{\Cof}} 1+ M \times [A,B] \times  (M \times[A,B] )^{\leq \infty} \xto{F(\epsilon^{\Cof})}\\
    1+M \times [A,B] \xto{\widetilde{\nabla}_{A,B}} [1+M \times A,1+M \times B] \xto{\beta_*} [1+M\times A,B].
\end{multline*}
If we compute the composition we find
\begin{align*}
    \widetilde{\Psi} \colon [A,B] \times (M \times [A,B])^{\leq \infty}  &\to [1+M \times A,B] \\
    (f',[\:]) &\mapsto \const_{e_B}\\
    (f',(m,f):\ell) &\mapsto
    \begin{cases}
        * &\mapsto  e_B \\
        (m',a) &\mapsto \beta(m\bullet m', f(a)).
    \end{cases}
\end{align*}
We see $\widetilde{\Psi}$ only depends on the first element of the list $\ell$, so for brevity we may write $\widetilde{\Psi}((f', (m,f):\ell)) = \widetilde{\Psi}((m,f))$.
Taking its transpose $\Psi = \Cof(\widetilde{\Psi}) \circ \eta$ we find the explicit formula for $\Psi$ to be 
\begin{align*}
    \Psi \colon{[A,B] \times ([A,B] \times M)^{\leq \infty}} &\mapsto {[1+M \times A,B] \times ([1+M \times A,B] \times M)^{\leq \infty}}\\
    (f', [\:]) &\mapsto (\const_{e_B}, [\:]) \\
    (f', (m,f):\ell) &\mapsto ( \widetilde{\Phi}((m,f)), [\:] ).
\end{align*}
Taking the equalizer, we find the underlying set of $\underline{\Alg}(A,B)$ to be a subset of $[A,B] \times (M \times [A,B])^\infty$, given by
\begin{align*}
\{
(f_0, (m_i,f_{i+1})_{i\in I})
\mid
f_{i}(e_A) = e_B,\,
&(f_i\circ \alpha)(m,a)  = \beta( (m_i \bullet m), f_{i+1}(a) ), \\
& f_{\max}(\alpha(m,a)) = e_B
\text{ for all }  a \in A, m \in M
\},
\end{align*}
where $f_{\max}$ is the last function in the stream $(f_{i})_{i\in I}$ if the stream is finite.
Unpacking the above, we find $\underline{\Alg}(A,B)$ consists of two streams $(m_i)_{i \in I} \in M^{\leq \infty}$ and $(f_i)_{i \in I + 1} \in [A,B]^{\leq \infty}$, with the condition
$
f_i(\alpha(m,a)) = \beta((m_i \bullet m) , f_{i+1}(a)),
$
and the last element of the stream $(f_i)$ should be the function mapping all elements of the form $\alpha(m,a)$ to the element $e_B \in B$.
The coalgebra structure of $\underline{\Alg}(A,B)$ is given by shifting to the left, 
$
    (f_0, (m_i,f_{i+1})_i) \mapsto (m_0, (f_1, (m_{i+1},f_{i+2})_{i})),
    (f_0, [\:]) \mapsto *.
$
Lastly, we give the evaluation map
\begin{align*}
    \ev \colon \underline{\Alg}(A,B) \times A &\to B\\
    ((f_0, (m_i,f_{i+1})_{i\in I}) , a) &\mapsto f_0(a).
\end{align*}
The evaluation map is measuring from $A$ to $B$ by $\underline{\Alg}(A,B)$ by definition, making it an element of the category of measurings from $A$ to $B$.

To verify $\m_C(A,B) \cong \CoAlg(C, \underline{\Alg}(A,B))$, we explicitly construct the bijection.
It is given by the function
$
    \Xi \colon \m_C(A,B) \to \CoAlg(C, \underline{\Alg}(A,B))
$
where given $\phi\in \mu_C(A,B)$:
\begin{align*}
    \Xi(\phi)\colon C &\to \underline{\Alg}(A,B) \\
    c &\mapsto \left(\phi_c, \left(  \chi^i_0(c), \phi_{\chi^i_1(c)}  \right)_{1 \leq i \leq \len(c)}  \right),
\end{align*}
where we write $\chi = \langle \chi_0, \chi_1 \rangle \colon C \to 1 + M \times C$ and $\gls{len}(c) \in \eN$ is defined as the unique number such that $\chi^{\len(c)}(c) = *$.
First, we verify $\Xi(\phi)$ is well-defined.
This is the case since
$
\phi(\chi^{\len(c)}_1(c))(\alpha(m,a)) = e_B 
$
by definition of a measuring and $\len(c)$ and
$
\phi_{\chi^{i}_1(c)}(\alpha(m,a)) = \beta(m \bullet \chi^{i}_0(c), \phi_{\chi^{i+1}_1(c)}(a))
$
by definition of a measuring.
Second, we verify $\Xi(\phi)$ is a morphism of coalgebras.
To this end, we note
\[
\Xi(\phi) (\chi(c)) = \left(\phi_{\chi(c)}, \left(  \chi^{i+1}_0(c), \phi_{\chi^{i+1}_1(c)}  \right)_{2 \leq i \leq \len(c)}  \right),
\]
which agrees with the coalgebra structure on $\underline{\Alg}(A,B)$.

The inverse of the bijection $\Xi\colon \m_C(A,B) \to \CoAlg(C, \underline{\Alg}(A,B))$ is given by sending a coalgebra homomorphism $\widetilde \phi \colon C \to \underline{\Alg}(A,B)$
to the measuring
$
\Xi\inv(\widetilde{\phi})\colon (c,a) \mapsto (\pr_0 \circ \widetilde{\phi} (c))(a).
$
This is well-defined by the properties on $\underline{\Alg}(A,B)$.
The bijection $\Xi\colon \m_C(A,B) \to \CoAlg(C, \underline{\Alg}(A,B))$ is natural.
We conclude $\underline{\Alg}(A,B)$ represents $\m_{-}(A,B)$. 
By the above the evaluation map is also the terminal object in the category of measurings, since any measuring $\phi\colon C \times A \to B$ factors uniquely through $\ev\colon \underline{\Alg}(A,B) \times A \to B$.

Next we would like to compute $C \triangleright A$.
The measuring tensor $C \triangleright A$ is given by the coequalizer of
\[\begin{tikzcd}
    [column sep=tiny]
	{M^* \times (C \times (1+M \times A) + \{e_{C\times F(A)}\})} & [4em]{M^* \times (C \times A + \{e_{C\times A}\})},
	\arrow["\Psi"', shift right, from=1-1, to=1-2]
	\arrow["{\id \times (\id \times \alpha + \id)}", shift left, from=1-1, to=1-2]
\end{tikzcd}\]
where $\Psi$ is the transpose of
\begin{multline*}
    \widetilde{\Psi}\colon C \times 1 + M \times A \xto{\chi \times \id_{F(A)}} 1 + M \times C \times 1 + M \times A \xto{\nabla_{A,B}} 1 + M \times (C \times A) \xto{F(\eta^{\Fr})} \\
    1 + M \times (M^* \times (C \times A + \{e_{C \times A}\})) \xto{\alpha_{\Fr}} M^* \times (C \times A + \{e_{C \times A}\}).
\end{multline*}
If we compute the composition we find
\begin{align*}
    \widetilde{\Psi}\colon C \times 1 + M \times A &\to M^* \times (C \times A + \{e_{C \times A}\}) \\
    (c,*) &\mapsto \chi(c) \bullet ([\:], e_{C \times A})\\
    (c,(m,a)) &\mapsto
    \begin{cases}
        ([\:], e_{C \times A}) &\text{ if } \chi(c) = * \\
        ([m'\bullet m], (c', a)) &\text{ if }\chi(c) = (m',c').
    \end{cases}
\end{align*}
Taking its transpose $\Psi = \epsilon \circ \Fr(\widetilde{\Psi})$ we find the explicit formula for $\Psi$ to be 
\begin{align*}
    \Psi \colon	{M^* \times (C \times (1+M \times A) + \{e_{C\times F(A)}\})} &\to {M^* \times (C \times A + \{e_{C\times A}\})} \\
    (\ell, e_{C\times F(A)}) &\mapsto (\ell, e_{C\times A})\\
    (\ell, (c,*)) &\mapsto (\ell, e_{C\times A})
    \end{align*}
    and 
    \begin{align*}
    (\ell, (c,(m,a))) &\mapsto
    \begin{cases}
        (\ell, e_{C\times A} )&\text{ if } \chi(c) = * \\
        (\ell ++ [m'\bullet m], (c',a)) &\text{ if } \chi(c) = (m',c').
    \end{cases}
    \end{align*}
Now that we know explicitly which maps we want to coequalize, we can compute
$$
\Fr(C\times A)/\sim  \:\:= M^* \times ( (C \times A) + \{e_{C\times A}\} ) / \sim. 
$$
Here the equivalence relation $\sim$ is generated by
\begin{align*}
    (\ell, c, e_A) &\sim (\ell, e_{C\times A}) \\
    (\ell, c, \alpha(m,a))  &\sim (\ell, e_{C\times A}) \text{ if } \chi(c) = *\\
    (\ell, c, \alpha(m,a)) &\sim (\ell ++ [m \bullet m'], c', a) \text{ if } \chi(c) = (m',c').
\end{align*}
Its algebra structure is given by
\begin{align*}
    1 + M \times C \triangleright A &\to C \triangleright A\\
    * &\mapsto [[\:], e_{C \times A}]\\
    (m, [\ell, (c,a)]) &\mapsto [m:\ell, (c,a)].
\end{align*}
Intuitively, given an element $(\ell, c, a)$ the equivalence relation transfers the first elements of the stream-like $c$ and the list-like $a$ to the beginning of the list $\ell$ by combining them using the monoid structure on $M$.

To verify $\m_C(A,B) \cong \Alg(C\triangleright A, B)$, we explicitly construct the bijection.
It is given by
\begin{align*}
    \Xi\colon \m_C(A,B) &\to \Alg(C\triangleright A, B) \\
    \phi &\mapsto \Xi(\phi),
\end{align*}
where $\Xi(\phi)$ is given by
\begin{align*}
    \Xi(\phi)\colon C \triangleright A &\to B\\
    [([\:], e_{C \times A})] &\mapsto e_B\\
    [(m:\ell, e_{C \times A})] &\mapsto \beta(m, \Xi(\phi)([(\ell, e_{C \times A})]) )\\
    [([\:], (c,a))]          &\mapsto \phi(c,a)\\
    [(m:\ell, (c,a))]          &\mapsto \beta(m, \Xi(\phi)([(\ell, (c,a))])).
\end{align*}
To check $\Xi(\phi)$ is well-defined, one needs to perform a straightforward check using the fact that $\phi$ is a measuring.
To see $\Xi(\phi)$ is a morphism of algebras, we note this is the case by definition of $C \triangleright A$ and the definition of $\Xi(\phi)$.

The inverse of the bijection $\Xi\colon \m_C(A,B) \to \Alg(C\triangleright A, B)$ is given by sending a morphism $\widetilde \phi \colon C\triangleright A \to B$
to the measuring
$$
\Xi\inv(\widetilde \phi)\colon (c,a) \mapsto \widetilde \phi([([\:],(c,a))]),
$$
which is well-defined by the properties on $C\triangleright A$.
The bijection $ \Xi \colon \m_C(A,B) \to \Alg(C\triangleright A, B)$ is natural, which can be checked by a straightforward calculation.
We conclude $C\triangleright A$ represents $\m_{C}(A,-)$. 

Finally, we define the convolution algebra $[C,B]$ to have algebra structure given by
\begin{align*}
    1 + M \times [C,B] &\to [C,B] \\
    * &\mapsto (c \mapsto e_B)\\
    (m,f) &\mapsto 
    \left(
    c \mapsto 
    \begin{cases}
        \beta(m \bullet m', f(c')) \text{ if } \chi(c) =(m',c')\\
        e_A \text{ if } \chi(c) = *
    \end{cases}
    \right).
\end{align*}

\begin{example}
We would like to calculate $\underline{\Alg}(M^{\leq n}, B)$ for arbitrary $B \in \Alg$.
Our aim is to leverage the isomorphism
$$
\CoAlg(C, \underline{\Alg}(M^{\leq n}, B)) \cong \m_C(M^{\leq n}, B) \cong \Alg(M^{\leq n}, [C,B]).
$$
To do so, we make an observation about $\Alg(M^{\leq n}, Z)$ for an arbitrary algebra $(Z, \zeta) \in \Alg$.
Since $M^{\leq n}$ is preinitial, any morphism out of $M^{\leq n}$ is unique.
So, $\Alg(M^{\leq n}, Z)$ has at most one element.
The question now becomes if there is a condition on $Z$ for $\Alg(M^{\leq n}, Z)$ to be inhabited.
We claim this condition is
\begin{equation}\label{eq:condition}
    \zeta(m,\fromI{Z}(\ell)) = \zeta(m,\fromI{Z}(\take(n-1)(\ell))) \text { for all } (m,\ell) \in M\times M^{\leq n}.
\end{equation}
Notice that $\take(n-1)(\ell) = \ell$ for all lists of length less than $n$, so the only non-trivial case is when $\len(\ell) = n$.
If \cref{eq:condition} is satisfied, we claim the unique algebra morphism $M^{\leq n} \to Z$ is given by
$$
\fromI{Z} \circ \iota\colon M^{\leq n} \hookrightarrow M^* \to Z,
$$
where $\iota \in [M^{\leq n}, M^*]$ is an inclusion of sets, not an algebra morphism.
For brevity, we will write $\fromI{Z} \circ \iota = \fromI{Z}$ if the domain is understood.
To verify this is an algebra morphism, we must check it commutes with the algebra structures.
If we denote the algebra structure on $M^{\leq n}$ by $\alpha_n$, we need to check
$
\fromI{Z}(\alpha_n(m,\ell)) = \zeta(m, \fromI{Z}(\ell)),
$
since the case of the empty list is trivial.
Using \cref{eq:condition} and that $\fromI{Z}\colon M^* \to Z$ is an algebra morphism we can make the following deduction:
\begin{align*}
    (\fromI{Z}\circ \iota)(\alpha_n(m,\ell)) 
    &= (\fromI{Z} \circ \iota)(m:\take(n-1)(\ell)) \\
    &= \fromI{Z}(m:\take(n-1)(\ell)) \\
    &= \zeta(m, \fromI{Z}(\take(n-1)(\ell))) \\
    &= \zeta(m,\fromI{Z}(\ell))\\
    &= \zeta(m,(\fromI{Z} \circ \iota)(\ell)).
\end{align*}
Conversely, if there exists $\ell \in M^{\leq n}$ such that $\zeta(m,\fromI{Z}(\ell)) \neq \zeta(m,\fromI{Z}(\take(n-1)(\ell)))$ it is impossible to construct an algebra morphism $f\colon M^{\leq n} \to Z$.
This can be seen by trying to construct such a morphism $f$ and observing $f(\alpha_n(m,\ell)) \neq \zeta(m,f(\ell))$ in this case.

Now that we have an indication on whether $\Alg(M^{\leq n}, Z)$ is inhabited for arbitrary $Z \in \Alg$, we can focus in $Z = [C,B]$.
Observing the algebra structure on $[C,B]$, we see the morphism $\fromI{[C,B]}$ is given by
\begin{align*}
    \fromI{[C,B]} \colon M^* &\to [C,B] \\
    [\:] &\mapsto (c \mapsto e_B)\\
    m:\ell &\mapsto 
    \left(
        c \mapsto
        \begin{cases}
            \beta(m \bullet m', \fromI{[C,B]}(\ell)(c')) \text{ if } \chi(c) = (m',c') \\
            e_B \text{ if } \chi(c) = *.
        \end{cases}
    \right).
\end{align*}
Denoting the algebra structure on $[C,B]$ by $\alpha$, we would like to know under which condition on $C$ and $B$ $\alpha(m, \fromI{[C,B]}(\ell)) = \alpha(m,\fromI{[C,B]}(\take(n-1)(\ell)))$.
Unpacking the above, we find this is the case if
$$
\beta(m\bullet m', \fromI{[C,B]}(\ell)(c')) = \beta(m\bullet m', \fromI{[C,B]}(\take(n-1)(\ell))(c')) 
$$
for all $(m',c') \in \im(\chi)$.
By definition of $\fromI{[C,B]}$, the above condition can be satisfied if $\len(c) \leq n$ for all $c\in C$ or if condition \cref{eq:condition} holds for $B$.
So, we have the following:
$$
\Alg(M^{\leq n}, [C,B]) \cong 
\begin{cases}
    \{*\} \text{ if } \beta(m,\fromI{B}(\ell)) = \beta(m,\fromI{B}(\take(n-1)(\ell)))\,
    \\ \hspace{9em} \forall (m,\ell) \in M \times M^{\leq n}\\
    \{*\} \text{ if } \len(c) \leq n \text{ for all } c \in C\\
    \emptyset \text{ otherwise}.
\end{cases}
$$
Now we can make use the fact that $\CoAlg(C, \underline{\Alg}(M^{\leq n}, B)) \cong \Alg(M^{\leq n}, [C,B])$.
We observe that in the case of $\beta(m,\fromI{B}(\ell)) = \beta(m,\fromI{B}(\take(n-1)(\ell)))$, the object $\underline{\Alg}(M^{\leq n}, B)$ has the universal property of the terminal coalgebra $M^{\leq \infty}$.
Whenever this is not the case, $\underline{\Alg}(M^{\leq n}, B)$ has the universal property of $M^{\leq n} \in \CoAlg$.
We conclude
$$
\underline{\Alg}(M^{\leq n}, B) = 
\begin{cases}
    M^{\leq \infty} \text{ if }  \beta(m,\fromI{B}(\ell)) = \beta(m,\fromI{B}(\take(n-1)(\ell))), \\ \hspace{10em} \forall (m,\ell) \in M \times M^{\leq n}\\
    M^{\leq n} \text{ otherwise}.
\end{cases}
$$
\end{example}

\subsubsection{$C$-Initial Algebras}
We have already seen that $\n$ is the terminal $\n^\circ$-initial algebra.
An educated guess is that $M^{\leq n}$ is the terminal $({M^{\leq n}})^\circ$-initial algebra as well.
and this is indeed the case. Moreover, for any $k \geq n$, $M^{\leq k}$ is a ${M^{\leq n}}^\circ$-initial algebra, just as in the case of natural numbers.
We have already seen a proof for the natural numbers, which in this context corresponds to consider the trivial monoid $M \cong 1$. 
One could adapt that proof to hold for any monoid $M$, but we will not do that here. In \cref{subsubsec: unbounded tree type : initial algebras}, we will prove this for a more complicated type: general trees.

One could also wonder about the preinitial algebra $(M')^*$.
An educated guess might be that $(M')^*$ is a ${(M')^*}^\circ$-initial algebra.
Alas, we have already seen at the start of this section that there does not exist a measuring 
$\phi\colon{(M')^*}^\circ \times (M')^* \to (M')^*,$
hence $(M')^*$ cannot be an ${(M')^*}^\circ$-initial algebra.

But what then can we say about coalgebras stemming from a subset $M' \subseteq M$, such as ${(M')^*}^\circ$ and ${((M')^{\leq n})}^\circ$?
As it turns out, they inherit their initial algebras from their counterparts stemming from the entire monoid $M$.
This is due to the fact that in this case, the monomorphisms $\iota_n \colon {(M')_n^*}^\circ \rightarrowtail {(M^{\leq n})}^\circ \in \CoAlg$ induce monomorphisms
\begin{align*}
    \iota_n \triangleright \id_A \colon {(M')_n^*}^\circ \triangleright A &\to {M_n^*}^\circ \triangleright A \\
    [\ell, \ell', a] &\mapsto [\ell, \ell', a].
\end{align*}
An algebra $A$ is ${M_n^*}^\circ$-initial if and only if ${M_n^*}^\circ \triangleright A \cong I$, where $I \cong M^*$ is the initial algebra.
A monomorphism into an initial object is an isomorphism, so using $\iota_n \triangleright \id_A$ we see ${(M')_n^*}^\circ \triangleright A \cong I$.
This implies any ${M_n^*}^\circ$-initial algebra $A$ is also ${(M')_n^*}^\circ$-initial.

\subsection{The Binary Tree Type}\label{subsec:binary}
Having seen the theory applied to the case of lists labeled in a monoid, we can continue to explore types frequently used in computer science.
One of them is the binary tree type. This type adds to the complexity by allowing a branching structure.
The branching structure is very predictable however, which makes it a nice stepping stone to the last section where we allow all branching structures.
The type of binary trees with nodes labeled in a commutative monoid $(M, \bullet, e)$ is the
initial algebra of the functor $F$ given by
\begin{align*}
    F \colon \Set &\longrightarrow \Set \\
    X &\longmapsto 1 + M \times X\times X.
\end{align*}
The functor $F$ is lax monoidal by
\begin{align*}
    \eta \colon &1 \to 1 + M \\
    &* \mapsto e
\end{align*}
and
\begin{align*}
    \nabla_{A,B} \colon (1+M \times X \times X) \times (1+M \times Y \times Y) &\to 1 + M \times X \times Y \times X \times Y \\
    (*,x) &\mapsto *\\
    (x,*) &\mapsto *\\
    ((m,x,x'),(m',y,y')) &\mapsto (m\bullet m', x,y,x',y').
\end{align*}
The category of $F$-algebras has elements
$
    \alpha \colon 1 + M \times A \times A \to A
$
denoted $(A,\alpha)$. 
Morphisms $f\colon (A,\alpha) \to (B,\beta)$ are given by functions $ f\colon A \to B $
which make the following diagram commute:
\[\begin{tikzcd}
1 + M \times A \times A  & 1 + M \times B \times B  \\
A & B.
            \arrow["F(f) ", from=1-1, to=1-2] 
\arrow["\alpha"',from=1-1, to=2-1] \arrow["\beta",from=1-2, to=2-2]
            \arrow["f", from=2-1, to=2-2]
\end{tikzcd}\]
The category of $F$-coalgebras has elements
$
    \chi\colon C \to 1 + M \times C \times C,
$
denoted $(C,\chi)$.
Morphisms $f\colon (C,\chi) \to (D,\delta)$ are given by functions $ f\colon C \to D $
which make the following diagram commute:
\[\begin{tikzcd}
    1 + M \times C \times C & 1 + M \times D \times D  \\
C & D.
            \arrow["F(f) ", from=1-1, to=1-2] 
\arrow["\chi",from=2-1, to=1-1] \arrow["\delta"',from=2-2, to=1-2]
            \arrow["f", from=2-1, to=2-2]
\end{tikzcd}\]

\subsubsection{Initial and Terminal Objects}
The initial algebra is given by the collection of all finite binary trees with nodes labeled in the monoid $M$.
In order to define the underlying set, we define the sets $\gls{Tree}$, which are the binary trees with values in $M$ of depth at most $n$.
Let $T_{M,0} = \{e_T\} \cong 1$, then we can write
$$
T_{M,n} = M \times T_{M,n-1} \times T_{M,n-1} + T_{M,0}.
$$
Note that we have a filtration $T_{M,0} \subseteq T_{M,1} \subseteq \dots \subseteq T_{M,n-1} \subset T_{M,n} \subseteq T_{M,n+1} \subseteq \cdots$.
The underlying set of the initial algebra is given by
$
T_M \coloneqq \bigcup_{n \in \N} T_{M,n}.
$
Its algebra structure is given by
\begin{align*}
    1 + M \times T_M \times T_M &\longrightarrow T_M \\
    * &\longmapsto e_T\\
    (m,\ell, r) &\longmapsto (m,\ell, r).
\end{align*}
Given an element $(m, \ell, r) \in T_M$, we will call $m$ the value stored in the node, $\ell$ the left child and $r$ the right child.
Some examples of preinitial algebras include $T_{M,n}$ for all $n \in \N$, $T_{M'}$ and $T_{M',n}$, where $M' \subseteq M$.
A slightly less obvious example is $M^*$, with algebra structure
\begin{align*}
    \alpha_\ell\colon 1 + M \times M^* \times M^* &\longrightarrow M^* \\
    * &\longmapsto [\:]\\
    (m,\ell, r) &\mapsto m:\ell.
\end{align*}
The unique algebra morphism $T_M \to (M^*, \alpha_\ell)$ returns the left most branch of the tree as a list.
Altering the algebra structure on $M^*$ does give different results.
Consider the algebra structure
\begin{align*}
    \alpha_{\max}\colon 1 + M \times M^* \times M^* &\to M^* \\
    * &\mapsto [\:]\\
    (m,\ell, r) &\mapsto
    \begin{cases}
        m:\ell &\text{ if } \len(\ell) \geq \len(r) \\
        m:r &\text{ otherwise}.
    \end{cases}
\end{align*}
In this case, the unique algebra morphism $T \to (M^*, \alpha_{\max})$ gives the (leftmost) longest branch.
Lastly, we remark there exists a morphism which can capture the shape of the binary tree while forgetting its contents.
It is defined as the unique algebra morphism
$
\shape\colon T_M \to T_{M,1}.
$

In order to define the terminal coalgebra we need to define consider the set $\widetilde{T}_{M,\infty}$ of trees of infinite depth, labeled in $M$.
The terminal coalgebra has as underlying set $\gls{TreeInf} = T_M + \widetilde{T}_{M,\infty}$ with coalgebra structure given by
\begin{align*}
    T_{M,\infty}  &\to 1+ M \times T_{M,\infty} \times T_{M,\infty}  \\
    e_T &\mapsto *\\
    (m,\ell, r) &\mapsto (m,\ell, r).
\end{align*}
Subterminal coalgebras are given by subsets of $T_{M,\infty}$ which respect the algebra structure.
Again, we will use $(-)^\circ$ to distinguish algebras and coalgebras which have the same underlying set.
Some examples are $T_{M,n}^\circ, T_M^\circ, \widetilde{T}_{M,\infty}$, but we also have the example of
\begin{align*}
    M^{\leq \infty} &\to 1 + M \times M^{\leq \infty} \times M^{\leq \infty} \\
    [\:] &\mapsto *\\
    m:\ell &\mapsto (m,\ell,[\:]).
\end{align*}
Again, changing the coalgebra structure on $M^{\leq \infty}$ will give a plethora of different coalgebras, each with their own interpretation.

\subsubsection{Measurings}
Given algebras $A, B \in \Alg$ and a coalgebra $C \in \CoAlg$, a measuring is a function $\phi\colon C \times A \to B$ satisfying
\begin{enumerate}
    \item $\phi_c(e_A) = e_B$ for all $c \in C$
    \item $\phi_c(\alpha(m,a_\ell,a_r)) = e_B$ if $\chi(c) = *$
    \item $\phi_c(\alpha(m,a_\ell,a_r)) = 
    \beta(m' \bullet m, \phi_{c_\ell}(a_\ell), \phi_{c_r}(a_r))$ 
    if $\chi(c) = (m',c_\ell, c_r)$.
\end{enumerate}
Comparing this with the definition of measuring found in  \cref{subsec:list}, we see they are very similar.
The only difference is that in this case, we are accommodating the branching structure of binary trees.

\begin{example}
    For similar reasons as in \cref{ex:meauringtoX^*}, we remark there does not exist an algebra morphism $T_{M,n} \to T_M$.
    There does however exist a measuring from $T_{M,n}$ to $T_M$ by $T_{M,n}^\circ$.
    It is given by
    \begin{align*}
        \phi\colon T_{M,n}^\circ \times T_{M,n} &\to T_{M} \\
        (e_T, (m,\ell, r)) &\mapsto e_T\\
        ((m',\ell', r'), e_T) &\mapsto e_T\\
        ((m',\ell',r') , (m,\ell, r)) &\mapsto y (m' \bullet m,\phi(\ell', \ell), \phi(r',r)),
    \end{align*}
    which is a measuring by definition.
    Again, for reasons similar to those seen in \cref{ex:meauringtoX^*} a measuring to $T_M$ does exist, where an algebra morphism does not.
    The coalgebra $T_{M,n}^\circ$ gives us control over which parts of an element in $T_{M,n}$ we consider.
\end{example}

\begin{example}
    If we only care about modifying the shape of an element in $T_M$ and don't want to introduce any twist, we can consider the measuring
    \begin{align*}
        \phi \colon T_{\{e\}}^\circ \times T_M &\to T_M \\
        (e_T, (m,\ell, r)) &\mapsto e_T\\
        ((e,\ell', r'), e_T) &\mapsto e_T\\
        ((e,\ell',r') , (m,\ell, r)) &\mapsto y (m,\phi(\ell', \ell), \phi(r',r)),
    \end{align*}
    which is a measuring by definition.
    This measuring takes a tree containing only the unit of the monoid $t' \in T_{\{e\}}^\circ$ and uses its shape to modify a tree $t \in T_M$ to fit within the shape of $t'$.
\end{example}

\begin{example}
    Again, we can generalize the map function using a measuring to obtain more fine grained control over which function gets applied to which elements.
    Recall we define the function $r_m\colon M \to M$ as multiplication by $m$ on the right.
    Let $[M,M]$ be the set of functions $M \to M$, and let $T_{[M,M]}$ be the set of all binary trees with labels in $[M,M]$.
    We can then define an algebra structure on $T_{[M,M]}$ by
    \begin{align*}
        1 + M \times T_{[M,M]} \times T_{[M,M]} &\to T_{[M,M]} \\
        * &\mapsto e_T\\
        (m,\ell,r) &\mapsto (r_m, \ell, r).
    \end{align*}
    Now we have a measuring given by
    \begin{align*}
        \phi\colon T_M^\circ \times T_{[M,M]} &\to T_M \\
        (e_T, (f, \ell, r)) &\mapsto e_T\\
        ((m', \ell', r'), e_T) &\mapsto e_T\\
        ((m', \ell', r'), (f, \ell, r)) &\mapsto (f(m), \phi(\ell',\ell), \phi(r',r)).
    \end{align*}
    To check this is a measuring, we verify
    $$
    \phi((m', \ell', r'), (r_m, \ell, r)) =  (r_m(m'), \phi(\ell',\ell), \phi(r',r)) =  (m' \bullet m, \phi(\ell',\ell), \phi(r',r)).
    $$
\end{example}

\subsubsection{Free and Cofree Functors}

Given $X \in \Set$, we use the following description of the free algebra on $X$. Its underlying set is the set of all finite trees where nodes are labeled by $M$ and leaves are labeled by $X + \{e_{\Fr}\}$. It has the algebra structure
\begin{align*}
    1 + M \times \Fr(X) \times \Fr(X) &\to \Fr(X) \\
    * &\mapsto e_{\Fr}\\
    (m,\ell, r) &\mapsto (m,\ell, r).
\end{align*}
One can think of $\Fr(X)$ as the set of all finite trees with elements in $M$, where we allow the leaves to contain elements of $X$.
The behavior of $\Fr$ on functions $f\colon X \to Y$ is forced and given by applying $f$ to all elements of $X$ in any element of $\Fr(X)$.
Checking this is a left adjoint to the forgetful functor can be done by constructing the natural bijection
$$
[X,A] \cong \Alg(\Fr(X), A).
$$
It is given by sending a function $f\colon X \to A$ to the morphism
\begin{align*}
    \widetilde{f}\colon \Fr(X) &\to A \\
    (m,\ell, r) &\mapsto \beta(m, \widetilde{f}(\ell), \widetilde{f}(r))\\
    x &\mapsto f(x)\\
    e_{\Fr} &\mapsto \beta(*).
\end{align*}
and sending a morphisms $\widetilde{f}\colon \Fr(X) \to A$ to its restriction to $X \subseteq \Fr(X)$.
The morphisms $\widetilde{f}$ then sends the nodes to their embedding into $A$, and then appends the leaves with the trees $f(x) \in A$.

The cofree functor is defined by mapping $X \in \Set$ to
\begin{align*}
    \delta\colon X \times T_{M \times X \times X, \infty} &\to 1 + M \times X \times T_{M \times X \times X, \infty} \times X \times T_{M \times X \times X, \infty}\\
    (x_0, e_T) &\mapsto *\\
    (x_0, (m,x_\ell,x_r,t_\ell,t_r)) &\mapsto (m,x_\ell,t_\ell,x_r,t_r).
\end{align*}
The behavior of $\Cof$ on functions $f\colon X \to Y$ is forced and given by applying $f$ to all elements of $X$ in any element of $Y \times T_{M \times Y \times Y, \infty}$.
Checking this is a right adjoint to the forgetful functor can be done by constructing the natural bijection
$$
[C,X] \cong \CoAlg(C, \Cof(X)).
$$
It is given by sending a function $f\colon C \to X$ to the morphism
\begin{align*}
    \widetilde{f}\colon C &\to X \times T_{M \times X \times X, \infty} \\
    c &\mapsto 
    \begin{cases}
        (f(c), e_T) \text{ if } \chi(c) = * \\
        (f(c), (m, f(c_\ell), f(c_r)), \widetilde{f}(c_\ell), \widetilde{f}(c_r)) \text{ if } \chi(c) = (m, c_\ell, c_r).
    \end{cases}
\end{align*}
Conversely, a morphism $\widetilde{f}\colon C \to \Cof(X)$ is sent to its composition with the projection on the first coordinate.

\subsubsection{Representing Objects}
Here, we we will be very brief about the representing objects of $\m$.
Constructing them and verifying they do indeed represent $\m$ is completely analogous to the constructions and verification done in \cref{subsec:list}

The representing object $\underline{\Alg}(A,B)$ is a subset
$
\underline{\Alg}(A,B) \subseteq T_{M\times [A,B],\infty}.
$
We can view elements of $T_{M \times [A,B],\infty}$ as two trees with the same shape, one containing elements in $M$ and the other functions $A \to B$.
We will write $(t_m,t_f) \in T_{M \times [A,B],\infty}$ for these two trees.
An element $(t_m,t_f) \in T_{M\times [A,B]}^\infty$ is included in $\underline{\Alg}(A,B)$ if and only if
$f_i(e_A) = e_B$ for all $f_i \in t_f$ and 
$f_i(\alpha(m,a_\ell,a_r)) = \beta(m\bullet m_i,f_{i_\ell}(a_\ell),f_{i_r}(a_r))$,
where $f_{i_\ell},f_{i_r}$ are the children of $f_i$ if they exist and are otherwise are taken to be the constant function $\const_{e_B}$.

The measuring tensor $C \triangleright A$ is given by
$$
C \triangleright A = \Fr(C \times A)/\sim
$$
where
\begin{align*}
    (c,e_A) &\sim e_{\Fr}\\
    (c,\alpha(m,a_\ell, a_r)) &\sim e_{\Fr} \text{ if } \chi(c) = *\\
    (c,\alpha(m,a_\ell, a_r)) &\sim (m'\bullet m, (c_\ell, a_\ell), (c_r, a_r)) \text{ if } \chi(c) = (m',c_\ell, c_r).
\end{align*}
and we also apply the above relation recursively throughout the tree in $\Fr(C \times A)$.
An element of $C \triangleright A$ can be viewed as a tree containing elements of $M$ in its nodes and having elements of $C\times A$ at its leaves.
Given a tuple $(c,a) \in C \times A$ at the leaf, we attempt to expand out its tree structure using $\alpha$ and $\chi$, and replace it with an empty leaf if one of these two turns out to be empty.

Given an algebra $B$ and a coalgebra $C$ the convolution algebra $[C,B]$ has the algebra structure
\begin{align*}
     1+ M \times [C,B] \times & [C,B] \to [C,B]\\
    * &\mapsto (c \mapsto e_B)\\
    (m,f_\ell, f_r) &\mapsto \left(
    c \mapsto
    \begin{cases}
        e_B \text{ if } \chi(c) = * \\
        \beta(m'\bullet m, f_\ell(c_\ell), f_r(c_r)) \text{ if } \chi(c) = (m', c_\ell, c_r)
    \end{cases}
    \right).
\end{align*}

\subsubsection{$C$-Initial Algebras}
Many of our results from \cref{subsec:list} carry over to this case.
In line with our expectations, $T_{M,n}$ is indeed the terminal $T_{M,n}^\circ$-initial algebra.
We also have that the algebra $T_{M,k}$ is $T_{M,n}^\circ$-initial for all $k \geq n$.
Moreover, the algebras $T_{M,k}$ are $T_{M',n}^\circ$-initial for all $k \geq n$ and subsets $M' \subseteq M$, again similar to \cref{subsec:list}.

\subsection{The Type of Trees}\label{subsec:unbounded}
For our final example, we explore what happens if we lose control over the amount of branching.
We want to study the type of trees, which contains trees where each node can have any (possibly infinite) number of children.
Again, we will see measurings introduce control over the shape of tree, as well as some element-wise twisting.

Let $F$ be the functor given by
\begin{align*}
    F\colon \Set &\longrightarrow \Set \\
    X &\longmapsto 1 + M \times X^{\leq \infty}\\
    f &\longmapsto \id_1 + \id_M \times \map(f)
\end{align*}
where $(M, \bullet, e)$ is a commutative monoid and $\gls{map}$ is a function frequently used in functional programming.
It is defined as
\begin{align*}
    \map\colon [X,Y] \times X^{\leq \infty} &\to Y^{\leq \infty} \\
    (f,[\:]) &\mapsto [\:]\\
    (f,x:\ell) &\mapsto f(x) : \map(f)(\ell),
\end{align*}
hence takes a list $\ell \in X^{\leq \infty}$ and a function $f\colon X \to Y$ and applies $f$ to every element of the list $\ell$, returning a list containing elements of $Y$.
We claim $F$ is lax monoidal by
$
    \eta\colon 1 \to 1 + M \times 1^{\leq \infty} \cong 1 + M \times \eN,
    * \mapsto (e, \infty)
$
and
\begin{align*}
    \nabla_{A,B}\colon (1+M \times X^{\leq \infty}) \times (1+M \times Y^{\leq \infty}) &\to 1+M \times (X \times Y)^{\leq \infty} \\
    (*,x) &\mapsto *\\
    (x,*) &\mapsto *\\
    ((m,\ell),(m',k)) &\mapsto (m\bullet m', \zip(\ell,k)),
\end{align*}
where $\gls{zip}$ is a function borrowed from functional programming and defined as
\begin{align*}
    \zip\colon X^{\leq \infty} \times Y^{\leq \infty} &\to (X\times Y)^{\leq \infty} \\
    ([\:], \ell) &\mapsto [\:]\\
    (\ell, [\:]) &\mapsto [\:]\\
    (x:\ell, y:k) &\mapsto (x,y):\zip(\ell, k).
\end{align*}
The category of $F$-algebras has elements
$
    \alpha\colon 1 + M \times A^{\leq \infty} \to A
$
denoted $(A,\alpha)$. 
Morphisms $f\colon(A,\alpha) \to (B,\beta)$ are given by functions $ f\colon A \to B $
which make the following diagram commute:
\[\begin{tikzcd}
1 + M \times A^{\leq \infty} & 1 + M \times B^{\leq \infty}  \\
A & B.
            \arrow["F(f) ", from=1-1, to=1-2] 
\arrow["\alpha"',from=1-1, to=2-1] \arrow["\beta",from=1-2, to=2-2]
            \arrow["f", from=2-1, to=2-2]
\end{tikzcd}\]
The category of $F$-coalgebras has elements
$
    \chi\colon C \to 1 + M \times C^{\leq \infty}$,
denoted $(C,\chi)$.
Morphisms $f\colon (C,\chi) \to (D,\delta)$ are given by functions $ f\colon C \to D $
which make the following diagram commute:
\[\begin{tikzcd}
    1 + M \times C^{\leq \infty} & 1 + M \times D^{\leq \infty} \\
C & D.
            \arrow["F(f) ", from=1-1, to=1-2] 
\arrow["\chi",from=2-1, to=1-1] \arrow["\delta"',from=2-2, to=1-2]
            \arrow["f", from=2-1, to=2-2]
\end{tikzcd}\]

\subsubsection{Initial and Terminal Objects}
The initial algebra is given by the collection of all trees labeled in $M$ with finite depth.
In order to define the underlying set, we define the sets $\gls{STreen}$, which are the trees labeled in $M$ of depth at most $n$.
Let $S_{M,0} = \{e_S\} \cong 1$, then we can write
$$
S_{M,n} = M \times (S_{M,n-1})^{\leq \infty} \times S_{M,0}.
$$
Note that we have a filtration $S_{M,0} \subseteq S_{M,1} \subseteq \dots \subseteq S_{M,n-1} \subseteq S_{M,n} \subseteq S_{M,n+1} \subseteq \dots$.
The underlying set of the initial algebra is given by
$
\gls{STree} \coloneqq \bigcup_{n \in \N} S_{M,n}.
$
Its algebra structure is given by
\begin{align*}
    1 + M \times (S_{M})^{\leq \infty} &\to S_{M} \\
    * &\mapsto e_S\\
    (m,\ell) &\mapsto (m,\ell).
\end{align*}
For $(m,\ell) \in S_M$ we will call $m$ the value stored at the node and $\ell$ the list of children.

Some examples of preinitial algebras include $S_{M,n}$ for all $n \in \N$, $S_{M'}$ and $S_{M',n}$, where $M' \subseteq M$.
A slightly more subtle example is $M^*$, with algebra structure
\begin{align*}
    \alpha\colon 1 + M \times (M^*)^{\leq \infty} &\to M^* \\
    * &\mapsto [\:]\\
    (m,[\:]) &\mapsto [m]\\
    (m,\ell:k) &\mapsto m:\ell.
\end{align*}
The unique algebra morphism $S_M \to M^*$ returns the left most branch of the tree as a list.
Altering the algebra structure on $M^*$ does give different results.
Consider the algebra structure
\begin{align*}
    \alpha_{\max}\colon 1 + M \times (M^*)^{\leq \infty} &\to M^* \\
    * &\mapsto [\:]\\
    (m,[\:]) &\mapsto [m]\\
    (m,k) &\mapsto m:\ell \text{ where } \ell = \argmax_{\ell'\in k}(\len(\ell'))
\end{align*}
where $\argmax_{\ell'\in k}(\len(\ell'))$ is the maximum $\len(\ell')$ over all $\ell'\in k$.
In this case, the unique algebra morphism $T \to (M^*, \alpha_{\max})$ gives the longest branch.
Lastly, we remark there exists a morphism which can capture the shape of the tree while forgetting its contents.
It is defined as the unique algebra morphism
$
\shape\colon S_M \to S_1.
$

In order to define the terminal coalgebra, let $\widetilde{S}_{M,\infty}$ denote the set which contains trees of infinite depth, labeled in $M$.
The terminal coalgebra has as underlying set $\gls{STreeInf} = S_M + \widetilde{S}_{M,\infty}$ with coalgebra structure given by
\begin{align*}
    S_{M,\infty}  &\to 1+ M \times (S_{M,\infty})^{\leq \infty} \\
    e_S &\mapsto *\\
    (m,\ell) &\mapsto (m,\ell).
\end{align*}
Subterminal coalgebras are given by subsets of $S_{M,\infty}$ which respect the algebra structure.
Again, we will use $(-)^\circ$ to distinguish algebras and coalgebras which have the same underlying set.
Some examples are ${S_{M,n}}^\circ, S_M^\circ, \widetilde{S}_{M,\infty}$, but we also have the example of
\begin{align*}
    M^{\leq \infty} &\to 1 + M \times (M^{\leq \infty})^{\leq \infty} \\
    [\:] &\mapsto *\\
    m:\ell &\mapsto (m,[\ell]).
\end{align*}
Again, changing the coalgebra structure on $M^{\leq \infty}$ will give a plethora of different coalgebras, each with their own interpretation.

\subsubsection{Measurings}
Given algebras $(A,\alpha), (B,\beta) \in \Alg$ and a coalgebra $(C,\chi) \in \CoAlg$, a measuring is a function $\phi\colon C \times A \to B$ satisfying
\begin{enumerate}
    \item $\phi_c(e_A) = e_B$ for all $c \in C$
    \item $\phi_c(\alpha(m,\ell)) = e_B$ if $\chi(c) = *$
    \item $\phi_c(\alpha(m,\ell)) = 
    \beta(m' \bullet m, \map(\phi)(\zip(cs,\ell)))$ 
    if $\chi(c) = (m',\ell)$.
\end{enumerate}
It is again very similar to previous definitions of measurings.
The most notable difference is that we are zipping the lists of children $c$ and $\alpha(m, \ell)$ have.
This means a measuring does not only give us control over the depth of a tree, which would be analogous to lists, but gives us control over the branching structure of the tree as well.

\begin{example}
    For similar reasons as in previous examples, we remark there does not exist an algebra morphism $S_{M,n} \to S_M$.
    There does however exist a measuring from $S_{M,n}$ to $S_M$ by $S_{M,n}^\circ$.
    It is given by
    \begin{align*}
        \phi\colon S_{M,n}^\circ \times S_{M,n} &\to S_{M} \\
        (e_S, (m,\ell)) &\mapsto e_S\\
        ((m',\ell'), e_T) &\mapsto e_T\\
        ((m',\ell') , (m,\ell)) &\mapsto (m' \bullet m,\,\map(\phi)(\zip(\ell',\ell))),
    \end{align*}
    which is a measuring by definition.
    Again, reasoning similar to that seen in Example \ref{ex:meauringtoX^*} applies as to why a measuring to $S_M$ does exist, where an algebra morphism does not.
    The coalgebra $S_{M,n}^\circ$ gives us control over which parts of an element in $S_{M,n}$ we consider.
\end{example}

\begin{example}
    Consider the measuring
    \begin{align*}
        \phi\colon S_{\{e\}}^\circ \times S_M &\to S_M \\
        (e_S, (m,\ell)) &\mapsto e_S\\
        ((e,\ell'), e_T) &\mapsto e_T\\
        ((e,\ell') , (m,\ell)) &\mapsto (m,\map(\phi)(\zip(\ell',\ell))),
    \end{align*}
    which is a measuring by definition.
    This measuring takes a tree containing only the unit of the monoid $t' \in S_1^\circ$ and uses its shape to modify a tree $t \in S_M$ to fit within the shape of $t'$.
\end{example}

\begin{example}
    Again, we can generalize the map function using a measuring to obtain more fine grained control over which function gets applied to which elements.
    Recall we define the function $r_m\colon M \to M$ as multiplication by $m$ on the right.
    We define an algebra structure on $S_{[M,M]}$ by
    \begin{align*}
        1 + M \times S_{[M,M]} \times S_{[M,M]} &\to S_{[M,M]} \\
        * &\mapsto e_T\\
        (m,\ell,r) &\mapsto (r_m, \ell, r).
    \end{align*}
    Now we have a measuring given by
    \begin{align*}
        \phi\colon S_M^\circ \times S_{[M,M]} &\to S_M \\
        (e_S, (f, \ell)) &\mapsto e_S\\
        ((m', \ell'), e_S) &\mapsto e_S\\
        ((m', \ell'), (f, \ell)) &\mapsto (f(m'), \map(\phi)(\zip(\ell',\ell))).
    \end{align*}
    To check this is a measuring, we verify
    $$
    \phi((m', \ell'), (r_m, \ell)) =  (r_m(m'), \map(\phi)(\zip(\ell',\ell))) =  (m' \bullet m, \map(\phi)(\zip(\ell',\ell))).
    $$
\end{example}

\subsubsection{Free and Cofree Functors}

We use the following description of the free algebras.
Given $X \in \Set$, the free algebra $\Fr(X)$ has as underlying set the set of all trees of finite depth whose nodes are labeled by $M$ and whose leaves are labeled by $X + \{e_{\Fr}\}$. Its algebra structure is given by
\begin{align*}
    1 + M \times \Fr(X)^{\leq \infty} &\to \Fr(X) \\
    * &\mapsto e_{\Fr}\\
    (m,\ell) &\mapsto (m,\ell).
\end{align*}
One can think of $\Fr(X)$ as the set of all finite-depth trees with elements in $M$, where we allow the leaves to contain elements of $X$.
The behavior of $\Fr$ on functions $f\colon X \to Y$ is forced and given by applying $f$ to all elements of $X$ in any element of $\Fr(X)$.
Checking this is a left adjoint to the forgetful functor can be done by constructing the natural bijection
$$
[X,A] \cong \Alg(\Fr(X), A).
$$
It is given by sending a function $f\colon X \to A$ to the morphism
\begin{align*}
    \widetilde{f}\colon \Fr(X) &\to B \\
    (m,\ell) &\mapsto \beta(m, \map(\widetilde{f})(\ell))\\
    x &\mapsto f(x)\\
    e_{\Fr} &\mapsto \beta(*).
\end{align*}
and sending a morphisms $\widetilde{f}\colon \Fr(X) \to A$ to its restriction to $X \subseteq \Fr(X)$.
The morphisms $\widetilde{f}$ sends the nodes to their embedding into $A$, and then appends the leaves with the trees $f(x) \in A$.

The cofree functor is defined by mapping $X \in \Set$ to
\begin{align*}
    \delta\colon X \times S_{\Cof(X)} &\to 1 + M \times (X \times S_{\Cof(X)})^{\leq \infty}\\
    (x_0, e_S) &\mapsto *\\
    (x_0, (m,\ell)) &\mapsto (m,\ell)
\end{align*}
The behavior of $\Cof$ on functions $f\colon X \to Y$ is forced and given by applying $f$ to all elements of $X$ in any part of $\Cof(Y)$.
Checking this is a right adjoint to the forgetful functor can be done by constructing the natural bijection
$$
[C,X] \cong \CoAlg(C, \Cof(X)).
$$
It is given by sending a function $f\colon C \to X$ to the morphism
\begin{align*}
    \widetilde{f} \colon C &\to X \times S_{\Cof(X)} \\
    c &\mapsto 
    \begin{cases}
        (f(c), e_S) \text{ if } \chi(c) = * \\
        (f(c), (m, \map(\widetilde{f})(\ell)) \text{ if } \chi(c) = (m, \ell).
    \end{cases}
\end{align*}
Conversely, a morphism $\widetilde{f}\colon C \to \Cof(X)$ is sent to its composition with the projection on the first coordinate.

\subsubsection{Representing Objects}
Here, we we will be very brief about the representing objects of $\m$.
Constructing them and verifying they do indeed represent $\m$ is analogous to the constructions and arguments done in \cref{subsec:list}

The universal measuring coalgebra is a subset
$
\underline{\Alg}(A,B) \subseteq S_{M \times [A,B]}^\infty.
$
We can view elements of $S_{M \times [A,B]}^\infty$ as two trees with the same shape, one containing elements in $M$ and the other functions $A \to B$.
We will write $(t_m,t_f) \in S_{M \times [A,B]}^\infty$ for these two trees.
An element $(t_m,t_f) \in \underline{\Alg}(A,B)$ if and only if
\begin{enumerate}
    \item $f_i(e_A) = e_B$ for all $f_i \in t_f$
    \item $f_i(\alpha(m,\ell)) = \beta(m\bullet m_i,\mathtt{zipapply}(f,\ell))$,
\end{enumerate}
where here by $\gls{zipapply}(f,\ell)$ we mean applying each function $f_i$ to each element $\ell_i$ to obtain a new list.

The measuring tensor $C \triangleright A$ is given by
$$
C \triangleright A = \Fr(C \times A)/\sim
$$
where
\begin{align*}
    (c,e_A) &\sim e_{\Fr}\\
    (c,\alpha(m,\ell)) &\sim e_{\Fr} \text{ if } \chi(c) = *\\
    (c,\alpha(m,\ell)) &\sim (m'\bullet m, \zip(k, \ell)) \text{ if } \chi(c) = (m',k)\\
\end{align*}
and we also apply the above relation recursively throughout the tree in $\Fr(C \times A)$.
The object $C \triangleright A$ can be interpreted as being a tree containing elements of $M$ in its nodes and having elements of $C \times A$ at its leaves.
Given a tuple $(c,a) \in C \times A$ at the leaf, we attempt to expand out its tree structure using $\alpha$ and $\chi$, and replace it with an empty leaf if one of the two turns out to be empty.

Given an algebra $B$ and a coalgebra $C$, the convolution algebra $[C, B]$ has algebra structure
\begin{align*}
    1+ M \times [C,B]^* &\to [C,B]\\
    * &\mapsto (c \mapsto e_B)\\
    (m,\ell) &\mapsto \left(
    c \mapsto
    \begin{cases}
        e_B \text{ if } \chi(c) = * \\
        \beta(m' \bullet m,\,\mathtt{zipapply}(\ell,k) ) \text{ if } \chi(c) = (m', k)
    \end{cases}
    \right).
\end{align*}

\subsubsection{$C$-Initial Algebras}
\label{subsubsec: unbounded tree type : initial algebras}
In this section we would like to give a proof of the fact that $S_{M,n}$ is the terminal $S_{M,n}^\circ$-initial algebra.
We have already seen a proof of a similar result for the natural numbers, and this proof has the same outline.

We first show there exists a morphism $A \to S_{M,n}$ for any $S_{M,n}^\circ$-initial algebra $A$ using induction.
After that, we will show the morphism is unique.
We start with a lemma which allows us to use induction later on.

\begin{lemma}
    Let $A$ be a $S_{M,n}^\circ$-initial algebra. Then $A$ is also $S_{M,n-1}^\circ$ initial.
\end{lemma}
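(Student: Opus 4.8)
The plan is to mimic the argument already used for the natural numbers (the lemma preceding \Cref{prop:Atonexists}), exploiting the representing property of the measuring tensor. The starting point is the characterization, valid because $\m_C(A,-) \cong \Alg(C \triangleright A, -)$, that an algebra $A$ is $C$-initial precisely when $C \triangleright A$ is the initial object of $\Alg$; here the initial algebra is $S_X$. Thus my goal reduces to showing that $S_{X,n-1}^\circ \triangleright A \cong S_X$, given that $S_{X,n}^\circ \triangleright A \cong S_X$.

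First I would consider the evident inclusion of subterminal coalgebras $m \colon S_{X,n-1}^\circ \hookrightarrow S_{X,n}^\circ$, which is a coalgebra homomorphism by restriction of the coalgebra structure on $S_{X,\infty}$. Applying the measuring tensor functor $- \triangleright A \colon \CoAlg \to \Alg$ yields an algebra homomorphism $m \triangleright A \colon S_{X,n-1}^\circ \triangleright A \to S_{X,n}^\circ \triangleright A$. Using the explicit description $C \triangleright A = \Fr(C \times A)/\!\sim$ from the representing-objects computation, this map is the one induced on quotients by the injection $m \times \id_A$, sending the class of a tree with leaves labeled in $S_{X,n-1}^\circ \times A$ to the same tree viewed with leaves in $S_{X,n}^\circ \times A$; I would check that it is a monomorphism.

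Finally, since $A$ is $S_{X,n}^\circ$-initial, the codomain $S_{X,n}^\circ \triangleright A$ is the initial algebra $S_X$, and a monomorphism into an initial object is automatically an isomorphism: if $I$ is initial and $f \colon Y \to I$ is monic, the unique $u \colon I \to Y$ satisfies $f u = \id_I$, whence $f(u f) = f = f \,\id_Y$ and monicity of $f$ gives $u f = \id_Y$. Therefore $m \triangleright A$ is an isomorphism, so $S_{X,n-1}^\circ \triangleright A \cong S_X$ is initial, i.e.\ $A$ is $S_{X,n-1}^\circ$-initial. The main obstacle is the middle step: verifying directly that $m \triangleright A$ is a monomorphism. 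The quotient by $\sim$ can in principle collapse classes, so I would argue that because $m \times \id_A$ is injective and the generating relations on each side correspond under $m$ (the relations only involve expanding leaves via $\alpha$ and $\chi$, and $m$ is compatible with $\chi$), no two distinct classes in the domain are identified in the codomain; equivalently, one exhibits a retraction on underlying sets, making injectivity transparent.
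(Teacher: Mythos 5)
Your proposal is correct and follows essentially the same route as the paper's proof: the same inclusion $m \colon S_{X,n-1}^\circ \hookrightarrow S_{X,n}^\circ$, application of the functor $- \triangleright A$, and the conclusion that a monomorphism into the initial object $S_{X,n}^\circ \triangleright A$ must be an isomorphism. You are in fact more careful than the paper on the one delicate point: the paper asserts that $m \triangleright A$ is ``monomorphic by definition,'' whereas you correctly flag injectivity of the induced map on the quotients $\Fr(C \times A)/\!\sim$ as the step requiring a genuine argument, since the generating relations in the codomain could a priori pass through leaves labelled by elements of $S_{X,n}^\circ$ not lying in $S_{X,n-1}^\circ$.
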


\begin{proof}
    Consider the coalgebra morphism $\iota\colon S_{M,n-1}^\circ \to S_{M,n}^\circ, s \mapsto s$.
    This induces a morphism
    \begin{align*}
        \iota\triangleright A \colon S_{M,n-1}^\circ \triangleright A &\to  S_{M,n}^\circ \triangleright A \\
        [e_{\Fr}] &\mapsto [e_{\Fr}]\\
        [s,a] &\mapsto [s,a]\\
        [m,\ell] &\mapsto [m, \map(\iota \triangleright A)(\ell)].
    \end{align*}
    This morphism is monomorphic by definition.
    Since $A$ is $S_{M,n}^\circ$-initial, we know $S_{M,n} \triangleright A$ is an initial object.
    This means $\iota\triangleright A $ is a monomorphism into the initial object, hence an isomorphism.
\end{proof}


\begin{corollary}
    Let $A$ be a $S_{M,n}^\circ$-initial algebra. Then $A$ is also $S_{M,k}^\circ$ initial for all $k \leq n$.
\end{corollary}

The next lemma is a technical lemma which we will be able to leverage during the induction step.

\begin{lemma}\label{lem:adddepth}
    Let $A$ be a $S_{M,n}^\circ$-initial algebra and let $\phi\colon S_{M,n}^\circ \times A \to S_M$ be the unique measuring from $A$ to $S_{M}$ by $S_{M,n}^\circ$.
    For all $0 \leq i \leq n$, let  $s_i \in S_{M,n}$ be the largest tree of depth $i$ in $S_{M,n}$ which contains only the unit $e \in M$.
    Then for all $0 \leq i \in n$ and $0 \leq j \leq n-i$ we have
    $$
    \phi_{s_i}(a) = \phi_{s_i}(\fromI{A}\circ\phi_{s_{i+j}} (a)).
    $$
\end{lemma}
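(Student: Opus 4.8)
The statement to prove is \cref{lem:adddepth}, which asserts that for an $S_{X,n}^\circ$-initial algebra $A$ with unique measuring $\phi\colon S_{X,n}^\circ \times A \to S_X$, we have $\phi_{s_i}(a) = \phi_{s_i}(\fromI{A}\circ\phi_{s_{i+j}}(a))$ for all $0 \leq i \leq n$ and $0 \leq j \leq n-i$. This is the unbounded-tree analogue of the technical lemma proved for the natural numbers earlier in the excerpt (the lemma preceding \cref{lem:bound}). My plan is to mirror that earlier argument exactly, replacing the coalgebra morphisms $p_j\colon \k^\circ \to \n^\circ \times \n^\circ$ and $f\colon \k^\circ \to \n^\circ$ with their tree-shaped counterparts.

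\textbf{The approach.}
The plan is to exhibit two measurings $S_{X,k}^\circ \times A \to S_X$ that must coincide by the $S_{X,k}^\circ$-initiality of $A$ (available by the preceding corollary, since $k \leq n$), and to read off the desired identity from their agreement. First I would consider the composite measuring $\phi \circ (\id \times \fromI{A}\circ\phi) \in \m_{S_{X,n}^\circ \times S_{X,n}^\circ}(A, S_X)$, built from $\phi$ using the enriched composition of measurings guaranteed by \cref{thm:enriched}. Next I would define, for each fixed $j$, a coalgebra morphism
\[
p_j \colon S_{X,k}^\circ \to S_{X,n}^\circ \times S_{X,n}^\circ, \qquad s \mapsto (s_i, s_{i+j})\text{-style pairing},
\]
which sends a subterminal tree to a pair recording the ``shift by $j$ in depth'' of the unit-trees $s_i$; precomposing the composite measuring with $p_j \times \id_A$ yields a measuring $S_{X,k}^\circ \times A \to S_X$. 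In parallel I would take the inclusion coalgebra morphism $f\colon S_{X,k}^\circ \hookrightarrow S_{X,n}^\circ$ and precompose $\phi$ with $f \times \id_A$ to obtain a second measuring $S_{X,k}^\circ \times A \to S_X$.

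\textbf{Concluding and the main obstacle.}
Since $A$ is $S_{X,k}^\circ$-initial for every $k \leq n$, the set $\mu_{S_{X,k}^\circ}(A, S_X) \cong \bcoalg(S_{X,k}^\circ, \ubalg(A, S_X))$ is a singleton, so the two measurings constructed above must be equal. Evaluating both at the appropriate element $s_i$ (of depth $i \leq k$) gives $\phi_{s_i}(a) = \phi_{s_i}(\fromI{A}\circ\phi_{s_{i+j}}(a))$ on the nose. Iterating over all $0 \leq k \leq n$, exactly as in the natural-numbers proof, extends the identity to the full range $0 \leq i \leq n$, $0 \leq j \leq n-i$, completing the argument. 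The main obstacle I anticipate is purely bookkeeping: unlike the linear natural-numbers case, the unit-trees $s_i$ carry a genuine branching structure, so verifying that the candidate maps $p_j$ are actually coalgebra homomorphisms --- i.e.\ that shifting the canonical depth-$i$ unit-tree to depth $i+j$ is compatible with $\chi$ through the $\zip$/$\map$ lax structure of $F$ --- requires care. Once these are confirmed to be bona fide coalgebra morphisms, the initiality argument transfers verbatim, and no further computation is needed.
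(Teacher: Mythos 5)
Your overall architecture is exactly the paper's: compose $\phi$ with itself via $\fromI{A}$ to get a measuring indexed by $S_{X,n}^\circ \times S_{X,n}^\circ$, precompose with a coalgebra morphism $p_j$ and, separately, precompose $\phi$ with the inclusion $S_{X,k}^\circ \hookrightarrow S_{X,n}^\circ$, invoke $S_{X,k}^\circ$-initiality to identify the two resulting measurings, specialize to $s_i$, and iterate over $k$. The gap is that the one genuinely new ingredient --- the map $p_j$ itself --- is never defined, and the two natural readings of your ``$(s_i,s_{i+j})$-style pairing'' both fail. Read literally as the constant map $s \mapsto (s_i, s_{i+j})$, it is not a coalgebra homomorphism: the structure maps cannot commute, since for instance $\chi(e_S) = *$ while the pair $(s_i,s_{i+j})$ has non-trivial structure for $i \geq 1$. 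Read as the direct analogue of the naturals proof, $s \mapsto (s, s')$ with $s'$ the tree $s$ deepened by $j$ (grafting $s_j$ onto every leaf of $s$), it also fails: the coalgebra structure on the tensor $S_{X,n}^\circ \times S_{X,n}^\circ$ is built from the lax structure $\nabla$, which \emph{multiplies} the root labels, so the structure map applied to the pair produces $x \bullet x$ where the homomorphism condition requires $x$; this breaks unless every element of $X$ is idempotent, which is not assumed.

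This is why the verification is not ``purely bookkeeping'': the paper first defines an append operation $(++)$ grafting a tree onto all leaves of another, and then sets $p_j(s) = (s, \widetilde{s} ++ s_j)$, where $\widetilde{s} \in S_{\{e\},k}$ is the unique unit-labelled tree with $\shape(\widetilde{s}) = \shape(s)$. Replacing the labels in the second coordinate by the unit $e$ is exactly what rescues the homomorphism property, since $\nabla$ now produces $x \bullet e = x$ and $\zip$ matches the children pairwise. With this $p_j$ your outline goes through verbatim: initiality gives $\phi_s(a) = \phi_s(\fromI{A}\circ\phi_{\widetilde{s} ++ s_j}(a))$ for all $s \in S_{X,k}^\circ$, and taking $k = i$ and $s = s_i$ (so that $\widetilde{s_i} = s_i$ and $s_i ++ s_j = s_{i+j}$) yields the lemma. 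So you have the correct skeleton, but the shape/unit-tree construction that makes $p_j$ exist is the actual content of this lemma beyond the natural-numbers case, and it is missing from your proposal.
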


\begin{proof}
    Let $k\leq n$.
    First, we define a function which takes two trees and appends the second tree to all the roots of the first tree.
    We define it as
    \begin{align*}
        (++) \colon S_{M,\infty} \times S_{M,\infty} &\to S_{M,\infty} \\
        (e_S, s) &\mapsto s\\
        ((m, \ell), s) &\mapsto (m, \map( (++) s )(\ell)).
    \end{align*}
    For all $0 \leq j \leq n-k$, define the coalgebra morphism
    \begin{align*}
        p_j\colon S_{M,k}^\circ &\to S_{M,n}^\circ \times S_{M,n}^\circ \\
        s &\mapsto (s,\widetilde{s} ++ s_j),
    \end{align*}
    where $\widetilde{s}$ is the unique tree $\widetilde{s} \in S_{\{e\}, k}$ such that $\shape(\widetilde{s}) = \shape(s)$.
    Consider the composition of measurings $\phi \circ (\id \times \fromI{A} \circ \phi) \in \m_{S_{M,n}^\circ \times S_{M,n}^\circ}(A,S_{M})$.
    We can precompose this measuring with the coalgebra morphism $p_j$ to obtain a measuring
    $$
    S_{M,k}^\circ \times A \xrightarrow{p_j \times \id_A} S_{M,n}^\circ \times S_{M,n}^\circ \times A \xrightarrow{\phi \circ (\id \times \fromI{A} \circ \phi)} S_{M}.
    $$
    We also have the coalgebra morphism $\iota\colon S_{M,k}^\circ  \to S_{M,n}^\circ, s \mapsto s$ which we can precompose with $\phi$.
    This gives us a measuring
    $$
    S_{M,k}^\circ \times A \xrightarrow{\iota \times \id_A} S_{M,n}^\circ \times A \xrightarrow{\phi} S_{M}.
    $$  
    Since $A$ is also $S_{M,k}^\circ$-initial, we know these measurings must coincide.
    Hence we can state
    $
    \phi(s,a) = \phi(s,\fromI{A} \circ \phi(\widetilde{s} ++ s_j,a))
    $
    for all $s \in S_{M,k}^\circ$ and $0 \leq j \leq n-k$.
    The only restriction placed on $k$ was that $k \leq n$.
    Iterating over all $0 \leq k \leq n$, we arrive at the desired result
    $$
    \phi(s,a) = \phi(s,\fromI{A}\circ \phi(\widetilde{s} ++ s_j,a))
    $$
    for all $0 \leq i \leq n$ and $0 \leq j \leq n-i$.
    In particular this holds for $s = s_i$, and noting $s_i ++ s_j = s_{i + j}$ we obtain the desired result. 
\end{proof}

\begin{corollary}\label{cor:bound}
    Let $A$ be $S_{M,n}^\circ$-initial and let $\phi$ be the unique measuring to $S_{M}$.
    For all $0 \leq i \leq n$ and $a \in A$, we have $\phi(s_i,a) \in S_{M,i}$.
\end{corollary}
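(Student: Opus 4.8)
The plan is to prove the claim by induction on $i$, using \cref{lem:adddepth} to replace the arbitrary argument $a$ by a concrete tree. Write $t \coloneqq \phi_{s_i}(a) \in S_X$. Taking $j = 0$ in \cref{lem:adddepth} gives $t = \phi_{s_i}(a) = \phi_{s_i}(\fromI{A}(t))$, so it suffices to show $\phi_{s_i}(\fromI{A}(t)) \in S_{X,i}$. The point of this reformulation is that $\fromI{A}(t)$ is no longer an opaque element of $A$: since $t$ lives in the initial algebra $S_X$, it is concretely either the empty tree $e_S$ or a node $(x, ts)$, and $\fromI{A}$ being the unique algebra map sends these to $e_A$ and $\alpha(x, \map(\fromI{A})(ts))$ respectively. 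This is exactly the move that lets us avoid the law of excluded middle, mirroring the role played by \cref{lem:adddepth} in the proof of \cref{lem:bound}.

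For the base case $i = 0$ we have $s_0 = e_S$, so $\chi(s_0) = {*}$; whether $t = e_S$ or $t = (x,ts)$, the first two clauses in the definition of a measuring force $\phi_{s_0}(\fromI{A}(t)) = e_S \in S_{X,0}$, and hence $t = e_S$. For the inductive step I would assume the statement for $i-1$; equivalently, since $A$ is $S_{X,k}^\circ$-initial for every $k \le n$ by the corollary above, the inductive hypothesis says $\phi_{s_{i-1}}(a') \in S_{X,i-1}$ for every $a' \in A$. If $t = e_S$ we are done, so assume $t = (x, ts)$ with $ts = [t_1, t_2, \dots]$. Then $\fromI{A}(t) = \alpha(x, \map(\fromI{A})(ts))$, and I would expand $\phi_{s_i}(\fromI{A}(t))$ by the third clause: writing $\chi(s_i) = (e, cs)$, where $cs$ is the (maximal) list of children of $s_i$, each equal to $s_{i-1}$, this gives
\[
\phi_{s_i}(\fromI{A}(t)) = \beta\big(e \bullet x,\ \map(\phi)(\zip(cs, \map(\fromI{A})(ts)))\big) = \big(x,\ \map(\phi)(\zip(cs, \map(\fromI{A})(ts)))\big).
\]
Two features of $s_i$ are essential here: its root value is the unit, so $e \bullet x = x$ introduces no twist, and it is the \emph{largest} tree of depth $i$, so $cs$ is long enough that $\zip$ discards none of the children of $t$. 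Hence the right-hand side equals $(x, [\phi_{s_{i-1}}(\fromI{A}(t_1)), \phi_{s_{i-1}}(\fromI{A}(t_2)), \dots])$, and comparing with $t = (x, ts)$ shows each child satisfies $t_k = \phi_{s_{i-1}}(\fromI{A}(t_k))$, which lies in $S_{X,i-1}$ by the inductive hypothesis. Thus every child of $t$ has depth at most $i-1$, so $t \in S_{X,i}$.

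I expect the main obstacle to be the bookkeeping in the inductive step: pinning down the coalgebra structure $\chi(s_i) = (e, cs)$ with $cs$ a list of copies of $s_{i-1}$ that is long enough (possibly infinite) that $\zip(cs, -)$ truncates nothing, and verifying that $\map(\phi) \circ \zip(cs, -)$ really reproduces the children of $t$ each paired with $s_{i-1}$. This is precisely where the hypothesis that $s_i$ is the \emph{largest} such tree does the real work; a smaller choice of $s_i$ could drop branches and break the identification $\phi_{s_i}(\fromI{A}(t)) = (x, [\phi_{s_{i-1}}(\fromI{A}(t_k))]_k)$. Everything else is a direct transcription of the definitions of the measuring, of $\fromI{A}$, and of \cref{lem:adddepth}, exactly in the spirit of the analogous calculation for the natural numbers.
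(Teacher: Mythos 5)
Your proof is correct and follows essentially the same route as the paper's: induction on $i$, applying \cref{lem:adddepth} with $j=0$ to replace the argument $a$ by $\fromI{A}(\phi(s_i,a))$, expanding via the measuring clauses using the maximality of $s_i$ (so that $\zip$ truncates nothing), and closing with the inductive hypothesis. If anything you are slightly more careful than the paper, since you justify the base case $\phi(s_0,a)=e_S$ (which the paper simply asserts) by observing that $\fromI{A}(t)$ lies in the image of the algebra structure map, so the measuring clauses genuinely apply.
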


\begin{proof}
    We will proceed by induction over $i$.
    For the base case $i = 0$, we have $s_0 = e_S$, so $\phi(s_0, a) = e_S \in S_{M,0}$.
    For the induction step, assume $\phi(s_i, a) \in S_{M,i}$.
    Using $s_{i+1} = (e,(s_i)^{(\infty)})$ and the previous lemma we can write
    $
    \phi(s_{i+1}, a) = \phi( (e,(s_i)^{(\infty)}), \fromI{A}(\phi(s_{i+1}, a))).
    $
    Let $\alpha\colon 1 + M \times (S_M)^{\leq \infty} \to S_M$, then we can write $(m,s) = \alpha\inv(\phi(s_{i+1}, a))$ where $s$ is the list with components $s_i$.
    We then know by definition of a measuring that
    $$
    \phi(s_{i+1}, a)  = \phi( (e,(s_i)^{(\infty)}), \fromI{A}(\phi(s_{i+1}, a))) = 
    (e \bullet m, \map(\phi)(\zip((s_i)^{(\infty)}, k)   ) ),
    $$
    where $k = \map(\fromI{A})(s)$.
    By the induction hypothesis, we know $$\map(\phi)(\zip((s_i)^{(\infty)}, k)   )  \in (S_{M,i})^{\leq \infty},$$ hence $\phi(s_{i+1}, a) \in S_{M,i + 1}$ by definition.
    This concludes the induction step and the proof.
\end{proof}

Now we are ready to define the a family of functions which will culminate in an algebra morphism $A \to S_{M,n}$ for any $S_{M,n}^\circ$ initial algebra $A$.

\begin{definition}\label{def:AtoS_k}
    Let $A$ be $S_{M,n}^\circ$-initial, let $\phi$ be the unique measuring to $S_{M}$ and let $0 \leq k \leq n$. Define the functions
    \begin{align*}
        \phi_k \colon A &\to S_{M,k} \\
        a &\mapsto \phi(s_k,a),
    \end{align*}
    which are well-defined by the previous corollary.
\end{definition}

\begin{remark}
    With this new definition, we can restate  \cref{lem:adddepth} as
    $$
    \phi_i = \phi_i \circ \fromI{A} \circ \phi_{i+j}
    $$
    for all $0 \leq i \leq n$ and $0 \leq j \leq n-i$.
\end{remark}

\begin{proposition}\label{prop:AtoS_nexists}
    The functions $\phi_k\colon A \to S_{M,k}$ from  \cref{def:AtoS_k} are algebra morphisms for all $0 \leq k \leq n$.
\end{proposition}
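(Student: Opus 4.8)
The plan is to mirror the inductive argument of \cref{prop:Atonexists} (the natural numbers case), running an induction on $k$ and feeding in \cref{lem:adddepth} together with the initiality of $S_X$. Recall that for the endofunctor $A \mapsto 1 + X \times A^*_\infty$, a function $f\colon A \to B$ is an algebra morphism exactly when $f(e_A) = e_B$ and $f(\alpha(x,as)) = \beta(x,\map(f)(as))$ for all $(x,as)\in X\times A^*_\infty$, where $\alpha,\beta$ denote the two structure maps. The first condition holds for every $\phi_k$ at once: by the first defining property of a measuring, $\phi_c(e_A)=e_S$ for all $c$, so in particular $\phi_k(e_A)=\phi(s_k,e_A)=e_S$. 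For the base case $k=0$ I would note that $S_{X,0}\cong 1$ is the terminal object of $\Alg$, so $\phi_0\colon A\to S_{X,0}$ is automatically the unique algebra morphism. Everything then reduces to the second condition in the inductive step.

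So I would fix $k\ge 1$, assume $\phi_{k-1}$ is an algebra morphism, and compute both sides of $\phi_k(\alpha(x,as))=\alpha_k(x,\map(\phi_k)(as))$, where $\alpha_k$ is the structure map of $S_{X,k}$. For the left-hand side I would use that (as in the proof of \cref{cor:bound}) the generating tree satisfies $s_k=(e,(s_{k-1})^{(\infty)})$, so its coalgebra structure returns $\chi(s_k)=(e,(s_{k-1})^{(\infty)})$. The third defining property of a measuring, together with the fact that the structure map of the initial algebra $S_X$ sends $(x,bs)\mapsto(x,bs)$, then gives
\[
\phi_k(\alpha(x,as)) = \phi(s_k,\alpha(x,as)) = \left(e\bullet x,\ \map(\phi)(\zip((s_{k-1})^{(\infty)}, as))\right).
\]
Since $e\bullet x = x$ and zipping against the infinite constant list $(s_{k-1})^{(\infty)}$ pairs $s_{k-1}$ with every entry of $as$, this collapses, using $\phi_{k-1}=\phi(s_{k-1},-)$ from \cref{def:AtoS_k}, to $\phi_k(\alpha(x,as)) = (x,\map(\phi_{k-1})(as))$.

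For the right-hand side I would identify the structure map $\alpha_k$. The preinitial algebra $S_{X,k}$ is the quotient of $S_X$ obtained by truncating trees at depth $k$ (\cref{lem: quotient algebras}), so $\alpha_k(x,ss)=(x,\map(\tau_{k-1})(ss))$, where $\tau_{k-1}\colon S_{X,k}\to S_{X,k-1}$ is truncation at depth $k-1$, i.e.\ the restriction of the unique algebra morphism $\fromI{S_{X,k-1}}\colon S_X\to S_{X,k-1}$. Hence $\alpha_k(x,\map(\phi_k)(as)) = (x,\map(\fromI{S_{X,k-1}}\circ \phi_k)(as))$, and comparing with the left-hand side reduces the whole proposition to the single identity $\fromI{S_{X,k-1}}\circ \phi_k = \phi_{k-1}$ (as maps $A\to S_{X,k-1}$).

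This last identity is where the inductive hypothesis enters, and it is the step I expect to be the crux. Since $\fromI{A}\colon S_X\to A$ is an algebra morphism by initiality of $S_X$, and $\phi_{k-1}$ is one by hypothesis, the composite $\phi_{k-1}\circ \fromI{A}\colon S_X\to S_{X,k-1}$ is an algebra morphism out of the initial algebra and therefore coincides with $\fromI{S_{X,k-1}}$. Combining this with \cref{lem:adddepth} in the form $\phi_{k-1}=\phi_{k-1}\circ\fromI{A}\circ\phi_k$ (taking $i=k-1$, $j=1$) yields $\fromI{S_{X,k-1}}\circ \phi_k = \phi_{k-1}\circ\fromI{A}\circ\phi_k = \phi_{k-1}$, so the two sides agree and $\phi_k$ is an algebra morphism. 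The main obstacle is thus not the (routine) unwinding of the measuring axioms but correctly pinning down the truncated structure map $\alpha_k$ and recognizing that the required compatibility $\fromI{S_{X,k-1}}\circ \phi_k = \phi_{k-1}$ is precisely what initiality of $S_X$ plus \cref{lem:adddepth} deliver; extra care is needed because $A$ is only $S_{X,n}^\circ$-initial and \emph{not} preinitial, so one may not argue by cases on whether an element of $A$ is a node or a leaf, which is exactly why the identity must be obtained functorially rather than elementwise.
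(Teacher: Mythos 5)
Your proof is correct, and it shares the paper's overall skeleton---induction on $k$ with the trivial base case $S_{X,0}\cong 1$, with \cref{lem:adddepth} and the inductive hypothesis carrying the inductive step---but you execute that step along a genuinely different route. The paper's proof is elementwise: it inserts $\phi_k = \phi_k\circ\fromI{A}\circ\phi_k$ (\cref{lem:adddepth} with $j=0$), performs a case distinction on whether $\phi_k(a_i)\in S_X$ is the empty tree or a node, rewrites using the inductive hypothesis and \cref{lem:adddepth} again (with $j=1$), and then retraces the computation. You instead compute both sides in closed form, using $\chi(s_k)=(e,(s_{k-1})^{(\infty)})$ on the left and the truncation description of $\alpha_k$ on the right, and reduce everything to the single morphism-level identity $\fromI{S_{X,k-1}}\circ\phi_k = \phi_{k-1}$, which you obtain from initiality of $S_X$ (the composite $\phi_{k-1}\circ\fromI{A}$ is an algebra morphism by the inductive hypothesis, hence equals $\fromI{S_{X,k-1}}$) together with \cref{lem:adddepth} at $i=k-1$, $j=1$. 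What your route buys is a cleaner, case-free argument that makes the role of initiality explicit; what it costs is that you must pin down $\alpha_k$ explicitly as root-plus-truncation and identify truncation $S_X\to S_{X,k-1}$ with $\fromI{S_{X,k-1}}$---facts you assert rather than prove. These are routine (truncation visibly commutes with the structure maps, so initiality identifies it), and the paper's own proof relies on the same implicit description of $\alpha_k$, so this is a presentational debt rather than a gap. One small correction to your closing remark: the paper \emph{does} argue by cases, just on elements of the initial algebra $S_X$ (the images $\phi_k(a_i)$) rather than on elements of $A$, which is perfectly legitimate even though $A$ is not preinitial.
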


\begin{proof}
    We proceed by induction over $k$.
    For $k = 0$, we have that $S_{M,k} \cong 1$, the terminal object in $\Set$.
    Hence $\phi_0\colon A \to S_{M,0}$ is an algebra morphism.
    For the inductive step, assume $\phi_{k-1}\colon A \to S_{M,k-1}$ is an algebra morphism.
    We wish to show $\phi_k$ is an algebra morphism.
    In more detail, we aim to show $$\alpha_k(m, \map(\phi_k)(\ell)) = \phi_k(\alpha(m,\ell))$$ for all $(m,\ell) \in M \times A^{\leq \infty}$.
    First, we will write $\ell = (a_i)_{i = 0}^j$, where $j \in \eN$.
    We can now write
    \begin{align*}
        \alpha_k(m, \map(\phi_k)(\ell))
        &= \alpha_k(m, \map(\phi_k)((a_i)_i))\\
        &= \alpha_k(m, (\phi_k(a_i))_i)\\
        &= \alpha_k(m, (\phi_k \circ \fromI{A} \circ \phi_k(a_i))_i).
    \end{align*}
    Writing $\phi_k \circ \fromI{A} \circ \phi_k(a_i) = t_i$, we can make the case distinction
    $$ 
    t_i = 
    \begin{cases}
        e_S &\text{ if } \phi_k(a_i) = e_S \in S_M \\
        (m_i, \map(\phi_{k-1} \circ \fromI{A})(a_i)) &\text{ if } \phi_k(a_i) = (m_i,a_i) \in S_M
    \end{cases}
    $$
    by $\phi \colon S_{M,n}^\circ \times A \to S_M$ being a measuring.
    This means $$\alpha_k(m, \map(\phi_k)(\ell)) = \alpha_k(m, (t_i)_i).$$
    By the algebra structure of $S_{M,k}$, we can write this as
    $
    \alpha_k(m, (t_i)_i) = (m,(r_i)_i),
    $
    where
    $$
    r_i = 
    \begin{cases}
        e_S &\text{ if } \phi_k(a_i) = e_S \in S_M \\
        \alpha_{k-1}(m_i, \map(\phi_{k-1} \circ \fromI{A})(a_i)) &\text{ if } \phi_k(a_i) = (m_i,a_i) \in S_M.
    \end{cases}
    $$
    By the induction hypothesis
    $
    \alpha_{k-1}(m_i, \map(\phi_{k-1} \circ \fromI{A})(a_i)) 
    = (\phi_{k-1} \circ \fromI{A})(m_i, a_i)
    $
    and we can write
    $$
    r_i = (\phi_{k-1} \circ \fromI{A})(\phi_k(a_i)) = (\phi_{k-1} \circ \fromI{A} \circ \phi_{k-1})(a_i)
    $$
    using \cref{lem:adddepth}.
    Retracing our steps, we now compute
    \begin{align*}
        \alpha_k(m, \map(\phi_k)(\ell))
        &= (m,(r_i)_i)\\
        &= (m, (\phi_{k-1} \circ \fromI{A} \circ \phi_{k-1})(a_i)_i)\\
        &= (m, \map(\phi_{k-1}) ((\fromI{A} \circ \phi_{k-1})(a_i))_i)\\
        &= \phi_k(\alpha(m,  (\fromI{A} \circ \phi_{k-1})(a_i))_i  ))\\
        &= (\phi_k \circ \fromI{A} \circ \phi_k)(\alpha(m, (a_i)_i ))\\
        &= \phi_k(\alpha(m, \ell))
    \end{align*}
    using that $\phi$ is a measuring twice.
    We conclude $\phi_k \colon A \to S_{M,k}$ is an algebra morphism for all $0 \leq k \leq n$.
\end{proof}

In particular, this lemma shows $\phi_n\colon A \to S_{M,n}$ is an algebra morphism.
It still remains to show this algebra morphism is unique.

\begin{lemma}\label{lem:AtoS_nunique}
    For any $S_{M,n}^\circ$-initial algebra $A$, there exist at most one algebra morphism $A \to S_{M,n}$.
\end{lemma}

\begin{proof}
    By \cref{prop:map from initial alg to dual}, there exists a unique morphism $A \to [S_{M,n}^\circ, S_{M}]$ for any $S_{M,n}^\circ$-initial algebra $A$.
    Since $S_{M,n}$ is $S_{M,n}^\circ$-initial we know there exists a unique morphism $\iota \colon S_{M,n} \rightarrowtail [S_{M,n}^\circ, S_{M}]$.
    This map is given by
    \begin{align*}
        \iota \colon S_{M,n} &\to [S_{M,n}^\circ, S_{M}] \\
        e_S &\mapsto \const_{e_S}\\
        (m,\ell) &\mapsto
        \left( 
        s' \mapsto 
        \begin{cases}
            e_S &\text{ if } s' = e_S \\
            (m'\bullet m, \map(\iota)(\zip(\ell,\ell'))) &\text{ if } s' = (m', \ell').
        \end{cases}
        \right).
    \end{align*}
    We can easily conclude $\iota$ is a monomorphism by observing that $\iota(s)(s_n) = s$ for all $s \in S_{M,n}$.
    Given any two morphisms $f,g\colon A \to S_{M,n}$, we can draw the following diagram
    \[\begin{tikzcd}
        A & {[S_{M,n}^\circ, S_{M}]} \\
        & S_{M,n}
        \arrow["{!}", from=1-1, to=1-2]
        \arrow["f"', shift right, from=1-1, to=2-2]
        \arrow["g", shift left, from=1-1, to=2-2]
        \arrow["\iota"', tail, from=2-2, to=1-2].
    \end{tikzcd}\]
    Since the morphism $A \to [S_{M,n}^\circ, S_{M}]$ is unique, we know the composites $\iota \circ f = \iota \circ g$, and by $\iota$ being mono we conclude $f = g$.
    Hence, there can be at most one algebra morphism from a $S_{M,n}^\circ$-initial algebra $A$ to $S_{M,n}$.
\end{proof}

Putting all the above together, we arrive at the following result.

\begin{theorem}
    The algebra $S_{M,n}$ is the terminal $S_{M,n}^\circ$-initial algebra.
\end{theorem}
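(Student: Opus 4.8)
The plan is to follow exactly the two-step pattern already used to show that $\n$ is the terminal $\n^\circ$-initial algebra: first produce an algebra morphism out of an arbitrary $S_{X,n}^\circ$-initial algebra $A$ into $S_{X,n}$, then show it is unique, so that $S_{X,n}$ inherits the universal property of the terminal object in the subcategory of $\balg$ spanned by the $S_{X,n}^\circ$-initial algebras. Almost all of the work has been isolated into the preceding lemmas, so the theorem should be a short assembly.

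First I would invoke \cref{prop:AtoS_nexists}, which already supplies, for every $S_{X,n}^\circ$-initial algebra $A$, an algebra morphism $\phi_n\colon A \to S_{X,n}$. This map is $a \mapsto \phi(s_n,a)$, where $\phi\colon S_{X,n}^\circ \times A \to S_X$ is the unique measuring to $S_X$ and $s_n$ is the maximal depth-$n$ unit-tree; its codomain is genuinely $S_{X,n}$ and not merely $S_X$ by the depth bound \cref{cor:bound}. Then I would invoke \cref{lem:AtoS_nunique} to upgrade this \emph{existence} to \emph{uniqueness}: any $A \to S_{X,n}$ is forced to agree after composing with the monomorphism $m\colon S_{X,n} \rightarrowtail [S_{X,n}^\circ, S_X]$ coming from \cref{prop:map from initial alg to dual}, and uniqueness of the map into the dual therefore forces uniqueness into $S_{X,n}$. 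Together these give precisely a unique morphism $A \to S_{X,n}$ for every $S_{X,n}^\circ$-initial $A$, which is the statement that $S_{X,n}$ is terminal among such algebras (existence alone would only make it weakly terminal).

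One point that the abbreviated natural-numbers argument leaves implicit and that I would make explicit is that $S_{X,n}$ itself lies in the relevant subcategory, i.e.\ that $S_{X,n}$ is $S_{X,n}^\circ$-initial; this is already presupposed inside the proof of \cref{lem:AtoS_nunique}. Concretely one checks $\mu_{S_{X,n}^\circ}(S_{X,n}, -) \cong *$: existence of a measuring is witnessed by the explicit formula given in the measuring examples of \cref{subsec:unbounded}, and uniqueness follows because the three clauses defining a measuring, combined with the depth-$n$ truncation built into $S_{X,n}^\circ$, pin down every value. With this in hand, specializing the existence/uniqueness pair to $A = S_{X,n}$ returns the identity as the unique endomorphism, confirming the claim.

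I expect the genuine difficulty to reside in the lemmas that are assumed rather than in the final step: the crux is the interplay of the depth-adding \cref{lem:adddepth} and the depth bound \cref{cor:bound}, both of which rely on composing measurings over $S_{X,n}^\circ \times S_{X,n}^\circ$, precomposing with the comparison maps $p_j$, and exploiting $S_{X,k}^\circ$-initiality for every $k \le n$. Given those, the theorem follows by combining \cref{prop:AtoS_nexists} and \cref{lem:AtoS_nunique}, exactly as in the natural-numbers case; the only bookkeeping to watch is the branching (the $\zip$/$\map$ clauses) in the inductive step of \cref{prop:AtoS_nexists}, which is the feature distinguishing this type from lists and binary trees.
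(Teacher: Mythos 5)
Your proposal is correct and follows essentially the same route as the paper's own proof: existence of the morphism $A \to S_{X,n}$ is supplied by \cref{prop:AtoS_nexists} and uniqueness by \cref{lem:AtoS_nunique}, exactly as in the paper. Your extra explicit check that $S_{X,n}$ is itself $S_{X,n}^\circ$-initial (so that it genuinely lies in the subcategory in which terminality is claimed) is a point the paper leaves implicit inside the proof of \cref{lem:AtoS_nunique}, and spelling it out is a small refinement rather than a different argument.
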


\begin{proof}
    Given an $S_{M,n}^\circ$-initial algebra $A$, by  \cref{prop:AtoS_nexists}, we obtain an algebra morphism $\phi_n \colon A \to S_{M,n}$.
    By \cref{lem:AtoS_nunique}, it is unique.
    We conclude $\n$ is the terminal $S_{M,n}^\circ$-initial algebra.
\end{proof}



\printglossary[title=Index of symbols]




\vspace{4em}
\raggedcolumns
\printindex

%
%
\bibliographystyle{spmpsci}
\bibliography{bib}

@inCollection{Goguen-Thatcher-Wagner78,
author = {Joseph A. Goguen AND James W. Thatcher AND Eric G.
Wagner},
title = {An initial algebra approach to the specification,
correctness,
and implementation of abstract data types},
booktitle = {Current trends in Programming Methodology, vol.
{IV}},
year = 1978,
pages = {80--149},
publisher = {Prentice-Hall}
}

@article {lawvere2004functorial,
    AUTHOR = {Lawvere, F. William},
     TITLE = {Functorial semantics of algebraic theories and some algebraic
              problems in the context of functorial semantics of algebraic
              theories},
      NOTE = {Reprinted from Proc. Nat. Acad. Sci. U.S.A. {\bf 50} (1963),
              869--872 [MR0158921] and {\it Reports of the Midwest Category
              Seminar. II}, 41--61, Springer, Berlin, 1968 [MR0231882]},
   JOURNAL = {Repr. Theory Appl. Categ.},
  FJOURNAL = {Reprints in Theory and Applications of Categories},
      YEAR = {2004},
     PAGES = {1--121},
   MRCLASS = {18C50},
  MRNUMBER = {2118935},
  volume = {5}
}

@misc{anel2013sweedler,
      title={Sweedler Theory for (co)algebras and the bar-cobar constructions}, 
      author={Anel, Mathieu and Joyal, André},
      year={2013},
      eprint={1309.6952},
      archivePrefix={arXiv},
      primaryClass={math.CT},
    url={test}
}

@book {Fox,
    AUTHOR = {Fox, Thomas F.},
     TITLE = {Universal Coalgebras},
      NOTE = {Thesis (Ph.D.)--McGill University (Canada)},
 PUBLISHER = {ProQuest LLC, Ann Arbor, MI},
      YEAR = {1976},
     PAGES = {(no paging)},
   MRCLASS = {99-05},
  MRNUMBER = {2626730},
       URL =
              {http://gateway.proquest.com/openurl?url_ver=Z39.88-2004&rft_val_fmt=info:ofi/fmt:kev:mtx:dissertation&res_dat=xri:pqdiss&rft_dat=xri:pqdiss:NK31770},
}

@misc{grignou,
      title={Mapping Coalgebras {I}: Comonads}, 
      author={Brice Le Grignou},
      year={2022},
      note={arXiv:2009.10041.v2},
}

@InProceedings{MRU22,
author="McDermott, Dylan
and Rivas, Exequiel
and Uustalu, Tarmo",
editor="Bouyer, Patricia
and Schr{\"o}der, Lutz",
title="Sweedler Theory of Monads",
booktitle="Foundations of Software Science and Computation Structures",
year="2022",
publisher="Springer International Publishing",
address="Cham",
pages="428--448",
abstract="Monad-comonad interaction laws are a mathematical concept for describing communication protocols between effectful computations and coeffectful environments in the paradigm where notions of effectful computation are modelled by monads and notions of coeffectful environment by comonads. We show that monad-comonad interaction laws are an instance of measuring maps from Sweedler theory for duoidal categories whereby the final interacting comonad for a monad and a residual monad arises as the Sweedler hom and the initial residual monad for a monad and an interacting comonad as the Sweedler copower. We then combine this with a (co)algebraic characterization of monad-comonad interaction laws to derive descriptions of the Sweedler hom and the Sweedler copower in terms of their coalgebras resp. algebras.",
isbn="978-3-030-99253-8",
       DOI = {10.1007/978-3-030-99253-8\_22},
}

@article{Per22,
title = {The coalgebraic enrichment of algebras in higher categories},
journal = {Journal of Pure and Applied Algebra},
volume = {226},
number = {3},
pages = {106849},
year = {2022},
issn = {0022-4049},
doi = {https://doi.org/10.1016/j.jpaa.2021.106849},
author ="Maximilien P{\'e}roux",
keywords = {Algebra, Coalgebra, Enrichment, Operads, ∞-categories, Presentable},
abstract = {We prove that given C a presentably symmetric monoidal ∞-category, and any essentially small ∞-operad O, the ∞-category of O-algebras in C is enriched, tensored and cotensored over the presentably symmetric monoidal ∞-category of O-coalgebras in C. We provide a higher categorical analogue of the universal measuring coalgebra. For categories in the usual sense, the result was proved by Hyland, López Franco, and Vasilakopoulou.}
}

@article {vasila,
    AUTHOR = {Vasilakopoulou, Christina},
     TITLE = {Enriched duality in double categories: {$\mathcal{V}$}-categories
              and {$\mathcal{V}$}-cocategories},
   JOURNAL = {Journal of Pure and Applied Algebra},
    VOLUME = {223},
      YEAR = {2019},
    NUMBER = {7},
     PAGES = {2889--2947},
      ISSN = {0022-4049},
   MRCLASS = {18D05 (18C15 18D10 18D15 18D20 18D30)},
MRREVIEWER = {R. H. Street},
       DOI = {10.1016/j.jpaa.2018.10.003},
}

@misc{arxiv,
      title={Coinductive control of inductive data types}, 
      author={Paige Randall North and Maximilien Péroux},
      year={2023},
      eprint={2303.16793},
      archivePrefix={arXiv},
      primaryClass={math.CT}
}

@book{scott1970outline,
  title={Outline of a mathematical theory of computation},
  author={Scott, Dana},
  year={1970},
  publisher={Oxford University Computing Laboratory, Programming Research Group Oxford}
}

@article{benabou1965categories,
  title={Cat{\'e}gories relatives},
  author={B{\'e}nabou, Jean},
  journal={Comptes rendus hebdomadaires des séances de l'Académie des sciences},
  volume={260},
  number={14},
  pages={3824},
  year={1965},
  publisher={GAUTHIER-VILLARS/EDITIONS ELSEVIER 23 RUE LINOIS, 75015 PARIS, FRANCE}
}

@article{leinster,
title = {A general theory of self-similarity},
journal = {Advances in Mathematics},
volume = {226},
number = {4},
pages = {2935-3017},
year = {2011},
issn = {0001-8708},
doi = {https://doi.org/10.1016/j.aim.2010.10.009},
url = {https://www.sciencedirect.com/science/article/pii/S0001870810003713},
author = {Tom Leinster},
keywords = {Recursion, Self-similarity, Final coalgebra, Real interval, Barycentric subdivision, Fractal, Categorification, Colimit, Bimodule, Profunctor, Flat functor},
abstract = {A little-known and highly economical characterization of the real interval [0,1], essentially due to Freyd, states that the interval is homeomorphic to two copies of itself glued end to end, and, in a precise sense, is universal as such. Other familiar spaces have similar universal properties; for example, the topological simplices Δn may be defined as the universal family of spaces admitting barycentric subdivision. We develop a general theory of such universal characterizations. This can also be regarded as a categorification of the theory of simultaneous linear equations. We study systems of equations in which the variables represent spaces and each space is equated to a gluing-together of the others. One seeks the universal family of spaces satisfying the equations. We answer all the basic questions about such systems, giving an explicit condition equivalent to the existence of a universal solution, and an explicit construction of it whenever it does exist.}
}

@article{yetter,
title = {Abelian categories of modules over a (lax) monoidal functor},
journal = {Advances in Mathematics},
volume = {174},
number = {2},
pages = {266-309},
year = {2003},
issn = {0001-8708},
doi = {https://doi.org/10.1016/S0001-8708(02)00041-5},
author = {D.N. Yetter},
keywords = {Deformation theory, Derived functors, Monoidal category, Monoidal functor},
abstract = {Crane and Yetter (Deformations of (bi)tensor categories, Cahier de Topologie et Géometrie Differentielle Catégorique, 1998) introduced a deformation theory for monoidal categories. The related deformation theory for monoidal functors introduced by Yetter (in: E. Getzler, M. Kapranov (Eds.), Higher Category Theory, American Mathematical Society Contemporary Mathematics, Vol. 230, American Mathematical Society, Providence, RI, 1998, pp. 117–134.) is a proper generalization of Gerstenhaber's deformation theory for associative algebras (Ann. Math. 78(2) (1963) 267; 79(1) (1964) 59; in: M. Hazewinkel, M. Gerstenhaber (Eds.), Deformation Theory of Algebras and Structure and Applications, Kluwer, Dordrecht, 1988, pp. 11–264). In the present paper we solidify the analogy between lax monoidal functors and associative algebras by showing that under suitable conditions, categories of functors with an action of a lax monoidal functor are abelian categories. The deformation complex of a monoidal functor is generalized to an analogue of the Hochschild complex with coefficients in a bimodule, and the deformation complex of a monoidal natural transformation is shown to be a special case. It is shown further that the cohomology of a monoidal functor F with coefficients in an F,F-bimodule is given by right derived functors.}
}

@book {Kelly,
    AUTHOR = {Kelly, Gregory Maxwell},
     TITLE = {Basic concepts of enriched category theory},
    SERIES = {London Mathematical Society Lecture Note Series},
    VOLUME = {64},
 PUBLISHER = {Cambridge University Press, Cambridge-New York},
      YEAR = {1982},
     PAGES = {245},
      ISBN = {0-521-28702-2},
   MRCLASS = {18-02 (18D20)},
  MRNUMBER = {651714},
MRREVIEWER = {F.\ E. J. Linton},
}

@incollection{ml-wtypes,
  title={Constructive mathematics and computer programming},
  author={Martin-L{\"o}f, Per},
  booktitle={Studies in Logic and the Foundations of Mathematics},
  volume={104},
  pages={153--175},
  year={1982},
  publisher={Elsevier}
}

@article {blockleroux,
    AUTHOR = {Block, Richard E. and Leroux, Pierre},
     TITLE = {Generalized dual coalgebras of algebras, with applications to
              cofree coalgebras},
   JOURNAL = {Journal of Pure and Applied Algebra},
    VOLUME = {36},
      YEAR = {1985},
    NUMBER = {1},
     PAGES = {15--21},
      ISSN = {0022-4049,1873-1376},
   MRCLASS = {16A24},
       DOI = {10.1016/0022-4049(85)90060-X},
       URL = {https://doi.org/10.1016/0022-4049(85)90060-X},
}

@incollection {north2023coinductive,
    AUTHOR = {North, Paige Randall and P\'{e}roux, Maximilien},
     TITLE = {Coinductive control of inductive data types},
 BOOKTITLE = {10th {C}onference on {A}lgebra and {C}oalgebra in {C}omputer
              {S}cience},
    SERIES = {LIPIcs},
    VOLUME = {270},
     PAGES = {Art. No. 15, 17},
 PUBLISHER = {Schloss Dagstuhl. Leibniz-Zent. Inform., Wadern},
      YEAR = {2023},
      ISBN = {978-3-95977-287-7},
   MRCLASS = {68Q65},
       DOI = {10.4230/lipics.calco.2023.15},
       URL = {https://doi.org/10.4230/lipics.calco.2023.15},
}

@article {hylandetal,
    AUTHOR = {Hyland, Martin and L\'{o}pez Franco, Ignacio and
              Vasilakopoulou, Christina},
     TITLE = {Hopf measuring comonoids and enrichment},
  JOURNAL = {Proceedings of the London Mathematical Society. Third Series},
    VOLUME = {115},
      YEAR = {2017},
    NUMBER = {5},
     PAGES = {1118--1148},
      ISSN = {0024-6115,1460-244X},
   MRCLASS = {16T15 (16T05 18D10 18D20)},
MRREVIEWER = {Ram\'{o}n\ Gonz\'{a}lez Rodr\'{\i}guez},
       DOI = {10.1112/plms.12064},
       URL = {https://doi.org/10.1112/plms.12064},
}

@book {Sweedler,
    AUTHOR = {Sweedler, Moss E.},
     TITLE = {Hopf algebras},
    SERIES = {Mathematics Lecture Note Series},
 PUBLISHER = {W. A. Benjamin, Inc., New York},
      YEAR = {1969},
     PAGES = {vii+336},
   MRCLASS = {18.20 (16.00)},
MRREVIEWER = {K.-T.\ Chen},
}

@article {hazewinkel,
    AUTHOR = {Hazewinkel, Michiel},
     TITLE = {Cofree coalgebras and multivariable recursiveness},
   JOURNAL = {Journal of Pure and Applied Algebra},
    VOLUME = {183},
      YEAR = {2003},
    NUMBER = {1-3},
     PAGES = {61--103},
      ISSN = {0022-4049,1873-1376},
   MRCLASS = {16W30},
MRREVIEWER = {E.\ J.\ Taft},
       DOI = {10.1016/S0022-4049(03)00013-6},
       URL = {https://doi.org/10.1016/S0022-4049(03)00013-6},
}

@book{peano,
  title={Arithmetices principia: Nova methodo exposita},
  author={Peano, Giuseppe},
  year={1889},
  publisher={Fratres Bocca}
}

@article{dybjer,
title = {Representing inductively defined sets by wellorderings in {M}artin-{L}öf's type theory},
journal = {Theoretical Computer Science},
volume = {176},
number = {1},
pages = {329-335},
year = {1997},
issn = {0304-3975},
doi = {https://doi.org/10.1016/S0304-3975(96)00145-4},
url = {https://www.sciencedirect.com/science/article/pii/S0304397596001454},
author = {Peter Dybjer},
abstract = {We prove that every strictly positive endofunctor on the category of sets generated by Martin-Löf's extensional type theory has an initial algebra. This representation of inductively defined sets uses essentially the wellorderings introduced by Martin-Löf in “Constructive Mathematics and Computer Programming”.}
}

@article{mp,
title = {Wellfounded trees in categories},
journal = {Annals of Pure and Applied Logic},
volume = {104},
number = {1},
pages = {189-218},
year = {2000},
issn = {0168-0072},
doi = {https://doi.org/10.1016/S0168-0072(00)00012-9},
url = {https://www.sciencedirect.com/science/article/pii/S0168007200000129},
author = {Ieke Moerdijk and Erik Palmgren},
keywords = {Initial algebras, Locally cartesian closed categories, Pretoposes, Artin gluing, Sheaves, Wellfounded trees, Martin-Löf type theory},
abstract = {In this paper we present and study a categorical formulation of the W-types of Martin-Löf. These are essentially free term algebras where the operations may have finite or infinite arity. It is shown that W-types are preserved under the construction of sheaves and Artin gluing. In the proofs we avoid using impredicative or nonconstructive principles.}
}

@mastersthesis{rech2017strictly,
  title={Strictly positive types in homotopy type theory},
  author={Rech, Felix},
  year={2017},
  school={Saarland University}
}

@article{lambek,
author = {Lambek, Joachim},
journal = {Mathematische Zeitschrift},
keywords = {general algebraic structures},
pages = {151-161},
title = {A Fixpoint Theorem for complete Categories.},
url = {http://eudml.org/doc/170906},
volume = {103},
year = {1968},
}

@article{sp,
author = {Smyth, M. B. and Plotkin, G. D.},
title = {The Category-Theoretic Solution of Recursive Domain Equations},
journal = {SIAM Journal on Computing},
volume = {11},
number = {4},
pages = {761-783},
year = {1982},
doi = {10.1137/0211062},

URL = { 
    
        https://doi.org/10.1137/0211062
    
    

},
    abstract = { Recursive specifications of domains plays a crucial role in denotational semantics as developed by Scott and Strachey and their followers. The purpose of the present paper is to set up a categorical framework in which the known techniques for solving these equations find a natural place. The idea is to follow the well-known analogy between partial orders and categories, generalizing from least fixed-points of continuous functions over cpos to initial ones of continuous functors over \$\omega \$-categories. To apply these general ideas we introduce Wand’s \${\bf O}\$-categories where the morphism-sets have a partial order structure and which include almost all the categories occurring in semantics. The idea is to find solutions in a derived category of embeddings and we give order-theoretic conditions which are easy to verify and which imply the needed categorical ones. The main tool is a very general form of the limit-colimit coincidence remarked by Scott. In the concluding section we outline how compatibility considerations are to be included in the framework. A future paper will show how Scott’s universal domain method can be included too. }
}

@article{adamek74,
author = {Ad{\'{a}}mek, Ji{\v{r}}{\'{\i}}},
journal = {Commentationes Mathematicae Universitatis Carolinae},
number = {4},
pages = {589-602},
publisher = {Charles University in Prague, Faculty of Mathematics and Physics},
title = {Free algebras and automata realizations in the language of categories},
url = {http://eudml.org/doc/16649},
volume = {015},
year = {1974},
}

@book {presentable,
    AUTHOR = {Ad\'{a}mek, Ji\v{r}\'{\i} and Rosick\'{y}, Ji\v{r}\'{\i}},
     TITLE = {Locally presentable and accessible categories},
    SERIES = {London Mathematical Society Lecture Note Series},
    VOLUME = {189},
 PUBLISHER = {Cambridge University Press, Cambridge},
      YEAR = {1994},
     PAGES = {xiv+316},
      ISBN = {0-521-42261-2},
   MRCLASS = {18Axx (18-02)},
MRREVIEWER = {J. R. Isbell},
       DOI = {10.1017/CBO9780511600579},
}

@article{varieties, title={On varieties and covarieties in a category}, volume={13}, DOI={10.1017/S0960129502003882}, number={2}, journal={Mathematical Structures in Computer Science}, publisher={Cambridge University Press}, author={Ad\'{a}mek, Ji\v{r}\'{\i} and Porst, Hans-E.}, year={2003}, pages={201–232}}

\end{document}